\newcommand{\sm}{\smallsetminus}
\newcommand{\mapp}[1]{\xrightarrow{#1}}
\newcommand{\Oo}{\mathcal{O}}
\newcommand{\AAA}{\mathbf{A}}
\newcommand{\Aa}{\mathscr{A}}
\newcommand{\CC}{\mathbb{C}}
\newcommand{\RR}{\mathbb{R}}
\newcommand{\FF}{\mathbb{F}}
\newcommand{\ZZ}{\mathbb{Z}}
\newcommand{\QQ}{\mathbb{Q}}
\newcommand{\MM}{\text{\upshape M}}
\newcommand{\NN}{\text{\upshape N}}
\newcommand{\lqq}{\leqslant}
\newcommand{\gqq}{\geqslant}
\newcommand{\YYY}{\mathscr{Y}}
\newcommand{\ZZZ}{\mathscr{Z}}
\newcommand{\DDD}{\mathscr{D}}
\newcommand{\MMM}{\mathscr{M}}
\newcommand{\TTT}{\mathscr{T}}
\newcommand{\FFF}{\mathscr{F}}
\newcommand{\OOO}{\mathscr{O}}
\newcommand{\Ee}{\mathbf{E}}
\newcommand{\LLL}{\mathscr{L}}
\newcommand{\KKK}{\mathscr{K}}
\newcommand{\XXX}{\mathscr{X}}
\newcommand{\CCC}{\mathscr{C}}
\newcommand{\VVV}{\mathscr{V}}
\newcommand{\WWW}{\mathscr{W}}
\DeclareMathOperator{\Diff}{Diff}
\DeclareMathOperator{\id}{id}
\DeclareMathOperator{\Tr}{Tr}
\DeclareMathOperator{\CM}{CM}
\DeclareMathOperator{\GL}{GL}
\DeclareMathOperator{\Cl}{Cl}
\DeclareMathOperator{\Aut}{Aut}
\DeclareMathOperator{\End}{End}
\DeclareMathOperator{\Gal}{Gal}
\DeclareMathOperator{\Hom}{Hom}
\DeclareMathOperator{\disc}{disc}
\DeclareMathOperator{\Tor}{Tor}
\DeclareMathOperator{\imm}{im}
\DeclareMathOperator{\Br}{Br}
\DeclareMathOperator{\Spec}{Spec}
\DeclareMathOperator{\Nrd}{Nrd}
\DeclareMathOperator{\Trd}{Trd}
\DeclareMathOperator{\charr}{char}
\DeclareMathOperator{\Lie}{Lie}
\DeclareMathOperator{\inv}{inv}
\DeclareMathOperator{\Def}{Def}
\newcommand{\frakL}{\mathfrak L}
\newcommand{\frakD}{\mathfrak D}
\newcommand{\frakb}{\mathfrak b}
\newcommand{\frakm}{\mathfrak m}
\newcommand{\frakM}{\mathfrak M}
\newcommand{\fraka}{\mathfrak a}
\newcommand{\frakg}{\mathfrak g}
\newcommand{\frakQ}{\mathfrak Q}
\newcommand{\frakp}{\mathfrak p}
\newcommand{\frakq}{\mathfrak q}
\newcommand{\frakn}{\mathfrak n}
\newcommand{\frakP}{\mathfrak P}
\newcommand{\frakl}{\mathfrak l}
\newcommand{\ord}{\mathrm{ord}}
\newcommand{\mil}{\varprojlim}
\newcommand{\dlim}{\varinjlim}
\newcommand{\map}[1]{\xrightarrow{#1}}
\theoremstyle{plain}
\newtheorem{theorem}{Theorem}[section]
\newtheorem{corollary}[theorem]{Corollary}
\newtheorem{lemma}[theorem]{Lemma}
\newtheorem{proposition}[theorem]{Proposition}
\theoremstyle{definition}
\newtheorem{definition}[theorem]{Definition}
\newtheorem{remark}[theorem]{Remark}
\numberwithin{equation}{section}
\theoremstyle{plain}
\newtheorem*{theorem*}{Theorem}
\theoremstyle{plain}
\newtheorem*{theorem 1}{Theorem 1}
\theoremstyle{plain}
\newtheorem*{theorem 2}{Theorem 2}
\theoremstyle{plain}
\newtheorem*{theorem 3}{Theorem 3}
\theoremstyle{plain}
\newtheorem*{theorem 4}{Theorem 4}
\begin{document}
\title{The Gross-Zagier formula on singular moduli for Shimura curves}
\author[Andrew Phillips]{Andrew Phillips}

\maketitle

\begin{abstract}
The Gross-Zagier formula on singular moduli can be seen as a calculation of the intersection multiplicity of
two CM divisors on the integral model of a modular curve. We prove a generalization of this result to a Shimura curve.

\vspace{3mm}
\noindent \textbf{Keywords:} Shimura curve, arithmetic intersection, QM abelian surface
\vspace{3mm} \\
\noindent  \textbf{2020 Mathematics Subject Classification:} 11G15, 11G18, 14K22
\end{abstract}

\section{Introduction}

In this paper we study a moduli problem involving QM abelian surfaces with complex multiplication (CM), generalizing
a theorem about the arithmetic degree of a certain moduli stack of CM elliptic curves. 
This moduli problem is the main arithmetic content
of \cite{Howard}. The result of that paper can be seen as a refinement of the well-known formula
of Gross and Zagier on singular moduli in \cite{Gross-Zagier}. We begin by describing how the Gross-Zagier
formula and the result of \cite{Howard} can be interpreted as statements about intersection theory on a modular curve.
Our generalization of \cite{Howard} has a similar interpretation as a result about intersection theory, but now
on a Shimura curve. 

\subsection{Elliptic curves}
Let $K_1$ and $K_2$ be non-isomorphic imaginary quadratic fields and set $K = K_1 \otimes_{\QQ} K_2$.
Let $F$ be the real quadratic subfield of $K$ and let $\frakD \subset \Oo_F$ be the different of $F$. We assume
$K_1$ and $K_2$ have relatively prime discriminants $d_1$ and $d_2$, so $K$ is a field, $K/F$ is unramified at all finite places, and $\Oo_{K_1}
\otimes_{\ZZ} \Oo_{K_2}$ is the maximal order in $K$. 

Let $\MMM$ be the category fibered in groupoids over $\Spec(\Oo_K)$ with $\MMM(S)$ the category of elliptic
curves over the $\Oo_K$-scheme $S$. The category $\MMM$ is an algebraic stack 
(in the sense of \cite{Vistoli}, also known as a Deligne-Mumford stack) which is smooth of relative dimension $1$
over $\Spec(\Oo_K)$ (so it is $2$-dimensional).
For $i \in \{1, 2\}$ let $\YYY_i$ be the algebraic stack over $\Spec(\Oo_K)$ with $\YYY_i(S)$
the category of elliptic curves over the $\Oo_K$-scheme $S$ with complex multiplication by $\Oo_{K_i}$.
When we speak of an elliptic curve $E$ over an $\Oo_K$-scheme $S$ with complex multiplication by $\Oo_{K_i}$, we are assuming that the action $\Oo_{K_i} \to \End_{\OOO_S}(\Lie(E))$ is through the structure map
$\Oo_{K_i} \hookrightarrow \Oo_K \to \OOO_S(S)$. The stack $\YYY_i$ is finite and \'etale over
$\Spec(\Oo_K)$, so in particular it is $1$-dimensional and regular. 
There is a finite morphism $\YYY_i \to \MMM$
defined by forgetting the complex multiplication structure.
 
Even though the morphism $\YYY_i \to \MMM$ is not a closed immersion, we view
$\YYY_i$ as a divisor on $\MMM$ through its image (\cite[Definition 1.7]{Vistoli}).
A natural question to now ask is: what is the intersection multiplicity, defined in the appropriate sense below, 
of the two divisors $\YYY_1$ and $\YYY_2$ on $\MMM$? More generally, if $T_m : \text{Div}(\MMM) \to
\text{Div}(\MMM)$ is the 
$m$-th Hecke correspondence on $\MMM$, what is the intersection multiplicity of 
$T_m\YYY_1$ and $\YYY_2$?

If $\ZZZ$ is an irreducible algebraic stack of dimension $0$ over $\Spec(\Oo_K)$, define its \textit{arithmetic degree} to be
\begin{equation}\label{arithmetic degree}
\deg(\ZZZ) = \sum_{\frakP \subset \Oo_K}\log(|\FF_\frakP|)\sum_{x \in [\ZZZ(\overline{\FF}_\frakP)]}\frac{\text{lg}(\OOO_{\ZZZ, x}^{
\text{sh}})}{|\Aut(x)|},
\end{equation}
where $[\ZZZ(S)]$ is the set of isomorphism classes of objects in the category $\ZZZ(S)$, $\OOO_{\ZZZ, x}^{\text{sh}}$ is the strictly Henselian local ring of $\ZZZ$ at the geometric point $x$ (the local ring for the \'etale topology), and $\text{lg}(\OOO_{\ZZZ, x}^{\text{sh}})$ is the length of the ring.
Also, the outer sum is over all prime ideals $\frakP \subset \Oo_K$, $\FF_\frakP = \Oo_K/\frakP$, and
$\Spec(\overline{\FF}_\frakP)$ is an $\Oo_K$-scheme through the reduction map $\Oo_K \to \FF_\frakP$.
This definition is extended linearly to all $0$-dimensional algebraic stacks over $\Spec(\Oo_K)$.
Returning to the context above, if $\DDD_1$ and $\DDD_2$ are two prime divisors on $\MMM$ intersecting properly, meaning $\DDD_1 \cap
\DDD_2 = \DDD_1 \times_{\MMM} \DDD_2$ is an algebraic stack of dimension $0$, 
define the \textit{intersection multiplicity} of $\DDD_1$ and $\DDD_2$ on $\MMM$ to be the number
$I(\DDD_1, \DDD_2) = \deg(\DDD_1 \cap \DDD_2)$.
The definition of $I(\DDD_1, \DDD_2)$ is extended to all divisors $\DDD_1$
and $\DDD_2$ by bilinearity, assuming $\DDD_1$ and $\DDD_2$ intersect properly.

The intersection multiplicity $I(\YYY_1, \YYY_2)$ relates to the Gross-Zagier formula on singular moduli as follows.
Let $L \supset K$ be a number field and suppose $E_1$ and $E_2$ are elliptic curves over $\Spec(\Oo_L)$.
The $j$-invariant determines an isomorphism of schemes $M_{/\Oo_L} \cong \Spec(\Oo_L[x])$,
where $M \to \Spec(\Oo_K)$ is the coarse moduli scheme associated with $\MMM$,
and the elliptic curves $E_1$ and $E_2$ determine morphisms $\Spec(\Oo_L) \rightrightarrows M_{/\Oo_L}$.
These morphisms correspond to ring homomorphisms $\Oo_L[x] \rightrightarrows \Oo_L$ defined by
$x \mapsto j(E_1)$ and $x \mapsto j(E_2)$. Let $D_1$ and $D_2$ be the divisors on $M_{/\Oo_L}$ defined
by the morphisms $\Spec(\Oo_L) \rightrightarrows M_{/\Oo_L}$. Then 
$$
D_1 \cap D_2 = \Spec(\Oo_L \otimes_{\Oo_L[x]} \Oo_L) \cong \Spec(\Oo_L/(j(E_1) - j(E_2))).
$$
For $\tau$ an imaginary quadratic integer in the complex upper half plane, let $[\tau]$ be its equivalence class under the action
of $\text{PSL}_2(\ZZ)$. As in \cite{Gross-Zagier} define
\begin{equation}\label{GZ}
J(d_1, d_2) = \Bigg(\prod_{\substack{[\tau_1], [\tau_2] \\ \text{disc}(\tau_i) = d_i}}(j(\tau_1) - 
j(\tau_2))\Bigg)^{4/(w_1w_2)},
\end{equation}
where $w_i = |\Oo_{K_i}^\times|$. It follows from the above discussion that the main result of \cite{Gross-Zagier},
which is a formula for the prime factorization of the integer $J(d_1, d_2)^2$, is essentially the same as giving a formula
for $\deg(\YYY_1 \cap \YYY_2) = I(\YYY_1, \YYY_2)$. 

For each positive integer $m$ define $\TTT_m$ to be the algebraic stack over $\Spec(\Oo_K)$ with 
$\TTT_m(S)$ the category of triples $(E_1, E_2, f)$ where $E_i$ is an object of $\YYY_i(S)$ and 
$f \in \Hom_S(E_1, E_2)$ satisfies $\deg(f) = m$ on every connected component of $S$.
In \cite{Howard} it is shown there is a decomposition
$$
\TTT_m = \bigsqcup_{\substack{\alpha \in F^\times \\ \Tr_{F/\QQ}(\alpha) = m}}\XXX_{\alpha}
$$
for some $0$-dimensional stacks $\XXX_\alpha \to \Spec(\Oo_K)$ and then a formula is given for each term in
$$
\deg(\TTT_m) = \sum_{\substack{\alpha \in \frakD^{-1}, \alpha \gg 0 \\ 
\Tr_{F/\QQ}(\alpha) = m}} \deg(\XXX_{\alpha}).
$$
We will prove later (in the appendix) that 
\begin{equation}\label{Hecke 1}
\deg(\TTT_m) = I(T_m\YYY_1, \YYY_2), 
\end{equation}
so the main result
of \cite{Howard} really is a refinement of the Gross-Zagier formula.

Let $\XXX$ be the algebraic stack over $\Spec(\Oo_K)$ with fiber $\XXX(S)$ the category of
pairs $(\Ee_1, \Ee_2)$ where $\Ee_i = (E_i, \kappa_i)$ with $E_i$ an elliptic
curve over the $\Oo_K$-scheme $S$ with complex multiplication $\kappa_i : \Oo_{K_i} \to \End_S(E_i)$.
Let $(\Ee_1, \Ee_2)$ be an object of $\XXX(S)$. The maximal order $\Oo_K = \Oo_{K_1} \otimes_{\ZZ}
\Oo_{K_2}$ acts on the $\ZZ$-module $L(\Ee_1, \Ee_2) = \Hom_S(E_1, E_2)$ by
$$
(t_1 \otimes t_2) \bullet f = \kappa_2(t_2) \circ f \circ \kappa_1(\overline{t}_1),
$$
where $x \mapsto \overline{x}$ is the nontrivial element of $\Gal(K/F)$.
Writing $[\cdot\hspace{.5mm}, \cdot]$ for the bilinear form on $L(\Ee_1, \Ee_2)$ associated with the quadratic
form $\deg$, there is a unique $\Oo_F$-bilinear form
$$
[\cdot\hspace{.5mm}, \cdot]_{\CM} : L(\Ee_1, \Ee_2) \times L(\Ee_1, \Ee_2) \to \frakD^{-1}
$$
satisfying $[f_1, f_2] = \Tr_{F/\QQ}([f_1, f_2]_{\CM})$. Let $\deg_{\CM}$ be the totally positive definite
$F$-quadratic form on $L(\Ee_1, \Ee_2) \otimes_{\ZZ} \QQ$ corresponding to $[\cdot\hspace{.5mm}, \cdot]_{\CM}$, so
$\deg(f) = \Tr_{F/\QQ}(\deg_{\CM}(f))$. 

For any $\alpha \in F^\times$ let $\XXX_\alpha$ be the algebraic stack over $\Spec(\Oo_K)$ with $\XXX_\alpha(S)$
the category of triples $(\Ee_1, \Ee_2, f)$ where $(\Ee_1, \Ee_2)$ is an object of $\XXX(S)$ and $f \in L(\Ee_1, \Ee_2)$
satisfies $\deg_{\CM}(f) = \alpha$ on every connected component of $S$. 
The category $\XXX_\alpha$ is empty unless $\alpha$ is
totally positive and lies in $\frakD^{-1}$. 

Let $\chi$ be the quadratic Hecke character associated with the extension $K/F$ and for $\alpha \in F^\times$
define $\Diff(\alpha)$ to be the set of prime ideals $\frakp \subset \Oo_F$ satisfying $\chi_\frakp(\alpha\frakD) = -1$.
The set $\Diff(\alpha)$ is finite and nonempty. For any fractional $\Oo_F$-ideal $\frakb$ let $\rho(\frakb)$ be the number
of ideals $\mathfrak{B} \subset \Oo_K$ satisfying $\NN_{K/F}(\mathfrak{B}) = \frakb$. For any prime number $\ell$ let $\rho_\ell(\frakb)$ be the number of ideals $\mathfrak{B} \subset 
\Oo_{K, \ell}$ satisfying $\NN_{K_\ell/F_\ell}(\mathfrak{B}) = \frakb\Oo_{F, \ell}$, so there is
a product formula
$$
\rho(\frakb) = \prod_\ell \rho_\ell(\frakb).
$$
The following theorem, which is essentially \cite[Theorem A]{Howard}, is the main result we will generalize.

\begin{theorem 1}[Howard-Yang]\label{theorem 1}
Suppose $\alpha \in F^\times$ is totally positive. If $\alpha \in \frakD^{-1}$ and $\Diff(\alpha) = \{\frakp\}$
then $\XXX_\alpha$ is of dimension zero, is supported in characteristic $p$ {\upshape (}the rational prime below
$\frakp${\upshape )}, and satisfies
$$
\deg(\XXX_\alpha) = \frac{1}{2}\log(p)\cdot\ord_\frakp(\alpha\frakp\frakD)\cdot\rho(\alpha\frakp^{-1}\frakD).
$$
If $\alpha \notin \frakD^{-1}$ or if $\#\Diff(\alpha) > 1$, then $\deg(\XXX_\alpha) = 0$.
\end{theorem 1}

\subsection{QM abelian surfaces}
Our work in generalizing Theorem 1 goes as follows. Let $B$ be an indefinite quaternion algebra over
$\QQ$, let $\Oo_B$ be a maximal order of $B$, and let $d_B$ be the discriminant of $B$. A \textit{QM abelian surface} over a scheme $S$ is a pair $(A, i)$ where $A \to S$ is an abelian scheme of relative dimension $2$ and 
$i : \Oo_B \to \End_S(A)$ is a ring homomorphism. Any QM abelian surface $A$ comes equipped with a principal polarization
$\lambda : A \to A^\vee$ uniquely determined by a condition described below in Proposition \ref{polarization}. If $A_1$ and $A_2$ are QM abelian surfaces over a connected scheme $S$ with corresponding principal polarizations $\lambda_1$ and $\lambda_2$, 
then the map
$$
f \mapsto \lambda_1^{-1} \circ f^\vee \circ \lambda_2 \circ f : \Hom_{\Oo_B}(A_1, A_2) \to \End_{\Oo_B}(A_1)
$$
has image in $\ZZ \subset \End_{\Oo_B}(A_1)$ and defines a positive definite quadratic form, called the \textit{QM degree}
and denoted $\deg^\ast$. 

We retain the same notation of $K_1$, $K_2$, $F$, and $K$ as above. We also assume each prime
dividing $d_B$ is inert in $K_1$ and $K_2$, so in particular, $K_1$ and $K_2$ split $B$. Let $S$ be an $\Oo_K$-scheme.
A \textit{QM abelian surface over $S$ with complex multiplication by $\Oo_{K_j}$}, for $j \in \{1, 2\}$, is a triple
$\AAA = (A, i, \kappa)$ where $(A, i)$ is a QM abelian surface over $S$ and $\kappa : \Oo_{K_j} \to \End_{\Oo_B}(A)$
is an action such that the induced map $\Oo_{K_j} \to \End_{\Oo_B}(\Lie(A))$ is through the
structure map $\Oo_{K_j} \hookrightarrow \Oo_K \to \OOO_S(S)$.
Let $\frakm_B \subset \Oo_B$ be the unique ideal of index $d_B^2$, so $\Oo_B/\frakm_B \cong \prod_{p
\mid d_B}\FF_{p^2}$.

Let $\MMM^B$ be the category fibered in groupoids over $\Spec(\Oo_K)$ with $\MMM^B(S)$ the category whose objects are QM abelian surfaces $(A, i)$ over the $\Oo_K$-scheme $S$ 
satisfying the following condition for any $x \in \Oo_B$: any point of $S$ has an affine open neighborhood $U$ such that $\Lie(A \times_S U)$ is a free $\OOO_U$-module of rank $2$ and there is an equality of polynomials
\begin{equation}\label{Kottwitz}
\text{char}(i(x), \Lie(A \times_S U)) = (T - x)(T - x^\iota)
\end{equation}
in $\OOO_U[T]$, where $x \mapsto x^\iota$ is the main involution on $B$. The category $\MMM^B$ is an algebraic stack which is regular and
flat of relative dimension $1$ over $\Spec(\Oo_K)$, smooth over $\Spec(\Oo_K[d_B^{-1}])$ (if $B$ is a division algebra,
$\MMM^B$ is proper over $\Spec(\Oo_K)$). For $j \in \{1, 2\}$ let $\YYY_j^B$ be the algebraic stack over $\Spec(\Oo_K)$ with $\YYY_j^B(S)$ the category of QM abelian surfaces over the $\Oo_K$-scheme $S$ with complex multiplication by $\Oo_{K_j}$. The stack
$\YYY_j^B$ is finite and \'etale over $\Spec(\Oo_K)$, so in particular it is $1$-dimensional and regular. Any object
of $\YYY_j^B(S)$ automatically satisfies condition (\ref{Kottwitz}) (see Corollary \ref{Kottwitz 2} below). Therefore there is a finite morphism $\YYY_j^B \to \MMM^B$ defined by forgetting the complex multiplication structure.

Our main goal is to calculate the intersection
multiplicity of the two divisors $T_m\YYY_1^B$ and $\YYY_2^B$ on $\MMM^B$, defined just as in 
(\ref{arithmetic degree}), where
$T_m$ is the $m$-th Hecke correspondence on $\MMM^B$. In the course of this calculation we prove the following
result, which should be of independent interest. Let $\bm{k}$ be an imaginary quadratic field and let $\bm{K}$ be any
finite extension of $\bm{k}$. Assume each prime dividing $d_B$ is inert in $\bm{k}$. 
Define $\YYY$ to be the algebraic stack over $\Spec(\Oo_{\bm{K}})$ consisting of all elliptic curves
over $\Oo_{\bm{K}}$-schemes with CM by $\Oo_{\bm{k}}$, and make the analogous definition of $\YYY^B$ for QM abelian surfaces. Then there is a decomposition
$$
\YYY^B = \bigsqcup_{\Oo_{\bm{k}} \to \Oo_B/\frakm_B} \YYY,
$$
where the union is over all ring homomorphisms $\Oo_{\bm{k}} \to \Oo_B/\frakm_B$ (Theorem \ref{Serre type}).

A \textit{CM pair} over an $\Oo_K$-scheme $S$ is a pair $(\AAA_1, \AAA_2)$ where $\AAA_1$ and $\AAA_2$
are QM abelian surfaces over $S$ with complex multiplication by $\Oo_{K_1}$ and $\Oo_{K_2}$,
respectively. For such a pair, set
$
L(\AAA_1, \AAA_2) = \Hom_{\Oo_B}(A_1, A_2).
$
As before, there is a unique $\Oo_F$-quadratic form 
$
\deg_{\CM} : L(\AAA_1, \AAA_2) \to \frakD^{-1}
$
satisfying $\Tr_{F/\QQ}(\deg_{\CM}(f)) = \deg^\ast(f)$. For any QM abelian surface $A$ let $A[\frakm_B]$ be its $\frakm_B$-torsion, defined as a group scheme below. For any
ring homomorphism $\Theta : \Oo_K \to \Oo_B/\frakm_B$ define $\XXX_\Theta^B$ to be the algebraic stack over
$\Spec(\Oo_K)$ where $\XXX_\Theta^B(S)$ is the category of CM pairs $(\AAA_1, \AAA_2)$ over the $\Oo_K$-scheme 
$S$ such that the diagram 
$$
\xymatrix{
\Oo_{K_j} \ar[rr] \ar[dr]_{\Theta|_{\Oo_{K_j}}} && \End_{\Oo_B/\frakm_B}(A_j[\frakm_B]) \\
& \Oo_B/\frakm_B \ar[ur] & }
$$
commutes for $j = 1, 2$, where $\Oo_B/\frakm_B \to \End_{\Oo_B/\frakm_B}(A_j[\frakm_B])$ is the map induced by the action of $\Oo_B$ on $A_j$.
Note that this map makes sense as $\Oo_B/\frakm_B$ is commutative. If $B = \MM_2(\QQ)$ then 
$\frakm_B = \Oo_B$, so any such $\Theta$ is necessarily $0$
and $\XXX_\Theta^B$ is the stack of all CM pairs over $\Oo_K$-schemes.

For any $\alpha \in F^\times$ define $\XXX_{\Theta, \alpha}^B$ to be the algebraic stack over $\Spec(\Oo_K)$
with $\XXX_{\Theta, \alpha}^B(S)$ the category of triples $(\AAA_1, \AAA_2, f)$ where $(\AAA_1, \AAA_2)$
is an object of $\XXX_\Theta^B(S)$ and $f \in L(\AAA_1, \AAA_2)$ satisfies $\deg_{\CM}(f) = \alpha$ on every 
connected component of $S$. Define a nonempty finite set of prime ideals
$$
\Diff_\Theta(\alpha) = \{\frakp \subset \Oo_F : \chi_\frakp(\alpha\fraka_\Theta\frakD) = -1\},
$$
where $\fraka_\Theta = \ker(\Theta) \cap \Oo_F$. Our main result is the following (Proposition \ref{local ring II}
and Theorems \ref{local ring} and \ref{final formula} in the text; see the appendix for the proof of (b)).

\begin{theorem 2}
Let $\alpha \in F^\times$ be totally positive and suppose $\alpha \in \frakD^{-1}$. Let $\Theta : \Oo_K \to \Oo_B/\frakm_B$ 
be a ring homomorphism with $\fraka_\Theta = \ker(\Theta) \cap \Oo_F$, suppose $\Diff_\Theta(\alpha) = \{\frakp\}$, and let $p\ZZ = \frakp \cap \ZZ$. \\
{\upshape (a)} The stack $\XXX_{\Theta, \alpha}^B$ is of dimension zero and is supported in characteristic $p$. \\
{\upshape (b)} There is a decomposition
\begin{equation}\label{Hecke 2}
I(T_m\YYY_1^B, \YYY_2^B) = 
\sum_{\substack{\beta \in \frakD^{-1}, \beta \gg 0 \\ \Tr_{F/\QQ}(\beta) = m}}
\sum_{\Omega : \Oo_K \to \Oo_B/\frakm_B} \deg(\XXX_{\Omega, \beta}^B).
\end{equation}
{\upshape (c)} If $p \nmid d_B$ then 
$$
\deg(\XXX_{\Theta, \alpha}^B) = \frac{1}{2}\log(p)\cdot\ord_\frakp(\alpha\frakp\frakD)\cdot\rho(\alpha\fraka_\Theta^{-1}
\frakp^{-1}\frakD).
$$
{\upshape (d)} Suppose $p \mid d_B$ and let $\frakP \subset \Oo_K$ be the unique prime over $\frakp$. If $\frakP$ 
divides $\ker(\Theta)$  then
$$
\deg(\XXX_{\Theta, \alpha}^B) = \frac{1}{2}\log(p)\cdot \ord_\frakp(\alpha)\cdot \rho(\alpha\fraka_\Theta^{-1}\frakp^{-1}
\frakD).
$$
If $\frakP$ does not divide $\ker(\Theta)$ then
$$
\deg(\XXX_{\Theta, \alpha}^B) = \frac{1}{2}\log(p)\cdot \ord_\frakp(\alpha\frakp)\cdot \rho(\alpha\fraka_\Theta^{-1}\frakp^{-1}\frakD).
$$
If $\alpha \notin \frakD^{-1}$ or if $\#\Diff_\Theta(\alpha) > 1$, then $\deg(\XXX_{\Theta, \alpha}^B) = 0$.
\end{theorem 2}

The proof of this theorem consists of two general parts: counting the number of geometric points of the 
stack $\XXX_{\Theta, \alpha}^B$ (Theorem \ref{orbital} and Proposition \ref{orbital II}) and calculating the
length of the local ring $\OOO^{\text{sh}}_{\XXX_{\Theta, \alpha}^B, x}$ (Theorem \ref{local ring}). As explained above, Theorem 2 can be seen as a generalization of the Gross-Zagier formula in \cite{Gross-Zagier}, but there is another generalization to Shimura curves in \cite{Daas} that is more directly analogous to factoring differences of $j$-values as in (\ref{GZ}). For $N \in \{6, 10, 22\}$, let $B_N$ be the indefinite quaternion division algebra over $\QQ$ with discriminant $N$, and let $R_N \subset B_N$ be a maximal order, which is unique up to conjugation by $B_N^\times$. Let $R_{N, 1}^{\times} \subset R_N^{\times}$ be the group of units with reduced norm $1$, fix an embedding $\iota : B_N \to B_N \otimes_{\QQ}\RR \cong \MM_2(\RR)$, and let $\Gamma_N = \iota(R_{N, 1}^{\times})/\{\pm 1\}$. If $\mathbb{H}$ is the complex upper half plane, then $X_N = \Gamma_N\backslash\mathbb{H}$ is a compact Riemann surface of genus $0$, which has a model over $\QQ$ such that $(X_N)_{/K}$ is a coarse moduli scheme for $\MMM^{B_N}_{/K}$.  Let $j_N : X_N \to \mathbb{P}^1$ be a generator of the function field of $X_N$ over $\QQ$. Then, after a suitable normalization, $j_N(P)$ is an algebraic number for any CM point $P \in X_N$, and \cite[Theorem 1.1]{Daas} gives a formula for the $\QQ$-norm of the cross ratio of $j_N(P_1)$, $j_N(P_2)$, and Atkin-Lehner twists of these two points, for any CM points $P_1, P_2 \in X_N$. The result has a very similar form to \cite[Theorem 1.3]{Gross-Zagier}. There are two proofs given; the first, in \cite[\S 2]{Daas}, uses Theorem 2 above by expressing the $\frakP$-adic valuation, for $\frakP \subset \Oo_K$ prime, of a difference of $j_N$-values at CM points $P_1, P_2 \in X_N$, corresponding to a geometric point $x = (\AAA_1, \AAA_2) \in \XXX_\Theta^B(\overline{\FF}_\frakP)$,  in terms of the length of the local ring $\OOO^{\text{sh}}_{\XXX_{\Theta}^B, x}$.

\subsection{Eisenstein series}
Theorem 1 is really only half of a larger story, one that gives a better explanation of the definition
of the arithmetic degree of $\XXX_\alpha$ and provides a surprising connection between 
arithmetic geometry and analysis. To explain this,
let $K_1$, $K_2$, $F$, and $K$ be as in Section 1.1, let $D = \disc(F)$, and let $\sigma_1$ and $\sigma_2$ be
the two real embeddings of $F$. For $\tau_1, \tau_2$ in the complex upper half plane and $s \in \CC$
define an Eisenstein series
\begin{align*}
E^\ast(\tau_1, \tau_2, s) &= D^{(s+1)/2}\left(\pi^{-(s+2)/2}\Gamma\left(\frac{s+2}{2}\right)\right)^2
\sum_{\fraka \in \Cl(\Oo_F)}\chi(\fraka)\NN(\fraka)^{1 + s} \\
&\times \sum_{(0, 0) \neq (m, n) \in \fraka \times \fraka/\Oo_F^\times}
\frac{(v_1v_2)^{s/2}}{[m, n](\tau_1, \tau_2)|[m, n](\tau_1, \tau_2)|^s},
\end{align*}
where $\Cl(\Oo_F)$ is the ideal class group of $F$, $v_i = \text{Im}(\tau_i)$, and
$$
[m, n](\tau_1, \tau_2) = (\sigma_1(m)\tau_1 + \sigma_1(n))(\sigma_2(m)\tau_2 + \sigma_2(n)).
$$
This series, which is convergent for $\text{Re}(s) \gg 0$, has meromorphic continuation to all $s \in \CC$
and defines a non-holomorphic Hilbert modular form of weight $1$ for $\text{SL}_2(\Oo_F)$ which is
holomorphic in $s$ in a neighborhood of $s = 0$. The derivative of $E^\ast(\tau_1, \tau_2, s)$
at $s = 0$ has a Fourier expansion
$$
(E^\ast)'(\tau_1, \tau_2, 0) = \sum_{\alpha \in \frakD^{-1}}a_\alpha(v_1, v_2)\cdot q^\alpha,
$$
where $e(x) = e^{2\pi ix}$ and $q^\alpha = e(\sigma_1(\alpha)\tau_1 + \sigma_2(\alpha)\tau_2)$.
The connection between this analytic theory and the moduli space $\XXX_\alpha$ lies in the
next theorem (\cite[Theorem B, Theorem C]{Howard}).

\begin{theorem*}[Howard-Yang]
Suppose $\alpha \in F^\times$ is totally positive. If $\alpha \in \frakD^{-1}$ then $a_\alpha = a_\alpha(v_1, v_2)$
is independent of $v_1, v_2$ and $a_\alpha = 4\cdot \deg(\XXX_\alpha)$.
\end{theorem*}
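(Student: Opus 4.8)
The plan is to compute the Fourier coefficients $a_\alpha(v_1,v_2)$ of $(E^\ast)'(\tau_1,\tau_2,0)$ directly by an adelic unfolding, and then to deduce the theorem by comparing the answer with the geometric formula of Theorem 1. The first step is to recognize $E^\ast(\tau_1,\tau_2,s)$ as an incoherent Hilbert Eisenstein series of weight $1$ attached to the quadratic Hecke character $\chi$ of $K/F$, in the sense of Kudla: after unfolding, its $\alpha$-th Fourier coefficient for $\mathrm{Re}(s)\gg 0$ is a product $\prod_v W_{\alpha,v}(s)$ of local Whittaker functions, one for each place $v$ of $F$, with the two archimedean factors additionally carrying the dependence on $\tau_1,\tau_2$; this identity continues meromorphically to $s=0$. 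Incoherence means precisely that for each totally positive $\alpha$ there is an odd, hence nonempty, set of places at which $W_{\alpha,v}(0)=0$, so that $E^\ast(\tau_1,\tau_2,0)\equiv 0$ and the derivative is the first nonzero term of the Fourier expansion.

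Next I would analyze the archimedean places. For $\alpha$ totally positive, the weight one confluent hypergeometric Whittaker function $W_{\alpha,\sigma_i}(\tau_i,s)$ is holomorphic and nonzero at $s=0$, and at $s=0$ its dependence on $v_i=\mathrm{Im}(\tau_i)$ is only through the exponential $e(\sigma_i(\alpha)\tau_i)$ — so that after the standard factor $q^\alpha$ has been divided out, nothing archimedean survives. In particular the places at which the local factor vanishes at $s=0$ are all finite, which is why $\mathrm{Diff}(\alpha)$ is a set of finite primes. Consequently, when one differentiates $\prod_v W_{\alpha,v}$ at $s=0$ by the Leibniz rule, the archimedean factors enter only through their values at $s=0$, and the resulting coefficient $a_\alpha$ of $q^\alpha$ is independent of $v_1,v_2$; this is the first assertion of the theorem.

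The nonarchimedean analysis is the core of the argument. The key point is that a finite factor $W_{\alpha,\frakp}(s)$ vanishes at $s=0$ — to order exactly one — precisely when $\frakp\in\mathrm{Diff}(\alpha)$. If $\#\mathrm{Diff}(\alpha)\geq 2$, then at least two factors of $\prod_v W_{\alpha,v}$ vanish at $s=0$, so the product vanishes there to order at least two and $a_\alpha=0$, in agreement with $\deg(\XXX_\alpha)=0$ in Theorem 1. If $\mathrm{Diff}(\alpha)=\{\frakp\}$ with $p\ZZ=\frakp\cap\ZZ$, then exactly one factor vanishes and the Leibniz rule collapses to a single term; after dividing out $q^\alpha$ one is left with
$$
a_\alpha \;=\; W'_{\alpha,\frakp}(0)\cdot\!\prod_{v\neq\frakp} W_{\alpha,v}(0).
$$
One then evaluates the two pieces: up to the normalizing data in $E^\ast$ (the power of $D$, the Gamma factors, and the class-number sum) the product of the remaining local factors is a lattice count for the norm form of $K/F$ that, by the product formula $\rho(\frakb)=\prod_\ell\rho_\ell(\frakb)$, reassembles into $\rho(\alpha\frakp^{-1}\frakD)$, while the derivative of the bad factor produces a factor $\log p$ times the length-type quantity $\ord_\frakp(\alpha\frakp\frakD)$. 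Tracking constants gives $a_\alpha = 2\log(p)\cdot\ord_\frakp(\alpha\frakp\frakD)\cdot\rho(\alpha\frakp^{-1}\frakD)$, which is exactly $4\cdot\deg(\XXX_\alpha)$ by Theorem 1.

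The main obstacle is the explicit evaluation of the nonarchimedean Whittaker functions and their $s$-derivatives: pinning down the exact order of vanishing at $s=0$ (so that $\mathrm{Diff}(\alpha)$ is correctly identified), then computing $W'_{\alpha,\frakp}(0)$ and $\prod_{v\neq\frakp}W_{\alpha,v}(0)$ on the nose, and in particular handling the primes dividing $d_1d_2$ and the prime $2$, where the local norm form of $K/F$ is irregular and the naive formulas acquire corrections. A secondary but unavoidable nuisance is keeping exact track of the global normalization of $E^\ast$ and of the local absolute values — including the reconciliation of $\log N\frakp$ with the $\log p$ of Theorem 1 — so that the final constant comes out to be precisely $4$; the product formula for $\rho$ and the functional equation of $E^\ast$ are the main tools for that bookkeeping.
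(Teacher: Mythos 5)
This statement is not proved in the present paper at all: it is recalled verbatim as a result of Howard--Yang, with the reference \cite[Theorem B, Theorem C]{Howard} supplied in the sentence preceding it. There is therefore no ``paper's own proof'' to compare against; the theorem is imported as a black box to explain the arithmetic significance of $\deg(\XXX_\alpha)$ and motivate the speculation at the end of Section 1.3.

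That said, your sketch is a reasonable summary of the strategy that \emph{is} used in the cited reference: recognize $E^\ast$ as an incoherent weight-one Hilbert Eisenstein series in the sense of Kudla, factor the $\alpha$-th Fourier coefficient into a product of local Whittaker functions, identify $\Diff(\alpha)$ as the set of places where the local factor vanishes to first order, and apply the Leibniz rule to the derivative at $s=0$. Two small points of care. First, the independence of $a_\alpha$ from $v_1,v_2$ is not because the archimedean Whittaker functions are constant in $s$ -- they are not -- but because, after the Leibniz rule, any term in which an archimedean factor is differentiated still carries the undifferentiated nonarchimedean factor at $\frakp\in\Diff(\alpha)$, which vanishes; your text does eventually say this (``the Leibniz rule collapses to a single term''), but the earlier sentence ``the archimedean factors enter only through their values at $s=0$'' asserts the conclusion before the mechanism. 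Second, and more seriously, the proposal is a roadmap rather than a proof: the local Whittaker computations at the finite places, including the order-one vanishing, the formula $W'_{\alpha,\frakp}(0)\sim\log p\cdot\ord_\frakp(\alpha\frakp\frakD)$, the reassembly of $\prod_{v\neq\frakp}W_{\alpha,v}(0)$ into $\rho(\alpha\frakp^{-1}\frakD)$, and the exact normalization producing the constant $4$, are precisely the content of the theorem and are all deferred to ``the main obstacle.'' As a proof of the statement this is therefore incomplete; as an explanation of why the statement is true and how one would verify it against Theorem 1, it is sound.
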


It seems likely that there is a theorem in the spirit of the one above for the moduli space $\XXX_{\Theta, \alpha}^B$, 
but we do not pursue that direction here. A reasonable next question to address is: can Theorem 2 be
extended to the case where $\YYY_j^B$ is defined to be the stack of QM abelian surfaces with CM by a fixed non-maximal
order in $K_j$? A result of this type would seemingly extend the results of Lauter and Viray in \cite{LV} to QM abelian surfaces.

\subsection{Notation}   
When we say ``stack" we mean algebraic stack in the sense
of \cite{Vistoli}, also called a Deligne-Mumford stack. We write $\QQ_{p^2}$
for the unique unramified quadratic extension of $\QQ_p$ and $\ZZ_{p^2} \subset \QQ_{p^2}$ for its ring of integers. If $\CCC$ is a category, we write $C \in \CCC$ to mean $C$ is an object of $\CCC$. We use $\Delta$ to denote the maximal order in the unique quaternion division algebra over $\QQ_p$ and $\overline{\FF}$ for an algebraic closure of a finite field $\FF$. For any number field $L$, we write $\widehat{L} = L \otimes_{\QQ} \widehat{\QQ}$ for the ring of finite adeles over $L$. We say a field $k$ is ``over $\Oo_L$" to mean $k$ is an $\Oo_L$-algebra. If $M$ is a $\ZZ$-module and $V$ a $\QQ$-vector space, let $\widehat{M} = M \otimes_{\ZZ} \widehat{\ZZ}$ and $\widehat{V} = 
V \otimes_{\QQ} \widehat{\QQ}$. If $A \to S$ is an abelian scheme, we write $\End^0_S(A)$ for
$\End_S(A) \otimes_{\ZZ} \QQ$. If $R$ is a commutative ring and $M$ an $R$-module, we write
$\text{lg}_R(M)$ for the length of $M$ and $\text{lg}(R)$ for $\text{lg}_R(R)$. For any ring $R$, let $\MM_n(R)$ be the ring of $n\times n$ matrices over $R$.

\subsection{Acknowledgment} This research forms part of my Boston College Ph.D. thesis. I would
like to thank my advisor Ben Howard for extensive discussions as these results developed. I would also like to thank the anonymous referee for many helpful corrections and suggestions.

\section{QM abelian surfaces}

In this section we give a brief review of the basic theory of QM abelian surfaces. 
For the remainder of this paper fix an indefinite quaternion algebra $B$ over $\QQ$ and a maximal order $\Oo_B$ of $B$. 
We do not exclude the case where $B$ is split, that is, where $B = \MM_2(\QQ)$.
As $B$ is split at $\infty$, all maximal orders of $B$ are conjugate by elements of $B^\times$ and hence isomorphic as rings (combine 17.4.10, 17.4.13, and 17.8.3 of \cite{Voight}). Let $d_B$ be the discriminant of $B$.

\begin{definition} 
Let $S$ be a scheme. A \textit{QM abelian surface} over $S$ is a pair $(A, i)$ where $A \to S$ is an abelian
scheme of relative dimension $2$ and $i : \Oo_B \hookrightarrow \End_S(A)$ is an injective ring homomorphism.
\end{definition}

\begin{definition}
Let $(A_1, i_1)$ and $(A_2, i_2)$ be two QM abelian surfaces over a scheme $S$. A \textit{homomorphism} $f : A_1 \to A_2$
of QM abelian surfaces is a homomorphism of abelian schemes over $S$ satisfying $i_2(x) \circ f = 
f \circ i_1(x)$ for all $x \in \Oo_B$. If in addition $f$ is an isogeny of abelian schemes, then $f$ is called
an \textit{isogeny} of QM abelian surfaces. We write $\Hom_{\Oo_B}(A_1, A_2)$ for the group of all
homomorphisms of QM abelian surfaces $A_1 \to A_2$.
\end{definition}

In fact, any nonzero homomorphism of QM abelian surfaces $A_1 \to A_2$ is necessarily an
isogeny (Corollary \ref{isog}), and any ring homomorphism $\Oo_B \to \End_S(A)$ is automatically injective.
For each place $v$ of $\QQ$ let $\inv_v : \Br_2(\QQ_v) \to \{\pm 1\}$ be the unique isomorphism. 

\begin{definition}
For each prime number $p$, define $B^{(p)}$ to be the quaternion division algebra over $\QQ$ determined
by 
$$
\inv_v(B^{(p)}) = \left\{\begin{array}{ll}
\inv_v(B) & \text{if $v \notin \{p, \infty\}$} \\
-\inv_v(B) & \text{if $v \in \{p, \infty\}$}.
\end{array} \right.
$$
\end{definition}

\begin{proposition}\label{char p}
Suppose $A$ is a QM abelian surface over a field $k$. \\
{\upshape(a)} The abelian variety $A$ is either simple or isogenous to $E^2$ for some elliptic curve $E$ over $k$. \\
{\upshape(b)} If $k = \overline{\FF}_p$ then 
$\End^0_{\Oo_B}(A) = \End_{\Oo_B}(A) \otimes_{\ZZ} \QQ$ is either
\begin{enumerate}
\item[{\upshape (1)}] an imaginary quadratic field which splits $B$, or
\item[{\upshape (2)}] the definite quaternion algebra $B^{(p)}$.
\end{enumerate}
Furthermore, $A$ is isogenous to $E^2$ with
$E$ ordinary in case {\upshape (1)} and supersingular in case {\upshape (2)}. \\
{\upshape(c)} If $k = \CC$ then $\End^0_{\Oo_B}(A)$ is either $\QQ$ or an imaginary quadratic field
which splits $B$.
\end{proposition}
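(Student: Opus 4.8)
The plan is to treat both parts in two stages: first pin down the isogeny class of $A$, then compute the centralizer $\End^0_{\Oo_B}(A) = C_{\End^0(A)}(B)$. Two elementary facts will be used repeatedly. (i) Since $B$ is a central simple $\QQ$-algebra of dimension $4$, every unital $\QQ$-algebra homomorphism out of $B$ is injective; in particular $B$ admits no unital homomorphism into a commutative ring, nor into any $\QQ$-algebra of $\QQ$-dimension less than $4$. (ii) If $B$ is contained in a central simple $\QQ$-algebra $R$, then multiplication gives an isomorphism $B \otimes_\QQ C_R(B) \cong R$; hence $C_R(B)$ is central simple over $\QQ$ of complementary dimension and $\inv_v(C_R(B)) = \inv_v(R)\cdot\inv_v(B)$ for every place $v$ of $\QQ$.

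For (a), let $k = \overline{\FF}_p$. By the Honda-Tate classification a simple abelian surface over $\overline{\FF}_p$ has commutative endomorphism algebra (a quartic CM field), which cannot contain $B$; hence $A$ is not simple and, by Poincar\'e reducibility, is isogenous to a product $E_1 \times E_2$ of elliptic curves. Each $\End^0(E_i)$ is an imaginary quadratic field or the definite quaternion algebra $B_{p,\infty}$ over $\QQ$ ramified exactly at $p$ and $\infty$; if $E_1 \not\sim E_2$ then $\End^0(A) = \End^0(E_1)\times\End^0(E_2)$ would receive a unital embedding of $B$, forcing each $\End^0(E_i) \cong B$ by (i) and a dimension count, hence each $E_i$ supersingular and therefore $E_1 \sim E_2$, a contradiction. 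So $A \sim E^2$ and $\End^0(A) \cong M_2(\End^0(E))$, with $E$ ordinary or supersingular. If $E$ is ordinary, set $L = \End^0(E)$, an imaginary quadratic field; the $L$-subalgebra of $M_2(L)$ generated by the image of $B$ is isomorphic to the simple algebra $B\otimes_\QQ L$, hence has $L$-dimension $4$ and equals $M_2(L)$, so $L$ splits $B$ (and consequently $L \hookrightarrow B$) and $\End^0_{\Oo_B}(A) = C_{M_2(L)}(M_2(L)) = L$: this is case (1). If $E$ is supersingular, $\End^0(E) = B_{p,\infty}$ and, by (ii), $\End^0_{\Oo_B}(A) = C_{M_2(B_{p,\infty})}(B)$ is a quaternion $\QQ$-algebra with $\inv_v = \inv_v(B_{p,\infty})\cdot\inv_v(B)$, that is, $\inv_v(B)$ for $v \notin\{p,\infty\}$ and $-\inv_v(B)$ for $v \in \{p,\infty\}$; this is exactly $B^{(p)}$, which is definite because $B$, being indefinite, is split at $\infty$: this is case (2).

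For (b), let $k = \CC$. If $A$ is not simple it is isogenous to a product of two elliptic curves; these cannot be non-isogenous, since then $\End^0(A)$ would be a product of two fields of $\QQ$-dimension at most $2$, contradicting (i), so $A \sim E^2$ and $\End^0(A) = M_2(\End^0(E))$ with $\End^0(E)$ equal to $\QQ$ or to an imaginary quadratic field $K$. Then $\End^0_{\Oo_B}(A) = C_{M_2(\End^0(E))}(B)$ is $\QQ$ in the first case and $K$ in the second, where as in part (a) one gets $B \otimes_\QQ K \cong M_2(K)$, so $K$ splits $B$. If instead $A$ is simple, then $\End^0(A)$ is a division algebra acting on the nonzero $4$-dimensional $\QQ$-vector space $H_1(A,\QQ)$, so $\dim_\QQ \End^0(A) \leq 4$; since it contains $B$, it equals $B$, and $\End^0_{\Oo_B}(A) = C_B(B)$ is the center $\QQ$ of $B$. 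In all cases $\End^0_{\Oo_B}(A)$ is $\QQ$ or an imaginary quadratic field that splits $B$, and $A$ is simple or isogenous to $E^2$.

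The main obstacle is the reduction to $A \sim E^2$ in part (a): it rests on the Honda-Tate classification, specifically the fact that over $\overline{\FF}_p$ every simple abelian surface has commutative endomorphism algebra (the remaining simple types over a finite field are supersingular and become isogenous to $E^2$, hence non-simple, over the algebraic closure). Everything after that is bookkeeping with Brauer invariants, the one computation needing a little care being the identification $C_{M_2(B_{p,\infty})}(B) \cong B^{(p)}$ via (ii); the simple case in part (b) rests only on the division-algebra dimension count and requires no further input.
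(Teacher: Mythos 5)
The paper itself does not give an argument here: it dispatches (a) to \cite[Proposition~5.2]{Milne} and (b) to \cite[Proposition~52]{Clark}. Your proposal is therefore taking a genuinely different route by supplying a self-contained proof, and I found it to be correct. The bookkeeping in the second half is clean: you correctly reduce to $A \sim E^2$, observe that $\End^0_{\Oo_B}(A)$ is the centralizer of $B$ in $\End^0(A) \cong M_2(\End^0(E))$, and compute it either as the center (when $\End^0(E)$ is $\QQ$ or an imaginary quadratic field, which then must split $B$ because $B\otimes_\QQ L \hookrightarrow M_2(L)$ forces $B\otimes_\QQ L \cong M_2(L)$), or via the double-centralizer Brauer-invariant calculation $\inv_v(C_{M_2(B_{p,\infty})}(B)) = \inv_v(B_{p,\infty})\inv_v(B)$, which gives $B^{(p)}$ exactly. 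The one step worth flagging — and you flag it yourself — is the reduction to $A\sim E_1\times E_2$ in (a). As written it relies on two background facts beyond a bare invocation of Honda--Tate: (i) every abelian variety over $\overline{\FF}_p$ is isogenous to one defined over a finite field (Grothendieck), so that Tate's description of $\End^0$ applies, and (ii) the simple Honda--Tate types over $\FF_q$ with $[\QQ(\pi):\QQ]\leq 2$ (i.e.\ those with noncommutative $\End^0$) are supersingular and hence become $\sim E^2$ over $\overline{\FF}_p$ (the $\QQ(\pi)$ real-quadratic and $\QQ(\pi)=\QQ$ cases have all slopes $1/2$; the $\QQ(\pi)$ imaginary-quadratic split case likewise; the inert/ramified imaginary-quadratic case is excluded by the Newton-polygon integrality constraint). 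Spelling out that the $(2,2)$ Honda--Tate type is forced supersingular would make the ``main obstacle'' airtight, but as an outline your reduction is sound, and it is in any case no less detailed than the citation the paper offers.
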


\begin{proof}
(a) Suppose $A \sim E_1 \times E_2$ with $E_1 \not\sim E_2$. Since $B$ is a simple $\QQ$-algebra, there is an injective ring homomorphism
$$
B \to \End^0_k(A) \cong \End_k^0(E_1) \times \End^0_k(E_2) \to \End^0_k(E_1),
$$
so $B \cong \End^0_k(E_1)$ by counting $\QQ$-dimensions. This forces $\charr(k) = p$ and $\End^0_k(E_1)$ to be the quaternion division algebra over $\QQ$ ramified at $p$ and $\infty$ (\cite[Theorem 42.1.9]{Voight}), contradicting $B$ being split at $\infty$.

For (b) see \cite[Proposition 5.2]{Milne} and for (c) see \cite[Proposition 52]{Clark}.
\end{proof}

\begin{proposition}\label{lefschetz}
Suppose $A$ is a QM abelian surface over a field $k$ and $k' \subset k$ is a subfield.
Then $\End_k(A)$ embeds into $\End_{\widetilde{k}}(\widetilde{A})$ for some QM abelian surface $\widetilde{A}$ defined over a finite extension $\widetilde{k}/k'$. 
\end{proposition}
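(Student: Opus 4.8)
The plan is to run the standard ``spread out and specialize'' argument. Recall first that $\End(A)$ is a finitely generated $\ZZ$-module (classical over any field: it is torsion-free and embeds into $\End_{\ZZ_\ell}(T_\ell A)$ for any prime $\ell\neq p$). Fix finitely many endomorphisms that generate $\End(A)$ as a ring, together with finitely many elements of $\Oo_B$ whose images generate $i(\Oo_B)$. By a standard limit argument, the abelian scheme $A$, the action $i$, and these chosen endomorphisms are all defined over a subfield $L_0\subseteq L$ that is finitely generated over $\FF_p$; since restriction of endomorphisms along $L/L_0$ is injective and its image contains a ring-generating set, $\End(A_0)\xrightarrow{\ \sim\ }\End(A)$, and $i$ descends likewise. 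We may therefore assume $L=L_0$ is the fraction field of a finitely generated $\FF_p$-domain $R$, and, after inverting a nonzero element of $R$, that there is an abelian scheme $\mathcal A\to\Spec R$ of relative dimension $2$ with an action $i_R\colon\Oo_B\to\End_R(\mathcal A)$ whose generic fibre is $(A,i)$ and with $\End_R(\mathcal A)$ realizing a ring-generating set of $\End(A)$. Restriction to the generic fibre is injective on $\End_R(\mathcal A)$ (the generic fibre is schematically dense in $\mathcal A$), so in fact $\End_R(\mathcal A)=\End(A)$ and $i_R$ is injective.

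Next, choose a closed point $\mathfrak m\in\Spec R$. By Zariski's lemma the residue field $\FF=R/\mathfrak m$ is finite, and the fibre $A':=\mathcal A_{\mathfrak m}$ is an abelian surface over $\FF$ carrying the $\Oo_B$-action induced by $i_R$. To finish it suffices to prove that the specialization map $\End(A)=\End_R(\mathcal A)\to\End_{\FF}(A')$ is injective: it is a ring homomorphism intertwining the $\Oo_B$-actions, so its injectivity at once shows that $(A',\,i_R\bmod\mathfrak m)$ is a QM abelian surface over the finite field $\FF$ and that $\End(A)$ embeds into $\End(A')$. For the injectivity, fix $\ell\neq p$, so that $\ell$ is a unit in $R$ and $T_\ell\mathcal A=\varprojlim_n\mathcal A[\ell^n]$ is a lisse $\ZZ_\ell$-sheaf on the connected scheme $\Spec R$ carrying an action of $\End_R(\mathcal A)$. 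An \'etale path from a geometric point $\bar s$ over $\mathfrak m$ to a geometric point $\bar\eta$ over the generic point induces an $\End_R(\mathcal A)$-equivariant isomorphism $(T_\ell\mathcal A)_{\bar s}\xrightarrow{\ \sim\ }(T_\ell\mathcal A)_{\bar\eta}=T_\ell(\mathcal A_{\bar\eta})$. Hence an endomorphism killed in $\End_{\FF}(A')$ acts by $0$ on $(T_\ell\mathcal A)_{\bar s}$, so by $0$ on $T_\ell(\mathcal A_{\bar\eta})$; since the Tate module is faithful for an abelian variety over a field, such an endomorphism vanishes on the geometric generic fibre, hence on the generic fibre, hence is $0$.

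The argument is essentially formal; its two substantive inputs are the classical finiteness of $\End(A)$ as a $\ZZ$-module --- needed so that spreading out finitely many endomorphisms recovers all of $\End(A)$ --- and the injectivity of the specialization map on endomorphism rings of abelian schemes. I expect the latter to be the only point requiring genuine care: it is classical over a discrete valuation ring, and the $\ell$-adic argument above is what makes it go through over the finitely generated base $R$ furnished by the spreading-out step.
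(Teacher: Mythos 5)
Your proof is correct and complete. The paper's own proof is a one-line hint --- ``induction on the transcendence degree of $L$ over $\overline{\FF}_p$'' --- which one would naturally unpack by specializing along a discrete valuation ring to drop the transcendence degree by one at each step, then quoting the base case of algebraic extensions of $\FF_p$. Your version reaches a finite field in a single step: descend $(A,i)$ together with a finite generating set of $\End(A)$ to a finitely generated subfield, spread out over a finitely generated $\FF_p$-domain $R$, and specialize at any closed point, whose residue field is finite by Zariski's lemma. Both are spread-out-and-specialize arguments; yours skips the iteration, and it isolates and handles the only genuinely delicate point --- injectivity of the specialization map on endomorphisms over a higher-dimensional base --- by the clean device of the lisse $\ZZ_\ell$-sheaf $T_\ell\mathcal A$ together with an \'etale path, which is exactly the right tool when $\Spec R$ is not a trait. (One small simplification: a $\ZZ$-module generating set of $\End(A)$ would already do, since the image of $\End_R(\mathcal A)\hookrightarrow\End(A)$ is a $\ZZ$-submodule; but a ring-generating set works just as well.)
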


\begin{proof}
Write $k = \dlim k_j$ with $\{k_j\}$\vspace{.5mm} the directed system of all subfields of $k$ that are finitely generated over $k'$. Then $\End_k(A) = \dlim \End_{k_j}(A \otimes_{k'} k_j)$ by \cite[Theorem 10.63]{GW} and since $\End_k(A)$ is a
finitely generated $\ZZ$-module, there is an index $j_0$ such that $\End_k(A) = \End_{k_j}(A \otimes_{k'} k_j)$ for all $j \gqq j_0$, so we may assume the extension $k/k'$ is finitely generated.

Now we will use induction on the transcendence degree of $k$ over $k'$.
The result is trivial if $k$ has transcendence degree $0$ over $k'$, so assume the result
holds for any QM abelian surface defined over a field $k_0$ with a fixed transcendence degree over $k'$,
and suppose $A$ is a QM abelian surface defined over a field $k$ with transcendence degree $1$ over $k_0$.
Then $k$ is a finite extension of $k_0(x)$ for some $x \in k$ transcendental over $k_0$. Let $\Oo_k$ be the integral closure of $k_0[x]$ in $k$, which is a Dedekind domain, and
fix a prime $\frakp \subset \Oo_k$ of good reduction for $A$. This means that there is an abelian scheme
$\Aa$ over $\Spec(\Oo_{k, \frakp})$ whose generic fiber is $A$. Since $\Aa$ is an abelian scheme, it is the N\'eron model of its generic fiber $A$, so $\End_k(A) \cong \End_{\Oo_{k, \frakp}}(\Aa)$
by the universal property of the N\'eron model. This gives $\Aa$ the structure of a QM abelian surface.

Let $\widetilde{A} = \Aa \otimes_{\Oo_{k, \frakp}} \widetilde{k}$
be the reduction of $\Aa$ modulo $\frakp$, where $\widetilde{k} = \Oo_k/\frakp$, which is a QM abelian surface over $\widetilde{k}$.
By \cite[Theorem 2.1(2)]{Conrad} the natural map $\End_{\Oo_{k, \frakp}}(\Aa) \to \End_{\widetilde{k}}(\widetilde{A})$
is injective. Since $\widetilde{k}$ is a finite extension of $k_0$ and there is
an inclusion $\End_k(A) \hookrightarrow \End_{\widetilde{k}}(\widetilde{A})$, we are done by induction.
\end{proof}

\begin{corollary}\label{endo ring}
Let $A$ be a QM abelian surface over an algebraically closed field $k$. \\
{\upshape(a)} If $\charr(k) = 0$ then $\End^0_{\Oo_B}(A)$ is isomorphic to $\QQ$ or an imaginary quadratic field which splits $B$. \\
{\upshape(b)} If $\charr(k) = p$ then $\End^0_{\Oo_B}(A)$ is isomorphic to one of{\upshape \hspace{.5mm}:} $\QQ$, an imaginary quadratic field, or $B^{(p)}$.
\end{corollary}

\begin{proof}
All cases are an immediate consequence of Propositions \ref{char p} and \ref{lefschetz}, except when
$\charr(k) = p$ and $L = \End^0_{\Oo_B}(A)$ embeds as a quadratic subfield of $B^{(p)}$. In this case, let $D = \End^0_k(A)$, so $\dim_{\QQ} D = (\dim_{\QQ} B)(\dim_{\QQ} L) = 8$ by \cite[Proposition 7.7.8]{Voight}. By the classification of the possibilities for $D$ (\cite[Theorem 1.1]{Yu}), we must have $D \cong B \otimes_{\QQ} L'$
for some imaginary quadratic field $L'$. Therefore $L = C_D(B) = L'$ is imaginary quadratic. 
\end{proof}

Let $x \mapsto x^{\iota}$ be the main
involution of $B$ and fix $a \in \Oo_B$ satisfying $a^2 = -d_B$. 
Define another involution on $B$ by $x \mapsto x^{\ast} = a^{-1}x^{\iota}a$. The order $\Oo_B$ is 
stable under $x \mapsto x^\ast$. 
If $(A, i)$ is a QM abelian surface over $S$, then so is the dual abelian scheme $A^\vee$, with corresponding homomorphism
$i^\vee : \Oo_B \hookrightarrow \End_S(A^\vee)$ defined by $i^\vee(x) = i(x^\ast)^\vee$.
If $f : A_1 \to A_2$ is a homomorphism of QM abelian surfaces, then so is $f^\vee : A_2^\vee \to 
A_1^\vee$.

\begin{proposition} \label{polarization}
Let $(A, i)$ be a QM abelian surface over a scheme $S$. There is a 
unique principal polarization $\lambda : A \to A^{\vee}$ such that
the corresponding Rosati involution $\varphi \mapsto \varphi^{\dagger} = 
\lambda^{-1} \circ \varphi^\vee \circ \lambda$ on $\End^0_S(A)$ induces 
$x \mapsto x^\ast$ on $\Oo_B$, that is, $\lambda^{-1} \circ i(x)^\vee \circ \lambda = i(x^\ast)$ for all $x \in \Oo_B$.
\end{proposition}

\begin{proof}
See \cite[Proposition III.1.8]{BC} and \cite[Proposition III.3.5]{BC} for the cases where $S = \Spec(k)$ with $k$ an algebraically closed field of characteristic $0$ and $p$, respectively.
The general case is reduced to these by \cite[Proposition in \S 11]{B}.
\end{proof}

\begin{lemma} \label{isogenies 2}
Let $(A, i)$ be a QM abelian surface over a scheme $S$ and assume $B$ is a division algebra. 
If $x \in \Oo_B$ is nonzero then $i(x) \in \End_S(A)$
is an isogeny of degree $\Nrd(x)^2$, where $\Nrd : B^\times \to \QQ^\times$ is the reduced norm.
\end{lemma}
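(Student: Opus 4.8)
The plan is to reduce to a statement about degrees of endomorphisms that can be checked after base change to a field, and then to a computation inside the quaternion algebra $B$ using the reduced norm. First I would observe that the degree of an endomorphism of an abelian scheme over a connected base is locally constant, so it suffices to verify the claim fiberwise; thus we may assume $S = \Spec(k)$ for a field $k$. Over a field, $i(x)$ is an isogeny if and only if it is nonzero (equivalently, has finite kernel), so the first task is to show $i(x) \neq 0$ in $\End_k(A)$ whenever $0 \neq x \in \Oo_B$. Since $i$ is a ring homomorphism and $B$ is a division algebra, $x$ has an inverse $x^{-1} \in B$; writing $x^{-1} = y/n$ with $y \in \Oo_B$ and $n \in \ZZ_{>0}$ gives $i(x)\circ i(y) = i(xy) = i(n) = [n]_A \neq 0$, forcing $i(x) \neq 0$. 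Hence $i(x)$ is an isogeny.

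Next I would compute its degree. The key identity is that for $x \in \Oo_B$ one has $x \cdot x^\iota = x^\iota \cdot x = \Nrd(x)$ in $\Oo_B$, where $\iota$ is the main involution; applying $i$ gives $i(x) \circ i(x^\iota) = [\Nrd(x)]_A$. Taking degrees and using that $\deg([n]_A) = n^{2\dim A} = n^4$ for an abelian surface, together with multiplicativity of degree, yields $\deg(i(x)) \cdot \deg(i(x^\iota)) = \Nrd(x)^4$. By symmetry (replacing $x$ by $x^\iota$, and noting $\Nrd(x^\iota) = \Nrd(x)$) the same holds with the two factors swapped, so $\deg(i(x)) = \deg(i(x^\iota))$ and therefore $\deg(i(x))^2 = \Nrd(x)^4$, giving $\deg(i(x)) = \Nrd(x)^2$.

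The one point requiring a little care — and what I expect to be the main obstacle — is justifying $\deg(i(x)) = \deg(i(x^\iota))$ rigorously rather than by an appeal to symmetry, since a priori one only gets the product formula. One clean way around this: the main involution satisfies $x^\iota = \Trd(x) - x$, so $i(x^\iota) = [\Trd(x)]_A - i(x)$, and one can instead argue via the Rosati involution attached to the principal polarization $\lambda$ of $A$. The polarization is compatible with the QM structure (this is built into the definition via condition (\ref{Kottwitz})/the uniqueness clause), and the Rosati involution restricted to $i(\Oo_B)$ coincides with $i \circ \iota$; since the Rosati involution is degree-preserving (as $f \mapsto f^\vee$ preserves degree and $\lambda$ is an isomorphism), we get $\deg(i(x)) = \deg(i(x^\iota))$ directly. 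Alternatively, one avoids the issue entirely by reducing to the case $k = \overline{\FF}_p$ or $k = \CC$ via Proposition \ref{char p} and Proposition \ref{lefschetz}, where $\End^0_{\Oo_B}(A)$ is explicitly one of a short list of algebras and the degree form is visibly the square of a norm form; but the Rosati-involution argument is cleaner and uniform in $k$, so that is the route I would take.
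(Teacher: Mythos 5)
Your proof is correct in outline and matches the paper through the easy steps: reduction to a geometric fiber, showing $i(x) \neq 0$ from $B$ being a division algebra, and reducing the degree computation to the identity $\deg(i(x)) = \deg(i(x^\iota))$ via $xx^\iota = \Nrd(x)$ and $\deg([n]_A) = n^4$. Where you diverge from the paper is precisely at the step you flagged as the crux. The paper handles $\deg(i(x)) = \deg(i(x^\iota))$ with a Noether--Skolem argument: comparing the two maps $b \mapsto i(b)$ and $b \mapsto i(b^\iota)$ into $\End^0(A)$, it produces $u \in \End^0(A)^\times$ with $i(b) = u \circ i(b^\iota) \circ u^{-1}$, and then conjugacy gives the equality of degrees. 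You instead invoke the Rosati involution of Proposition \ref{polarization}. Both routes work, but yours has two points needing care. First, there is a forward reference: Proposition \ref{polarization} is stated \emph{after} Lemma \ref{isogenies 2} in the text; this is not circular (the polarization proposition is cited from external sources and does not depend on the lemma), but you would need to reorder or at least acknowledge it. Second, a small inaccuracy: the Rosati involution attached to $\lambda$ restricts on $\Oo_B$ to $x \mapsto x^\ast = a^{-1}x^\iota a$, not to the main involution $\iota$ itself. This is easily repaired, since $a \in \Oo_B$ with $a^2 = -d_B$ gives $i(a) \circ i(x^\ast) = i(x^\iota) \circ i(a)$ in $\End_S(A)$, and $\deg(i(a))$ is a finite nonzero integer (as $i(a)^2 = [-d_B]_A$), so cancelling yields $\deg(i(x^\ast)) = \deg(i(x^\iota))$; combined with $\deg(i(x)^\dagger) = \deg(i(x))$ this recovers the desired equality. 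The Noether--Skolem route has the advantage of being self-contained at this point in the paper; the Rosati route has the advantage of being conceptually transparent once the canonical polarization is available, and your backup plan (reducing to $\overline{\FF}_p$ or $\CC$ via Propositions \ref{char p} and \ref{lefschetz}) is also a viable third option. With the small corrections above, your argument is sound.
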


\begin{proof}
Any nonzero $x \in B$ is invertible, so $i(x)$ is an isogeny. To compute its degree we may assume $S = \Spec(k)$ for
$k$ an algebraically closed field. Applying the Skolem-Noether theorem to the
two maps $B \to \End^0_k(A)$ given by $b \mapsto i(b)$ and $b \mapsto i(b^{\iota})^\dagger$, where $b \mapsto b^{\iota}$
is the main involution on $B$ and $\varphi \mapsto \varphi^\dagger$ is the Rosati involution on $\End^0_k(A)$ as in Proposition \ref{polarization}, we find that there is a $u \in \End^0_k(A)^\times$ such that
$i(b) = u \circ i(b^{\iota})^\dagger \circ u^{-1}$ for all $b \in B$. Hence $\deg(i(x)) = \deg(i(x^{\iota})^\dagger) = \deg(i(x^{\iota}))$ and 
$$
\deg(i(x))^2 = \deg(i(xx^{\iota})) = \deg([\Nrd(x)]) = \Nrd(x)^4.
$$
Since $\deg(i(x))$ is a positive integer, $\deg(i(x)) = \Nrd(x)^2$.
\end{proof}

Let $A_1$ and $A_2$ be QM abelian surfaces over $S$ with corresponding principal polarizations
$\lambda_1 : A_1 \to A_1^\vee$ and $\lambda_2 : A_2 \to A_2^\vee$. Suppose $f : A_1 \to A_2$ is a nonzero homomorphism
of QM abelian surfaces. We obtain a map $f^t : A_2 \to A_1$ defined
as the composition 
$$
f^t = \lambda_1^{-1} \circ  f^\vee \circ \lambda_2 : A_2 \to A_1.
$$
This is a homomorphism of QM abelian surfaces (it is $\Oo_B$-linear by the $\ast$-property in Proposition \ref{polarization}), called the \textit{dual homomorphism} of $f$.
If $f, g \in \Hom_{\Oo_B}(A_1, A_2)$ then $(f^t)^t = f$ and $(f + g)^t = f^t + g^t$. These follow immediately from the corresponding properties of $f \mapsto f^\vee$. Similarly, if $f \in \Hom_{\Oo_B}(A_1, A_2)$ and $g \in \Hom_{\Oo_B}(A_2, A_3)$, then $(g \circ f)^t = f^t \circ g^t$.

\begin{proposition}
Let $f : A_1 \to A_2$ be a homomorphism of QM abelian surfaces over a scheme $S$. The homomorphism $f^t \circ f : A_1 \to A_1$
is locally on $S$ multiplication by an integer.
\end{proposition}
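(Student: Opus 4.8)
The plan is to reduce the statement to the case where $S = \Spec(k)$ for $k$ an algebraically closed field, and in that case to identify the integer in question. First I would observe that $f^t \circ f \in \End_{\Oo_B}(A_1)$ commutes with the $\Oo_B$-action and is fixed by the Rosati involution $\dagger$ attached to $\lambda_1$ (since $(f^t\circ f)^\dagger = f^t \circ (f^t)^\vee{}^\dagger \cdots$, more precisely $(f^t f)^\dagger = \lambda_1^{-1}(f^t f)^\vee \lambda_1 = \lambda_1^{-1} f^\vee (f^t)^\vee \lambda_1 = \lambda_1^{-1} f^\vee \lambda_2^{-1}{}^\vee f^{\vee\vee}\lambda_1$, and unwinding $\lambda_i^\vee = \lambda_i$ under the canonical identification $A_i^{\vee\vee} = A_i$ shows this equals $f^t \circ f$). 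So $f^t\circ f$ lies in the subring of $\End_{\Oo_B}^0(A_1)$ of $\dagger$-symmetric, $\Oo_B$-linear endomorphisms. The key point is that this subring is just $\QQ$: this is exactly the content behind the definition of $\deg^\ast$ given in the introduction (the map $f \mapsto \lambda_1^{-1}f^\vee\lambda_2 f$ has image in $\ZZ \subset \End_{\Oo_B}(A_1)$), but to be self-contained I would prove it directly using Proposition \ref{char p} and Proposition \ref{polarization}.

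The heart of the argument is therefore the following local claim: if $A$ is a QM abelian surface over an algebraically closed field $k$, then every element of $\End_{\Oo_B}^0(A)$ fixed by the Rosati involution $\dagger$ lies in $\QQ$. To see this I would run through the classification. If $k$ has characteristic $0$, then by Proposition \ref{char p}(b), $\End_{\Oo_B}^0(A)$ is either $\QQ$ or an imaginary quadratic field $L$; in the latter case the Rosati involution restricts to complex conjugation on $L$ (it is a positive involution, hence nontrivial on $L$), so the fixed subring is $\QQ$. If $k = \overline{\FF}_p$, then by Proposition \ref{char p}(a), $\End_{\Oo_B}^0(A)$ is either an imaginary quadratic field $L$ — handled exactly as before — or the definite quaternion algebra $B^{(p)}$. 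In the quaternion case the Rosati involution is again a positive involution of the second kind on a quaternion algebra, so its fixed subring is the center $\QQ$ (a positive involution on a definite quaternion algebra over $\QQ$ must be the canonical involution, whose fixed points are the scalars). In all cases the $\dagger$-fixed subring of $\End_{\Oo_B}^0(A)$ is $\QQ$, hence $f^t\circ f \in \QQ \cap \End(A_1) = \ZZ$.

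Having the claim over algebraically closed fields, I would globalize. Because $A_1 \to S$ is an abelian scheme and $f^t\circ f$ is an $S$-endomorphism, for each point $s \in S$ the geometric fiber $(f^t\circ f)_{\bar s}$ equals multiplication by some integer $n(s) \in \ZZ$. The function $s \mapsto n(s)$ is locally constant: on each connected component of $S$ the degree of $(f^t\circ f)_{\bar s}$, equal to $n(s)^{2\dim A_1} = n(s)^4$, is constant (degrees of endomorphisms are locally constant in families), and the sign of $n(s)$ is pinned down because $f^t\circ f = (f^t\circ f)^\dagger$ forces $n(s) > 0$ (it is a norm-type element; alternatively $\deg^\ast$ is positive definite). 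Restricting to a connected component we get a fixed integer $n$, and then $f^t\circ f$ and $[n]$ are two $S$-homomorphisms $A_1 \to A_1$ agreeing on a dense set of geometric fibers, hence equal by the rigidity/separatedness of the $\Hom$-scheme of abelian schemes. Thus $f^t\circ f = [n]$ locally on $S$, as claimed.

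The main obstacle I anticipate is the globalization step rather than the algebraic one: one must be a little careful that "agrees on geometric fibers" upgrades to "equal as $S$-morphisms." The cleanest route is to invoke that for abelian schemes $A_1, A_2$ over a base $S$, $\underline{\Hom}_S(A_1,A_2)$ is representable by a scheme that is unramified and separated over $S$ (indeed formally étale in the relevant sense), so a section that hits $[n]$ on a dense set of fibers of a connected $S$ must coincide with $[n]$; combined with the local constancy of $n(s)$ this gives the result on each connected component, i.e. locally on $S$. The algebraic input — that positive involutions have scalar fixed ring in the two relevant cases — is standard and follows from Albert's classification, but I would spell it out via Propositions \ref{char p} and \ref{polarization} to keep the argument closed within the paper.
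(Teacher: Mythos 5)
Your proof is correct and follows essentially the same strategy as the paper's: reduce to an algebraically closed field, check that $f^t \circ f$ is fixed by the Rosati involution attached to $\lambda_1$, and observe that the Rosati-fixed subring of $\End^0_{\Oo_B}(A_1)$ is $\QQ$. You fill in two steps the paper leaves implicit — the case-by-case verification (via Proposition \ref{char p} and Albert's classification) that the Rosati-fixed subring is $\QQ$, and the descent from geometric fibers to $S$ via local constancy of degree and unramifiedness of $\underline{\Hom}_S(A_1,A_1)$ — but these are elaborations of the same argument, not a different route.
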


\begin{proof}
We may assume $S = \Spec(k)$ with $k$ an algebraically closed field. Viewing $f^t \circ f \in \End^0_{\Oo_B}(A_1)$, a calculation shows $f^t \circ f$ is fixed by the Rosati involution $\varphi \mapsto \varphi^\dagger = \lambda_1^{-1} \circ \varphi^\vee \circ \lambda_1$. By Corollary \ref{endo ring}, the $\QQ$-algebra
$E = \End_{\Oo_B}^0(A_1)$ is isomorphic to one of: (i) $\QQ$, (ii) an imaginary quadratic field, or (iii) the definite quaternion algebra $B^{(p)}$ for some prime $p$. In case (i), $E = \QQ$, so $f^t \circ f \in \QQ$. Using the classification of finite dimensional division algebras over $\QQ$ with a positive involution (\cite[\S 21]{Mumford}), there are the following possibilities for the involution $\dagger$ in the remaining cases. In case (ii), $E$ is an imaginary quadratic field and $\dagger$ is complex conjugation, so the set of fixed points is $\QQ$. In case (iii), $E$ is a definite quaternion algebra over $\QQ$ and $\dagger$ is the standard involution, so again the set of fixed points is $\QQ$. This shows
$f^t \circ f \in \QQ \cap \End_{\Oo_B}(A_1) = \ZZ$.
\end{proof}

\begin{definition}
If the integer in the previous proposition is constant on $S$, we call it the \textit{QM degree} of $f$, and it is denoted $\deg^\ast(f)$.
\end{definition}

\begin{corollary}
Let $A_1$ and $A_2$ be QM abelian surfaces over a connected scheme $S$ and suppose $f \in \Hom_{\Oo_B}(A_1, A_2)$
is an isogney. Then $\deg^\ast(f^t) = \deg^\ast(f)$ and $\deg(f) = \deg^\ast(f)^2$.
\end{corollary}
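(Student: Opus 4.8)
The plan is to reduce both assertions to the ``biduality'' relation $(f^t)^t = f$, after which everything follows by formal manipulation with degrees of isogenies and with multiplication-by-$n$ endomorphisms; the hypothesis that $S$ is connected is used throughout so that the preceding proposition and definition attach to each isogeny a genuine integer-valued QM degree. First I would verify $(f^t)^t = f$: dualizing the defining equation $f^t = \lambda_1^{-1}\circ f^\vee\circ\lambda_2$ and invoking the canonical identifications $A_i^{\vee\vee}=A_i$ and $f^{\vee\vee}=f$ together with the symmetry of the (principal) polarizations $\lambda_i^\vee=\lambda_i$, one obtains $(f^t)^\vee=\lambda_2\circ f\circ\lambda_1^{-1}$ and hence $(f^t)^t=\lambda_2^{-1}\circ(f^t)^\vee\circ\lambda_1=f$. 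Note $(f^t)^t$ makes sense because $f^t$ is again an isogeny of QM abelian surfaces.

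For $\deg^\ast(f^t)=\deg^\ast(f)$, set $n=\deg^\ast(f)$ and $n'=\deg^\ast(f^t)$, so that $f^t\circ f=[n]_{A_1}$ and, by the previous step, $f\circ f^t=(f^t)^t\circ f^t=[n']_{A_2}$. Evaluating $f\circ f^t\circ f$ in two ways and using that multiplication-by-integer maps commute with all homomorphisms, $[n']_{A_2}\circ f = f\circ[n]_{A_1} = [n]_{A_2}\circ f$, so $[n'-n]_{A_2}\circ f = 0$. Since $f$ is an isogeny it is finite, flat and surjective, hence faithfully flat and quasi-compact, hence an epimorphism of schemes; therefore $[n'-n]_{A_2}=0$, and as $A_2\to S$ has positive relative dimension over the connected scheme $S$ this forces $n'=n$.

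For $\deg(f)=\deg^\ast(f)^2$, I would use that the degree of an isogeny of abelian schemes over a connected base is multiplicative and unchanged by dualizing, together with $\deg(\lambda_i)=1$, to get $\deg(f^t)=\deg(\lambda_1^{-1})\deg(f^\vee)\deg(\lambda_2)=\deg(f)$; then $\deg(f)^2=\deg(f^t\circ f)=\deg([n]_{A_1})=n^4$, using that $A_1$ has relative dimension $2$, and comparing positive integers gives $\deg(f)=n^2=\deg^\ast(f)^2$. I expect the only point that is not purely formal to be the base-change-stable identity $(f^t)^t=f$ in the first step, which rests on the symmetry of polarizations and on biduality for abelian schemes over a general base; once that is in place, the remainder is bookkeeping with central endomorphisms and with multiplicativity of degrees.
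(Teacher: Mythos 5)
Your proof is correct and follows essentially the same route as the paper: establish $(f^t)^t = f$, deduce $f\circ f^t = [d]_{A_2}$ from $f^t\circ f = [d]_{A_1}$ by cancelling the isogeny $f$, and then combine $\deg(f^t)=\deg(f)$ with $\deg(f^t)\deg(f)=d^4$. The only deviation is that the paper first reduces to geometric fibers (algebraically closed fields) while you argue directly over the connected base $S$ using faithful flatness of $f$; both are fine, and you also spell out the verification of $(f^t)^t=f$ via symmetry of the polarizations, which the paper takes as known.
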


\begin{proof}
This can be checked on geometric fibers, so we may assume $S = \Spec(k)$ for $k$ an algebraically closed field.
Let $d = \deg^\ast(f)$. Note that $f^t \circ f = [d]_{A_1}$ implies $(f \circ f^t) \circ f = [d]_{A_2} \circ f$ and
hence $f \circ f^t = [d]_{A_2}$ by \cite[Proposition 27.178]{GW2}.
The first claim now follows from $(f^t)^t = f$.
For the second claim, since $f^t \circ f = [d]_{A_1}$, we have $\deg(f^t)\deg(f) = d^4$.
However, $\deg(f^t) = \deg(f^\vee) = \deg(f)$, so $\deg(f) = d^2$.
\end{proof}

\begin{proposition}\label{QM degree qf}
Let $A_1$ and $A_2$ be QM abelian surfaces over a connected scheme $S$. The map $\deg^\ast : \Hom_{\Oo_B}(A_1, A_2) \to \ZZ$ is a positive definite quadratic form. 
\end{proposition}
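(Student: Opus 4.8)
The plan is to establish two things separately: that $\deg^\ast$ is a quadratic form on the $\ZZ$-module $L := \Hom_{\Oo_B}(A_1,A_2)$, and that it is positive definite. I would first extend $\deg^\ast$ to all of $L$ by setting $\deg^\ast(0)=0$, which is forced by the characterizing identity $f^t\circ f=[\deg^\ast(f)]_{A_1}$. That identity does define an integer for every $f\in L$: a nonzero $f$ is an isogeny by Lemma~\ref{isog}, so $f^t\circ f$ is locally on $S$ multiplication by an integer (as established above), and this integer is constant since $S$ is connected. Here $f^t=\lambda_1^{-1}\circ f^\vee\circ\lambda_2$ makes sense for any $f$ because $\lambda_1$ is an isomorphism, and $f\mapsto f^t$ is additive. (The module $L$ is automatically torsion-free, as is any homomorphism group of abelian schemes.)

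For the quadratic form assertion, homogeneity follows from $(nf)^t\circ(nf)=n^2(f^t\circ f)$, giving $\deg^\ast(nf)=n^2\deg^\ast(f)$. For the associated symmetric form I would expand
$$
[\deg^\ast(f+g)]_{A_1}=(f+g)^t\circ(f+g)=f^t\circ f+\bigl(f^t\circ g+g^t\circ f\bigr)+g^t\circ g,
$$
so that $f^t\circ g+g^t\circ f=[\,\deg^\ast(f+g)-\deg^\ast(f)-\deg^\ast(g)\,]_{A_1}$. The left side is visibly $\ZZ$-biadditive in $(f,g)$, since $(\cdot)^t$ is additive and composition is biadditive; hence $(f,g)\mapsto\deg^\ast(f+g)-\deg^\ast(f)-\deg^\ast(g)$ is a $\ZZ$-bilinear form and $\deg^\ast$ is a quadratic form.

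To get positive definiteness I would fix a nonzero $f\in L$; it is an isogeny (Lemma~\ref{isog}), hence fiberwise an isogeny, and $\deg^\ast(f)$ equals $\deg^\ast(f_{\overline s})$ at any geometric point $\overline s$, so I may assume $S=\Spec(k)$ with $k$ algebraically closed. The idea is then to package $f$ as an honest endomorphism: take the polarized abelian variety $(A_1\times A_2,\ \lambda_1\times\lambda_2)$ with Rosati involution $\dagger$, and let $\widetilde f\in\End^0(A_1\times A_2)$ be the nonzero map $(x,y)\mapsto(0,f(x))$. A short block-matrix computation with $\lambda_1\times\lambda_2$ identifies $\widetilde f^{\,\dagger}$ as $(x,y)\mapsto(f^t(y),0)$, whence $\widetilde f\circ\widetilde f^{\,\dagger}$ is zero on $A_1$ and is $f\circ f^t=[\deg^\ast(f)]_{A_2}$ on $A_2$ (using $(f^t)^t=f$ and $\deg^\ast(f^t)=\deg^\ast(f)$, both recorded above). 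Taking traces on the rational Tate module shows $\Tr(\widetilde f\circ\widetilde f^{\,\dagger})=4\deg^\ast(f)$, which is positive by the positivity of the Rosati involution (see, e.g., \cite{Milne}); hence $\deg^\ast(f)>0$.

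The one genuinely nontrivial step is this sign: degree considerations alone (through $\deg(f)=\deg^\ast(f)^2$) only yield $\deg^\ast(f)\neq 0$, and forcing positivity really does require embedding $f$ into $\End^0(A_1\times A_2)$ and appealing to the positivity of the Rosati involution. Everything else is formal manipulation of the dual-isogeny formalism already developed.
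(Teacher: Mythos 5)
Your proof is correct and follows essentially the same strategy as the paper: reduce to an algebraically closed base, work in $\End^0(A_1\times A_2)$ with the product polarization $\lambda_1\times\lambda_2$, and invoke positivity of the Rosati involution. The only cosmetic difference is that the paper packages $f$ as the Rosati-symmetric element $\Phi(x,y)=(f^t(y),f(x))$, so that $\Phi\circ\Phi^\dagger=\Phi^2=[\deg^\ast(f)]$ is immediate, whereas you use $\widetilde f=(0,f)$ and compute $\widetilde f^\dagger$ separately; both lead to the same conclusion.
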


\begin{proof}
The only nontrivial part is showing $\deg^\ast(f) > 0$ if $f \in \Hom_{\Oo_B}(A_1, A_2)$ is nonzero. For this we may assume $S = \Spec(k)$ with $k$ an algebraically closed field. 
Define an isogeny of abelian varieties 
$$
\Phi : A_1 \times A_2 \to A_1 \times A_2
$$
by $\Phi(x, y) = (f^t(y), f(x))$ on points in $k$-schemes. Then $\Phi^\vee$ is given by $\Phi^\vee(u, v) = (f^\vee(v), (f^t)^\vee(u))$. If $\lambda_j : A_j \to A_j^\vee$, $j =1, 2$, are the usual principal polarizations, then we get a principal polarization
$$
\lambda_1 \times \lambda_2 : A_1 \times A_2 \to A_1^\vee \times A_2^\vee.
$$
A calculation shows that the corresponding Rosati involution on $\End^0_k(A_1 \times A_2)$ satisfies $\Phi^\dagger = \Phi$, so $\Phi\circ \Phi^\dagger = [\deg^\ast(f)]$. Since the Rosati involution is positive, $\deg^\ast(f) > 0$.
\end{proof}

\begin{corollary}\label{isog}
Let $A_1$ and $A_2$ be QM abelian surfaces over a scheme $S$. Any nonzero element of $\Hom_{\Oo_B}(A_1, A_2)$
is an isogeny.
\end{corollary}

\begin{proof}
Assume $f \in \Hom_{\Oo_B}(A_1, A_2)$ is nonzero. To show $f$ is an isogeny it suffices to 
check that the map on fibers $f_s$ is an isogeny for all $s \in S$, so we may assume $S = \Spec(k)$ with $k$ a field. Since $\deg^\ast(f) \neq 0$ by Proposition \ref{QM degree qf}, the homomorphism
$f^t \circ f = [\deg^\ast(f)]$ is an isogeny, so $f^t$ is surjective and thus is an isogeny since
$A_1$ and $A_2$ have the same dimension (\cite[Corollary 27.177]{GW2}). Therefore
$f = (f^t)^t = \lambda_2^{-1} \circ (f^t)^{\vee}\circ \lambda_1$ is an isogeny.
\end{proof}

\section{QM abelian surfaces with CM}

For this section let $\bm{k}$ be an imaginary quadratic field and let $\bm{K}$ be a finite extension of $\bm{k}$.
Assume any prime dividing $d_B$ is inert in $\bm{k}$. This implies $\bm{k}$ splits $B$ or, equivalently, $\bm{k}$ embeds in $B$ (\cite[Proposition 14.6.7]{Voight}).

\subsection{Definitions}

\begin{definition}
Let $S$ be an $\Oo_{\bm{K}}$-scheme. A \textit{QM abelian surface over $S$
with complex multiplication by} $\Oo_{\bm{k}}$, which we will abbreviate as a \textit{CMQM abelian surface}, is a triple $\textbf{A} = (A, i, \kappa)$, where $(A, i)$ is a QM abelian surface over $S$ and $\kappa : \Oo_{\bm{k}} \to \End_{\Oo_B}(A)$ is a ring homomorphism such that the diagram
$$
\xymatrix{
\Oo_{\bm{k}} \ar[rr]^<<<<<<<<<<<<<<<<{\kappa^{\Lie}} \ar[dr] & & \End_{\Oo_B}(\Lie(A)) \\
& \OOO_S(S) \ar[ur] }
$$
commutes, where $\Oo_{\bm{k}} \hookrightarrow \Oo_{\bm{K}} \to \OOO_S(S)$ is the structure map.
We call the commutativity of this diagram the \textit{CM normalization condition}.
\end{definition}

When we speak of a CMQM abelian surface over $\overline{\FF}_\frakP$ for some prime ideal $\frakP \subset \Oo_{\bm{K}}$, 
where $\FF_\frakP = \Oo_{\bm{K}}/\frakP$, it is understood that $\Spec(\overline{\FF}_\frakP)$ is an $\Oo_{\bm{K}}$-scheme through the reduction map $\Oo_{\bm{K}} \to \FF_\frakP \hookrightarrow \overline{\FF}_\frakP$.
Less precisely, when we speak of a CMQM abelian surface $A$ over $\overline{\FF}_p$ for some
prime number $p$, we really mean $A$ is a CMQM abelian surface over $\overline{\FF}_\frakP$ for some prime ideal $\frakP \subset \Oo_{\bm{K}}$ lying over $p$. 

\begin{remark}\label{CM remark}
Suppose $(A, i, \kappa)$ is a QM abelian surface over a field $k$ with CM by $\Oo_{\bm{k}}$. We claim $A$ is a CM abelian variety in the traditional sense, that is, $\End_k^0(A)$ contains an \'etale $\QQ$-subalgebra with dimension $4$ over $\QQ$. To see this, let $\bm{k}'$ be an imaginary quadratic field
with discriminant prime to $\disc(\bm{k})$ and such that each prime dividing $d_B$ is inert in $\bm{k}'$. Then $\bm{L} = \bm{k} \otimes_{\QQ} \bm{k}'$ is a quartic CM field with ring of integers $\Oo_{\bm{L}} = 
\Oo_{\bm{k}} \otimes_{\ZZ} \Oo_{\bm{k}'}$ and there is an embedding $\bm{k}' \hookrightarrow B$.
It follows that there is an embedding $\Oo_{\bm{k}'} \hookrightarrow \Oo_B$ since $\Oo_{\bm{k}'}$ is contained in some maximal order and all maximal orders are isomorphic. Therefore there is an embedding $\Oo_{\bm{L}} \hookrightarrow \End_k(A)$ given by 
$$
a_1 \otimes a_2 \mapsto \kappa(a_1) \circ i(a_2) = i(a_2) \circ \kappa(a_1),
$$
viewing $a_2 \in \Oo_{\bm{k}'} \subset \Oo_B$, so in particular, $\bm{L} \subset \End^0_k(A)$.
\end{remark}

\begin{lemma} \label{transpose}
Suppose $A$ is a QM abelian surface with complex
multiplication by $\Oo_{\bm{k}}$ via the homomorphism $\kappa : \Oo_{\bm{k}} \to \End_{\Oo_B}(A)$. 
If $a \in \Oo_{\bm{k}}$ is nonzero then $\kappa(a)$ is an isogeny and
$\kappa(a)^t = \kappa(\overline{a})$, where $a \mapsto \overline{a}$ is complex
conjugation on $\bm{k}$. In particular, $\deg^\ast(\kappa(a)) = \NN_{\bm{k}/\QQ}(a)$ for all $a \in \bm{k}$.
\end{lemma}

\begin{proof}
The homomorphism $\kappa(a)$ is an isogeny for the same reason that $i(b)$ is an isogeny for any nonzero $b \in 
\Oo_B$ (Lemma \ref{isogenies 2}). For the rest, it suffices to assume $A$ is defined over an algebraically closed field.
From the embedding $\kappa : \bm{k} \hookrightarrow \End^0_{\Oo_B}(A)$ and Corollary \ref{endo ring}, either $\End^0_{\Oo_B}(A)$ is $\bm{k}$ or a definite quaternion algebra over $\QQ$. It follows that
there is an embedding $\kappa' : \bm{k} \hookrightarrow \End^0_{\Oo_B}(A)$ such that the Rosati involution
on $\End^0_{\Oo_B}(A)$ corresponding to the principal polarization $\lambda : A \to A^\vee$ 
restricts to complex conjugation on $\kappa'(\bm{k})$. 
Then using the Skolem-Noether theorem, there is a $u \in \End^0_{\Oo_B}(A)^{\times}$
such that $\kappa(a) = u \circ \kappa'(a) \circ u^{-1}$ for all $a \in \bm{k}$, and hence $\deg^\ast(\kappa(a)) = 
\deg^\ast(\kappa'(a))$. 

The Rosati involution on $\End^0_{\Oo_B}(A)$ is given by $\varphi \mapsto \lambda^{-1}\circ \varphi^\vee
\circ \lambda = \varphi^t$, so by construction, $\kappa'(a)^t = \kappa'(\overline{a})$ for any $a \in \bm{k}$. Hence
$$
[\deg^\ast(\kappa'(a))] = \kappa'(a)^t \circ \kappa'(a) = \kappa'(\overline{a}) \circ \kappa'(a) = \kappa'(\overline{a}a)
= [\NN_{\bm{k}/\QQ}(a)],
$$
which means $\deg^\ast(\kappa(a)) = \NN_{\bm{k}/\QQ}(x)$ for all $a \in \bm{k}$. Then, for any $a \in \Oo_{\bm{k}}$,
$$
\kappa(\overline{a}) \circ \kappa(a) = [\NN_{\bm{k}/\QQ}(a)] = [\deg^\ast(\kappa(a))] = \kappa(a)^t\circ \kappa(a)
$$
and composing both sides on the right with $\kappa(a)^t$ gives
$$
[\deg^\ast(\kappa(a))] \circ \kappa(\overline{a}) = [\deg^\ast(\kappa(a))] \circ \kappa(a)^t,
$$
so $\kappa(a)^t = \kappa(\overline{a})$ (\cite[Proposition 27.178]{GW2}).
\end{proof}

\begin{corollary}\label{polarization2}
If $A$ is a QM abelian surface with complex multiplication $\kappa : \Oo_{\bm{k}} \to \End_{\Oo_B}(A)$, then the principal
polarization $\lambda : A \to A^\vee$ of Proposition {\upshape \ref{polarization}} is $\Oo_{\bm{k}}$-linear{\upshape \hspace{.5mm}:}
$\lambda \circ \kappa(a) = \kappa(\overline{a})^\vee \circ \lambda$ for all $a \in \Oo_{\bm{k}}$.
\end{corollary}

\begin{proof}
By definition,
$\kappa(a)^t = \lambda^{-1} \circ \kappa(a)^\vee \circ \lambda$ for any $a \in \Oo_{\bm{k}}$. But from Lemma \ref{transpose},
$\kappa(a)^t = \kappa(\overline{a})$, and hence $\lambda \circ \kappa(a) = \kappa(\overline{a})^\vee \circ \lambda$. 
\end{proof}

\begin{definition}\label{CM FELC}
Define $\YYY^B$ to be the category whose objects are triples $(A, i, \kappa)$, 
where $(A, i)$ is a QM abelian surface
over some $\Oo_{\bm{K}}$-scheme with complex multiplication $\kappa : \Oo_{\bm{k}} \to \End_{\Oo_B}(A)$. A morphism $(A', i', \kappa') \to (A, i, \kappa)$ between 
two such triples defined
over $\Oo_{\bm{K}}$-schemes $T$ and $S$, respectively, is a morphism of $\Oo_{\bm{K}}$-schemes $T \to S$ together with 
an $\Oo_{\bm{k}}$-linear isomorphism $A' \to A \times_S T$ of QM abelian surfaces.
\end{definition}

The category $\YYY^B$ is a stack of finite type over $\Spec(\Oo_{\bm{K}})$. In fact,  $\YYY^B \to \Spec(\Oo_{\bm{K}})$ is 
\'etale by Proposition \ref{reduction} below, proper by a proof similar to that of \cite[Proposition 3.3.5]{Howard3},
and quasi-finite by Corollary \ref{st2} below, so the morphism is finite \'etale.
Let $[\YYY^B(S)]$ denote the set of isomorphism classes of objects in $\YYY^B(S)$.

For each prime $p$ dividing $d_B$ there is a unique maximal ideal $\frakm_p \subset \Oo_B$
such that $\frakm_p \cap \ZZ = p\ZZ$ (\cite[Theorem 18.3.6]{Voight}), and $\Oo_B/\frakm_p$ is a finite field with $p^2$ elements (\cite[Theorem 13.3.11]{Voight}). Define 
$\frakm_B = \bigcap_{p \mid d_B}\frakm_p$. We have $\frakm_B = \prod_{p \mid d_B}\frakm_p$
because for any two primes $p$ and $q$ dividing $d_B$, $\frakm_p\frakm_q = \frakm_q\frakm_p$ (\cite[Theorem 18.3.4]{Voight}) and $\frakm_p + \frakm_q = \Oo_B$. Note that 
$$
\Oo_B/\frakm_B \cong \prod_{p \mid d_B} \FF_{p^2}
$$
as rings.
Let $(A, i)$ be a QM abelian surface over a scheme $S$ and suppose $B$ is a division algebra. The $d_B$-torsion $A[d_B]$ is a finite locally free commutative $S$-group scheme with a natural action of $\frakm_B/d_B\Oo_B$. 
Let $x_B$ be any element of $\frakm_B$ whose image 
generates the principal ideal $\frakm_B/d_B\Oo_B \subset \Oo_B/d_B\Oo_B$. Define the $\frakm_B$-torsion of $A$
as
$$
A[\frakm_B] = \ker(i(x_B) : A[d_B] \to A[d_B]),
$$
which is a finite locally free commutative $S$-group scheme of order $\deg(i(x_B)) = \Nrd(x_B)^2 = d_B^2$ by Lemma \ref{isogenies 2}. This definition does not depend on the choice of $x_B$. The group scheme $A[\frakm_B]$ has an action of 
$\Oo_B/\frakm_B$ given on points by $\widetilde{x}\cdot a = i(x)(a)$ for $\widetilde{x} \in 
\Oo_B/\frakm_B$ and $a \in A[\frakm_B](T)$ for any $S$-scheme $T$. Equivalently, we could define 
$$
A[\frakm_B] = \bigcap_{x \in \frakm_B}\ker(i(x)),
$$
the scheme-theoretic intersection. If $B$ is split, set $\frakm_B = \Oo_B$ and define $A[\frakm_B]$ to
be the trivial $S$-group scheme.

\begin{definition}\label{theta stack}
Let $\theta : \Oo_{\bm{k}} \to \Oo_B/\frakm_B$ be a ring homomorphism. Define
$\YYY^B(\theta)$ to be the category whose objects are objects $(A, i, \kappa)$ of $\YYY^B$ such that
the diagram
\begin{equation}\tag{D$(A, \theta)$}
\xymatrix{
\Oo_{\bm{k}} \ar[rr]^>>>>>>>>>>>>>>>>{\kappa^{\frakm_B}} \ar[dr]_{\theta} && \End_{\Oo_B/\frakm_B}(A[\frakm_B]) \\
& \Oo_B/\frakm_B \ar[ur]_{i^{\frakm_B}} & }
\end{equation}
commutes, where $\kappa^{\frakm_B}$ and $i^{\frakm_B}$ are the maps on $\frakm_B$-torsion induced by $\kappa$ and $i$.
Morphisms are defined in the same way as in the category $\YYY^B$.
\end{definition}

Note that $\YYY^B(\theta) = \YYY^B$ if $B$ is split.
Recall from the introduction that $\YYY$ is the stack over $\Spec(\Oo_{\bm{K}})$ with $\YYY(S)$ the category of
elliptic curves over the $\Oo_{\bm{K}}$-scheme $S$ with CM by $\Oo_{\bm{k}}$.
We will prove below there is an isomorphism of stacks over $\Spec(\Oo_{\bm{K}})$
\begin{equation}\label{decomposition}
\bigsqcup_{\theta : \Oo_{\bm{k}} \to \Oo_B/\frakm_B} \YYY \to \YYY^B
\end{equation}
inducing an isomorphism $\YYY \to \YYY^B(\theta)$ for any $\theta$ (Theorem
\ref{Serre type}). It follows that $\YYY^B(\theta)$ has the structure of a stack, finite \'etale over $\Spec(\Oo_{\bm{K}})$,
and $\YYY \cong \YYY^B$ in the case of $B$ split.

\subsection{Group actions}
Suppose $(A, i, \kappa)$ is a QM abelian surface over an $\Oo_{\bm{K}}$-scheme $S$ with complex multiplication by $\Oo_{\bm{k}}$, and let $\fraka$ be a fractional ideal of $\Oo_{\bm{k}}$. Since there is a ring homomorphism
$\kappa : \Oo_{\bm{k}} \to \End_S(A)$, we may view $A$ as an $\Oo_{\bm{k}}$-module scheme over $S$, so from $\fraka$ being a finitely generated projective $\Oo_{\bm{k}}$-module, locally free of rank $1$, there is an 
abelian scheme $\fraka \otimes_{\Oo_{\bm{k}}} A \to S$ of relative dimension $2$ satisfying 
$(\fraka \otimes_{\Oo_{\bm{k}}} A)(T) = \fraka \otimes_{\Oo_{\bm{k}}} A(T)$ for any $S$-scheme $T$ 
(see \cite[\S 7]{Conrad}, \cite[\S 1]{Zavosh}). 
There are commuting actions
$$
i_{\fraka} : \Oo_B \to \End_S(\fraka \otimes_{\Oo_{\bm{k}}} A), \quad \kappa_{\fraka} : \Oo_{\bm{k}} \to \End_S(\fraka \otimes_{\Oo_{\bm{k}}} A)
$$
defined in the obvious way.
Using the isomorphism $\Lie(\fraka \otimes_{\Oo_{\bm{k}}} A) \cong \fraka \otimes_{\Oo_{\bm{k}}} \Lie(A)$ of $\OOO_S$-modules, it follows that $\kappa_\fraka^{\Lie}$  inherits the CM normalization condition from $\kappa^{\Lie}$.
This shows $\fraka \otimes_{\Oo_{\bm{k}}} A$ is a QM abelian surface over $S$ with complex multiplication by $\Oo_{\bm{k}}$.
Therefore the ideal class group $\Cl(\Oo_{\bm{k}})$ acts on the set $[\YYY^B(S)]$. This action restricts to an action on the subset $[\YYY^B(\theta)(S)]$ since there is an isomorphism 
$$
(\fraka \otimes_{\Oo_{\bm{k}}} A)[\frakm_B] \cong \fraka \otimes_{\Oo_{\bm{k}}} A[\frakm_B]
$$
of $\Oo_{\bm{k}}$-module schemes over $S$.

\begin{lemma}\label{free action}
Let $S$ be a connected $\Oo_{\bm{K}}$-scheme. The action of $\Cl(\Oo_{\bm{k}})$ on $[\YYY^B(S)]$ is free.
\end{lemma}

\begin{proof}
Suppose there exists $\fraka \in \Cl(\Oo_{\bm{k}})$ and $A \in [\YYY^B(S)]$ satisfying
$A \cong \fraka \otimes_{\Oo_{\bm{k}}} A$ as CMQM abelian surfaces. Setting $\Oo = \Oo_B \otimes_{\ZZ} \Oo_{\bm{k}}$, there is an isomorphism
of $\Oo_{\bm{k}}$-modules 
$$
\End_{\Oo}(A) \cong \Hom_{\Oo}(A, \fraka \otimes_{\Oo_{\bm{k}}} A).
$$
By \cite[Proposition 2(b)]{Zavosh} there is an isomorphism of $\Oo_{\bm{k}}$-modules
$$
\Hom_{\Oo_{\bm{k}}}(A, \fraka \otimes_{\Oo_{\bm{k}}} A) \cong \fraka \otimes_{\Oo_{\bm{k}}}\End_{\Oo_{\bm{k}}}(A),
$$
so $\End_{\Oo}(A) \cong \fraka \otimes_{\Oo_{\bm{k}}}\End_{\Oo}(A)$. We claim that $\End_{\Oo}(A) \cong
\Oo_{\bm{k}}$ as an $\Oo_{\bm{k}}$-algebra. By definition, $\End_{\Oo}(A)$ is the centralizer of
$\Oo_{\bm{k}}$ in $\End_{\Oo_B}(A)$. Picking any geometric point $\overline{s}$ of $S$, there are inclusions 
$$
\Oo_{\bm{k}} \hookrightarrow \End_{\Oo_B}(A) \hookrightarrow \End_{\Oo_B}(A_{\overline{s}}),
$$
the second coming from the rigidity lemma for abelian schemes (\cite[Proposition 27.105]{GW2}). By Corollary \ref{endo ring}, either
$\End_{\Oo_B}(A_{\overline{s}}) \cong \Oo_{\bm{k}}$ or $\End_{\Oo_B}(A_{\overline{s}})$ is an order in a quaternion algebra,
so the same is true of $\End_{\Oo_B}(A)$.
The centralizer of $\Oo_{\bm{k}}$ in either such ring is $\Oo_{\bm{k}}$.
Hence $\fraka \cong \Oo_{\bm{k}}$ as an $\Oo_{\bm{k}}$-module, which means $\fraka$ is principal.
\end{proof}

The other important group action on $[\YYY^B(S)]$ comes from the Atkin-Lehner group $W_B$ of $\Oo_B$.
By definition, $W_B = \NN_{B^\times}(\Oo_B)/\QQ^\times\Oo_B^\times = \langle w_p : p \mid d_B\rangle$,
where $w_p \in \Oo_B$ has reduced norm $p$. As an abstract group, $W_B \cong \prod_{p \mid d_B}
\ZZ/2\ZZ$. The group $W_B$ acts on the set $[\YYY^B(S)]$ for any 
$\Oo_{\bm{K}}$-scheme $S$ as follows: for $w \in W_B$ and $\mathbf{A} = (A, i, \kappa) \in \YYY^B(S)$, define
$w\cdot \mathbf{A} = (A, i_w, \kappa)$, where $i_w : \Oo_B \to \End_S(A)$ is given by $i_w(x) = i(w^{-1}xw)$. The actions of
$W_B$ and $\Cl(\Oo_{\bm{k}})$ commute, so there is an induced action of $W_B \times \Cl(\Oo_{\bm{k}})$ on
$[\YYY^B(S)]$. 

\begin{proposition}\label{st1}
The group $W_B \times \Cl(\Oo_{\bm{k}})$ acts simply transitively on $[\YYY^B(\CC)]$.
\end{proposition}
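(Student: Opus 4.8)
The plan is to reduce the statement to the classical theory of CM elliptic curves over $\CC$ together with the classification of maximal orders in a quaternion algebra. The first step is to understand $[\YYY^B(\CC)]$ complex-analytically. A CMQM abelian surface $(A,i,\kappa)$ over $\CC$ has $A(\CC) = V/\Lambda$ with $V$ a $2$-dimensional $\CC$-vector space carrying commuting actions of $\Oo_B$ and $\Oo_{\bm{k}}$, and $\Lambda \subset V$ an $\Oo_B \otimes_{\ZZ} \Oo_{\bm{k}}$-stable lattice; the CM normalization condition pins down how $\Oo_{\bm{k}}$ acts on $V = \Lie(A)$ via the fixed embedding $\bm{k} \hookrightarrow \bm{K} \to \CC$. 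Since every prime dividing $d_B$ is inert in $\bm{k}$, the field $\bm{k}$ splits $B$, so $B \otimes_{\QQ} \bm{k} \cong \MM_2(\bm{k})$; this lets me write $V$ as $\bm{k}^2$ with $\Oo_B$ acting through an embedding $\Oo_B \hookrightarrow \MM_2(\Oo_{\bm{k}}')$ for an appropriate $\Oo_{\bm{k}}$-order. Concretely, I expect the upshot to be: $A \sim E \otimes_{\ZZ} \Oo_B$-style construction, i.e. $A$ is isogenous (QM-equivariantly) to $E^2$ for an elliptic curve $E$ with CM by $\Oo_{\bm{k}}$, and the QM structure corresponds to a choice of $\Oo_B$-stable lattice in $K_1^2$-type data. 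This is exactly the analogue of Proposition \ref{char p}(b) in the CM setting.

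The second step is to set up the dictionary with ideals. On the one hand, $[\YYY(\CC)]$, the set of isomorphism classes of CM elliptic curves over $\CC$ with CM by $\Oo_{\bm{k}}$, is a torsor under $\Cl(\Oo_{\bm{k}})$ by the classical theory (the action being $\fraka \mapsto \fraka \otimes_{\Oo_{\bm{k}}} E$, matching the $\Cl(\Oo_{\bm{k}})$-action defined in the text). On the other hand, QM structures $i : \Oo_B \hookrightarrow \End_{\Oo_{\bm{k}}}(E^2)$ compatible with a fixed polarization type, up to isomorphism, should be a torsor under $W_0 = \NN_{B^\times}(\Oo_B)/\QQ^\times \Oo_B^\times$: any two maximal orders in $B$ are conjugate (stated in the text, since $B$ is split at $\infty$ and hence has class number $1$ in the relevant sense — more precisely all maximal orders are conjugate by $B^\times$), and two embeddings giving isomorphic QM abelian surfaces differ by conjugation by an element normalizing $\Oo_B$, modulo the inner automorphisms $\Oo_B^\times$ and scalars $\QQ^\times$ that don't change the isomorphism class. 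So I would prove: (i) $W_0$ acts transitively on the set of QM structures on a fixed $E^2$ up to $\Oo_{\bm{k}}$-linear isomorphism, and (ii) the stabilizer is trivial. Combining with the $\Cl(\Oo_{\bm{k}})$-torsor structure on the underlying $E$, and checking the two actions commute (already asserted in the text) and that every class in $[\YYY^B(\CC)]$ has underlying elliptic curve of CM type $\Oo_{\bm{k}}$ (step one), yields that $W_0 \times \Cl(\Oo_{\bm{k}})$ acts simply transitively.

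For the details of transitivity in (i): given $(A,i,\kappa)$ and $(A,i',\kappa)$ over $\CC$ with the same underlying $\Oo_{\bm{k}}$-elliptic-curve-squared, both $i$ and $i'$ are embeddings $\Oo_B \hookrightarrow \End_{\Oo_{\bm{k}}}(A)$, and $\End_{\Oo_{\bm{k}}}(A) \otimes \QQ$ is $\MM_2(\bm{k})$ or a quaternion-type algebra into which $B \otimes \bm{k}$ embeds; by Noether–Skolem (Skolem applied over $\bm{k}$, as in the proof of Lemma \ref{isogenies 2}) $i$ and $i'$ are conjugate by some $u \in (\End^0 A)^\times$, and one checks $u$ can be taken $\Oo_{\bm{k}}$-linear and then that $u$ carries $\Oo_B$ to a maximal order conjugate to $\Oo_B$, so after adjusting, $u \in \NN_{B^\times}(\Oo_B)$; reducing modulo scalars and $\Oo_B^\times$ gives the $W_0$-element. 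Freeness in (ii): if $w \in W_0$ fixes $(A,i,\kappa)$ then conjugation by a lift $\widetilde w$ is an $\Oo_{\bm{k}}$-linear automorphism of $(A,i)$, i.e. $\widetilde w \in \kappa(\bm{k})^\times \cap \Oo_B\text{-centralizer}$ is a scalar-like element, forcing $w = 1$ in $W_0$.

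The main obstacle I anticipate is step one — proving every CMQM abelian surface over $\CC$ has underlying elliptic curve of the prescribed CM type and correctly identifying the relevant lattice/order data, since Proposition \ref{char p}(b) only says the endomorphism algebra is $\QQ$ or an imaginary quadratic field splitting $B$, and one must rule out the $\QQ$ case (here it is ruled out by the presence of $\kappa$) and match the field with $\bm{k}$ via the CM normalization condition. A secondary subtlety is bookkeeping the polarization: one must confirm that the canonical principal polarization of Proposition \ref{polarization} is compatible with the product structure $E^2$ and doesn't cut down the $W_0$-torsor, but since $\lambda$ is \emph{uniquely} determined by $(A,i)$ this compatibility is automatic once the QM structure is fixed.
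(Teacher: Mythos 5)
The paper's proof is a one-line citation to Jordan's Harvard thesis, which establishes that $W_0' \times \Cl(\Oo_{\bm{k}})$ acts simply transitively on $[\YYY^B(\CC)]$ where $W_0'$ is generated by the $w_p$ with $p \mid d_B$ inert in $\bm{k}$, together with the observation that the standing hypothesis forces $W_0' = W_0$. Your proposal instead tries to prove the torsor structure directly via the complex-analytic description and a Noether--Skolem argument — a genuinely different route, and in spirit it is exactly what one would do to make Jordan's result self-contained, so the comparison is worth spelling out.

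The difficulty is that your sketch, as written, never invokes the standing hypothesis that every $p \mid d_B$ is inert in $\bm{k}$, yet the statement is \emph{false} without it: if some $p \mid d_B$ is ramified in $\bm{k}$, then $w_p$ fixes every class in $[\YYY^B(\CC)]$ (locally at such a $p$ there is only one isomorphism class of $\Oo_{B,p} \otimes_{\ZZ_p} \Oo_{\bm{k},p}$-lattice, not two, so the Atkin--Lehner involution has nothing to swap), and only the subgroup $W_0'$ acts simply transitively. Any argument that does not visibly use inertness cannot be correct. Concretely, the gap sits in your step (i)--(ii): Noether--Skolem over $\bm{k}$ produces a conjugating element $u \in \GL_2(\bm{k}) \cong (B \otimes_{\QQ} \bm{k})^\times$, and the claim that $u$ ``can be taken $\Oo_{\bm{k}}$-linear and then carries $\Oo_B$ to a maximal order conjugate to $\Oo_B$, so after adjusting $u \in \NN_{B^\times}(\Oo_B)$'' hides the real work. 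Two QM structures $i, i'$ on the same CM abelian surface correspond to two $\Oo_B \otimes_{\ZZ} \Oo_{\bm{k}}$-lattice structures on $H_1(A(\CC), \ZZ)$, and matching them up to a $W_0$-shift requires knowing the local module classification at each $\ell \mid d_B$ — this is precisely Lemma \ref{types}/Lemma \ref{local modules} of the paper, whose dichotomy (exactly two classes, interchanged by $w_\ell$) uses $\Oo_{\bm{k},\ell} \cong \ZZ_{\ell^2}$, i.e.\ $\ell$ inert. Your freeness argument (``$\widetilde{w} \in \kappa(\bm{k})^\times \cap \Oo_B$-centralizer is a scalar-like element, forcing $w = 1$'') is also too loose: a lift $\widetilde{w}$ of a nontrivial $w$ has $\Nrd(\widetilde{w})$ a nonunit, so $i(\widetilde{w})\kappa(c)$ is never a genuine automorphism of $A$ for any $c \in \bm{k}^\times$, and you need to turn this degree obstruction into the statement that $(A, i_w, \kappa) \not\cong (A, i, \kappa)$ — again a lattice-theoretic statement at the primes dividing $d_B$. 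So the approach is salvageable (and is essentially what Jordan and the paper's later bijection $\LLL \to [\YYY^B(\CC)]$ do), but you need to run the argument through the local lattice classification and flag explicitly where inertness is used, rather than through Noether--Skolem alone.
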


\begin{proof}
It is shown in \cite{Jordan} that $W_B' \times \Cl(\Oo_{\bm{k}})$ acts simply transitively on $[\YYY^B(\CC)]$, where $W_B' \subset W_B$ is the subgroup generated by $\{w_p : \text{$p \mid d_B$, $p$ inert in $\bm{k}$}\}$. However, we are assuming 
each prime $p \mid d_B$ is inert in $\bm{k}$.
\end{proof}

\subsection{Structure of CMQM abelian surfaces}

The main result of this section states that any CMQM abelian surface arises from a CM
elliptic curve through the Serre tensor construction described in Section 3.2. We will use this in the next section to give
a description, in terms of certain coordinates, of the ring $\End_{\Oo_B}(A) \otimes_{\ZZ} \ZZ_p$ for $A$ a CMQM abelian surface over $\overline{\FF}_p$ for $p \mid d_B$. 
Fix a prime ideal $\frakP \subset \Oo_{\bm{K}}$ of residue characteristic $p$. Let $\WWW_{\bm{K}_\frakP}$ be the ring of integers in the completion
of the maximal unramified extension of $\bm{K}_{\frakP}$, so in particular $\WWW_{\bm{K}_\frakP}$ is an $\Oo_{\bm{K}}$-algebra. Let $\mathbf{CLN}_{\bm{K}_\frakP}$ be the category whose objects are complete local Noetherian $\WWW_{\bm{K}_\frakP}$-algebras with residue field $\overline{\FF}_\frakP$,
where $\FF_\frakP = \Oo_{\bm{K}}/\frakP$,
and morphisms $R \to R'$ are local $\WWW_{\bm{K}_\frakP}$-algebra homomorphisms inducing the identity $\overline{\FF}_\frakP \to \overline{\FF}_\frakP$ on residue fields.

\begin{definition}
Suppose $\widetilde{R} \to R$ is a surjection of $\Oo_{\bm{K}}$-algebras and $\mathbf{A} = (A, i, \kappa) \in \YYY^B(R)$. 
A \textit{deformation of} $\mathbf{A}$ (or just a \textit{deformation of} $A$) \textit{to} $\widetilde{R}$ is an object $(\widetilde{A}, \widetilde{i}, \widetilde{\kappa})
\in \YYY^B(\widetilde{R})$ together with an $\Oo_{\bm{k}}$-linear isomorphism $\widetilde{A} \otimes_{\widetilde{R}} R \to A$
of QM abelian surfaces.
\end{definition}

If $\widetilde{R} \to R$ is a surjection of $\Oo_{\bm{K}}$-algebras, $(A, i, \kappa) \in \YYY^B(R)$, and 
$(\widetilde{A}, \widetilde{i}, \widetilde{\kappa}) \in \YYY^B(\widetilde{R})$ is a deformation of $(A, i, \kappa)$, then
it is easy to check that the principal polarizations $\widetilde{\lambda} : \widetilde{A} \to (\widetilde{A})^\vee$
and $\lambda : A \to A^\vee$ defined in Proposition \ref{polarization} are compatible in the sense that $\lambda$ is
the reduction of $\widetilde{\lambda}$.
Let $\mathbf{A} = (A, i, \kappa) \in \YYY^B(\overline{\FF}_\frakP)$ and define a functor $\Def_{\Oo_B}(A, \Oo_{\bm{k}}) : \mathbf{CLN}_{\bm{K}_\frakP} \to \mathbf{Sets}$ that assigns to each object $R$ of $\mathbf{CLN}_{\bm{K}_\frakP}$ the set of isomorphism classes of deformations of $\mathbf{A}$
to $R$. 

\begin{proposition} \label{reduction}
The functor $\Def_{\Oo_B}(A, \Oo_{\bm{k}})$ is represented by $\WWW_{\bm{K}_\frakP}$, so there is a bijection
$$
\Def_{\Oo_B}(A, \Oo_{\bm{k}})(R) \cong \Hom_{\mathbf{CLN}_{\bm{K}_\frakP}}(\WWW_{\bm{K}_\frakP}, R),
$$
which is a one point set, for any object $R$ of $\mathbf{CLN}_{\bm{K}_\frakP}$. In particular, the reduction map
$[\YYY^B(R)] \to [\YYY^B(\overline{\FF}_\frakP)]$ is a bijection for any $R \in \mathbf{CLN}_{\bm{K}_\frakP}$.
\end{proposition}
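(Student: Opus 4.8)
The plan is to prove the representability assertion and then read off the ``in particular'' clause. Since $\WWW_{\bm{K}_\frakP}$ is the initial object of $\mathbf{CLN}_{\bm{K}_\frakP}$, the set $\Hom_{\mathbf{CLN}_{\bm{K}_\frakP}}(\WWW_{\bm{K}_\frakP}, R)$ is automatically a one-point set for every $R$, so it suffices to show that $\Def_{\Oo_B}(A, \Oo_{\bm{k}})(R)$ is a one-point set for every $R \in \mathbf{CLN}_{\bm{K}_\frakP}$ (the representing object is then forced to be $\WWW_{\bm{K}_\frakP}$, with the unique deformation over it serving as the universal object). For this I would first note that $\Def_{\Oo_B}(A, \Oo_{\bm{k}})(\overline{\FF}_\frakP)$ is a single point, the only deformation of $A$ over its own residue field being $A$ itself, and then show that for every surjection $\widetilde{R} \twoheadrightarrow R$ in $\mathbf{CLN}_{\bm{K}_\frakP}$ with square-zero kernel the reduction map $\Def_{\Oo_B}(A, \Oo_{\bm{k}})(\widetilde{R}) \to \Def_{\Oo_B}(A, \Oo_{\bm{k}})(R)$ is bijective. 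Filtering an Artinian $R$ by the powers of its maximal ideal $\frakm_R$ then gives the claim for Artinian $R$, and a passage to the limit over the quotients $R/\frakm_R^n$ (using Grothendieck's algebraization theorem for polarized abelian schemes over complete local Noetherian rings) handles a general $R$. The ``in particular'' clause follows: surjectivity of $[\YYY^B(R)] \to [\YYY^B(\overline{\FF}_\frakP)]$ is the existence of a lift of any $A$ over $\overline{\FF}_\frakP$, and injectivity is its uniqueness up to isomorphism.

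Thus the whole content is the bijectivity of the reduction map along a square-zero extension $\widetilde{R} \twoheadrightarrow R$, which I would establish by Serre--Tate deformation theory together with Grothendieck--Messing. After reducing to a nilpotent situation we may assume $p$ is nilpotent in $R$; by Serre--Tate, deforming the CMQM abelian surface $(A, i, \kappa)$ from $R$ to $\widetilde{R}$ is the same as deforming the $p$-divisible group $A[p^\infty]$ together with the actions of $\Oo_B \otimes_{\ZZ} \ZZ_p$ and $\Oo_{\bm{k}} \otimes_{\ZZ} \ZZ_p$ and the CM normalization condition on $\Lie$; the principal polarization of Proposition \ref{polarization} and its unique lift come along for free and impose no extra constraint, as noted before the statement, and condition (\ref{Kottwitz}) is automatic by Corollary \ref{Kottwitz 2}. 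By Grothendieck--Messing such deformations are classified by the lifts of the Hodge filtration to a rank-two $\widetilde{R}$-direct summand $\omega$ of the rank-four crystal $\mathbb{D}(A[p^\infty])(\widetilde{R})$ that is stable under $\Oo_B \otimes \ZZ_p$ and $\Oo_{\bm{k}} \otimes \ZZ_p$ and induces the prescribed CM structure on the quotient. So it remains to show there is exactly one such $\omega$.

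Here is the mechanism I would use. The $\Oo_{\bm{k}} \otimes \ZZ_p$-action makes $V_p(A)$ free of rank two over $\bm{k} \otimes_{\QQ} \QQ_p$ (using that $\kappa$ is injective, and the polarization to balance the case of $p$ split in $\bm{k}$), hence $\mathbb{D}(A[p^\infty])(\widetilde{R})$ is, Zariski-locally on $\widetilde{R}$, free of rank two over $\Oo_{\bm{k}} \otimes_{\ZZ} \widetilde{R}$. Every object of $\mathbf{CLN}_{\bm{K}_\frakP}$ contains $\ZZ_{p^2}$ (Teichm\"uller lifts, since its residue field is $\overline{\FF}_p$), so when $p$ is unramified in $\bm{k}$ --- in particular whenever $p \mid d_B$, as such primes are inert in $\bm{k}$ --- the algebra $\Oo_{\bm{k}} \otimes_{\ZZ} \widetilde{R}$ splits as a product and $\mathbb{D}(A[p^\infty])(\widetilde{R}) = N^{+} \oplus N^{-}$ according to the structure embedding $\Oo_{\bm{k}} \to \widetilde{R}$ and its conjugate; the CM normalization condition forces $N^{-} \subseteq \omega$, and since both are direct summands of rank two we get $\omega = N^{-}$, which is visibly stable under $\Oo_B \otimes \ZZ_p$ (this commutes with $\Oo_{\bm{k}} \otimes \ZZ_p$) and reduces to the given filtration over $R$ by the same description. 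When $p$ ramifies in $\bm{k}$ (so $p \nmid d_B$) the same role is played by $(\pi - \psi(\pi))\,\mathbb{D}(A[p^\infty])(\widetilde{R})$, where $\pi$ is a uniformizer of $\Oo_{\bm{k}} \otimes \ZZ_p$ and $\psi$ is the structure map; this is again a rank-two direct summand forced to equal $\omega$, which is the Lubin--Tate deformation theory of formal $\Oo_{\bm{k}} \otimes \ZZ_p$-modules. In every case $\omega$ is uniquely determined, so by Grothendieck--Messing the deformation exists and is unique.

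The step I expect to be the main obstacle is the case $p \mid d_B$, where $\Oo_B \otimes \ZZ_p$ is a maximal order in a division algebra: there $A[p^\infty]$ is a special formal $\Oo_B \otimes \ZZ_p$-module in the sense of Drinfeld, whose deformation space \emph{before imposing the CM} is one-dimensional rather than a point, so some genuinely new input is needed beyond the elliptic-curve case treated in \cite{Howard}. The resolution is exactly the observation above --- that every object of $\mathbf{CLN}_{\bm{K}_\frakP}$ contains $\ZZ_{p^2}$ and hence splits $\Oo_B \otimes \ZZ_p$, after which the $\Oo_{\bm{k}}$-action collapses the Hodge filtration to the single eigen-summand $N^{-}$. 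Verifying that this eigen-decomposition and its reductions are compatible with all the structures in play, and handling the $p$-ramified-in-$\bm{k}$ variant through the theory of formal $\Oo_{\bm{k}} \otimes \ZZ_p$-modules (for which one may invoke \cite{BC}), is where the real work lies; everything else is bookkeeping.
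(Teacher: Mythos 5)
Your proposal is correct in outline but takes a genuinely different route from the paper's. The paper reduces to Artinian $R$, then simply cites \cite[Proposition 2.1.2]{Howard2}, which asserts both that $(A,\kappa)$ has a unique deformation $\widetilde{A}$ as an abelian scheme with $\Oo_{\bm{k}}$-action \emph{and} that reduction gives an isomorphism $\End_{\Oo_{\bm{k}}}(\widetilde{A}) \to \End_{\Oo_{\bm{k}}}(A)$; the $\Oo_B$-action, being $\Oo_{\bm{k}}$-linear, then lifts uniquely through that isomorphism, and the passage to general $R$ is by Grothendieck's existence theorem. Your approach instead unpacks what amounts to the proof of that cited proposition, running Serre--Tate and Grothendieck--Messing directly and using the CM normalization to pin down the Hodge filtration as the eigen-summand $N^{-}$ (or the $(\pi-\psi(\pi))$-image in the ramified case). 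The mechanism is exactly right --- the $\Oo_{\bm{k}}$-action leaves a unique admissible lift of the filtration, and $\Oo_B$-stability is automatic since $\Oo_B$ commutes with $\Oo_{\bm{k}}$. What your version buys is self-containment and an explicit justification of why the $p \mid d_B$ case (Drinfeld special formal $\Oo_B$-modules, whose deformation space is a priori one-dimensional) collapses to a point once the CM is imposed; what the paper's version buys is brevity, at the cost of importing a result whose proof is invisible here. Two small cleanups: the phrase ``$V_p(A)$ free of rank two over $\bm{k}\otimes_{\QQ}\QQ_p$'' should be phrased in terms of the Dieudonn\'e module or crystal rather than a rational $p$-adic Tate module (which is not the right object for the connected part in characteristic $p$); and the ``Zariski-locally on $\widetilde{R}$'' qualifier is superfluous since $\widetilde{R}$ is local, so freeness over $\Oo_{\bm{k}}\otimes\widetilde{R}$ follows globally from Nakayama together with the fact that $D(A)$ is free of rank two over $\Oo_{\bm{k}}\otimes W$.
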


\begin{proof}
Let $R$ be an Artinian object of $\mathbf{CLN}_{\bm{K}_\frakP}$, so the reduction map $R \to \overline{\FF}_\frakP$ is surjective with nilpotent kernel. Using Corollary \ref{polarization2}, by \cite[Proposition 2.1.2]{Howard2}, $A$ has a unique deformation $\widetilde{A}$, as an abelian scheme with
an action of $\Oo_{\bm{k}}$, to $R$. Also, the reduction map $\End_{\Oo_{\bm{k}}}(\widetilde{A}) \to \End_{\Oo_{\bm{k}}}(A)$ is an isomorphism by \cite[Theorem 2.2.1(2)]{Howard4}. Therefore we can lift the $\Oo_{\bm{k}}$-linear
action of $\Oo_B$ on $A$ to a unique such action on $\widetilde{A}$. This shows that each object of
$\YYY^B(\overline{\FF}_\frakP)$ has a unique deformation to an object of $\YYY^B(R)$ for any Artinian $R$
in $\mathbf{CLN}_{\bm{K}_\frakP}$. Now let $R$ be an arbitrary object of $\mathbf{CLN}_{\bm{K}_\frakP}$, so $R = \mil R/\frakm^n$, where $\frakm \subset R$
is the maximal ideal. The result now follows
from the Artinian case, the bijection 
$$
\Hom_{\mathbf{CLN}_{\bm{K}_\frakP}}(\WWW_{\bm{K}_\frakP}, R) \cong \mil \Hom_{\mathbf{CLN}_{\bm{K}_\frakP}}(\WWW_{\bm{K}_\frakP}, R/\frakm^n),
$$
and the fact that the natural map 
$$
\Def_{\Oo_B}(A, \Oo_{\bm{k}})(R) \to \mil \Def_{\Oo_B}(A, \Oo_{\bm{k}})(R/\frakm^n)
$$
is a bijection by Grothendieck's existence theorem (\cite[Theorem 3.4]{Conrad}).
\end{proof}

\begin{proposition} \label{st}
The group $W_B \times \Cl(\Oo_{\bm{k}})$ acts simply transitively on $[\YYY^B(\overline{\FF}_\frakP)]$.
\end{proposition}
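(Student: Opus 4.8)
The plan is to deduce the characteristic-$p$ statement from the characteristic-zero one (Proposition \ref{st1}) by lifting. First I would note that $[\YYY^B(\overline{\FF}_\frakP)]$ is nonempty: there exists a CMQM abelian surface over $\overline{\FF}_\frakP$, for instance by reducing one over a number ring, using properness of $\YYY^B \to \Spec(\Oo_{\bm{K}})$ together with the fact that $\YYY^B(\CC) \neq \EM$ from Proposition \ref{st1}. Given that the actions of $W_0$ and $\Cl(\Oo_{\bm{k}})$ on $[\YYY^B(S)]$ are functorial in $S$ and commute, it suffices to show the action is transitive and free on $[\YYY^B(\overline{\FF}_\frakP)]$; since $\#(W_0 \times \Cl(\Oo_{\bm{k}})) = \#[\YYY^B(\CC)]$ is finite, transitivity plus either the correct cardinality or freeness will do.

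The key mechanism is Proposition \ref{reduction}: the reduction map $[\YYY^B(R)] \to [\YYY^B(\overline{\FF}_\frakP)]$ is a bijection for every $R \in \mathbf{CLN}_{\bm{K}_\frakP}$, in particular for $R = \WWW_{\bm{K}_\frakP}$. Applying this with $R$ the completion of the ring of integers of a suitable finite extension of $\bm{K}_\frakP$, or more simply with $R = \WWW_{\bm{K}_\frakP}$ itself, every point of $\YYY^B(\overline{\FF}_\frakP)$ lifts uniquely to a point of $\YYY^B(\WWW_{\bm{K}_\frakP})$, hence (after choosing an embedding $\WWW_{\bm{K}_\frakP} \hookrightarrow \CC$ compatible with the $\Oo_{\bm{K}}$-structure) to a point of $\YYY^B(\CC)$. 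This produces a bijection $[\YYY^B(\overline{\FF}_\frakP)] \cong [\YYY^B(\CC)]$ that I would argue is $W_0 \times \Cl(\Oo_{\bm{k}})$-equivariant: the $W_0$-action only modifies the embedding $i$ by an inner automorphism of $\Oo_B$ and so commutes with reduction, while the $\Cl(\Oo_{\bm{k}})$-action is given by the Serre tensor construction $\fraka \otimes_{\Oo_{\bm{k}}} -$, which commutes with base change and hence with reduction. Equivariance transports the simply transitive action of Proposition \ref{st1} to $[\YYY^B(\overline{\FF}_\frakP)]$.

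The main obstacle is verifying that the reduction bijection of Proposition \ref{reduction} is genuinely compatible with the two group actions, and in particular that lifting from $\overline{\FF}_\frakP$ to $\WWW_{\bm{K}_\frakP}$ and then to $\CC$ is well defined on isomorphism classes independently of auxiliary choices (the choice of lift of the residue field into an algebraic closure, and the choice of complex embedding). For $W_0$ this is essentially formal since the action does not touch the abelian scheme or the CM structure, only the quaternionic action through a fixed inner automorphism, which is preserved under any base change. For $\Cl(\Oo_{\bm{k}})$ one needs the canonical isomorphism $\fraka \otimes_{\Oo_{\bm{k}}} (\widetilde{A} \otimes_{\WWW} \overline{\FF}_\frakP) \cong (\fraka \otimes_{\Oo_{\bm{k}}} \widetilde{A}) \otimes_{\WWW} \overline{\FF}_\frakP$ from \cite[Section 7]{Conrad}, which is precisely the statement that the Serre tensor construction commutes with base change; the deformation-theoretic rigidity in Proposition \ref{reduction} then forces the unique lift of $\fraka \otimes_{\Oo_{\bm{k}}} A$ to be $\fraka \otimes_{\Oo_{\bm{k}}}(\text{lift of }A)$. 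Once equivariance is in place, the simply transitive action over $\CC$ from Proposition \ref{st1} immediately yields the simply transitive action over $\overline{\FF}_\frakP$, completing the proof; since $\frakP$ was an arbitrary prime of $\Oo_{\bm{K}}$, the statement holds over $\overline{\FF}_p$ for every prime $p$ as well.
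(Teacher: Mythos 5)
Your proposal is correct and follows essentially the same strategy as the paper: lift a geometric point over $\overline{\FF}_\frakP$ to characteristic zero via the deformation-theoretic bijection $[\YYY^B(\WWW_{\bm{K}_\frakP})] \cong [\YYY^B(\overline{\FF}_\frakP)]$ of Proposition \ref{reduction}, pass to an algebraically closed field of characteristic zero, check equivariance for $W_0$ (which is formal) and for $\Cl(\Oo_{\bm{k}})$ (via compatibility of the Serre tensor construction with base change), and then invoke Proposition \ref{st1}. The only cosmetic difference is that the paper routes through $\CC_p$ via a fixed embedding $\WWW_{\bm{K}_\frakP} \hookrightarrow \CC_p$ and explicitly exhibits the inverse direction (descend to a number field, reduce, and extend), whereas you go directly to an abstract embedding $\WWW_{\bm{K}_\frakP} \hookrightarrow \CC$; both work, and your extra discussion of why the Serre tensor step commutes with reduction is a useful elaboration of what the paper leaves implicit.
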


\begin{proof}
Let $\CC_p$ be the field of complex $p$-adic numbers and fix a ring embedding $\WWW_{\bm{K}_\frakP} \to \CC_p$.
There is a $W_B\times\Cl(\Oo_{\bm{k}})$-equivariant bijection $[\YYY^B(\CC_p)] \to [\YYY^B(\overline{\FF}_{\frakP})]$ defined as follows. Let $A \in \YYY^B(\CC_p)$.
Since $A$ is a CM abelian variety over $\CC_p$ (Remark \ref{CM remark}), it descends to a number field, which means there is an abelian surface $A_0$
over $L$ with an action of $\Oo_{\bm{k}}$, for some number field $L$, and an isomorphism $A_0 \otimes_L \CC_p \cong A$
compatible with the actions of $\Oo_{\bm{k}}$ on each. By passing to a finite extension of $L$ if necessary,
we may assume $\End_{\overline{L}}(A_0) = \End_L(A_0)$, where $\overline{L}$ is an algebraic closure of $L$.
Now fix a prime $\frakp \subset \Oo_L$ lying over $p$.
Passing to a further finite extension of $L$, we may assume $A_0$ has good reduction
at $\frakp$, since $A_0$ is a CM abelian variety.
From \cite[Theorem 2.1(1)]{Conrad}, the natural map $\End_{\overline{L}}(A_0) \to
\End_{\CC_p}(A)$ is an isomorphism, so there is an isomorphism $\End_{\Oo_{\bm{k}}}(A_0) \to \End_{\Oo_{\bm{k}}}(A)$. Therefore the
$\Oo_{\bm{k}}$-linear $\Oo_B$-action on $A$ induces such an action on $A_0$, which means $A_0$ is a CMQM abelian surface over $L$.
 
Let $\Aa_0$ be the N\'eron model of $A_0$ at $\frakp$, 
so $\Aa_0$ is an abelian scheme over $\Oo_{L, \frakp}$ satisfying $\Aa_0 \otimes_{\Oo_{L, \frakp}}
\text{Frac}(\Oo_{L, \frakp}) \cong A_0$. Since $\End_L(A_0) \cong \End_{\Oo_{L, \frakp}}(\Aa_0)$, 
there are induced commuting actions of $\Oo_B$ and
$\Oo_{\bm{k}}$ on $\Aa_0$, making it into a CMQM abelian surface over $\Oo_{L, \frakp}$. Finally, let $\widetilde{A}_0 = \Aa_0 \otimes_{\Oo_{L, \frakp}} \Oo_L/\frakp$, so $\widetilde{A}_0$ is a CMQM abelian surface over $\Oo_L/\frakp$.
Define $[\YYY^B(\CC_p)] \to [\YYY^B(\overline{\FF}_\frakP)]$ by $A \mapsto \widetilde{A}_0 \otimes_{\Oo_L/\frakp} \overline{\FF}_\frakP$.
This map does not depend on the abelian surface $A_0$ or the number field $L$ such that $A_0 \otimes_L \CC_p \cong A$
since we are base extending to $\overline{\FF}_\frakP$ in the end. 

Next define a map $[\YYY^B(\overline{\FF}_\frakP)] \to [\YYY^B(\CC_p)]$ as the composition
$$
[\YYY^B(\overline{\FF}_\frakP)] \to [\YYY^B(\WWW_{\bm{K}_\frakP})] \to [\YYY^B(\CC_p)],
$$
where the first map is the inverse of the reduction map in Proposition \ref{reduction} and the second map is base extension to $\CC_p$.
This is the inverse to the map $[\YYY^B(\CC_p)] \to [\YYY^B(\overline{\FF}_\frakP)]$ defined above. The result now follows from
Proposition \ref{st1}.
\end{proof}

\begin{corollary}\label{st2}
The group $W_B \times \Cl(\Oo_{\bm{k}})$ acts simply transitively on $[\YYY^B(k)]$ for any algebraically closed field $k$ over $\Oo_{\bm{K}}$.
\end{corollary}

\begin{proof}
Let $A \in [\YYY^B(k)]$ and first suppose $k$ is of characteristic $0$, so there is an embedding $\overline{\QQ} \hookrightarrow k$. Then there is an abelian variety $A_0$ over $\overline{\QQ}$ such 
that $A_0 \otimes_{\overline{\QQ}}k \cong A$ and $\End_{\overline{\QQ}}(A_0) \cong \End_k(A)$ (\cite[Theorem 1.7.2.1]{CCO}). Hence $A_0$ has commuting actions of $\Oo_B$ and $\Oo_{\bm{k}}$, making it into a CMQM abelian surface over $\overline{\QQ}$. This shows that base extension
$[\YYY^B(\overline{\QQ})] \to [\YYY^B(k)]$ is surjective, and clearly it is $W_B\times\Cl(\Oo_{\bm{k}})$-equivariant. 

For any $A_1, A_2 \in [\YYY^B(\overline{\QQ})]$, the natural map 
$$
\Hom_{\overline{\QQ}}(A_1, A_2) \to \Hom_k(A_1 \otimes_{\overline{\QQ}} k, A_2 \otimes_{\overline{\QQ}} k)
$$
is an isomorphism by \cite[Theorem 2.1(2)]{Conrad}. Therefore, if $f : A_1 \otimes_{\overline{\QQ}} k \to A_2 \otimes_{\overline{\QQ}} k$ is an isomorphism in $\YYY^B(k)$, then $f = f_0 \otimes \id_k$ for some $f_0 \in \Hom_{\overline{\QQ}}(A_1, A_2)$, which is necessarily an isomorphism of abelian varieties by faithfully flat descent. Also, $f$ being $\Oo_B \otimes_{\ZZ} \Oo_{\bm{k}}$-linear implies $f_0$ is as well, which shows $f_0 : A_1 \to A_2$ is an isomorphism in $\YYY^B(\overline{\QQ})$ and hence base extension 
$[\YYY^B(\overline{\QQ})] \to [\YYY^B(k)]$ is a bijection. Now fix an embedding $\overline{\QQ} \hookrightarrow \CC$. It follows from \cite[Corollary 7.10]{Milne1} that base extension $[\YYY^B(\overline{\QQ})] \to [\YYY^B(\CC)]$ is a bijection, so the result in this case is a consequence of
Proposition \ref{st1}.

Next suppose $k$ is of characteristic $p$, so there is an embedding $\overline{\FF}_\frakP \hookrightarrow k$. Since there is a quartic CM field $L$ such that $\Oo_L \subset \End_k(A)$ (Remark \ref{CM remark}), by
\cite[Theorem 1.7.5.1]{CCO}, there is an abelian variety $A_0$ over $\overline{\FF}_\frakP$ such that
$A_0 \otimes_{\overline{\FF}_\frakP} k \cong A$. Since the natural map
$$
\Hom_{\overline{\FF}_\frakP}(A_1, A_2) \to \Hom_k(A_1 \otimes_{\overline{\FF}_\frakP} k, A_2 \otimes_{\overline{\FF}_\frakP} k)
$$
is an isomorphism for any $A_1, A_2 \in [\YYY^B(\overline{\FF}_\frakP)]$ by \cite[Theorem 2.1(2)]{Conrad}, the same argument as above shows that base extension $[\YYY^B(\overline{\FF}_\frakP)] \to [\YYY^B(k)]$ is a bijection, so the result in this case is a consequence of Proposition \ref{st}.
\end{proof}

\begin{remark}\label{st remark}
Similar proofs as in Proposition \ref{st} and Corollary \ref{st2} show that for any algebraically closed field $k$ over $\Oo_{\bm{K}}$, the group $\Cl(\Oo_{\bm{k}})$ acts simply transitively on the set $[\YYY(k)]$, using the special case of $k = \CC$ given in \cite[Proposition 1.2]{Silverman}.
\end{remark}

Our next goal is to prove there is an isomorphism as in (\ref{decomposition}). It will be a consequence of this isomorphism
that any $A \in \YYY^B(S)$ is of the form $M \otimes_{\Oo_{\bm{k}}} E$ for some $E \in \YYY(S)$ and
some $\Oo_B \otimes_{\ZZ}\Oo_{\bm{k}}$-module $M$, free of rank $4$ over $\ZZ$.
To prove this result, we will describe a bijection between the set of isomorphism classes of such modules $M$ and the set $[\YYY^B(\CC)]$.

For the remainder of this section
set $\Oo = \Oo_B \otimes_{\ZZ} \Oo_{\bm{k}}$ and $\Oo_p = \Oo \otimes_{\ZZ} \ZZ_p$ for any prime $p$. Define $\LLL$ to be the set of isomorphism
classes of $\Oo$-modules that are free of rank $4$ over $\ZZ$ and $\KKK$ to be the set of $\Oo_B^\times$-conjugacy
classes of ring embeddings $\Oo_{\bm{k}} \hookrightarrow \Oo_B$. Note that since $\bm{k}$ embeds in 
$B$ and all maximal orders of $B$ are isomorphic, the set $\KKK$ is nonempty. We begin by examining the local structure of modules
in $\LLL$.

\begin{lemma}\label{Eichler}
Fix a prime $p$, let $\Delta$ be the maximal order in the unique quaternion division algebra $\Delta_{\QQ}$
over $\QQ_p$, and fix an embedding $\ZZ_{p^2} \hookrightarrow \Delta$. There is an isomorphism of rings $\ZZ_{p^2} \otimes_{\ZZ_p} \Delta \cong R_1$, where
$$
R_1 = \begin{bmatrix} \ZZ_{p^2} & \ZZ_{p^2} \\ p\ZZ_{p^2} & \ZZ_{p^2} \end{bmatrix}
$$
is the standard Eichler order of level $1$ in $\MM_2(\QQ_{p^2})$.
\end{lemma}

\begin{proof}
There is a ring homomorphism $f : \ZZ_{p^2} \to \End_{\ZZ_{p^2}}(\Delta)$ given by $f(a)(\delta) = a\delta$, and
there is a ring homomorphism $g : \Delta \to \End_{\ZZ_{p^2}}(\Delta)$ given by $g(x)(\delta) = \delta x^{\iota}$,
where $x \mapsto x^{\iota}$ is the main involution on $\Delta_{\QQ}$. As $f$ and $g$ have commuting images, there is an induced ring homomorphism
$$
\Phi : \ZZ_{p^2} \otimes_{\ZZ_p} \Delta \to \End_{\ZZ_{p^2}}(\Delta) \cong \MM_2(\ZZ_{p^2})
$$
given by $\Phi(a \otimes x)(\delta) = a\delta x^{\iota}$. Tensoring this map with $\QQ_{p^2}$ induces the natural isomorphism 
$\QQ_{p^2} \otimes_{\QQ_p} \Delta_{\QQ} \cong \MM_2(\QQ_{p^2})$ (the maximal subfield $\QQ_{p^2} \subset 
\Delta_{\QQ}$ containing $\QQ_p$ splits $\Delta_{\QQ}$), so $\ker \Phi$ is a torsion $\ZZ_p$-module. However,
$\ZZ_{p^2} \otimes_{\ZZ_p} \Delta$ is a torsion-free $\ZZ_p$-module, which means $\Phi$ is injective.

Let $\frakm_{\Delta} \subset \Delta$ be the unique maximal ideal. Then $\End_{\ZZ_{p^2}}(\frakm_{\Delta})$ and
$\End_{\ZZ_{p^2}}(\Delta)$ are distinct maximal orders in $\End_{\QQ_{p^2}}(\Delta \otimes_{\ZZ_{p^2}} \QQ_{p^2})
\cong \MM_2(\QQ_{p^2})$ and 
$$
\imm\Phi \subset R' = \End_{\ZZ_{p^2}}(\Delta) \cap \End_{\ZZ_{p^2}}(\frakm_{\Delta}).
$$
Since $R'$ is an Eichler order in $\MM_2(\QQ_{p^2})$, it is conjugate to 
$$
R_n = \begin{bmatrix} \ZZ_{p^2} & \ZZ_{p^2} \\ p^n\ZZ_{p^2} & \ZZ_{p^2} \end{bmatrix}
$$
for some $n \gqq 1$ (\cite[Lemma A.9(2)]{Conrad}). To show $\imm\Phi = R_1$, we will consider the
discriminants of the orders $\imm\Phi \cong \ZZ_{p^2} \otimes_{\ZZ_p} \Delta$ and $R'$ in $\MM_2(\QQ_{p^2})$.
By \cite[Example A.13]{Conrad}, $\disc(\Delta) = p^2\ZZ_p$, and thus $\disc(\imm\Phi) = 
\disc(\Delta)\ZZ_{p^2} = p^2\ZZ_{p^2}$. By \cite[Example A.12]{Conrad}, $\disc(R') = \disc(R_n) = p^{2n}\ZZ_{p^2}$.
As $\imm\Phi \subset R'$, $p^{2n}\ZZ_{p^2} \mid p^2\ZZ_{p^2}$, so
we must have $n = 1$ and $\imm\Phi = R' \cong R_1$.
\end{proof}

\begin{lemma}\label{types}
Fix a prime $p$ and an embedding $\ZZ_{p^2} \hookrightarrow \Delta$ so that
there is a decomposition $\Delta = \ZZ_{p^2} \oplus \ZZ_{p^2}\Pi$, where $\Pi$ is a uniformizer satisfying
$\Pi^2 = p$ and $\Pi a = \overline{a}\Pi$ for all $a \in \ZZ_{p^2}$. 
Then any ring homomorphism $f : \Delta \to \MM_2(\ZZ_{p^2})$ is $\GL_2(\ZZ_{p^2})$-conjugate to exactly one of the following two maps{\upshape \hspace{.5mm}:}
$$
f_1 : a + b\Pi \mapsto \begin{bmatrix} a & b \\ p\overline{b} & \overline{a}\end{bmatrix}, \quad 
f_2 : a + b\Pi \mapsto \begin{bmatrix} a & pb \\ \overline{b} & \overline{a}\end{bmatrix}.
$$
\end{lemma}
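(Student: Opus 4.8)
The plan is to reduce the classification of ring homomorphisms $f : \Delta \to \MM_2(\ZZ_{p^2})$ to a lattice-theoretic statement about the $\Delta$-module structures on $\ZZ_{p^2}^2$ that are free of rank $1$ over $\ZZ_{p^2}$. First I would observe that a ring homomorphism $f : \Delta \to \MM_2(\ZZ_{p^2})$ is the same datum as a left $\Delta$-module structure on the $\ZZ_{p^2}$-module $V = \ZZ_{p^2}^2$ that is compatible with the given $\ZZ_{p^2}$-action, and that two such homomorphisms are $\GL_2(\ZZ_{p^2})$-conjugate precisely when the resulting $\Delta$-modules are isomorphic. Since $\Delta \otimes_{\ZZ_p} \QQ_p$ is the quaternion division algebra $D$ over $\QQ_p$, extending scalars gives a $D$-module structure on $V \otimes \QQ_p \cong \QQ_{p^2}^2$; as $D$ has a unique simple module (itself, up to iso) of $\QQ_p$-dimension $4$, this forces $V \otimes \QQ_p \cong D$ as a left $D$-module, so in particular such an $f$ exists and every $\Delta$-lattice in question is a full $\ZZ_p$-lattice in $D$ stable under left multiplication by $\Delta$ — that is, a left $\Delta$-ideal in $D$ — with the extra constraint that it be free of rank $1$ over the fixed copy of $\ZZ_{p^2} \subset \Delta$.

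Next I would classify these lattices. Every left ideal of the (noncommutative) discrete valuation ring $\Delta$ is two-sided and is a power of the maximal ideal $\Delta\Pi = \Pi\Delta$, so up to the left $\Delta$-action and scaling by $\QQ_p^\times$ there are exactly two homothety classes of left $\Delta$-lattices in $D$ that matter, represented by $\Delta$ itself and by $\Delta\Pi$ (equivalently $\Pi^{-1}\Delta$); all even powers of $\Pi$ give the $\Delta$-class of $\Delta$ and all odd powers give that of $\Delta\Pi$. For each of these two lattices I would write down an explicit $\ZZ_{p^2}$-basis using the decomposition $\Delta = \ZZ_{p^2} \oplus \ZZ_{p^2}\Pi$: the lattice $\Delta$ has basis $\{1, \Pi\}$, and the action of $a + b\Pi$ by left multiplication, using $\Pi a = \overline{a}\Pi$ and $\Pi^2 = p$, is computed directly to be the matrix $f_1$; the lattice $\Pi^{-1}\Delta$ (or $\Delta$ with basis $\{1, \Pi^{-1}\}$, which is the same $\ZZ_{p^2}$-free rank-$2$ module) gives $f_2$ by the same bookkeeping. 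This shows every $f$ is $\GL_2(\ZZ_{p^2})$-conjugate to $f_1$ or $f_2$.

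Finally I would check $f_1$ and $f_2$ are not conjugate, i.e. that $\Delta$ and $\Delta\Pi$ are non-isomorphic as left $\Delta$-modules compatibly with the $\ZZ_{p^2}$-structure — equivalently that no element of $\GL_2(\ZZ_{p^2})$ conjugates $f_1$ into $f_2$. The cleanest invariant is to look at the cokernel of $f_i(\Pi) : V \to V$, or rather at the action of $\Pi$ reduced mod $p$: $f_1(\Pi)$ and $f_2(\Pi)$ both have image of index $p^2$, but one distinguishes them by whether the induced map on $V/pV$ has the "right" one-dimensional kernel/image position relative to the residual $\FF_{p^2}$-action, or equivalently by computing that $V/\Pi V$ is the trivial $\ZZ_{p^2}$-module in one case and has $\ZZ_{p^2}$ acting through its nontrivial automorphism twisted appropriately in the other — these are not $\GL_2(\ZZ_{p^2})$-conjugate because conjugation is $\ZZ_{p^2}$-linear. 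I expect the \textbf{main obstacle} to be precisely this last step: setting up a clean conjugation-invariant that separates the two modules, since both matrices look superficially similar; the safest route is probably to argue at the level of lattices (the two homothety classes of left $\Delta$-ideals are genuinely distinct and this distinction is preserved under any $\ZZ_{p^2}$-linear isomorphism), rather than manipulating the matrices directly.
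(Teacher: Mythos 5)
You begin the same way the paper does: a ring homomorphism $f : \Delta \to \MM_2(\ZZ_{p^2})$ is a $\Delta$-module structure on $\ZZ_{p^2}^2$ commuting with the $\ZZ_{p^2}$-scalar action, that is, a $\Delta \otimes_{\ZZ_p}\ZZ_{p^2}$-module, and $\GL_2(\ZZ_{p^2})$-conjugacy is isomorphism of such modules. (The paper uses a right $\Delta$-action on row vectors; the difference is cosmetic.) The paper then identifies $\Delta \otimes_{\ZZ_p}\ZZ_{p^2}$ with the standard Eichler order of level~$1$ in $\MM_2(\QQ_{p^2})$ and cites Reiner's classification: every module over this order that is free of finite $\ZZ_p$-rank is a direct sum of copies of $\Delta$ and $\frakm_\Delta$, so a rank count forces the module to be $\Delta$ or $\frakm_\Delta$, and $f_1$, $f_2$ fall out of the computation. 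You instead attempt the classification by hand inside $D = \Delta \otimes \QQ_p$, and that is where the argument has genuine gaps.

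The central problem is that you drop the $\ZZ_{p^2}$-scalar structure upon passing to lattices in $D$. Your condition \emph{free of rank $1$ over the fixed copy of $\ZZ_{p^2} \subset \Delta$} is both misstated (any left $\Delta$-lattice in $D$ is free of rank~$1$ over $\Delta$, hence of rank~$2$ over $\ZZ_{p^2}$) and, more importantly, does not encode where the scalar action went. After fixing a left $\Delta$-isomorphism $V \otimes \QQ_p \cong D$, the scalar $\ZZ_{p^2}$-action becomes right multiplication by \emph{some} embedding $\rho : \ZZ_{p^2} \hookrightarrow D$; one must first normalize $\rho$ to the fixed inclusion $\ZZ_{p^2} \subset \Delta$ (Noether--Skolem plus a unit adjustment to remain integral) before any lattice count makes sense, and the residual equivalence is then right multiplication by the centralizer $\QQ_{p^2}^\times$ of $\ZZ_{p^2}$ in $D^\times$, not $\QQ_p^\times$ as you write (the orbits happen to coincide, but this needs observing). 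Finally, your distinctness step --- that the two homothety classes \emph{are preserved under any $\ZZ_{p^2}$-linear isomorphism} --- is exactly what needs proof rather than a safe retreat: it amounts to showing $\Delta$ and $\Delta\Pi$ are nonisomorphic as $\Delta \otimes_{\ZZ_p}\ZZ_{p^2}$-modules, which does follow from a parity-of-valuation argument on $\QQ_{p^2}^\times$ once the normalization above is in place, but you assert it rather than prove it. Reiner's theorem dispenses with all this bookkeeping at once.
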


The proof uses the general ideas of the proof of \cite[Theorem 1.4]{Ribet}.

\begin{proof}
Let $M = \ZZ_{p^2} \oplus \ZZ_{p^2}$. Then $M$ is a left $\ZZ_{p^2}$-module via componentwise multiplication, and
a right $\Delta$-module via matrix multiplication $\begin{bmatrix} a & b\end{bmatrix}f(x)$, viewing elements of $M$ as
row vectors. These actions commute, so $M$ is a $\Delta \otimes_{\ZZ_p} \ZZ_{p^2}$-module. By Lemma \ref{Eichler},
$\Delta \otimes_{\ZZ_p} \ZZ_{p^2} \cong R_1$ is the standard Eichler order of level $1$ in $\MM_2(\QQ_{p^2})$ and thus a 
hereditary order. Any $R_1$-module which is free of finite rank over $\ZZ_p$ is a direct sum of copies of $\Delta$ and $\frakm_{\Delta}$, where $\frakm_{\Delta} \subset \Delta$ is the unique maximal ideal (\cite[Chapter 9]{orders}).
By comparing $\ZZ_p$-ranks, we see that there is an isomorphism of $\Delta \otimes_{\ZZ_p} \ZZ_{p^2}$-modules
$\varphi : M \to \Delta$ or $\varphi : M \to \frakm_{\Delta}$.

First suppose $\varphi : M \to \Delta$ is an isomorphism of $\Delta \otimes_{\ZZ_p} \ZZ_{p^2}$-modules, where $\Delta$ is
a right $\Delta$-module through multiplication on the right, and a left $\ZZ_{p^2}$-module through multiplication on the left
via the inclusion $\ZZ_{p^2} \hookrightarrow \Delta$. Let $M'$ be the group $M$ with the same left $\ZZ_{p^2}$-action, but
now a right $\Delta$-action given by 
$$
(x, y) \cdot (a + b\Pi) = \begin{bmatrix} x & y \end{bmatrix}\begin{bmatrix} a & b \\ p\overline{b} & \overline{a}\end{bmatrix}.
$$
Then there is an isomorphism $\psi : \Delta \to M'$ of $\Delta \otimes_{\ZZ_p} \ZZ_{p^2}$-modules given by
$\psi(a + b\Pi) = \begin{bmatrix} a & b \end{bmatrix}$, and thus $\gamma = \psi \circ \varphi : M \to M'$ is a $\Delta \otimes_{\ZZ_p}
\ZZ_{p^2}$-linear isomorphism. Hence $\gamma \in \GL_2(\ZZ_{p^2})$ and since it is $\Delta$-linear, $\gamma(m\cdot x) = \gamma(m)\cdot x$ for all $x \in \Delta$ and $m \in M$. Therefore $f = \gamma\circ f_1 \circ \gamma^{-1}$.

Now suppose $\varphi : M \to \frakm_{\Delta}$ is an isomorphism of $\Delta \otimes_{\ZZ_p} \ZZ_{p^2}$-modules, where $\frakm_{\Delta}$ is a right
$\Delta$-module through multiplication on the right, and a left $\ZZ_{p^2}$-module through $\ZZ_{p^2} \hookrightarrow \Delta$.
Let $M'$ be the group $M$ with the same left $\ZZ_{p^2}$-action, but now a right $\Delta$-action given by
$$
(x, y) \cdot (a + b\Pi) = \begin{bmatrix} x & y \end{bmatrix}\begin{bmatrix} a & pb \\ \overline{b} & \overline{a} \end{bmatrix}.
$$
Writing $\frakm_{\Delta} = p\ZZ_{p^2} \oplus \ZZ_{p^2}\Pi$, there is an isomorphism $\psi : \frakm_{\Delta} \to M'$ of $\Delta \otimes_{\ZZ_p}\ZZ_{p^2}$-modules given by $\psi(pa + b\Pi) = \begin{bmatrix} a & b \end{bmatrix}$. Similarly to the first case, it follows that
$f = \gamma \circ f_2 \circ \gamma^{-1}$, where $\gamma = \psi \circ \varphi \in \GL_2(\ZZ_{p^2})$.

To show $f_1$ and $f_2$ are not $\GL_2(\ZZ_{p^2})$-conjugate, first note that $f_1 = Tf_2T^{-1}$, where
$$
T = \begin{bmatrix} 1 & 0 \\ 0 & p \end{bmatrix}.
$$
Suppose $f_1$ and $f_2$ are conjugate, so $f_1 = Xf_2X^{-1}$ for some $X \in \GL_2(\ZZ_{p^2})$. Then
conjugation by $T$ on $f_2(\Delta) \subset \MM_2(\ZZ_{p^2})$ is equal to conjugation by $X$, which means
$X = UT$ for some $U$ in the center of $f_2(\Delta)$. In particular, $U \in \MM_2(\ZZ_{p^2})$. We then have $0 = \ord_p(\det(X)) = \ord_p(\det(U)) + 1 \gqq 1$,
a contradiction.
\end{proof}

\begin{lemma}\label{local modules}
Let $p$ be a prime number. For $p \nmid d_B$ there is a unique isomorphism class of $\Oo_p$-modules free
of rank $4$ over $\ZZ_p$ and for $p \mid d_B$ there are two isomorphism classes.
\end{lemma}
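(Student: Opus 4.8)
The plan is to reduce everything to a computation with orders over $\ZZ_p$, treating the two cases separately according to whether $B$ is split at $p$. Write $\Oo_p = \Oo_B \otimes_{\ZZ} \ZZ_p \otimes_{\ZZ} \Oo_{\bm{k}} = (\Oo_B \otimes_{\ZZ} \ZZ_p) \otimes_{\ZZ_p} (\Oo_{\bm{k}} \otimes_{\ZZ} \ZZ_p)$; an $\Oo_p$-module free of rank $4$ over $\ZZ_p$ is the same as an $\Oo_B \otimes_{\ZZ} \ZZ_p$-module structure on a free $\ZZ_p$-module of rank $4$ that commutes with a given $\Oo_{\bm{k}} \otimes_{\ZZ} \ZZ_p$-action, and one classifies these up to isomorphism.

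First I would dispose of the unramified case $p \nmid d_B$. Here $\Oo_B \otimes_{\ZZ} \ZZ_p \cong \MM_2(\ZZ_p)$, and by Morita equivalence every $\MM_2(\ZZ_p)$-module that is free over $\ZZ_p$ is of the form $\ZZ_p^2 \otimes_{\ZZ_p} N$ for a $\ZZ_p$-module $N$ determined up to isomorphism; since we want $\ZZ_p$-rank $4$, we get $N$ free of rank $2$, hence a unique isomorphism class as $\MM_2(\ZZ_p)$-module. One then has to check the commuting $\Oo_{\bm{k}} \otimes_{\ZZ} \ZZ_p$-action contributes nothing new: the centralizer of $\MM_2(\ZZ_p)$ in $\End_{\ZZ_p}(\ZZ_p^4)$ is $\End_{\ZZ_p}(N) \cong \MM_2(\ZZ_p)$, so an $\Oo_{\bm{k}} \otimes_{\ZZ} \ZZ_p$-action commuting with $\MM_2(\ZZ_p)$ is exactly a $\ZZ_p$-algebra map $\Oo_{\bm{k}} \otimes_{\ZZ} \ZZ_p \to \End_{\ZZ_p}(N)$ making $N$ free of rank $1$ over $\Oo_{\bm{k}} \otimes_{\ZZ} \ZZ_p$ (using that $\Oo_{\bm{k}} \otimes \ZZ_p$ is a product of at most two local rings, or a ramified/inert quadratic extension of $\ZZ_p$, in all cases a Gorenstein ring over which a faithful module of the right rank is free), and all such are conjugate by $\GL_2(\ZZ_p) = \Aut_{\MM_2(\ZZ_p)}(\ZZ_p^4 / \text{Morita})$. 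So exactly one class.

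For $p \mid d_B$, by hypothesis $p$ is inert in $\bm{k}$, so $\Oo_{\bm{k}} \otimes_{\ZZ} \ZZ_p = \ZZ_{p^2}$, and $\Oo_B \otimes_{\ZZ} \ZZ_p = \Delta$, the maximal order in the quaternion division algebra over $\QQ_p$. An $\Oo_p$-module free of rank $4$ over $\ZZ_p$ is then a $\ZZ_p$-module $N \cong \ZZ_p^4$ together with a $\ZZ_p$-algebra embedding $\Delta \hookrightarrow \End_{\ZZ_p}(N)$ and a commuting action of $\ZZ_{p^2}$. Using the fixed embedding $\ZZ_{p^2} \hookrightarrow \Delta$ from Lemma \ref{types}, the $\ZZ_{p^2}$-action is the restriction of the $\Delta$-action, so the data is just a $\ZZ_p$-algebra homomorphism $\Delta \to \End_{\ZZ_p}(N)$; choosing a $\ZZ_{p^2}$-basis identifies $\End_{\ZZ_{p^2}}(N) \cong \MM_2(\ZZ_{p^2})$, and such homomorphisms are classified up to $\GL_2(\ZZ_{p^2})$-conjugacy. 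Lemma \ref{types} says there are exactly two conjugacy classes, $f_1$ and $f_2$. I would then argue that $\GL_2(\ZZ_{p^2})$-conjugacy coincides with isomorphism of the resulting $\Oo_p$-modules: an $\Oo_p$-module isomorphism is a $\ZZ_{p^2}$-linear (because it commutes with the $\ZZ_{p^2} \subset \Delta$ action) automorphism of $N$ intertwining the two $\Delta$-actions, i.e., an element of $\GL_2(\ZZ_{p^2})$ conjugating one $f$ to the other. Hence exactly two isomorphism classes.

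The main obstacle is the bookkeeping in the split case: making precise that the commuting $\Oo_{\bm{k}} \otimes_{\ZZ} \ZZ_p$-action on the Morita-dual module $N$ really does force $N$ to be free of rank one over $\Oo_{\bm{k}} \otimes_{\ZZ} \ZZ_p$ and that any two such are conjugate, in a way uniform over the three possible shapes of $\Oo_{\bm{k}} \otimes_{\ZZ} \ZZ_p$ (split, inert, ramified). This is where I would be most careful; everything in the ramified case is essentially contained in Lemma \ref{types}.
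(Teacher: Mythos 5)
Your argument is correct and follows essentially the same route as the paper: Morita equivalence when $p \nmid d_B$, and Lemma \ref{types} (via the Eichler order $R_1 \cong \Delta \otimes_{\ZZ_p} \ZZ_{p^2}$) when $p \mid d_B$. In the case $p \nmid d_B$ the paper streamlines your bookkeeping by applying Morita directly to $\Oo_p \cong \MM_2(\Oo_{\bm{k}, p})$ rather than to $\MM_2(\ZZ_p)$: under this equivalence a $\ZZ_p$-rank-$4$ module corresponds to an $\Oo_{\bm{k}, p}$-lattice of $\ZZ_p$-rank $2$, which (if faithful) is free of rank one because $\Oo_{\bm{k}, p}$ is a product of discrete valuation rings; this absorbs the double-centralizer and commuting-action analysis that you flagged as the main obstacle, and the appeal to Gorensteinness is not needed. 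One caveat, which your proof and the paper's share: at a prime $p$ split in $\bm{k}$ the $\Oo_{\bm{k}, p}$-action on an arbitrary $\Oo_p$-module free of $\ZZ_p$-rank $4$ need not be faithful, and then the module is not of the asserted form (e.g.\ $\ZZ_p^{2} \times 0$ over $\ZZ_p \times \ZZ_p$, transported through Morita). Faithfulness does hold for the localizations of modules in $\LLL$, since $\bm{k}$ splits $B$ and so $M \otimes_{\ZZ} \QQ$ is the standard $\MM_2(\bm{k})$-module, which is the situation to which the lemma is actually applied.
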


\begin{proof}
First suppose $p \nmid d_B$. In this case,
$$
\Oo_p \cong \Oo_{B, p} \otimes_{\ZZ_p} \Oo_{\bm{k},p} \cong \MM_2(\Oo_{\bm{k}, p}),
$$
and any $\Oo_p$-module that is free of rank $4$ over $\ZZ_p$ is isomorphic to $\Oo_{\bm{k}, p} \oplus
\Oo_{\bm{k}, p}$, with the natural left action of $\MM_2(\Oo_{\bm{k}, p})$. Now suppose $p \mid d_B$, so $\Oo_p \cong 
\Delta \otimes_{\ZZ_p} \ZZ_{p^2}$. By the proof of Lemma \ref{types}
there are two isomorphism classes of modules over this ring that are free of rank $4$ over $\ZZ_p$.
\end{proof}

Now we will show that the three sets $\KKK$, $\LLL$, and $[\YYY^B(\CC)]$ are all in bijection. Define an action of the group $\Cl(\Oo_{\bm{k}})$ on the set $\LLL$ by 
$\fraka \cdot M = \fraka\otimes_{\Oo_{\bm{k}}} M$. 
To define an action of $\Cl(\Oo_{\bm{k}})$ on $\KKK$, let $\fraka \in \Cl(\Oo_{\bm{k}})$ and let
$\theta : \Oo_{\bm{k}} \to \Oo_B$ be a representative of an $\Oo_B^\times$-conjugacy class of embeddings. Then $\theta(\fraka)\Oo_B = \xi\Oo_B$ for some $\xi \in \Oo_B$ (\cite[Theorem 17.8.3]{Voight}) and we define $\fraka\cdot\theta = \xi^{-1}\theta\xi$. A different choice of element $\xi$ will conjugate the embedding by an element of $\Oo_B^\times$. That $\xi^{-1}\theta\xi$ has image in $\Oo_B$ is verified in \cite[\S 3.2.1]{S}. Next define an action of the Atkin-Lehner group $W_B$ on $\KKK$ by $w \cdot \theta = w\theta w^{-1}$. The next result will give an action of $W_B$ on $\LLL$. 

\begin{proposition}\label{bijection}
There is a $\Cl(\Oo_{\bm{k}})$-equivariant bijection $\KKK \to \LLL$.
\end{proposition}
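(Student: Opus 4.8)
The plan is to construct the bijection $\KKK \to \LLL$ by hand and then use the local classification lemmas to verify injectivity and surjectivity. Given a ring embedding $\iota : \Oo_{\bm k} \hookrightarrow \Oo_B$, the algebra $\Oo_B$ becomes a left $\Oo_{\bm k}$-module via $\iota$ and, together with its standing left $\Oo_B$-module structure (which commutes with right multiplication... wait, one must be careful: $\Oo_B$ is noncommutative). More precisely, I would let $\Oo_B$ carry the left action of $\Oo = \Oo_B \otimes_{\ZZ} \Oo_{\bm k}$ in which $\Oo_B$ acts by left multiplication and $\Oo_{\bm k}$ acts by \emph{right} multiplication through $\iota$, i.e. $(b \otimes t)\cdot x = b\, x\, \iota(t)$. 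This is a well-defined $\Oo$-module because left and right multiplication on $\Oo_B$ commute, and it is free of rank $4$ over $\ZZ$ since $\Oo_B$ is. If $\iota$ is replaced by $u\iota(\cdot)u^{-1}$ for $u \in \Oo_B^\times$, then right multiplication by $u$ gives an $\Oo$-module isomorphism, so the class of this module in $\LLL$ depends only on the $\Oo_B^\times$-conjugacy class of $\iota$. This defines the map $\Phi : \KKK \to \LLL$.

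Next I would prove $\Phi$ is injective. Suppose $\iota_1, \iota_2$ give isomorphic $\Oo$-modules $M_1 \cong M_2$. Each $M_j$ has a distinguished generator (the element $1 \in \Oo_B$), and an $\Oo$-module isomorphism $M_1 \to M_2$ is in particular a left $\Oo_B$-module isomorphism $\Oo_B \to \Oo_B$, hence is right multiplication by some unit $u \in \Oo_B^\times$. Chasing the $\Oo_{\bm k}$-action through this isomorphism forces $x\,\iota_1(t)\,u = x\, u\, \iota_2(t)$ for all $x$, i.e. $\iota_1(t)u = u\iota_2(t)$, so $\iota_2 = u^{-1}\iota_1(\cdot)u$ and the classes agree in $\KKK$. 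For surjectivity I would argue locally and then glue: by Lemma \ref{local modules}, at each prime $p \nmid d_B$ there is a unique local isomorphism type (so no constraint), and at each $p \mid d_B$ there are exactly two local types, which by Lemma \ref{types} correspond to the two $\GL_2(\ZZ_{p^2})$-conjugacy classes $f_1, f_2$ of embeddings $\Delta \hookrightarrow \MM_2(\ZZ_{p^2})$, equivalently to the two $\Oo_{B,p}^\times$-conjugacy classes of embeddings $\Oo_{\bm k, p} = \ZZ_{p^2} \hookrightarrow \Oo_{B,p} = \Delta$ (recall each $p \mid d_B$ is inert in $\bm k$, so $\Oo_{\bm k,p} \cong \ZZ_{p^2}$, and $\Delta$ is its own two-sided module decomposing as $\ZZ_{p^2}\oplus\ZZ_{p^2}\Pi$). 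So every $M \in \LLL$ is, prime by prime, of the form $\Phi(\iota)_p$ for a local embedding; one then uses that $B$ satisfies strong approximation (or more elementarily, that any two maximal orders of $B$ agree at almost all primes and the embeddings $\Oo_{\bm k} \hookrightarrow \Oo_B$ are classified adelically) to produce a single global embedding $\iota : \Oo_{\bm k} \hookrightarrow \Oo_B$ realizing the prescribed local conjugacy classes, and then $\Phi(\iota) \cong M$ after checking the rational structures match (both are $\Oo \otimes \QQ = B \otimes_{\QQ} \bm k$-modules of the same rank, hence isomorphic over $\QQ$).

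The main obstacle I anticipate is the surjectivity step, specifically the passage from local data to a global embedding: one must know that the finite set of $\Oo_B^\times$-conjugacy classes of optimal (or just ring) embeddings $\Oo_{\bm k} \hookrightarrow \Oo_B$ surjects onto (indeed bijects with) the product of local conjugacy classes $\prod_{p \mid d_B}\{f_1, f_2\}$, together with the fact that at $p \nmid d_B$ everything is automatic. This is exactly the kind of statement controlled by the Eichler/Noether embedding theorem for quaternion orders, and since $B$ is indefinite, strong approximation makes the global-to-local map a bijection with no class-number defect. I would cite this (e.g.\ from the theory in \cite{orders} or Ribet's \cite{Ribet}) rather than reprove it. The remaining verifications — that $\Phi(\iota)$ really is free of rank $4$, that the $\QQ$-isomorphism class of an $\Oo$-module free of rank $4$ over $\ZZ$ is unique, and that two $\Oo$-modules free over $\ZZ$ which are isomorphic at every prime and over $\QQ$ are globally isomorphic (a standard lattice/genus argument, again using that over $B\otimes\bm k$ there is a single genus) — are routine and I would dispatch them briefly.
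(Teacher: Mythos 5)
Your map $\Phi$ coincides with the paper's $f$, and your injectivity argument is correct (and supplies detail the paper's one-line "easily seen" omits). But the surjectivity step has a genuine gap. You assert that two $\Oo$-modules free of rank $4$ over $\ZZ$ that are isomorphic at every prime and over $\QQ$ must be globally isomorphic, on the grounds that over $B\otimes\bm{k}$ "there is a single genus." That is false: a genus of $\Oo$-lattices in $B\otimes_{\QQ}\bm{k}\cong\MM_2(\bm{k})$ contains $|\Cl(\Oo_{\bm{k}})|$ isomorphism classes, not one. (This is the class number of the Eichler order $\Oo$; it is also forced by the paper's discussion immediately after this proposition, which shows $W_0\times\Cl(\Oo_{\bm{k}})$ acts simply transitively on $\LLL$, so $|\LLL|=2^r|\Cl(\Oo_{\bm{k}})|$ with $2^r$ genera.) Symmetrically, a choice of local conjugacy type at each $p\mid d_B$ determines not one but $|\Cl(\Oo_{\bm{k}})|$ global $\Oo_B^\times$-conjugacy classes in $\KKK$. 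So after "producing a global embedding $\iota$ realizing the prescribed local types," nothing in your argument ensures $\Phi(\iota)\cong M$ rather than $\Phi(\iota)$ merely lying in the same genus as $M$.

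The repair is what the paper's cryptic hint about $\Cl(\Oo_{\bm{k}})$ is pointing to. The map $\Phi$ is equivariant for the natural $\Cl(\Oo_{\bm{k}})$-actions (on $\LLL$, the class of $\fraka$ acts by $M\mapsto\fraka\otimes_{\Oo_{\bm{k}}}M$), both actions are free and preserve the local types, and your local analysis shows $\Phi$ induces a bijection on the sets of orbits. A $\Cl(\Oo_{\bm{k}})$-equivariant map between free $\Cl(\Oo_{\bm{k}})$-sets that is bijective on orbits is a bijection. The extra input needed — that each genus of $\LLL$ is a single $\Cl(\Oo_{\bm{k}})$-orbit — is precisely the strong-approximation statement for $\MM_2(\bm{k})$ you were trying to invoke, stated in the form in which it is actually true. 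Alternatively, pair your correct injectivity with the cardinality count $|\KKK|=|\LLL|=2^r|\Cl(\Oo_{\bm{k}})|$ coming from Eichler's optimal-embedding formula and class-number formula.
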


\begin{proof}
Let $\theta : \Oo_{\bm{k}} \to \Oo_B$ be a representative of an $\Oo_B^\times$-conjugacy class of embeddings and define
$f : \KKK \to \LLL$ by sending $\theta$ to the $\ZZ$-module $M_\theta = \Oo_B$, viewed as a left $\Oo_{\bm{k}}$-module
through $\theta$ (and multiplication on the left) and a right $\Oo_B$-module through multiplication on the right.
The isomorphism class of this $\Oo$-module only depends on $\theta$ through its $\Oo_B^\times$-conjugacy class.
To show $f$ is injective, suppose $\theta, \theta' : \Oo_{\bm{k}} \to \Oo_B$ are two embeddings and suppose
$\varphi : M_{\theta} \to M_{\theta'}$ is an $\Oo$-module isomorphism. By $\Oo_{\bm{k}}$-linearity we have
$\varphi(\theta(a)x) = \theta'(a)\varphi(x)$ for all $a \in \Oo_{\bm{k}}$ and all $x \in \Oo_B$. 
By $\Oo_B$-linearity we have $\varphi \in \End_{\Oo_B}(\Oo_B)^\times$, viewing $\Oo_B$ as a right $\Oo_B$-module. There is an isomorphism of
rings $\Oo_B \to \End_{\Oo_B}(\Oo_B)$ defined by sending $x$ to the
endomorphism $y \mapsto xy$. Therefore there is a $u \in \Oo_B^\times$ such that $\varphi(x) = ux$ for all
$x \in \Oo_B$, so $\theta = u^{-1}\theta'u$.

Next we will show $f$ is $\Cl(\Oo_{\bm{k}})$-equivariant. Let $\fraka \in \Cl(\Oo_{\bm{k}})$ and suppose
$\theta(\fraka)\Oo_B = \xi\Oo_B$ as above, so $\fraka\cdot\theta = \xi^{-1}\theta\xi$. Then
$$
\fraka\otimes_{\Oo_{\bm{k}}} M_\theta \cong \theta(\fraka)\Oo_B = \xi\Oo_B.
$$
It is easily checked that the map $M_{\fraka\cdot\theta} \to \fraka\otimes_{\Oo_{\bm{k}}} M_\theta$
given by $x \mapsto \xi x$ is an isomorphism of $\Oo$-modules, which means $f(\fraka\cdot\theta) \cong
\fraka\cdot f(\theta)$.

To show $f$ is surjective, let $M \in \LLL$ and let $\theta : \Oo_{\bm{k}} \to \Oo_B$ be an embedding 
such that $(M_\theta)_\ell \cong M_\ell$ as $\Oo_{\ell}$-modules for all primes $\ell$. To see that such a $\theta$ exists, note that by Lemma \ref{local modules}, the completions of $M_\theta$ and $M$ are isomorphic at all primes except possibly those dividing $d_B$. If they are not isomorphic at $\ell \mid d_B$, replace $\theta$ with $w_\ell\cdot\theta$, where $w_\ell \in W_B$ is the Atkin-Lehner operator at $\ell$, which interchanges the two isomorphism classes of $\Oo_\ell$-modules. Then there is an $\Oo$-linear isomorphism 
$$
\Hom_{\Oo}(M_\theta, M)  \otimes_{\Oo_{\bm{k}}}   M_\theta \to M
$$
given by $\varphi \otimes x \mapsto \varphi(x)$, where the module on the left has the obvious right 
$\Oo_B$-action through its action on $M_\theta$, and $\Hom_{\Oo}(M_\theta, M)$ is an $\Oo_{\bm{k}}$-module
via the pointwise action on the images of the homomorphisms. That this map is an isomorphism can be checked
by proving it is an isomorphism after completion at each prime number, using that 
$\fraka = \Hom_{\Oo}(M_\theta, M)$ is a fractional $\Oo_{\bm{k}}$-ideal and the isomorphism of $\Oo_{\bm{k}, \ell}$-modules $\fraka \otimes_{\ZZ} \ZZ_\ell \cong \Oo_{\bm{k}, \ell}$ for each prime $\ell$.
Hence $M \cong \fraka \otimes_{\Oo_{\bm{k}}}  M_{\theta} = \fraka\cdot M_\theta$ is in the image of
$f$.
\end{proof}

Define an action of $W_B$ on $\LLL$ by $w\cdot M_\theta = M_{w\cdot\theta}$, so the above bijection is also $W_B$-equivariant by definition.

\begin{proposition}\label{bijection 2}
There is a $W_B \times \Cl(\Oo_{\bm{k}})$-equivariant bijection $\LLL \to [\YYY^B(\CC)]$.
\end{proposition}

\begin{proof}
Let $M \in \LLL$. Then $V = M \otimes_{\ZZ} \RR$
is a $4$-dimensional $\RR$-vector space with $M$ a $\ZZ$-lattice in $V$. The action of $\Oo_{\bm{k}}$ on
$M$ induces a map $\bm{k} \otimes_{\QQ} \RR \cong \CC \to \End_\RR(V)$, turning $V$ into a $\CC$-vector space.
Define $f : \LLL \to [\YYY^B(\CC)]$ by sending $M$ to the CMQM abelian surface with 
complex points $V/M$. Such an abelian surface exists by 
\cite[Lemma 43.6.16]{Voight}. The inverse of $f$ is given by $A \mapsto H_1(A(\CC), \ZZ)$. The map $f$ is $\Cl(\Oo_{\bm{k}})$-equivariant by \cite[Theorem 7.6]{Conrad}. If $w \in W_B$ and $A \in \YYY^B(\CC)$, where $A(\CC) = V/M_\theta$, then $w\cdot A$ has $\CC$-points $V/M'_\theta$, where $M'_\theta$ is the same $\Oo_{\bm{k}}$-module as $M_\theta$, but 
with right $\Oo_B$-action given by $x \cdot z = xw^{-1}zw$. It is easily checked that the map $M_\theta' \to M_{w\theta w^{-1}} = w\cdot M_\theta$ given by $x \mapsto wxw^{-1}$ is an isomorphism of $\Oo$-modules, which implies $f(w\cdot M_\theta) \cong w\cdot f(M_\theta)$.
\end{proof}

Define an equivalence relation on the set $\KKK$ according to $\theta \sim \theta'$ if and only if the 
induced maps $\widetilde{\theta}, \widetilde{\theta}' : \Oo_{\bm{k}} \to \Oo_B/\frakm_B$ are equal. 
Let $\KKK'$ be the set of equivalence classes under this relation.
Under the bijection $\KKK \to \LLL$, this equivalence relation corresponds to the following equivalence relation
on $\LLL$: $M \sim M'$ if and only if $M_\ell \cong M_\ell'$ as $\Oo_{\ell}$-modules for all primes $\ell$
(note by Lemma \ref{local modules} that this is really only a condition at each prime dividing $d_B$).
Let $\LLL'$ be the set of equivalence classes under this relation. 
We know that the group $W_B \times \Cl(\Oo_{\bm{k}})$ acts simply transitively on the set $[\YYY^B(\CC)]$, so its
natural actions on $\KKK$ and $\LLL$ are also simply transitive.

The elements of $\LLL'$ can be thought of as collections of $\Oo_{\ell}$-modules $\{M_\ell\}_\ell$ indexed by the prime
numbers. The action of $W_B$ on $\LLL$ induces an action on $\LLL'$. Explicitly, for $\ell \mid d_B$, the
Atkin-Lehner operator $w_\ell \in W_B$ interchanges the two isomorphism classes of modules $M_\ell$
over $\Oo_{\ell}$. 
It follows that under the action of $W_B \times \Cl(\Oo_{\bm{k}})$ on $\LLL$, the group 
$\Cl(\Oo_{\bm{k}})$ acts simply transitively on each equivalence class under $\sim$ and the group 
$W_B$ acts simply transitively on the set of equivalence classes $\LLL'$. The corresponding results hold
for the set $\KKK$, so in particular $\#\KKK' = |W_B| = 2^r$, where $r$ is the number of primes dividing $d_B$.
Since there are
$2^r$ ring homomorphisms $\Oo_{\bm{k}} \to \Oo_B/\frakm_B$, each such homomorphism arises
as the reduction of a homomorphism $\Oo_{\bm{k}} \to \Oo_B$.

Using the bijections of Corollary \ref{st2}, the equivalence relation $\sim$ on $\KKK$ induces an equivalence relation
on the set $[\YYY^B(k)]$, for any algebraically closed field $k$ over $\Oo_{\bm{K}}$, determined by the following: if $[\theta]$ is the equivalence class of $\theta \in \KKK$,
then $[\theta]$ is in bijection with $[\YYY^B(\widetilde{\theta})(k)]$. It follows that the natural action of $\Cl(\Oo_{\bm{k}})$ on $[\YYY^B(\widetilde{\theta})(k)]$ is simply transitive. 

Suppose $(E, \kappa)$ is an elliptic curve over an $\Oo_{\bm{K}}$-scheme $S$ with CM by $\Oo_{\bm{k}}$ and let
$M \in \LLL$. From
$M$ being a finitely generated projective $\Oo_{\bm{k}}$-module, locally free of rank $2$, there is an abelian scheme
$M \otimes_{\Oo_{\bm{k}}} E \to S$ of relative dimension $2$ with 
$(M \otimes_{\Oo_{\bm{k}}} E)(T) = M \otimes_{\Oo_{\bm{k}}} E(T)$ for any $S$-scheme $T$.
There are commuting actions
$$
i_M : \Oo_B \to \End_S(M \otimes_{\Oo_{\bm{k}}} E), \quad \kappa_M : \Oo_{\bm{k}} \to \End_S(M \otimes_{\Oo_{\bm{k}}} E),
$$
given on points by
$$
i_M(x)(m \otimes z) = x\cdot m \otimes z, \quad \kappa_M(a) (m \otimes z) = m \otimes \kappa(a)(z),
$$
so $M \otimes_{\Oo_{\bm{k}}} E$ is a QM abelian surface over $S$ with complex multiplication by $\Oo_{\bm{k}}$. Note that the Serre construction used here requires $M$ to be a right $\Oo_{\bm{k}}$-module and, as seen in the proof of Proposition \ref{bijection}, we are viewing $M$ as a left $\Oo_{\bm{k}}$-module, but there is no issue since $\Oo_{\bm{k}}$ is commutative.

If $\theta : \Oo_{\bm{k}} \to \Oo_B$ is 
a ring homomorphism, we will sometimes write $\YYY^B([\theta])$ for $\YYY^B(\widetilde{\theta})$.
Recall that $\YYY$ is the stack of all elliptic curves over $\Oo_{\bm{K}}$-schemes with CM by $\Oo_{\bm{k}}$.

\begin{theorem}\label{Serre type}
Fix representatives $\theta_1, \ldots, \theta_m \in \KKK$ of the $m = 2^r$ classes in $\KKK'$.
There is an isomorphism of stacks over $\Spec(\Oo_{\bm{K}})$
$$
\Phi : \bigsqcup_{d = 1}^m\YYY \to \YYY^B
$$
defined by $(E, d) \mapsto M_{\theta_d} \otimes_{\Oo_{\bm{k}}} E$, which induces an isomorphism
$\YYY \to \YYY^B([\theta])$ for any $[\theta] \in \KKK'$.
\end{theorem}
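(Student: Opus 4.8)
The plan is to construct the functor $f$ on the level of $S$-valued points, exhibit a candidate inverse, and then check that both are well-defined morphisms of stacks before reducing the verification that they are mutually inverse to a problem already solved on geometric points. First I would define $f$ on objects by $(E, d) \mapsto (M_{\Theta_d} \otimes_{\Oo_{\bm{k}}} E, i_{M_{\Theta_d}}, \kappa_{M_{\Theta_d}})$, using the Serre tensor construction recalled just above; one checks the CM normalization condition holds because $\Lie(M_{\Theta_d} \otimes_{\Oo_{\bm{k}}} E) \cong M_{\Theta_d} \otimes_{\Oo_{\bm{k}}} \Lie(E)$ as $\OOO_S$-modules, so $\kappa_{M_{\Theta_d}}^{\Lie}$ inherits it from $\kappa^{\Lie}$, exactly as in the argument for $\fraka \otimes_{\Oo_{\bm{k}}} A$ in Section 3.2. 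On morphisms, an $\Oo_{\bm{k}}$-linear isomorphism $E' \to E \times_S T$ of CM elliptic curves induces an $\Oo_B \otimes_{\ZZ} \Oo_{\bm{k}}$-linear isomorphism after applying $M_{\Theta_d} \otimes_{\Oo_{\bm{k}}} (-)$, which is functorial in $S$-schemes, so $f$ is a morphism of categories fibered in groupoids, and visibly a morphism over $\Spec(\Oo_{\bm{K}})$. The image of the summand indexed by $d$ lands in $\YYY^B(\widetilde{\Theta}_d) = \YYY^B([\Theta_d])$ because the induced action of $\Oo_B/\frakm_B$ on $(M_{\Theta_d} \otimes E)[\frakm_B]$ is compatible with $\widetilde{\Theta}_d$ by the very definition of $M_{\Theta_d}$ as $\Oo_B$ with the right $\Oo_{\bm{k}}$-action through $\Theta_d$.

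Next I would produce the inverse. Over an $\Oo_{\bm{K}}$-scheme $S$, given $(A, i, \kappa) \in \YYY^B(S)$, I would first use the decomposition of $\YYY^B$ into the substacks $\YYY^B(\theta)$ as $\theta$ ranges over the $2^r$ ring homomorphisms $\Oo_{\bm{k}} \to \Oo_B/\frakm_B$ — which follows because the condition (\ref{MB commutative}) is locally constant on $S$, the $\frakm_B$-torsion being a finite flat group scheme — to reduce to the case $(A, i, \kappa) \in \YYY^B([\Theta_d])(S)$ for a unique $d$. Then, working with the fixed embedding $\Theta_d : \Oo_{\bm{k}} \hookrightarrow \Oo_B$, I would set $E = \Hom_{\Oo_B}(M_{\Theta_d}, A)$, which is naturally an elliptic curve over $S$ with CM by $\Oo_{\bm{k}}$ (the $\Oo_{\bm{k}}$-action coming from the right $\Oo_{\bm{k}}$-structure on $M_{\Theta_d}$), together with the evaluation map $M_{\Theta_d} \otimes_{\Oo_{\bm{k}}} \Hom_{\Oo_B}(M_{\Theta_d}, A) \to A$. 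The content is that this evaluation map is an isomorphism of QM abelian surfaces with CM; this is the assertion that $A$ lies in the essential image of $f$. To see it is an isomorphism, I would check it fppf-locally on $S$, then by deformation theory (Proposition \ref{reduction}) and the reduction-to-$\CC_p$-and-$\CC$ arguments in Propositions \ref{st1} and \ref{st} reduce to the case $S = \Spec(\CC)$ or $S = \Spec(\overline{\FF}_\frakP)$, where the bijections $\KKK \to \LLL \to [\YYY^B(\CC)]$ already established give that every CMQM abelian surface over $\CC$ is of the form $M_\Theta \otimes_{\Oo_{\bm{k}}} E$; the equivalence relation $\sim$ on $\KKK$ and the $W_0 \times \Cl(\Oo_{\bm{k}})$-equivariance then pin down which $\Theta_d$ and which $E$ up to isomorphism.

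Having both $f$ and the candidate inverse $g$, I would verify $g \circ f \cong \id$ and $f \circ g \cong \id$. The composite $g \circ f$ sends $(E, d)$ to $\Hom_{\Oo_B}(M_{\Theta_d}, M_{\Theta_d} \otimes_{\Oo_{\bm{k}}} E)$; there is an evident natural transformation $E \to \Hom_{\Oo_B}(M_{\Theta_d}, M_{\Theta_d} \otimes_{\Oo_{\bm{k}}} E)$, $z \mapsto (m \mapsto m \otimes z)$, and one checks it is an isomorphism fppf-locally, where $M_{\Theta_d}$ is a progenerator for $\Oo_B$-modules and the double-centralizer/Morita statement $\Hom_{\Oo_B}(M_{\Theta_d}, M_{\Theta_d} \otimes_{\Oo_{\bm{k}}} N) \cong N$ applies. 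The composite $f \circ g$ is handled by the evaluation isomorphism discussed above. Finally, the ``which induces an isomorphism $\YYY \to \YYY^B([\Theta])$'' clause is immediate once we know $f$ carries the $d$-th summand into $\YYY^B([\Theta_d])$ and that the decomposition (\ref{MB commutative}) of $\YYY^B$ into the $\YYY^B([\Theta_d])$ is a disjoint union matching the $m$ summands of the source.

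The main obstacle I anticipate is the integral statement that the evaluation map $M_{\Theta_d} \otimes_{\Oo_{\bm{k}}} \Hom_{\Oo_B}(M_{\Theta_d}, A) \to A$ is an isomorphism over an arbitrary base $S$, rather than over a field: the Serre tensor construction and the formation of $\Hom_{\Oo_B}$ need to commute with base change, and one must know the relevant $\Hom$-scheme is actually representable by a smooth group scheme of the expected dimension, so that checking on geometric fibers suffices. This is where I would lean most heavily on Proposition \ref{reduction} (to handle complete local Noetherian bases and hence, via Artin approximation or direct limits, all bases after fppf localization) together with the fact that both $\YYY$ and $\YYY^B$ are already known to be finite \'etale over $\Spec(\Oo_{\bm{K}})$, which makes an isomorphism of stacks detectable on $\CC$- and $\overline{\FF}_\frakP$-points — precisely the sets among which we have explicit bijections.
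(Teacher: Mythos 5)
Your construction of the inverse contains a genuine error. You set $E = \Hom_{\Oo_B}(M_{\Theta_d}, A)$, but since $M_{\Theta_d} \cong \Oo_B$ as a \emph{left} $\Oo_B$-module, the functor $\Hom_{\Oo_B}(M_{\Theta_d}, -)$ is just the forgetful functor on the underlying group schemes: evaluation at $1 \in M_{\Theta_d}$ gives $\Hom_{\Oo_B}(M_{\Theta_d}, A) \cong A$ as a group scheme, which is an abelian surface, not an elliptic curve (it simply acquires the extra $\Oo_{\bm{k}}$-action $i \circ \Theta_d$, entirely unrelated to $\kappa$). For the same reason, the ``double-centralizer/Morita statement'' you invoke, $\Hom_{\Oo_B}(M_{\Theta_d}, M_{\Theta_d} \otimes_{\Oo_{\bm{k}}} N) \cong N$, is false: the left side equals $M_{\Theta_d} \otimes_{\Oo_{\bm{k}}} N$, which has rank $2$ over $\Oo_{\bm{k}}$, not rank $1$. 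The correct adjoint would have to take $\Hom$ over the full ring $\Oo = \Oo_B \otimes_{\ZZ} \Oo_{\bm{k}}$, using both $i$ and $\kappa$, giving the closed subscheme $\{v \in A : i(\Theta_d(a))v = \kappa(a)v \text{ for all } a\}$; but then the Morita argument breaks for a more substantive reason: when $B$ is a division algebra and $p \mid d_B$, the completion $\Oo_p \cong \Delta \otimes_{\ZZ_p} \ZZ_{p^2}$ is an Eichler order of level one, which has two non-isomorphic indecomposable projectives (see Lemma \ref{types}), so $M_{\Theta_d}$ is \emph{not} a progenerator for $\Oo$-modules and $\Oo$ is not Morita equivalent to $\Oo_{\bm{k}}$. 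This is precisely why two genuinely distinct $\Oo_p$-module types appear (Lemma \ref{local modules}) and why the disjoint union on the source has $2^r$ summands.

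Even setting aside the algebra, the remaining step — that the putative inverse, however defined, is representable by a relative elliptic curve over an arbitrary base $S$ and that the adjunction maps are isomorphisms — is exactly where the serious technical work lies, and reducing ``to geometric fibers'' requires knowing in advance that the relevant morphism is flat or finite étale. The paper sidesteps this entirely: it first counts to get a bijection on $\CC$- and $\overline{\FF}_\frakP$-points (using simple transitivity of $W_0 \times \Cl(\Oo_{\bm{k}})$), then rigidifies both sides by adding level-$n$ structure to obtain finite étale \emph{schemes} $\YYY(n)$, $\YYY^B(n)$ over $S_n = \Spec(\Oo_{\bm{K}}[n^{-1}])$, so that a finite étale morphism between them which is bijective on geometric points is automatically an isomorphism, and finally descends through the $(G_n)_{S_n}$-quotients and glues over two coprime levels. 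If you want to pursue your route, you would need to (i) fix the $\Hom$ to be $\Oo$-linear, (ii) prove representability of that $\Hom$-scheme by a smooth abelian $S$-scheme of relative dimension $1$ with CM by $\Oo_{\bm{k}}$, and (iii) establish the unit and counit isomorphisms integrally — all of which amounts to more work, not less, than the paper's level-structure rigidification.
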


The notation $(E, d)$ means $E$ is an object of the $d$-th copy of $\YYY$ in the disjoint union, and $M_\theta$ is as in the proof of Proposition \ref{bijection}.
Therefore we obtain an isomorphism
$$
\bigsqcup_{\theta : \Oo_{\bm{k}} \to \Oo_B/\frakm_B}\YYY^B(\theta) \to \YYY^B.
$$
In particular, any $A \in \YYY^B(S)$ is isomorphic to $M_\theta \otimes_{\Oo_{\bm{k}}} E$ for some $\theta : \Oo_{\bm{k}}
\to \Oo_B$ and some $E \in \YYY(S)$. Note that if $S = \Spec(\overline{\FF}_\frakP)$, then $A = M_\theta \otimes_{\Oo_{\bm{k}}} E \sim (E')^2$ for some elliptic curve $E'$ over $\overline{\FF}_\frakP$
with $E'$ supersingular if and only if $E$ is supersingular. The proof of Theorem \ref{Serre type} will use the following lemmas.

\begin{lemma}\label{geometric points}
The morphism $\Phi : \bigsqcup_{d = 1}^m\YYY \to \YYY^B$ induces a bijection on geometric points
of $\Spec(\Oo_{\bm{K}})$.
\end{lemma}

\begin{proof}
Let $k$ be an algebraically closed field over $\Oo_{\bm{K}}$ and
$X \subset [\YYY^B(k)]$ be the image of the map
$$
\Phi(k) : \bigsqcup_{d=1}^m[\YYY(k)] \to [\YYY^B(k)]
$$
on $k$-points determined by $\Phi$. The group $W_B \times \Cl(\Oo_{\bm{k}})$
acts simply transitively on $[\YYY^B(k)]$ by Corollary \ref{st2} and this action preserves the subset $X$:
this is true for the action of $\Cl(\Oo_{\bm{k}})$ since this group acts on $[\YYY(k)]$. If $w \in W_B$ then 
$w\cdot (M_{\theta_d} \otimes_{\Oo_{\bm{k}}} E) \cong M_{w\theta_dw^{-1}} \otimes_{\Oo_{\bm{k}}} E$ by the proof of Proposition \ref{bijection 2}. This implies $\Phi(k)$ is surjective. Since
$\Cl(\Oo_{\bm{k}})$ acts simply transitively on $[\YYY(k)]$ (Remark \ref{st remark}), $\Phi(k)$ is a bijection as
\begin{equation*}
\#\bigsqcup_{d=1}^m[\YYY(k)] = m\cdot\#[\YYY(k)] = |W_B|\cdot|\Cl(\Oo_{\bm{k}})| = 
\#[\YYY^B(k)]. \qedhere
\end{equation*}
\end{proof}

\begin{lemma}\label{reduction 2}
The reduction map $\mathscr{R} : [\YYY^B(\widetilde{\theta}_d)(R)] \to [\YYY^B(\widetilde{\theta}_d)(\overline{\FF}_\frakP)]$ is a bijection for any $R \in \mathbf{CLN}_{\bm{K}_\frakP}$ and $1 \lqq d \lqq m$.
\end{lemma}

\begin{proof}
It follows from Proposition \ref{reduction} that $\mathscr{R}$ is injective since $\YYY^B(\widetilde{\theta}_d)(S)$ is a full subcategory of $\YYY^B(S)$ for any $\Oo_{\bm{K}}$-scheme $S$. Now let $X$ be the image of $\mathscr{R}$. The group $\Cl(\Oo_{\bm{k}})$ acts simply transitively on $[\YYY^B(\widetilde{\theta}_d)(\overline{\FF}_\frakP)]$
and the action preserves $X$. To see this, let $\fraka \in \Cl(\Oo_{\bm{k}})$ and
$\mathbf{A} \in X$. Then $\mathbf{A} \cong \mathbf{A}_0 \times_{\Spec(R)} \Spec(\overline{\FF}_\frakP)$ for some
$\mathbf{A}_0 \in [\YYY^B(\widetilde{\theta}_d)(R)]$ and
$$
\fraka \otimes_{\Oo_{\bm{k}}} \mathbf{A} \cong (\fraka \otimes_{\Oo_{\bm{k}}} \mathbf{A}_0) \times_{\Spec(R)} \Spec(\overline{\FF}_\frakP).
$$
Since $\fraka \otimes_{\Oo_{\bm{k}}} \mathbf{A}_0 \in [\YYY^B(\widetilde{\theta}_d)(R)]$, this proves the claim and that $\mathscr{R}$ is surjective.
\end{proof}

\begin{lemma}\label{decomposition 2}
We have $\YYY^B = \bigsqcup_{d=1}^m\YYY^B(\widetilde{\theta}_d)$.
\end{lemma}

\begin{proof}
By Lemma \ref{reduction 2}, the structure morphism $\YYY^B(\widetilde{\theta}_d) \to \Spec(\Oo_{\bm{K}})$ is \'etale. It is also quasi-finite by Corollary \ref{st2} and proper by a proof similar to that of \cite[Proposition 3.3.5]{Howard3}, so the structure morphism $\bigsqcup_{d=1}^m\YYY^B(\widetilde{\theta}_d) \to \Spec(\Oo_{\bm{K}})$ is finite \'etale. As $\YYY^B \to \Spec(\Oo_{\bm{K}})$ is also finite \'etate, the inclusion functor $I : \bigsqcup_{d=1}^m\YYY^B(\widetilde{\theta}_d) \to \YYY^B$, which is an immersion of stacks over $\Spec(\Oo_{\bm{K}})$, is finite \'etale, and in particular, an open immersion. By Corollary \ref{max ideal} below, whose proof only uses Lemma \ref{geometric points}, the morphism
$I$ induces a bijection on geometric points of $\Spec(\Oo_{\bm{K}})$ and hence is a surjective morphism of stacks by \cite[Tag 06D7]{Stacks}. As $I$ is a surjective open immersion, it is an isomorphism.
\end{proof}

\begin{proof}[Proof of Theorem {\upshape \ref{Serre type}}]
The idea of the proof is to introduce level structure to the stacks $\YYY$ and $\YYY^B$, show that these
new spaces are schemes, and then show $\Phi$ induces an isomorphism between these schemes. 
Fix an integer $n \gqq 1$ and set $S = \Spec(\Oo_{\bm{K}})$ and $S_n = \Spec(\Oo_{\bm{K}}[n^{-1}])$. 
For $\theta \in \KKK$, let $(\Oo_B/(n))_T^\theta$ be the constant
group scheme over the $S_n$-scheme $T$ associated with $\Oo_B/(n)$. Here we are viewing $\Oo_B/(n)$ as a right $\Oo_B$-module through multiplication on the right and a left $\Oo_{\bm{k}}$-module through $\theta: \Oo_{\bm{k}} \to \Oo_B$ and multiplication on the left. 
For $n$ prime to $d_B$ and $\theta \in \KKK$, define $\YYY^B(\widetilde{\theta}, n)$ to be the category fibered in groupoids over $S_n$ with $\YYY^B(\widetilde{\theta}, n)(T)$ the category of quadruples
$(A, i, \kappa, \nu)$ where $(A, i, \kappa) \in \YYY^B(\widetilde{\theta})(T)$ and 
$$
\nu : (\Oo_B/(n))_T^\theta \to A[n]
$$
is an $\Oo$-linear isomorphism of group schemes, and define
$$
\YYY^B(n) = \bigsqcup_{d=1}^m\YYY^B(\widetilde{\theta}_d, n).
$$
Forgetting $\nu$ defines a
finite \'etale representable morphism $\YYY^B(n) \to \YYY^B \times_S S_n$, so
$\YYY^B(n)$ is a stack, finite \'etale over $S_n$. 

We claim that for $n \gqq 3$ prime to $d_B$, any object of $\YYY^B(n)$ over a connected base has no nontrivial automorphisms. Let 
$T$ be a connected $S_n$-scheme, let $(A, i, \kappa, \nu) \in \YYY^B(n)(T)$, and suppose $g \in \Aut(A, i, \kappa, \nu)$.
Set $g' = g - \id_A$ and suppose $g' \neq 0$. Since $g = \kappa(a)$ for some $a \in \Oo_{\bm{k}}^\times$ (see the proof of Lemma \ref{free action}), the morphism
$g' = \kappa(a - 1)$ is an isogeny of QM abelian surfaces. Then
$$
[\deg^\ast(g')] = (g')^t \circ g' = g^t \circ g - g^t - g + \id_A = 2\cdot\id_A - (g^t + g),
$$
so $g^t + g = [c]$, where $c = 2 - \deg^\ast(g')$. Hence $g$ is a root of the polynomial $x^2 - cx + 1$
in $\End_T(A)[x]$. But $g$ is a root of unity, which means $|c| \lqq 2$ and thus $\deg^\ast(g') \lqq 4$.
By definition of $g$ being an automorphism of $(A, i, \kappa, \nu)$, the endomorphism 
$g'$ kills $A[n]$, so $g' = g'' \circ [n]$ for some $g'' \in \End_{\Oo_B}(A)$. Then $|n^2\deg^\ast(g'')| \lqq 4$ and
since $n \gqq 3$, we must have $\deg^\ast(g'') = 0$, a contradiction. It follows from this fact, as in
\cite[proof of Corollary 2.3]{Buzzard}, that $\YYY^B(n)$ is a scheme. 

For any $n \gqq 1$ define $\YYY(n)$ to be the category fibered in groupoids over $S_n$ with $\YYY(n)(T)$
the category of triples $(E, \kappa, \nu)$ where $(E, \kappa) \in \YYY(T)$ and 
$$
\nu : (\Oo_{\bm{k}}/(n))_T \to E[n]
$$
is an $\Oo_{\bm{k}}$-linear isomorphism of group schemes. As above, $\YYY(n)$ is a 
scheme, finite \'etale over $S_n$. Let $G_n = \Aut_{\Oo_{\bm{k}}}(\Oo_{\bm{k}}/(n)) \cong (\Oo_{\bm{k}}/(n))^\times$. There is an action of the finite
group scheme $(G_n)_{S_n}$ on the scheme $\YYY(n)$, defined on $T$-points, for any connected $S_n$-scheme $T$, by
$$
g \cdot (E, \kappa, \nu) = (E, \kappa, \nu \circ g^{-1}).
$$
There is an associated quotient stack $\YYY(n)/(G_n)_{S_n} \to S_n$, defined in \cite[Example 7.17]{Vistoli}, and
there is an isomorphism of stacks $\YYY(n)/(G_n)_{S_n} \to \YYY \times_S S_n$ such that the composition
$$
\YYY(n) \to \YYY(n)/(G_n)_{S_n} \map{\cong} \YYY \times_S S_n
$$
is the morphism defined by forgetting the level structure. 

Note that there is an isomorphism of groups 
$\Aut_{\Oo}(\Oo_B/(n)) \cong (\Oo_{\bm{k}}/(n))^\times$, viewing $\Oo_B/(n)$ as a left $\Oo_{\bm{k}}$-module through some $\theta \in \KKK$,
so $(G_n)_{S_n}$ also acts on $\YYY^B(\widetilde{\theta}, n)$, the action defined in the same way as above. As before there is an isomorphism
of stacks $\YYY^B(\widetilde{\theta}, n)/(G_n)_{S_n} \to \YYY^B(\widetilde{\theta}) \times_S S_n$ such that the composition
$$
\YYY^B(\widetilde{\theta}, n) \to \YYY^B(\widetilde{\theta}, n)/(G_n)_{S_n} \map{\cong} \YYY^B(\widetilde{\theta}) \times_S S_n
$$
is the forgetful morphism. The base change
$$
\Phi_n = \Phi \times \id : \bigsqcup_{d=1}^m\YYY \times_S S_n \to \YYY^B \times_S S_n
$$
induces a morphism of schemes over $S_n$
$$
\Phi_n' : \bigsqcup_{d=1}^m \YYY(n) \to \YYY^B(n)
$$
given on $T$-points by $(E, \nu, d) \mapsto (M_{\theta_d} \otimes_{\Oo_{\bm{k}}} E, M_{\theta_d}\otimes\nu)$, where $M_{\theta_d}\otimes\nu$ is the composition
\begin{equation}\label{level structure}
(\Oo_B/(n))_T^{\theta_d} \cong M_{\theta_d} \otimes_{\Oo_{\bm{k}}} (\Oo_{\bm{k}}/(n))_T \map{\id \otimes \nu}
M_{\theta_d} \otimes_{\Oo_{\bm{k}}} E[n] \cong (M_{\theta_d} \otimes_{\Oo_{\bm{k}}} E)[n].
\end{equation}

Let $k$ be an algebraically closed field over $\Oo_{\bm{K}}[n^{-1}]$ and fix a triple $(A, i, \kappa) \in \YYY^B(k)$, so $A \cong
M_{\theta_d} \otimes_{\Oo_{\bm{k}}} E$ for some $d$ and some $E \in \YYY(k)$, by Lemma \ref{geometric points}. Let $X$ be the set of all $\Oo$-linear isomorphisms of group schemes
$\mu : (\Oo_B/(n))_k^{\theta_d} \to A[n]$, where two such isomorphisms $\mu$ and $\mu'$ are considered equal in $X$
if the objects $(A, i, \kappa, \mu)$ and $(A, i, \kappa, \mu')$ are isomorphic in $\YYY^B(\widetilde{\theta}_d, n)(k)$. The group
$G_n$ acts simply transitively on $X$, the action as above, and this action preserves the subset of $X$ consisting of all $M_{\theta_d}\otimes\nu$ as in (\ref{level structure}) for some
$\Oo_{\bm{k}}$-linear isomorphism $\nu : (\Oo_{\bm{k}}/(n))_k \to E[n]$ since $a \cdot (\id \otimes \nu) = \id \otimes (\nu \circ m_{a^{-1}})$ for any $a \in (\Oo_{\bm{k}}/(n))^\times$, where $m_{a^{-1}}$ is  multiplication by $a^{-1}$. Combining this with
Lemma \ref{geometric points}, it follows that
$\Phi_n'$ determines a bijection $[\YYY(n)(k)] \to [\YYY^B(\widetilde{\theta}_d, n)(k)]$ for any $1 \lqq d \lqq m$. Hence $\Phi_n'$ determines a bijection
$$
(\Phi'_n)(k) : \bigsqcup_{d=1}^m[\YYY(n)(k)] \to \bigsqcup_{d=1}^m[\YYY^B(\widetilde{\theta}_d, n)] = [\YYY^B(n)(k)].
$$
The morphism $\Phi'_n$ is $(G_n)_{S_n}$-equivariant, so there is a morphism of stacks
$$
\bigsqcup_{d=1}^m\YYY(n)/(G_n)_{S_n} \to \bigsqcup_{d=1}^m\YYY^B(\widetilde{\theta}_d, n)/(G_n)_{S_n}
$$
inducing $\Phi_n$ under the isomorphisms described above, using Lemma \ref{decomposition 2}.
It follows that to show $\Phi_n$ is an isomorphism, it suffices to show $\Phi_n'$ is an isomorphism.
As $\Phi_n'$ is a finite \'etale morphism of
$S_n$-schemes inducing a bijection on geometric points of $S_n$, it is an isomorphism.
Choosing relatively prime integers $n, n' \gqq 3$ prime to $d_B$, the morphisms $\Phi_n$ and $\Phi_{n'}$ being isomorphisms implies $\Phi$ is an isomorphism.

For the final statement of the theorem, let $S$ be any $\Oo_{\bm{K}}$-scheme and fix an integer $1 \lqq d \lqq m$. 
It follows directly from the definitions that any CMQM abelian surface of the form $M_{\theta_d} \otimes_{\Oo_{\bm{k}}} E$ for some $E \in \YYY(S)$ lies in $\YYY^B([\theta_d])(S)$. Conversely, 
suppose $(A, i, \kappa) \in \YYY^B([\theta_d])(S)$. Since $\Phi$ is an isomorphism, $A \cong M_{\theta_{e}} \otimes_{\Oo_{\bm{k}}} E$ for some 
$E \in \YYY(S)$ and a unique $1 \lqq e \lqq m$, so the diagram (D$(A, \eta)$) of Definition \ref{theta stack}
commutes for $\eta = \widetilde{\theta}_d$ and $\eta = \widetilde{\theta}_e$. 
Picking any geometric point $\overline{s}$ of $S$, the same diagram still commutes with $A$ replaced with $A_{\overline{s}}$. But the induced map $i_{\overline{s}}^{\frakm_B}$
is an isomorphism by Corollary \ref{max ideal}. Therefore $d = e$, which shows $\Phi$ defines an isomorphism $\YYY \to \YYY^B([\theta_d])$.
\end{proof}

\begin{corollary}\label{Kottwitz 2}
Suppose $S$ is an $\Oo_{\bm{K}}$-scheme and let $(A, i, \kappa) \in \YYY^B(S)$. The trace of $i(x)$
acting on $\Lie(A)$ is equal to $\Trd(x)$ for any $x \in \Oo_B$.
\end{corollary}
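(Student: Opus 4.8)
The plan is to deduce the corollary from Theorem~\ref{Serre type}, which turns it into a computation with $\Oo_{\bm{k}}$-modules. By that theorem we may, after replacing $(A,i,\kappa)$ by an isomorphic object, assume $A = M_\Theta \otimes_{\Oo_{\bm{k}}} E$ for some ring embedding $\Theta\colon \Oo_{\bm{k}}\hookrightarrow\Oo_B$ and some $(E,\kappa)\in\YYY(S)$; recall that $M_\Theta = \Oo_B$ carries the left $\Oo_B$-action by left multiplication and the right $\Oo_{\bm{k}}$-action through $\Theta$, and that $i = i_{M_\Theta}$ acts by left multiplication on the $M_\Theta$-factor. Since the Serre tensor construction is compatible with the formation of $\Lie$ (see \cite[Section 7]{Conrad}), there is an $\OOO_S$-linear isomorphism $\Lie(A)\cong M_\Theta\otimes_{\Oo_{\bm{k}}}\Lie(E)$ carrying $i(x)$ to left multiplication by $x$ on the first factor, for every $x\in\Oo_B$; so it suffices to compute that trace.

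Next I would remove $\Lie(E)$ from the picture. Because $(E,\kappa)$ satisfies the CM normalization condition, $\Lie(E)$ is an invertible $\OOO_S$-module on which $\Oo_{\bm{k}}$ acts through the structure map $\phi\colon\Oo_{\bm{k}}\hookrightarrow\Oo_{\bm{K}}\to\OOO_S(S)$; hence, by associativity of the tensor product,
$$
\Lie(A)\;\cong\;\bigl(M_\Theta\otimes_{\Oo_{\bm{k}},\,\phi}\OOO_S\bigr)\otimes_{\OOO_S}\Lie(E),
$$
and $i(x)$ corresponds to $(L_x\otimes\id_{\OOO_S})\otimes\id_{\Lie(E)}$, where $L_x$ is left multiplication by $x$ on $\Oo_B$, regarded as a right $\Oo_{\bm{k}}$-module via $\Theta$. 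Since $\Oo_{\bm{k}}$ is a Dedekind domain, $M_\Theta$ is finitely generated projective of rank $2$ over $\Oo_{\bm{k}}$, so $\operatorname{tr}_{\Oo_{\bm{k}}}(L_x)\in\Oo_{\bm{k}}$ is defined and is compatible with base change along $\phi$; and tensoring an endomorphism of a locally free module with the identity of a line bundle does not change its trace (one checks this locally, where the line bundle is trivial). Thus $\operatorname{tr}(i(x)\mid\Lie(A)) = \phi\bigl(\operatorname{tr}_{\Oo_{\bm{k}}}(L_x)\bigr)$, and the claim reduces to the identity $\operatorname{tr}_{\Oo_{\bm{k}}}(L_x)=\Trd(x)$.

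Finally, for that identity, the trace of $L_x$ is unchanged by extending scalars from $\Oo_{\bm{k}}$ to $\bm{k}$, so I would work inside $B$, viewed as a right $\bm{k}$-vector space via $\Theta\otimes\QQ$. Writing $B=\bm{k}\oplus\bm{k}j$ with $jt=\bar t\,j$ for $t\in\bm{k}$ and $j^2\in\QQ^\times$, and expanding $x=\alpha+\beta j$ with $\alpha,\beta\in\bm{k}$, a short computation gives the matrix of $L_x$ in the right $\bm{k}$-basis $\{1,j\}$ as $\left(\begin{smallmatrix}\alpha & j^2\beta\\ \bar\beta & \bar\alpha\end{smallmatrix}\right)$, whose trace is $\alpha+\bar\alpha=\Trd(\alpha)=\Trd(x)$ (using $\Trd(\beta j)=0$). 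As $\Trd(x)\in\ZZ$ for $x\in\Oo_B$, this yields $\operatorname{tr}(i(x)\mid\Lie(A))=\Trd(x)$ in $\OOO_S(S)$, as required. The main friction I anticipate is purely the bookkeeping: keeping straight the competing left $\Oo_B$- and right $\Oo_{\bm{k}}$-module structures on $M_\Theta$, and cleanly justifying the two reductions (base change along $\phi$, and stripping off the line bundle $\Lie(E)$) before the algebra collapses to the $2\times2$ matrix above. I would also remark that the same matrix has determinant $\alpha\bar\alpha-j^2\beta\bar\beta=\Nrd(x)$, so its characteristic polynomial equals $(T-x)(T-x^\iota)$; hence every object of $\YYY^B(S)$ in fact satisfies condition~(\ref{Kottwitz}), which is what is needed to make sense of the forgetful morphism $\YYY_j^B\to\MMM^B$.
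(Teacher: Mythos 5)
Your proof is correct and follows the same route as the paper: reduce via Theorem~\ref{Serre type} to $A\cong M_\Theta\otimes_{\Oo_{\bm{k}}}E$, so that $\Lie(A)\cong M_\Theta\otimes_{\Oo_{\bm{k}}}\Lie(E)$ with $\Oo_B$ acting by left multiplication on $M_\Theta\cong\Oo_B$, and then compute the trace. The paper compresses the final computation to ``the result easily follows,'' whereas you carefully justify stripping off the line bundle $\Lie(E)$, base-changing along $\phi$, and exhibiting the explicit $2\times 2$ matrix of $L_x$ over $\bm{k}$ — a welcome expansion, and your closing remark about the determinant giving $\Nrd(x)$ correctly verifies condition~(\ref{Kottwitz}), which is the reason the corollary is stated.
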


What this means is that each point of $S$ has an affine open neighborhood $U$ such that
the trace of $i(x)$ acting on the free $\OOO_U$-module $\Lie(A \times_S U)$ is equal to $\Trd(x)$ for any $x \in \Oo_B$.

\begin{proof}
By Theorem \ref{Serre type}, $A \cong \Oo_B \otimes_{\Oo_{\bm{k}}} E$ for some $E \in \YYY(S)$, so
$\Lie(A) \cong \Oo_B \otimes_{\Oo_{\bm{k}}} \Lie(E)$ as $\Oo \otimes_{\Oo_{\bm{k}}} \OOO_S$-modules, with $\Oo_B$ acting on $\Oo_B \otimes_{\Oo_{\bm{k}}} \Lie(E)$ through right multiplication on $\Oo_B$. Let $s \in S$ and $U \subset S$ be an affine open neighborhood of $s$ such that $\Lie(E \times_S U)$ is a free $\OOO_U$-module of rank $1$.
Then the natural map $\Oo_B \otimes_{\Oo_{\bm{k}}} \OOO_U \to \mathscr{E}nd_{\OOO_U}(\Lie(A \times_S U))$ is an isomorphism of $\OOO_U$-modules and we may view $\Oo_B \otimes_{\Oo_{\bm{k}}} \OOO_U$ as $\MM_2(\OOO_U)$ acting on $\Lie(A\times_SU) \cong \OOO_U \oplus \OOO_U$. Hence $\Tr_{\Lie(A\times_SU)/\OOO_U}(i(x)) = \Tr(i(x)) = \Trd(x)$, where the middle term is matrix trace.
\end{proof}

\begin{corollary}\label{decomp lift}
Suppose $\widetilde{R} \to R$ is a surjection in $\mathbf{CLN}_{\bm{K}_\frakP}$, $\mathbf{A}  \in \YYY^B(R)$, and 
$\widetilde{\mathbf{A}} \in \YYY^B(\widetilde{R})$ is a deformation of $\mathbf{A}$.
Let $\theta : \Oo_{\bm{k}} \to \Oo_B/\frakm_B$ be a ring homomorphism.
Then $\mathbf{A} \in \YYY^B(\theta)(R)$ if and only if $\widetilde{\mathbf{A}} \in \YYY^B(\theta)(\widetilde{R})$.
\end{corollary}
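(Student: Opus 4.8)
The plan is to deduce the corollary from the decomposition $\YYY^B \cong \bigsqcup_{\theta}\YYY^B(\theta)$ of Theorem \ref{Serre type}. That decomposition exhibits each $\YYY^B(\theta)$ as an open and closed substack of $\YYY^B$, so the inclusion functor $\YYY^B(\theta)\hookrightarrow\YYY^B$ is a representable open and closed immersion. Regard $\widetilde{x}\in\YYY^B(\widetilde{R})$ as a morphism $\Spec(\widetilde{R})\to\YYY^B$ and pull $\YYY^B(\theta)$ back along it to obtain an open and closed subscheme $U\subseteq\Spec(\widetilde{R})$; by construction $\widetilde{x}\in\YYY^B(\theta)(\widetilde{R})$ if and only if $U=\Spec(\widetilde{R})$. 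The chosen $\Oo_{\bm{k}}$-linear identification $\widetilde{A}\otimes_{\widetilde{R}}R\cong A$ of the deformation means that $x$ is isomorphic to the composition $\Spec(R)\to\Spec(\widetilde{R})\map{\widetilde{x}}\YYY^B$, and pulling $\YYY^B(\theta)$ back along this composition gives $U\times_{\Spec(\widetilde{R})}\Spec(R)=U\cap\Spec(R)$; since the condition defining $\YYY^B(\theta)$ is invariant under isomorphism, $x\in\YYY^B(\theta)(R)$ if and only if $\Spec(R)\subseteq U$. Thus the corollary is reduced to a topological statement about the closed immersion $\Spec(R)\hookrightarrow\Spec(\widetilde{R})$, cut out by $I=\ker(\widetilde{R}\to R)$, and the open and closed subset $U$.

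One implication is then immediate: if $U=\Spec(\widetilde{R})$ then a fortiori $\Spec(R)\subseteq U$, so $\widetilde{x}\in\YYY^B(\theta)(\widetilde{R})$ forces $x\in\YYY^B(\theta)(R)$. This direction can also be seen directly, by reducing the commutative diagram (\ref{MB commutative}) for $\widetilde{x}$ along $\widetilde{R}\to R$, using $\widetilde{A}[\frakm_B]\otimes_{\widetilde{R}}R\cong A[\frakm_B]$ and the fact that $\widetilde{\kappa}$ reduces to $\kappa$. For the converse, suppose $\Spec(R)\subseteq U$. In every situation in which the corollary is applied the ring $\widetilde{R}$ is local — it is an object of $\mathbf{CLN}_{\bm{K}_\frakP}$, as in Proposition \ref{reduction} — so $\Spec(\widetilde{R})$ is connected and the open and closed subset $U$ is either empty or all of $\Spec(\widetilde{R})$; as it contains the nonempty $\Spec(R)$ it must equal $\Spec(\widetilde{R})$, that is, $\widetilde{x}\in\YYY^B(\theta)(\widetilde{R})$. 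More generally it would suffice to assume $I$ nilpotent (equivalently, that $\Spec(R)\to\Spec(\widetilde{R})$ is surjective), since then again $\Spec(R)\subseteq U$ forces $U=\Spec(\widetilde{R})$.

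The step to watch is precisely this last one: the corollary genuinely requires a hypothesis ensuring that $\Spec(R)$ meets every connected component of $\Spec(\widetilde{R})$, for otherwise $\widetilde{x}$ could fail the condition on a component disjoint from $\Spec(R)$, so the proof should record that the surjections $\widetilde{R}\to R$ under consideration have local (or at least connected) source. Everything else is formal once Theorem \ref{Serre type} is available; in particular, there is no need to analyze the finite flat group scheme $\widetilde{A}[\frakm_B]$ directly, which is convenient since it need not be \'etale over $\widetilde{R}$ when $p\mid d_B$. A more hands-on route — showing directly that two maps $\Oo_{\bm{k}}\rightrightarrows\End_{\Oo_B/\frakm_B}(\widetilde{A}[\frakm_B])$ agreeing modulo $I$ must coincide, via a rigidity argument for these endomorphisms — is possible but messier, so I would present the open-and-closed substack argument above.
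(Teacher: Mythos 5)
Your argument is correct and is exactly the content behind the paper's one-line proof ``direct consequence of Theorem \ref{Serre type}'': that theorem realizes $\YYY^B(\theta)$ as an open and closed substack of $\YYY^B$, so membership is recorded by an idempotent in $\widetilde{R}$, and the locality (or nilpotence) assumption forces that idempotent to lift trivially along $\widetilde{R}\to R$. You are also right to flag that, as literally stated for an arbitrary surjection of $\Oo_{\bm{K}}$-algebras, the converse direction needs $\Spec(R)$ to meet every connected component of $\Spec(\widetilde{R})$: taking $\widetilde{R}=R\times R'$ with the first projection, a deformation of the form $A\times A'$ with $A'$ belonging to a different $\YYY^B(\theta')$ gives a genuine counterexample. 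The paper only invokes the corollary for $\widetilde{R}$ an object of $\mathbf{CLN}_{\bm{K}_\frakP}$ (in Theorem \ref{local ring}), so nothing downstream is affected, but a hypothesis to that effect ought to be carried along in the statement. One small slip in your write-up: nilpotence of $I=\ker(\widetilde{R}\to R)$ is strictly stronger than surjectivity of $\Spec(R)\to\Spec(\widetilde{R})$ (the latter is equivalent only to $I$ lying in the nilradical), though the implication you actually need --- nilpotent kernel implies the two spectra share the same underlying space --- is the true one.
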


\begin{proof}
First suppose $\mathbf{A} \in \YYY^B(\theta)(R)$.
By Theorem \ref{Serre type}, $\widetilde{\mathbf{A}} \in \YYY^B(\eta)(\widetilde{R})$ for a unique $\eta : \Oo_{\bm{k}} \to \Oo_B/\frakm_B$, so the
diagram (D$(\widetilde{A}, \eta)$) of Definition \ref{theta stack} commutes. 
Since $\widetilde{\mathbf{A}}$ is a deformation of $\mathbf{A}$, the diagram (D$(A, \eta)$) then commutes. But $\mathbf{A} \in \YYY^B(\theta)(R)$ implies the diagram (D$(A, \theta)$) also commutes, so
$\eta = \theta$ as in the last part of the proof of Theorem \ref{Serre type}. Conversely, if $\widetilde{\mathbf{A}} \in 
\YYY^B(\theta)(\widetilde{R})$ then (D$(\widetilde{A}, \theta)$) commutes, so 
as before it follows that (D$(A, \theta)$) commutes and hence $\mathbf{A} \in \YYY^B(\theta)(R)$.
\end{proof}

\subsection{Tate and Dieudonn\'e modules}

In this section we collect some basic results on Tate and Dieudonn\'e modules of QM abelian surfaces that will be used later. 

\begin{lemma}\label{Tate modules}
Let $\ell$ be a prime number and suppose $A_1$ and $A_2$ are QM abelian surfaces
over a field $k$ with $\charr(k) \neq \ell$. \\
{\upshape (a)} Suppose $\ell \nmid d_B$ and set
$$
\varepsilon = \begin{bmatrix} 1 & 0 \\ 0 & 0 \end{bmatrix}, \quad \varepsilon' = \begin{bmatrix} 0 & 0 \\ 0 & 1
\end{bmatrix}
$$
in $\MM_2(\ZZ_{\ell}) \cong \Oo_B \otimes_{\ZZ} \ZZ_{\ell}$. There are isomorphisms of $\ZZ_{\ell}$-modules
$$
\Hom_{\Oo_{B, \ell}}(T_{\ell}(A_1), T_{\ell}(A_2)) \cong \Hom_{\ZZ_{\ell}}(\varepsilon T_{\ell}(A_1), \varepsilon T_{\ell}(A_2))
\cong \MM_2(\ZZ_{\ell}).
$$
If $A_1 = A_2$ then these are
isomorphisms of rings.  \\
{\upshape (b)} If $\ell \mid d_B$ then there is an isomorphism of $\ZZ_{\ell}$-modules
$$
\Hom_{\Oo_{B, \ell}}(T_{\ell}(A_1), T_{\ell}(A_2)) \cong \Oo_{B, \ell},
$$
which is an isomorphism of rings if $A_1 = A_2$.
\end{lemma}

\begin{proof}
(a) For $j = 1,2$ write $T_j$ for $T_{\ell}(A_j)$. From $T_j = \varepsilon T_j \oplus \varepsilon' T_j$, there is an
inclusion
$$
\Hom_{\Oo_{B, \ell}}(T_1, T_2) \hookrightarrow \Hom_{\ZZ_{\ell}}(\varepsilon T_1, \varepsilon T_2) \oplus
\Hom_{\ZZ_{\ell}}(\varepsilon' T_1, \varepsilon' T_2).
$$
Denote this map
by $f \mapsto (f_{\varepsilon}, f_{\varepsilon'})$ and let 
$$
w = \begin{bmatrix} 0 & 1 \\ 1 & 0 \end{bmatrix} \in \MM_2(\ZZ_{\ell}).
$$
Then for any $x \in T_1$, $f_{\varepsilon'}(\varepsilon'x) = wf_{\varepsilon}(\varepsilon wx)$,
which means $f_{\varepsilon}$ determines $f_{\varepsilon'}$. Therefore the above map is really an inclusion $\Hom_{\Oo_{B, \ell}}(T_1, T_2) \hookrightarrow \Hom_{\ZZ_{\ell}}(\varepsilon T_1, \varepsilon T_2)$.
To show this map is surjective, let $f_{\varepsilon} \in \Hom_{\ZZ_{\ell}}(\varepsilon T_1, \varepsilon T_2)$.
Define $f_{\varepsilon'} \in \Hom_{\ZZ_{\ell}}(\varepsilon' T_1, \varepsilon' T_2)$ by $f_{\varepsilon'}(\varepsilon'x)
= wf_{\varepsilon}(\varepsilon wx)$, and define $f : T_1 \to T_2$ by $f = f_{\varepsilon} \oplus f_{\varepsilon'}$.
It is simple to check that $f$ is $\MM_2(\ZZ_{\ell})$-linear, and by construction, $f \mapsto f_{\varepsilon}$.

(b) Since $\ell \mid d_B$, $B_{\ell}$ is a quaternion division algebra over $\QQ_{\ell}$, and from $T_j$
being free of rank $4$ as a $\ZZ_{\ell}$-module, $T_j \otimes_{\ZZ_{\ell}} \QQ_{\ell}$ is a free $B_{\ell}$-module
of rank $1$. Choosing a generator, we obtain an isomorphism of $B_{\ell}$-modules $T_j \otimes_{\ZZ_{\ell}}\QQ_{\ell}
\cong B_{\ell}$, which identifies $T_j$ with a finitely generated $\Oo_{B, \ell}$-submodule of $B_{\ell}$.
Multiplying $T_j$ by a suitably large power of $\ell$ gives an isomorphism of $T_j$ with a finitely generated
$\Oo_{B, \ell}$-submodule of $\Oo_{B, \ell}$, that is, a left ideal in $\Oo_{B, \ell}$. Since all ideals of
$\Oo_{B, \ell}$ are principal, $T_j \cong \Oo_{B, \ell}$ as a left $\Oo_{B, \ell}$-module. Hence
$$
\Hom_{\Oo_{B, \ell}}(T_1, T_2) \cong \End_{\Oo_{B, \ell}}(\Oo_{B, \ell}) \cong \Oo_{B, \ell}^{\text{op}} \cong \Oo_{B, \ell}
$$
as $\ZZ_{\ell}$-modules, where the isomorphism $\Oo_{B, \ell} \to \Oo_{B, \ell}^{\text{op}}$ is given by the main
involution.
\end{proof}

A QM abelian surface $(A, i)$ over a field of characteristic $p$ is \textit{supersingular} if the underlying abelian variety $A$ is
supersingular, that is, $A \sim E^2$ for some supersingular elliptic curve $E$.

\begin{lemma} \label{Tate modules 2}
Let $A_1$ and $A_2$ be supersingular QM abelian surfaces over $\overline{\FF}_p$. 
For any prime $\ell \neq p$ the natural map
$$
\Phi : \Hom_{\Oo_B}(A_1, A_2) \otimes_{\ZZ} \ZZ_{\ell} \to \Hom_{\Oo_{B. \ell}}(T_{\ell}(A_1), T_{\ell}(A_2))
$$
is an isomorphism of $\ZZ_{\ell}$-modules, and is an isomorphism of rings if $A_1 = A_2$.
\end{lemma}

\begin{proof}
The map $\Phi$ is injective for any abelian varieties by \cite[\S 19, Theorem 3]{Mumford}. For $j = 1, 2$ write $T_j$ for $T_{\ell}(A_j)$, and let $M = \imm(\Phi)$. We claim 
$\Hom_{\Oo_{B, \ell}}(T_1, T_2)/M$ is a torsion-free $\ZZ_{\ell}$-module.
Suppose $f \in \Hom_{\Oo_{B, \ell}}(T_1, T_2)$ satisfies $\ell f \in M$. Then $\ell f = \Phi(\varphi)$
for some $\varphi \in \Hom_{\Oo_B}(A_1, A_2) \otimes_{\ZZ} \ZZ_{\ell}$, which means $\varphi$ vanishes on $A_1[\ell](\overline{\FF}_p)$. Hence $(\ker\varphi)(\overline{\FF}_p) \supset (\ker\hspace{.5mm}[\ell])(\overline{\FF}_p)$, 
and thus there is a $\varphi' \in \Hom_{\overline{\FF}_p}(A_1, A_2)$ such that 
$\varphi = \varphi' \circ [\ell] = \ell\varphi'$. 
Note that $\varphi$ being $\Oo_B$-linear implies $\varphi'$ is also 
$\Oo_B$-linear. Then
$\ell\Phi(\varphi') = \Phi(\ell\varphi') = \Phi(\varphi) = \ell f$, so $f = \Phi(\varphi') \in M$ since
$\Hom_{\Oo_{B, \ell}}(T_1, T_2)$ is a torsion-free $\ZZ_{\ell}$-module. 

As $A_1$ and $A_2$ are supersingular, $\Hom_{\Oo_B}(A_1, A_2) \otimes_{\ZZ} \ZZ_{\ell}$ is a free 
$\ZZ_{\ell}$-module of rank $4$ since it is a lattice in $\Hom_{\Oo_B}(A_1, A_2) \otimes_{\ZZ} \QQ_\ell \cong \End_{\Oo_B}(A_1) \otimes_{\ZZ}
\QQ_\ell \cong B^{(p)}_\ell$, where the first isomorphism comes from choosing an isogeny $A_1 \to A_2$ of QM abelian surfaces (there is an isogeny of abelian varieties $A_1 \to A_2$ since they are supersingular and hence there is an isogeny of QM abelian surfaces by \cite[Remark 5.3]{Milne}).
By Lemma \ref{Tate modules}, $\Hom_{\Oo_{B, \ell}}(T_1, T_2)$ is also a free $\ZZ_\ell$-module of rank $4$, so $\Phi$ is an isomorphism.
\end{proof}

Fix a prime number $p$ and let $W = W(\overline{\FF}_p)$ be the ring of Witt vectors over $\overline{\FF}_p$, so
$W$ is the ring of integers in the completion of the maximal unramified extension of $\QQ_p$.
If $A$ is a QM abelian surface over $\overline{\FF}_p$, we write $D(A)$ for the covariant Dieudonn\'e module of
$A$ (that is, the Dieudonn\'e module of $A[p^{\infty}]$), which is a module over the Dieudonn\'e ring $\DDD$, free of rank $4$ over $W$. Recall that there is a unique continuous ring automorphism $\sigma$ of $W$ inducing the absolute Frobenius $x \mapsto x^p$ on $W/pW \cong \overline{\FF}_p$, and $\DDD = W\{\FFF, \VVV\}/(\FFF\VVV - p)$ where $W\{\FFF, \VVV\}$ is the non-commutative
polynomial ring in two commuting variables $\FFF$ and $\VVV$ satisfying $\FFF x = \sigma(x)\FFF$ and 
$\VVV x = \sigma^{-1}(x)\VVV$
for all $x \in W$. 

If $E$ is a supersingular elliptic curve over $\overline{\FF}_p$, then the natural map
$$
\End_{\overline{\FF}_p}(E) \otimes_{\ZZ} \ZZ_p \to \End_{\DDD}(D(E)) \cong \Delta
$$
is an isomorphism of $\ZZ_p$-algebras, where $\Delta$ is the unique maximal order in the quaternion division algebra over $\QQ_p$.

\begin{lemma} \label{Dieudonne 1}
Let $A_1$ and $A_2$ be QM abelian surfaces over $\overline{\FF}_p$. Suppose $p \nmid d_B$ and set
$$
\varepsilon = \begin{bmatrix} 1 & 0 \\ 0 & 0 \end{bmatrix}
$$
in $\MM_2(W) \cong \Oo_B \otimes_{\ZZ} W$. There is an isomorphism
of $W$-modules
$$
\Hom_{\Oo_B \otimes_{\ZZ} W}(D(A_1), D(A_2)) \cong \Hom_W(\varepsilon D(A_1), \varepsilon D(A_2)) \cong \MM_2(W),
$$
which is an isomorphism of rings if $A_1 = A_2$. In particular, if $A_1$ and $A_2$ are supersingular, then
$$
\Hom_{\Oo_B \otimes_{\ZZ} \DDD}(D(A_1), D(A_2)) \cong \Delta.
$$
\end{lemma}

\begin{proof}
The proof of the first part is identical to that of Lemma \ref{Tate modules}(a), replacing $\ZZ_{\ell}$-linearity with $W$-linearity.
For the in particular statement, note that
$$
\Hom_{\Oo_B \otimes_{\ZZ} \DDD}(D(A_1), D(A_2)) = \{\varphi \in \MM_2(W) : \FFF\varphi = \varphi^{\sigma}\FFF, \hspace{1mm} \VVV\varphi = \varphi^{\sigma^{-1}}\VVV\},
$$
where $\varphi^{\sigma}$ is the matrix obtained by applying $\sigma$ to all of the entries.
Since $A_j$ is supersingular, $\varepsilon D(A_j)$ is free of rank of $2$ over $W$ with basis $\{e_1, e_2\}$
satisfying $\FFF(e_1) = \VVV(e_1) = e_2$ and $\FFF(e_2) = \VVV(e_2) = pe_1$.
A computation in coordinates then shows
\begin{equation*}
\{\varphi \in \MM_2(W) : \FFF\varphi = \varphi^{\sigma}\FFF, \hspace{1mm} \VVV\varphi = \varphi^{\sigma^{-1}}\VVV\}
= \left\{\begin{bmatrix} a & pb \\ \overline{b} & \overline{a} \end{bmatrix} : a, b \in \ZZ_{p^2} \right\} \cong \Delta.  \qedhere
\end{equation*}
\end{proof}

\begin{lemma} \label{Dieudonne 2}
If $A_1$ and $A_2$ are supersingular QM abelian surfaces over $\overline{\FF}_p$, then the natural map
$$
\Hom_{\Oo_B}(A_1, A_2) \otimes_{\ZZ} \ZZ_p \to \Hom_{\Oo_B \otimes_{\ZZ}\DDD}(D(A_1), D(A_2))
$$ 
is an isomorphism of $\ZZ_p$-modules, and is an isomorphism of rings if $A_1 = A_2$.
\end{lemma}

\begin{proof}
The map is injective for any abelian varieties by \cite[Proposition 1.2.5.1]{CCO}.
The proof of surjectivity is very similar to that of Lemma \ref{Tate modules 2}, using the following fact: the group
$H = \Hom_{\Oo_B \otimes_{\ZZ} \DDD}(D(A_1), D(A_2))$ is free $\ZZ_p$-module of rank $4$. To see this, consider
the $\QQ_p$-vector space $H \otimes_{\ZZ_p} \QQ_p$. Since $A_1 \sim A_2$ as QM abelian surfaces, $D(A_1) \otimes_{\ZZ_p} \QQ_p \cong
D(A_2) \otimes_{\ZZ_p} \QQ_p$ as $\Oo_B \otimes_{\ZZ} \DDD$-modules, so
$$
H \otimes_{\ZZ_p} \QQ_p \cong \End_{\Oo_B \otimes_{\ZZ}\DDD}(D(A_1)) \otimes_{\ZZ_p} \QQ_p.
$$
As $A_1 \sim E_1^2$ for some supersingular elliptic curve $E_1$ over $\overline{\FF}_p$, we have
$D(A_1) \otimes_{\ZZ_p} \QQ_p \cong D(E_1)^2 \otimes_{\ZZ_p} \QQ_p$ as $\DDD$-modules and thus there are isomorphisms
\begin{align*}
\End_{\DDD}(D(A_1)) \otimes_{\ZZ_p} \QQ_p &\cong \MM_2(\End_{\DDD}(D(E_1))) \otimes_{\ZZ_p} \QQ_p
\cong \MM_2(\End_{\overline{\FF}_p}(E_1)) \otimes_{\ZZ} \QQ_p \\
&\cong \End_{\overline{\FF}_p}(A_1) \otimes_{\ZZ} \QQ_p.
\end{align*}
Taking the centralizer of $\Oo_B$ in each ring shows $H \otimes_{\ZZ_p} \QQ_p \cong \End_{\Oo_B}(A_1) \otimes_{\ZZ}
\QQ_p$ has $\QQ_p$-dimension $4$.
\end{proof}

Let $A \in \YYY^B(\overline{\FF}_\frakP)$,
so $A \cong M \otimes_{\Oo_{\bm{k}}} E$ for some $E \in \YYY(\overline{\FF}_\frakP)$ and some module
$M$ over $\Oo = \Oo_B \otimes_{\ZZ} \Oo_{\bm{k}}$, free of rank $4$ over $\ZZ$. Let $p$ be the rational prime below $\frakP$. 
There is an isomorphism of 
$W \otimes_{\ZZ_p} \Oo_p$-modules
$$
D(A) \cong M_p \otimes_{\Oo_{\bm{k}, p}} D(E).
$$
However, $M_p \cong \Oo_{\bm{k}, p} \oplus \Oo_{\bm{k}, p}$ as $\Oo_{\bm{k}, p}$-modules and thus $D(A) \cong D(E) \oplus
D(E)$ as modules over $W \otimes_{\ZZ_p} \Oo_{\bm{k}, p}$, where $\Oo_{\bm{k}, p}$ acts on $D(E) \oplus D(E)$ diagonally through its action on $D(E)$. We still have to determine the possibilities for the actions of $\Oo_{B, p}$ and $\DDD$ on $D(A)$.

\begin{lemma}\label{supersingular2}
If $p$ is a prime dividing $d_B$, or more generally, a prime nonsplit in $\bm{k}$, then any $A \in \YYY^B(\overline{\FF}_\frakP)$ is supersingular.
\end{lemma}

\begin{proof}
By Proposition \ref{char p} there are two possibilities for $A$ up to isogeny. Suppose $A \sim E^2$ for some ordinary
elliptic curve $E$ over $\overline{\FF}_\frakP$. Then $\End^0_{\overline{\FF}_\frakP}(E) \cong L$ for some imaginary quadratic
field $L$ and $\End^0_{\Oo_B}(A) \cong L$. But $\bm{k} \hookrightarrow \End^0_{\Oo_B}(A)$, so $L \cong \bm{k}$.
Tensoring the $p$-adic representation $\End_{\overline{\FF}_\frakP}(E) \to \End_{\ZZ_p}(T_p(E))$ with $\QQ_p$ gives a $\QQ_p$-algebra
homomorphism $\bm{k}_p = \bm{k} \otimes_{\QQ} \QQ_p \to \QQ_p$.
This map cannot be injective by counting dimensions, so $\bm{k}_p$ is not a field, which means $p$ is split in $\bm{k}$.
\end{proof}

\begin{proposition}\label{endomorphisms}
Suppose $A \in \YYY^B(\overline{\FF}_\frakP)$ for $p \mid d_B$, with $A \cong 
M \otimes_{\Oo_{\bm{k}}} E$ for some supersingular $E$. 
Fix an isomorphism $\Oo_{B, p} \cong \Delta$ and a uniformizer $\Pi \in \Delta$
satisfying $\Pi^2 = p$ and $\Pi a = \overline{a}\Pi$ for all $a \in \ZZ_{p^2}$, where we are viewing $\ZZ_{p^2} \hookrightarrow \Delta$ through the CM action $\Oo_{\bm{k}, p} \to \End_{\overline{\FF}_\frakP}(E) \otimes_{\ZZ} \ZZ_p \cong \Delta$. Then there is an isomorphism of rings
$\End_{\Oo_B}(A) \otimes_{\ZZ} \ZZ_p \cong R_{11}$, where
$$
R_{11} = \left\{\begin{bmatrix} x & y\Pi \\ py\Pi & x\end{bmatrix} : x, y \in \ZZ_{p^2}\right\} \subset \MM_2(\Delta).
$$
\end{proposition}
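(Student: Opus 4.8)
The plan is to transfer the computation to Dieudonn\'e modules, realize $\End_{\Oo_B}(A)\otimes_{\ZZ}\ZZ_p$ as the centralizer of a quaternionic subalgebra inside $M_2(\Delta)$, and then finish with the explicit classification of Lemma \ref{types}. Concretely, writing $A\cong M\otimes_{\Oo_{\bm{k}}}E$ as in the hypothesis, the isomorphism $D(A)\cong M_p\otimes_{\Oo_{\bm{k},p}}D(E)$ of $W\otimes_{\ZZ_p}\Oo_p$-modules recorded just before the statement, combined with $M_p\cong\Oo_{\bm{k},p}\oplus\Oo_{\bm{k},p}$, gives an identification of $\DDD$-modules $D(A)\cong D(E)\oplus D(E)$ on which $\Oo_{\bm{k},p}\cong\ZZ_{p^2}$ acts diagonally through its CM action on $D(E)$. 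Since $E$ is supersingular, $\End_{\DDD}(D(E))\cong\End(E[p^\infty])\cong\End(E)\otimes_{\ZZ}\ZZ_p$ is the maximal order $\Delta$ of the quaternion division algebra over $\QQ_p$, and the CM action realizes precisely the embedding $\ZZ_{p^2}\hookrightarrow\Delta$ fixed in the statement; hence $\End_{\DDD}(D(A))\cong M_2(\Delta)$, and $\Oo_{B,p}$ acts on $D(A)$ through a ring homomorphism $\rho\colon\Oo_{B,p}\to\End_{\Oo_{\bm{k},p}}(M_p)\cong M_2(\ZZ_{p^2})\subset M_2(\Delta)$ induced by the action of $\Oo_B$ on $M$.

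The next step is to identify $\End_{\Oo_B}(A)\otimes_{\ZZ}\ZZ_p$ with the centralizer $C_{M_2(\Delta)}(\rho(\Oo_{B,p}))$. Descending $A$ together with a finite generating set of $\End(A)$ to a finite field (Proposition \ref{lefschetz}) and applying Tate's theorem, one gets $\End(A)\otimes_{\ZZ}\ZZ_p\cong\End(A[p^\infty])\cong\End_{\DDD}(D(A))$ compatibly with the $\Oo_B$-actions, and imposing $\Oo_B$-equivariance on both sides yields the identification with $C_{M_2(\Delta)}(\rho(\Oo_{B,p}))$.

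It remains to compute this centralizer explicitly. By Lemma \ref{types}, after a suitable choice of $\ZZ_{p^2}$-basis of $M_p$ the map $\rho$ is one of $f_1$ or $f_2$, the $\ZZ_{p^2}$ of that lemma being our CM ring and $\Pi$ a uniformizer with $\Pi^2=p$ and $\Pi a=\bar a\Pi$; since conjugating the identification $D(A)\cong D(E)^{\,2}$ by the relevant element of $\GL_2(\ZZ_{p^2})\subset M_2(\Delta)^{\times}$ changes $C_{M_2(\Delta)}(\rho(\Oo_{B,p}))$ only by a ring isomorphism, it suffices to handle $\rho=f_1$ and $\rho=f_2$. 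Writing a general $T\in M_2(\Delta)$ with each entry decomposed along $\Delta=\ZZ_{p^2}\oplus\ZZ_{p^2}\Pi$, commutation with $f_1(\ZZ_{p^2})=\{\mathrm{diag}(a,\bar a):a\in\ZZ_{p^2}\}$ forces the diagonal entries of $T$ into $\ZZ_{p^2}$ and the off-diagonal entries into $\ZZ_{p^2}\Pi$ (using that the centralizer of $\ZZ_{p^2}$ in $\Delta$ is $\ZZ_{p^2}$ and that $\Pi a=\bar a\Pi$), and then commutation with $f_1(\Pi)=\begin{bmatrix}0&1\\p&0\end{bmatrix}$ pins $T$ down to precisely the shape $\begin{bmatrix}x&y\Pi\\py\Pi&x\end{bmatrix}$ with $x,y\in\ZZ_{p^2}$; that is, $T\in R_{11}$. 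For $\rho=f_2$ the same computation produces the conjugate of $R_{11}$ by the permutation matrix $\begin{bmatrix}0&1\\1&0\end{bmatrix}$, which is ring isomorphic to $R_{11}$. One also checks along the way that $R_{11}$ is unchanged if $\Pi$ is replaced by another uniformizer normalizing $\ZZ_{p^2}$ with square $p$, consistent with the fact that $\Pi$ is only pinned down up to such a twist.

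The step I expect to be the main obstacle is the bookkeeping in the middle: keeping straight the abstract $\Delta$ of Lemma \ref{types}, the ring $\End(E)\otimes_{\ZZ}\ZZ_p$, and $\Oo_{B,p}$, each with its copy of $\ZZ_{p^2}$ and its uniformizer, and invoking Tate's theorem over $\overline{\FF}_p$ (via descent to a finite field) correctly so that the $\Oo_B$-equivariant isomorphism $\End_{\Oo_B}(A)\otimes_{\ZZ}\ZZ_p\cong C_{M_2(\Delta)}(\rho(\Oo_{B,p}))$ is justified. Once that framework is in place, the final matrix computation is entirely routine.
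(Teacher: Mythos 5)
Your proof is correct and follows essentially the same route as the paper's: pass to Dieudonn\'e modules, identify $\End_{\mathscr{D}}(D(A)) \cong \mathrm{M}_2(\Delta)$ with $\ZZ_{p^2}$ embedded via the CM action, realize $\End_{\Oo_B}(A)\otimes\ZZ_p$ as the centralizer of the image of $\Oo_{B,p}$ in $\mathrm{M}_2(\Delta)$, invoke Lemma \ref{types} to reduce that image to $f_1$ or $f_2$ up to $\GL_2(\ZZ_{p^2})$-conjugacy, and compute the centralizer explicitly, getting $R_{11}$ in the first case and the ring-isomorphic $R_{22}$ (your permutation conjugate) in the second. The only difference is expository: you spell out the centralizer computation and flag the Tate-theorem/descent step ($\End_{\Oo_B}(A)\otimes\ZZ_p \cong \End_{\Oo_B\otimes\mathscr{D}}(D(A))$, via Proposition \ref{lefschetz}) which the paper simply states without comment.
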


\begin{proof}
There is the $\Delta$-action on $D(A)$
$$
D(i) : \Delta \to \End_{\Oo_{\bm{k}} \otimes_{\ZZ} \DDD}(D(A)) \cong \MM_2(\End_{\Oo_{\bm{k}} \otimes_{\ZZ} \DDD}(D(E))) \cong \MM_2(\ZZ_{p^2}).
$$
By Lemma \ref{types} there are two possibilities for $D(i)$ up to $\GL_2(\ZZ_{p^2})$-conjugacy, $f_1$ and $f_2$, and we may assume $D(i)$ is equal to $f_1$ or $f_2$ in computing 
$$
\End_{\Oo_B}(A) \otimes_{\ZZ} \ZZ_p \cong \End_{\Oo_B \otimes_{\ZZ} \DDD}(D(A)) \cong C_{\MM_2(\Delta)}(\Delta).
$$
If $D(i) = f_1$ then a computation shows $C_{\MM_2(\Delta)}(\Delta) = R_{11}$.
In the case of $D(i) = f_2$ we have
$C_{\MM_2(\Delta)}(\Delta) = R_{22}$, where
\begin{equation*}
R_{22} = \left\{\begin{bmatrix} x & py\Pi \\ y\Pi & x\end{bmatrix} : x, y \in \ZZ_{p^2}\right\} \cong R_{11}. \qedhere
\end{equation*}
\end{proof}

For $p \mid d_B$ there are two isomorphism classes of modules over $W \otimes_{\ZZ_p} \Oo_p$ that are free
of rank $4$ over $W$ (Lemma \ref{local modules}), and the proof of the previous proposition gives us explicit coordinates for each of these modules,
which we will use for the $W \otimes_{\ZZ_p} \Oo_p$-module $D(A)$. To describe this, identify
$\Delta$ with a subring of $\MM_2(\ZZ_{p^2}) \subset \MM_2(W)$ by
\begin{equation}\label{matrix rep}
a + b\Pi \mapsto \begin{bmatrix} a & pb \\ \overline{b} & \overline{a} \end{bmatrix},
\end{equation}
and use this to view $\ZZ_{p^2} \subset \Delta$ inside $\MM_2(\ZZ_{p^2})$.
Then there is a basis $\{e_n\}$ for the rank 4 free $W$-module $D(A) \cong D(E) \oplus D(E)$ relative to which the $\Delta$-action on $D(A)$ is given by one of the two maps $f_1, f_2 : \Delta \to \End_W(D(A)) \cong \MM_4(W)$ of Lemma \ref{types}:
\begin{equation}\label{OB action}
f_1(a + b\Pi) = \begin{bmatrix} a & 0 & b & 0 \\ 0 & \overline{a} & 0 & \overline{b} \\ p\overline{b} & 0 & \overline{a} & 0 \\
0 & pb & 0 & a \end{bmatrix}, \quad
f_2(a + b\Pi) = \begin{bmatrix} a & 0 & pb & 0 \\ 0 & \overline{a} & 0 & p\overline{b} \\ \overline{b} & 0 & \overline{a} & 0 \\
0 & b & 0 & a \end{bmatrix}.
\end{equation}
The action of $\Oo_{\bm{k}, p} \cong \ZZ_{p^2}$ on $D(A)$ is necessarily given in this basis by 
\begin{equation}\label{OK action}
a \mapsto\text{diag}(a, \overline{a}, a, \overline{a}). 
\end{equation}
Furthermore, using the basis $\{e_n\}$ to view
$R_{11} \cong \End_{\Oo_B \otimes_{\ZZ} \DDD}(D(A)) \subset \MM_4(W)$,
we can express any
$$
f = \begin{bmatrix} x & y\Pi \\ py\Pi & x \end{bmatrix} \in R_{11}
$$
as an element of $\MM_4(W)$ by
\begin{equation}\label{endo matrix}
f = 
\begin{bmatrix}
x & 0 & 0 & py \\
0 & \overline{x} & \overline{y} & 0 \\
0 & p^2y & x & 0 \\
p\overline{y} & 0 & 0 & \overline{x}
\end{bmatrix}.
\end{equation}
Note that (\ref{matrix rep}) comes from choosing a basis $\{v_1, v_2\}$ of $D(E)$ 
satisfying $\FFF(v_1) = \VVV(v_1) = v_2$ and $\FFF(v_2) = \VVV(v_2) = pv_1$, so we have proved the following.

\begin{proposition}\label{basis}
With notation as above, there is a $W$-basis $\{e_1, e_2, e_3, e_4\}$ for $D(A)$ relative to which the action of $\Delta$ on $D(A)$ is given by one of the matrices {\upshape(\ref{OB action})}, the action of $\Oo_{\bm{k}, p}$ is given by {\upshape(\ref{OK action})}, the action of $\FFF$ is determined by
\begin{equation}\label{frob action}
\FFF(e_1) = e_2, \quad \FFF(e_2) = pe_1, \quad \FFF(e_3) = e_4, \quad \FFF(e_4) = pe_3,
\end{equation}
the action of $\VVV$ is the same, and any $f \in \End_{\Oo_B \otimes_{\ZZ}\DDD}(D(A))$ is given by a matrix of the form 
{\upshape(\ref{endo matrix})}.
\end{proposition}

\begin{corollary}\label{basis 2}
Let $k$ be an algebraically closed extension of $\overline{\FF}_\frakP$ and let $W(k)$ be its ring of Witt vectors. For any $A \in \YYY^B(k)$, there is a $W(k)$-basis $\{e_1, e_2, e_3, e_4\}$ of $D(A)$ relative to which the action of $\Delta$ on $D(A)$ is given by one of the matrices {\upshape(\ref{OB action})}, the action of $\Oo_{\bm{k}, p}$ is given by {\upshape(\ref{OK action})}, and the actions of $\FFF$ and
$\VVV$ are given by {\upshape(\ref{frob action})}.
\end{corollary}

\begin{proof}
As in the proof of Corollary \ref{st2}, there exists $A_0 \in \YYY^B(\overline{\FF}_\frakP)$ such that $A_0 \otimes_{\overline{\FF}_\frakP} k \cong A$ and $\End_{\overline{\FF}_\frakP}(A_0) \cong \End_k(A)$. Since the functor $D$ is compatible with any extension of perfect fields, $D(A) \cong D(A_0) \otimes_W W(k)$ as $\DDD_k$-modules, where $\DDD_k$ is the Dieudonn\'e ring of $k$ (\cite[Proposition 7.2.6]{BC2}), and hence $\End_{W(k)}(D(A)) \cong \End_W(D(A_0)) \otimes_W W(k)$. Therefore the $W(k)$-basis
$\{e_n \otimes 1\}$ for $D(A)$, with $\{e_n\}$ a $W$-basis of $D(A_0)$ as in Proposition \ref{basis}, satisfies the stated properties.
\end{proof}

Proposition \ref{endomorphisms} gives a description of $\End_{\Oo_B}(A) \otimes_{\ZZ} \ZZ_p$ in terms of coordinates, which is
best suited for computations. The next result gives the abstract structure of this ring.

\begin{proposition} 
There is an isomorphism of rings $R_{11} \cong R_2$, where
$$
R_2 = \begin{bmatrix} \ZZ_p & \ZZ_p \\ p^2\ZZ_p & \ZZ_p \end{bmatrix}
$$
is the standard Eichler order of level $2$ in $\MM_2(\QQ_p)$.
\end{proposition}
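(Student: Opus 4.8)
The plan is to pass from the $\ZZ_p$-order $R_{11}$ to the $\QQ_p$-algebra $A := R_{11} \otimes_{\ZZ_p} \QQ_p$ it spans, identify $A$ with $\MM_2(\QQ_p)$, and then recognize $R_{11}$ inside it. First I would unwind the multiplication in $A$: writing a typical element as $\tilde x + \tilde y\,j$, where $\tilde x$ is the scalar matrix with diagonal entry $x \in \ZZ_{p^2}$ and $j$ is the matrix with off-diagonal entries $\Pi$ and $p\Pi$, the relations $\Pi^2 = p$ and $\Pi a = \overline a\,\Pi$ give $j^2 = p^2$ and $j\,\tilde x = \widetilde{\overline x}\,j$. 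Thus $A = \QQ_{p^2} \oplus \QQ_{p^2}\,j$ is the quaternion algebra over $\QQ_p$ generated by $\QQ_{p^2}$ and an element $j$ with $j^2 = p^2$ and $je = \overline e\,j$; since $(j/p)^2 = 1$ with $j/p \notin \QQ_p$, the element $\tfrac12(1 + j/p)$ is a nontrivial idempotent (for $p = 2$ one uses instead an idempotent built from a primitive root of unity), so $A$ is not a division algebra and $A \cong \MM_2(\QQ_p)$. In particular $R_{11}$ is a $\ZZ_p$-order in $\MM_2(\QQ_p)$.

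Next I would make this concrete by letting $A$ act faithfully on the two-dimensional $\QQ_p$-vector space $W := \QQ_{p^2}$ via $\tilde x \colon w \mapsto xw$ and $j \colon w \mapsto p\,\overline w$; a direct check shows these operators satisfy the defining relations of $A$, so $W$ is the simple $A$-module and we obtain $A \xrightarrow{\ \sim\ } \End_{\QQ_p}(W) \cong \MM_2(\QQ_p)$. Under this identification $R_{11}$ is generated over $\ZZ_p$ by the left multiplications $L_a$ ($a \in \ZZ_{p^2}$) and by $\rho \colon w \mapsto p\,\overline w$; since $L_a(\ZZ_{p^2}) \subseteq \ZZ_{p^2}$ and $\rho(\ZZ_{p^2}) = p\,\ZZ_{p^2} \subseteq \ZZ_{p^2}$, the order $R_{11}$ stabilizes the lattice $\ZZ_{p^2} \subset W$, hence $R_{11} \subseteq \End_{\ZZ_p}(\ZZ_{p^2})$.

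Fixing a $\ZZ_p$-basis $\{1, u\}$ of $\ZZ_{p^2}$ and using it to identify $\End_{\ZZ_p}(\ZZ_{p^2})$ with $\MM_2(\ZZ_p)$, the matrices of $L_u$ and $\rho$ give explicit generators of $R_{11} \subseteq \MM_2(\ZZ_p)$, hence a description of $R_{11}$ by congruence conditions on the matrix entries. From this one reads off the lattice chain in $W$ fixed by $R_{11}$ — equivalently the semisimple quotient $R_{11}/\mathrm{rad}(R_{11})$ together with the way $R_{11}$ sits inside a maximal order — and, after matching it against the standard length-two chain $\ZZ_p \oplus \ZZ_p \supset \ZZ_p \oplus p\ZZ_p \supset \ZZ_p \oplus p^2\ZZ_p$ attached to $R_2$, conjugates $R_{11}$ onto $R_2$ by a suitable element of $\GL_2(\QQ_p)$. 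As a bookkeeping check one can also compute the reduced discriminant of $R_{11}$ from the Gram matrix of reduced traces $\Trd(e_i e_j)$ of a $\ZZ_p$-basis $e_1, \dots, e_4$; it collapses, because $\ZZ_{p^2}/\ZZ_p$ is unramified, to $(p^4)$, matching the reduced discriminant of $R_2$.

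The first two steps are routine manipulations with the relations in $\Delta$ and $\MM_2(\Delta)$. The main obstacle is the identification: the reduced discriminant alone does not determine an order in $\MM_2(\QQ_p)$, so one must carry enough finer structure — the lattices stabilized by $R_{11}$, or the quotient $R_{11}/\mathrm{rad}(R_{11})$ — to pin down the isomorphism class and complete the matching with the standard Eichler order $R_2$.
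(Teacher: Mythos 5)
Your reduction to computing the order $R_{11}$ inside $\MM_2(\QQ_p)$ and then trying to match it against $R_2$ by conjugation is the right framework, and the first two steps are fine: the relations $j^2 = p^2$, $je = \overline e\,j$ exhibit $R_{11} \otimes_{\ZZ_p} \QQ_p$ as the cyclic algebra $(\QQ_{p^2}/\QQ_p, p^2)$, which is split because $p^2$ is a norm from $\QQ_{p^2}$, and the model $W = \QQ_{p^2}$ with $\rho(w) = p\overline w$ is correct. The gap is at the matching step, and it is not a small one: when you actually compute the invariants you propose to ``read off,'' they do not match those of $R_2$. The matrices with $y = 0$ give a subring of $R_{11}$ isomorphic to $\ZZ_{p^2}$, and the ideal $\mathfrak J = p\ZZ_{p^2} + \ZZ_{p^2}j$ is two-sided with $\mathfrak J^2 \subseteq p\mathfrak J$ and $R_{11}/\mathfrak J \cong \FF_{p^2}$; hence $R_{11}/\mathrm{rad}(R_{11}) \cong \FF_{p^2}$ is a field. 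By contrast $R_2/\mathrm{rad}(R_2) \cong \FF_p \times \FF_p$. Equivalently, in your lattice-chain language: since $\ZZ_{p^2} \subset R_{11}$, every $R_{11}$-stable lattice in $W = \QQ_{p^2}$ is $\ZZ_{p^2}$-stable, hence of the form $p^k\ZZ_{p^2}$; so $R_{11}$ fixes a single vertex of the Bruhat--Tits tree, while $R_2$ fixes a path of length two. For $p$ odd one can also check directly in your coordinates that $R_{11} = \{\left[\begin{smallmatrix} \alpha & \delta \\ \gamma & \beta \end{smallmatrix}\right] : \alpha \equiv \beta,\ \delta \equiv \epsilon\gamma \pmod p\}$ (with $u^2 = \epsilon$ a nonsquare unit) has no nontrivial idempotents, whereas $E_{11} \in R_2$. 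So the conjugation you plan to produce cannot exist, and in fact the Proposition as stated does not hold: $R_{11}$ is a residually inert order of reduced discriminant $p^2$, not an Eichler order. (A minor point independent of this: the reduced discriminants of $R_{11}$ and of $R_2$ both equal $p^2$, not $p^4$; your $p^4$ is the nonreduced discriminant. As you say, the discriminant alone was never going to suffice.)

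The paper offers no argument to compare against here --- it simply cites a calculation of Goren--Lauter --- but the invariant mismatch above is decisive, and your own plan, carried out honestly, exposes it.
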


\begin{proof}
The proof is identical to a calculation carried out in \cite[pp.\hspace{-.5mm} 26-27]{Goren}. 
\end{proof}

\section{Moduli spaces}

We continue with the same notation of $K_1, K_2, F$, and $K$ as in Section 1.1.
Recall that we assume any prime dividing $d_B$ is inert
in $K_1$ and $K_2$. In particular, each $p \mid d_B$ is nonsplit in $K_1$ and $K_2$, which implies $K_1$ and $K_2$ embed into $B$, or equivalently, they split $B$.
If a prime number $p$ is inert in both $K_1$ and $K_2$, then $p$ is split in $F$ and each prime of $F$ lying over $p$
is inert in $K$. If $p$ is ramified in one of $K_1$ or $K_2$, then $p$ is ramified in $F$ and the unique prime
of $F$ lying over $p$ is inert in $K$.

\begin{definition}
A \textit{CM pair} over an $\Oo_K$-scheme $S$ is a pair $(\AAA_1, \AAA_2)$ where $\AAA_1$ and $\AAA_2$
are QM abelian surfaces over $S$ with complex multiplication by $\Oo_{K_1}$ and $\Oo_{K_2}$, respectively. An \textit{isomorphism}
between CM pairs $(\AAA_1', \AAA_2') \to (\AAA_1, \AAA_2)$ over $S$ is a pair $(f_1, f_2)$ where each $f_j : A_j' \to A_j$ is an 
$\Oo_{K_j}$-linear isomorphism of QM abelian surfaces.
\end{definition}

Given a CM pair $(\AAA_1, \AAA_2)$ over an $\Oo_K$-scheme $S$ and a morphism of $\Oo_K$-schemes $T \to S$, 
there is a CM pair $(\AAA_1, \AAA_2)_{/T}$ over $T$ defined as the base change to $T$.
For every CM pair $(\AAA_1, \AAA_2)$ over an $\Oo_K$-scheme $S$, set 
$$
L(\AAA_1, \AAA_2) = \Hom_{\Oo_B}(A_1, A_2), \quad V(\AAA_1, \AAA_2) = L(\AAA_1, \AAA_2) \otimes_{\ZZ}\QQ.
$$
If $S$ is connected we have the quadratic form $\deg^\ast$
on $L(\AAA_1, \AAA_2)$. Let $[f, g] = f^t \circ g + g^t \circ f$ be the associated bilinear form. Then
$\Oo_K = \Oo_{K_1} \otimes_{\ZZ} \Oo_{K_2}$ acts on the $\ZZ$-module $L(\AAA_1, \AAA_2)$ by
$$
(x_1 \otimes x_2) \bullet f = \kappa_2(x_2) \circ f \circ \kappa_1(\overline{x}_1),
$$
where $\mathbf{A}_j = (A_j, i_j, \kappa_j)$.

\begin{proposition}\label{Hermitian}
Let $(\AAA_1, \AAA_2)$ be a CM pair over a connected $\Oo_K$-scheme.
There is a unique $F$-bilinear form $[\cdot\hspace{.5mm}, \cdot]_{\CM}$ on $V(\AAA_1, \AAA_2)$ satisfying
$[f, g] = \Tr_{F/\QQ}([f, g]_{\CM})$. Under this pairing,
$$[L(\AAA_1, \AAA_2), L(\AAA_1, \AAA_2)]_{\CM} \subset \frakD^{-1}.$$
The quadratic form $\deg_{\CM}(f) = \frac{1}{2}[f, f]_{\CM}$ is the unique $F$-quadratic form
on $V(\AAA_1, \AAA_2)$ satisfying $\deg^\ast(f) = \Tr_{F/\QQ}(\deg_{\CM}(f))$. Also, there is a unique $K$-Hermitian form $\langle \cdot\hspace{.5mm}, \cdot \rangle_{\CM}$ on $V(\AAA_1, \AAA_2)$
satisfying $[f, g]_{\CM} = \Tr_{K/F}(\langle f, g\rangle_{\CM})$.
\end{proposition}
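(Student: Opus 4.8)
The plan is to deduce the whole proposition from the elementary fact that a $\QQ$-bilinear form on an $F$-vector space which is self-adjoint for the $F$-action descends uniquely through the trace map $\Tr_{F/\QQ}$, so that the only real work is the self-adjointness. View $V(\AAA_1,\AAA_2)$ as an $F$-vector space by restricting the $\Oo_K=\Oo_{K_1}\otimes_\ZZ\Oo_{K_2}$-action $\bullet$ to $\Oo_F\subseteq\Oo_K$ and extending $\QQ$-linearly; note that $\bullet$ preserves $L(\AAA_1,\AAA_2)$. I would first prove
\begin{equation*}
[\,a\bullet f,\ g\,]=[\,f,\ a\bullet g\,]\qquad\text{for all }a\in F,\ f,g\in V(\AAA_1,\AAA_2).\tag{$\star$}
\end{equation*}
Granting $(\star)$: nondegeneracy of the trace pairing $(a,b)\mapsto\Tr_{F/\QQ}(ab)$ on $F$ produces, for each pair $f,g$, a unique $[f,g]_{\CM}\in F$ with $[\,a\bullet f,g\,]=\Tr_{F/\QQ}(a\cdot[f,g]_{\CM})$ for all $a\in F$; putting $a=1$ gives $[f,g]=\Tr_{F/\QQ}[f,g]_{\CM}$, and $(\star)$ with uniqueness forces $[\cdot,\cdot]_{\CM}$ to be symmetric and $F$-bilinear, while uniqueness of any $F$-bilinear $b$ with $[f,g]=\Tr_{F/\QQ}b(f,g)$ is immediate from nondegeneracy of the trace. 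Since $\deg^\ast$ is $\ZZ$-valued on $L=L(\AAA_1,\AAA_2)$, the polarization identity gives $[L,L]\subseteq\ZZ$, hence $\Tr_{F/\QQ}(a[f,g]_{\CM})=[a\bullet f,g]\in\ZZ$ for all $a\in\Oo_F$ and $f,g\in L$; by the description $\frakD^{-1}=\{x\in F:\Tr_{F/\QQ}(x\Oo_F)\subseteq\ZZ\}$ of the inverse different, $[L,L]_{\CM}\subseteq\frakD^{-1}$. Finally $\deg_{\CM}(f)=\tfrac12[f,f]_{\CM}$ is the $F$-quadratic form polarizing to $[\cdot,\cdot]_{\CM}$, so $\Tr_{F/\QQ}\deg_{\CM}(f)=\tfrac12[f,f]=\deg^\ast(f)$; any $F$-quadratic form $Q$ with $\Tr_{F/\QQ}\!\circ Q=\deg^\ast$ has polar form $B$ with $\Tr_{F/\QQ}\!\circ B=[\cdot,\cdot]$, so $B=[\cdot,\cdot]_{\CM}$ by the uniqueness above and $Q=\tfrac12 B=\deg_{\CM}$ since $\charr F=0$.

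It remains to prove $(\star)$. By $\ZZ$- then $\QQ$-linearity it suffices to take $a\in\Oo_F$; since both sides are integers (scalar endomorphisms of $A_1$) compatible with base change, we may assume $S=\Spec k$ with $k$ algebraically closed, so that by Propositions~\ref{char p} and~\ref{lefschetz} the algebra $\End^0_{\Oo_B}(A_j)$ is either $\kappa_j(K_j)\cong K_j$ or the definite quaternion algebra $B^{(p)}$. The key input is: \emph{for $j\in\{1,2\}$ the Rosati involution $\varphi\mapsto\varphi^\dagger$ attached to $\lambda_j$ stabilizes $\kappa_j(K_j)$ and satisfies $\kappa_j(y)^\dagger=\kappa_j(\overline{y})$ for all $y\in K_j$.} To see this, note that by Proposition~\ref{polarization} the anti-involution $\dagger$ stabilizes $\Oo_B\otimes_\ZZ\QQ\subseteq\End^0(A_j)$, hence its centralizer $\End^0_{\Oo_B}(A_j)$; in the quaternion case the unique positive involution on a definite quaternion algebra over $\QQ$ is the main involution, which stabilizes every subfield, so in all cases $\dagger$ stabilizes $\kappa_j(K_j)$ and restricts there to a $\QQ$-algebra involution of the imaginary quadratic field $K_j$. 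This involution is not the identity, for otherwise, taking a nonzero purely imaginary $\delta\in\Oo_{K_j}$, the element $\kappa_j(\delta)\kappa_j(\delta)^\dagger=\kappa_j(\delta^2)$ would be multiplication by the negative rational number $\delta^2$, contradicting positivity of $\dagger$. Hence $\dagger|_{\kappa_j(K_j)}$ is complex conjugation (alternatively this follows from Theorem~\ref{Serre type} and the classical case of CM elliptic curves).

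Granting the claim, $f^t=\lambda_1^{-1}\circ f^\vee\circ\lambda_2$ together with $\kappa_j(y)^\vee=\lambda_j\circ\kappa_j(\overline{y})\circ\lambda_j^{-1}$ gives, for a simple tensor, $\big((x_1\otimes x_2)\bullet f\big)^t=\kappa_1(x_1)\circ f^t\circ\kappa_2(\overline{x}_2)$. Let $\sigma_1,\sigma_2$ be the nontrivial automorphisms of $K$ fixing $K_2$, respectively $K_1$, so that the involution $x\mapsto\overline{x}$ of $K$ equals $\sigma_1\sigma_2$ and $F=K^{\langle\sigma_1\sigma_2\rangle}$; then $\sigma_1$ and $\sigma_2$ agree on $F$. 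Choosing a decomposition $\sigma_1(a)=\sum_j y_1^{(j)}\otimes y_2^{(j)}$ of $\sigma_1(a)=\sigma_2(a)\in\Oo_K$, the displayed formulas give $a\bullet f=\sum_j\kappa_2(y_2^{(j)})\circ f\circ\kappa_1(y_1^{(j)})$ and $(a\bullet f)^t=\sum_j\kappa_1(y_1^{(j)})\circ f^t\circ\kappa_2(y_2^{(j)})$ (and the same with $g$ in place of $f$), whence
\begin{equation*}
[a\bullet f,g]=\sum_j\Big(\kappa_1(y_1^{(j)})\,f^t\,\kappa_2(y_2^{(j)})\,g+g^t\,\kappa_2(y_2^{(j)})\,f\,\kappa_1(y_1^{(j)})\Big),
\end{equation*}
\begin{equation*}
[f,a\bullet g]=\sum_j\Big(f^t\,\kappa_2(y_2^{(j)})\,g\,\kappa_1(y_1^{(j)})+\kappa_1(y_1^{(j)})\,g^t\,\kappa_2(y_2^{(j)})\,f\Big).
\end{equation*}
Both are scalar endomorphisms of $A_1$, so they are equal once they have the same trace on the $\ell$-adic Tate module for one prime $\ell\neq\charr k$; and cyclicity of the trace matches the two sums term by term, proving $(\star)$. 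I expect the two genuinely delicate points to be exactly this pair: the identification of the Rosati involution on $\kappa_j(K_j)$ (with its small case split at supersingular reduction) and the observation $\sigma_1|_F=\sigma_2|_F$, which is what makes the twisted action $\bullet$ symmetric once restricted to $F$. Everything else is routine descent along a trace form together with the codifferent description of $\frakD^{-1}$.
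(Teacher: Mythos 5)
Your argument is correct, and it appears to be a fleshed-out version of the approach the paper is implicitly invoking: the paper's proof simply defers to \cite[Proposition 2.2]{Howard}, whose content for CM elliptic curves is exactly the descent-through-trace argument you give. Everything in your writeup is sound: the formal reduction of the whole proposition to the self-adjointness $(\star)$ via nondegeneracy of the trace pairing is standard; the codifferent containment via the description $\frakD^{-1}=\{x\in F:\Tr_{F/\QQ}(x\Oo_F)\subseteq\ZZ\}$ is correct; and the uniqueness of the $F$-quadratic form follows formally in characteristic zero. For $(\star)$ itself, the two ingredients you isolate are indeed the substance: (i) the Rosati involution $\dagger$ of $\lambda_j$ acts as complex conjugation on $\kappa_j(K_j)$, which you prove by observing $\dagger$ stabilizes the centralizer $\End^0_{\Oo_B}(A_j)$ and then splitting into the two cases $K_j$ and $B^{(p)}$ (using positivity of $\dagger$ and the uniqueness of the positive involution on a definite quaternion algebra); and (ii) $\sigma_1|_F=\sigma_2|_F$, which lets you use a single decomposition $\sigma_1(a)=\sum y_1^{(j)}\otimes y_2^{(j)}$ both for $a\bullet f$ and for $(a\bullet f)^t$ after the change-of-decomposition step, after which cyclicity of the Tate-module trace matches the two sums. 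One point you could make slightly more explicit is the change of decomposition needed for the formula $(a\bullet f)^t=\sum_j\kappa_1(y_1^{(j)})f^t\kappa_2(y_2^{(j)})$: the displayed identity gives $\sum_j\kappa_1(\overline{y}_1^{(j)})f^t\kappa_2(\overline{y}_2^{(j)})$, and one must observe that the $\ZZ$-bilinear map $(z_1,z_2)\mapsto\kappa_1(z_1)f^t\kappa_2(z_2)$ factors through $\Oo_K$ together with $\sum_j\overline{y}_1^{(j)}\otimes\overline{y}_2^{(j)}=\sigma_2(a)=\sigma_1(a)=\sum_j y_1^{(j)}\otimes y_2^{(j)}$ for $a\in\Oo_F$. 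Your parenthetical remark that one could alternatively deduce (i) from Theorem~\ref{Serre type} and the classical CM elliptic-curve case is also valid and is perhaps closer to the spirit of the paper's cross-reference.
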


\begin{proof}
This is the same as the proof of \cite[Proposition 2.2]{Howard}.
\end{proof}

\begin{definition}
For $j \in \{1, 2\}$ define $\YYY_j^B$ to be the stack $\YYY^B$ with $\bm{k} = K_j$ and $\bm{K} = K$.
For any ring homomorphism $\theta_j : \Oo_{K_j} \to \Oo_B/\frakm_B$, define $\YYY_j^B(\theta_j)$
to be the stack $\YYY^B(\theta_j)$ with $\bm{k} = K_j$ and $\bm{K} = K$.
\end{definition}

From now on, we write $\YYY^B$ to mean the category defined in Definition \ref{CM FELC} for
some fixed imaginary quadratic field $\bm{k}$ and finite extension $\bm{K}$.

\begin{definition}
Let $\Theta : \Oo_K \to \Oo_B/\frakm_B$ be a ring homomorphism. Define $\XXX_\Theta^B$ to be the category
whose objects are CM pairs $(\AAA_1, \AAA_2)$ over $\Oo_K$-schemes such that $\AAA_j$ is an object
of $\YYY_j^B(\theta_j)$ for $j = 1, 2$, where $\theta_j = \Theta|_{\Oo_{K_j}}$.
A morphism $(\AAA_1', \AAA_2') \to
(\AAA_1, \AAA_2)$ between two such pairs defined over $\Oo_K$-schemes $T$ and $S$, respectively, is a morphism of 
$\Oo_K$-schemes $T \to S$ together with an isomorphism of CM pairs 
$(\AAA_1', \AAA_2') \cong (\AAA_1, \AAA_2)_{/T}$ over $T$.
\end{definition}

\begin{definition}
Let $\Theta : \Oo_K \to \Oo_B/\frakm_B$ be a ring homomorphism. For any $\alpha \in F^\times$ define 
$\XXX_{\Theta, \alpha}^B$ to be the category whose objects are triples $(\AAA_1, \AAA_2, f)$ where 
$(\AAA_1, \AAA_2) \in \XXX_\Theta^B(S)$ for some $\Oo_K$-scheme $S$ and $f \in L(\AAA_1, \AAA_2)$
satisfies $\deg_{\CM}(f) = \alpha$ on every connected component of $S$. A morphism
$$
(\AAA_1', \AAA_2', f') \to (\AAA_1, \AAA_2, f)
$$
between two such triples, with $(\AAA_1', \AAA_2')$ and $(\AAA_1, \AAA_2)$ CM pairs over $\Oo_K$-schemes $T$ and $S$,
respectively, is a morphism of $\Oo_K$-schemes $T \to S$ together with an isomorphism 
$$
(\AAA_1', \AAA_2') \to (\AAA_1, \AAA_2)_{/T}
$$
of CM pairs over $T$ compatible with $f$ and $f'$.
\end{definition}
 
The categories $\XXX_\Theta^B$ and $\XXX_{\Theta, \alpha}^B$ are stacks of finite type over $\Spec(\Oo_K)$. 
For each positive integer $m$ define $\TTT_m^B$ to be the stack over $\Spec(\Oo_K)$ with $\TTT_m^B(S)$ the
category of triples $(\AAA_1, \AAA_2, f)$ where $\AAA_j \in \YYY_j^B(S)$ and $f \in L(\AAA_1, \AAA_2)$
satisfies $\deg^\ast(f) = m$ on every connected component of $S$.
It follows from Theorem \ref{Serre type} that there is a decomposition
\begin{equation}\label{TM decomp}
\TTT_m^B = \bigsqcup_{\substack{\alpha \in F^\times \\ \Tr_{F/\QQ}(\alpha) = m}}
\bigsqcup_{\Theta : \Oo_K \to \Oo_B/\frakm_B} \XXX_{\Theta, \alpha}^B.
\end{equation}

\begin{lemma}\label{tp}
If $S$ is a connected $\Oo_K$-scheme and $\XXX_{\Theta, \alpha}^B(S)$ is nonempty, then $\alpha$ is totally positive.
\end{lemma}

\begin{proof}
Let $(\AAA_1, \AAA_2, f) \in \XXX_{\Theta, \alpha}^B(S)$.
Suppose $\alpha$ is not totally positive and let $\tau_1, \tau_2$ be the embeddings of $F$ into $\RR$. Then we can choose $\gamma_1, \gamma_2 \in F$ such that 
$$
\tau_1(\gamma_1)^2\tau_1(\alpha) + \tau_2(\gamma_2)^2\tau_2(\alpha) < 0.
$$
By the weak approximation theorem, there is a $\gamma \in F$ such that $\tau_j(\gamma) < 2\tau_j(\gamma_j)$ for $j = 1, 2$. Hence
$$
\deg^\ast(\gamma f) = \Tr_{F/\QQ}(\deg_{\CM}(\gamma f)) = \Tr_{F/\QQ}(\gamma^2\deg_{\CM}(f)) = \sum_{j=1}^2\tau_j(\gamma)^2\tau_j(\alpha) < 4\sum_{j=1}^2\tau_j(\gamma_j)^2\tau_j(\alpha) < 0,
$$
which contradicts Proposition \ref{QM degree qf}, where we are extending $\deg^\ast$ to $V(\AAA_1, \AAA_2)$ in the obvious way.
\end{proof}

A CM pair $(\AAA_1, \AAA_2)$ is \textit{supersingular} if the underlying 
abelian varieties $A_1$ and $A_2$ are supersingular.

\begin{proposition} \label{quadratic}
Let $k$ be an algebraically closed field of characteristic $p \gqq 0$ and let $\Theta : \Oo_K \to \Oo_B/\frakm_B$
be a ring homomorphism. Let $\alpha \in F^\times$
and suppose $(\AAA_1, \AAA_2, f) \in \XXX_{\Theta, \alpha}^B(k)$. \\
{\upshape (a)} We have $p > 0$ and if $k = \overline{\FF}_p$, then $(\AAA_1, \AAA_2)$ is a supersingular CM pair. \\
{\upshape (b)} There is an isomorphism of $F$-quadratic spaces
$$
(V(\AAA_1, \AAA_2), \deg_{\CM}) \cong (K, \beta\cdot \NN_{K/F})
$$
for some totally positive $\beta \in F^\times$, determined up to multiplication by a norm from $K^\times$. \\
{\upshape (c)} There is an isomorphism of $\QQ$-quadratic spaces
$$
(V(\AAA_1, \AAA_2), \deg^\ast) \cong (B^{(p)}, \Nrd),
$$
where $\Nrd$ is the reduced norm on $B^{(p)}$. \\
{\upshape (d)} If $p$ does not divide $d_B$ then it is nonsplit in $K_1$ and $K_2$. 
\end{proposition}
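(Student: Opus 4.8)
The plan is to prove part (a) first and then read off (b), (c), (d) from it together with the structure of $B^{(p)}$. For (a): since $\alpha \in F^\times$ the value $\deg_{\CM}(f) = \alpha$ is nonzero, so $f$ is a nonzero element of $\Hom_{\Oo_B}(A_1,A_2)$ and hence an isogeny by Lemma \ref{isog}; thus $A_1$ and $A_2$ are isogenous QM abelian surfaces and conjugation by $f$ identifies $\End^0_{\Oo_B}(A_1)$ with $\End^0_{\Oo_B}(A_2)$. The complex multiplications give embeddings $K_j \hookrightarrow \End^0_{\Oo_B}(A_j)$. After harmlessly reducing to the case $k = \CC$ or $k = \overline{\FF}_p$ (using Proposition \ref{lefschetz} when $k$ has positive transcendence degree over its prime field) we apply Proposition \ref{char p}: if $p = 0$, or if $p > 0$ and $A_1$ is ordinary, then $\End^0_{\Oo_B}(A_1)$ is a field, so the embedding of $K_1$ forces $\End^0_{\Oo_B}(A_1) = K_1$; but then $\End^0_{\Oo_B}(A_2) \cong K_1$ also contains $K_2$, whence $K_1 \cong K_2$, contradicting the standing hypothesis that $K_1$ and $K_2$ are non-isomorphic. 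Hence $p > 0$, and $A_1$ — and likewise $A_2$ — is supersingular with $\End^0_{\Oo_B}(A_j) \cong B^{(p)}$, so $(\AAA_1,\AAA_2)$ is a supersingular CM pair, which is (a).

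For (c) and the $K$-vector space structure needed in (b): by (a), $V = V(\AAA_1,\AAA_2) = \Hom^0_{\Oo_B}(A_1,A_2)$ is a free rank-one bimodule over $B^{(p)}$, so $\dim_\QQ V = 4$; since the action $\bullet$ of $\Oo_K$ makes $V$ a nonzero module over the degree-four field $K$, it follows that $V$ is a one-dimensional $K$-vector space. The key input is that each Rosati involution $\dagger_j$ attached to the principal polarization $\lambda_j$, being a positive involution of the first kind on the definite quaternion algebra $\End^0_{\Oo_B}(A_j) \cong B^{(p)}$ over $\QQ$, must be the main involution $\varphi \mapsto \varphi^\iota$ of that algebra (an orthogonal involution $\Int(u)\circ\iota$ with $u$ a nonscalar pure quaternion is negative on any pure quaternion anticommuting with $u$). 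Consequently $\varphi^{\dagger_j}\circ\varphi = \Nrd(\varphi)$ for $\varphi \in B^{(p)}$, and $\dagger_j$ induces complex conjugation on the quadratic subfield $\kappa_j(K_j)$. Fixing an isogeny $f_0 \in V$ and identifying $V$ with $\End^0_{\Oo_B}(A_2) = B^{(p)}$ via $\varphi \circ f_0 \leftrightarrow \varphi$, the identity $(\varphi f_0)^t = f_0^t\circ\varphi^{\dagger_2}$ gives $\deg^\ast(\varphi \circ f_0) = \deg^\ast(f_0)\cdot\Nrd(\varphi)$; since $B^{(p)}$ is definite we have $\deg^\ast(f_0) \in \QQ_{>0} = \Nrd(B^{(p)\times})$, so after replacing $f_0$ by a suitable $B^{(p)}$-translate we may take $\deg^\ast(f_0) = 1$, proving (c).

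For (b): the same bookkeeping with duals, now using $(t\bullet f)^t = \kappa_1(t_1)\circ f^t\circ\kappa_2(\bar t_2)$ and $\kappa_j(t_j)^{\dagger_j} = \kappa_j(\bar t_j)$ for $t = t_1\otimes t_2 \in \Oo_K$, together with $\kappa_2(\bar t_2 t_2) = \NN_{K_2/\QQ}(t_2)$, yields $\deg^\ast(t\bullet f) = \NN_{K/F}(t)\cdot\deg^\ast(f)$ for every $t \in K$, where $\NN_{K/F}(t_1\otimes t_2) = \NN_{K_1/\QQ}(t_1)\NN_{K_2/\QQ}(t_2)$ because the nontrivial element of $\Gal(K/F)$ acts as conjugation on $K_1$ and on $K_2$. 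Writing $\deg^\ast = \Tr_{F/\QQ}\circ\deg_{\CM}$, the two $F$-quadratic forms $t\mapsto\deg_{\CM}(t\bullet f)$ and $t\mapsto\NN_{K/F}(t)\deg_{\CM}(f)$ on the $F$-vector space $K$ have the same composite with $\Tr_{F/\QQ}$; since the trace form of $F/\QQ$ is nondegenerate, an $F$-quadratic form killed by $\Tr_{F/\QQ}$ vanishes (check each matrix entry, using polarization for the off-diagonal ones), so the two forms agree and $\deg_{\CM}(t\bullet f) = \NN_{K/F}(t)\deg_{\CM}(f)$. Fixing $0 \neq f_0 \in V$ and putting $\beta = \deg_{\CM}(f_0) \in F^\times$, the $K$-linear isomorphism $K \to V$, $t\mapsto t\bullet f_0$, is then an isometry $(K, \beta\cdot\NN_{K/F}) \to (V, \deg_{\CM})$, and a different choice of $f_0$ changes $\beta$ by an element of $\NN_{K/F}(K^\times)$. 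Finally $\beta$ is totally positive: if not, some real embedding $\sigma$ of $F$ would satisfy $\sigma(\beta) < 0$, and taking a fundamental unit $\epsilon \in \Oo_F^\times \subset K^\times$ with $|\sigma(\epsilon)| > 1$ we would get $\deg^\ast(\epsilon^n\bullet f_0) = \Tr_{F/\QQ}(\epsilon^{2n}\beta) \to -\infty$ as $n \to \infty$, contradicting the positive-definiteness of $\deg^\ast$ (Proposition \ref{QM degree qf}).

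For (d): if $p \nmid d_B$ then $B$ is split at $p$, so by definition $B^{(p)}$ is ramified at $p$; since $K_j \hookrightarrow \End^0_{\Oo_B}(A_j) \cong B^{(p)}$ by (a), the quadratic field $K_j$ must be nonsplit at every ramified place of $B^{(p)}$, in particular at $p$. I expect the two delicate points to be: identifying each Rosati involution on $B^{(p)}$ with the main involution, so that all the norm and conjugation identities above are legitimate; and the total positivity of $\beta$, which is not formal and genuinely requires scaling $f_0$ by powers of a fundamental unit rather than just the single inequality $\deg^\ast(f_0) > 0$.
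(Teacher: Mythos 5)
Your parts (a), (c), and (d) are correct and in the expected spirit (the paper simply refers to the argument of Howard--Yang, Proposition 2.6, which is along the same lines). The observations that the Rosati involution must be the main involution of the definite algebra $B^{(p)}$, and the use of a fundamental unit to force total positivity of $\beta$, are both legitimate and are indeed the two places where one has to be careful.

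There is, however, a genuine gap in (b), in the passage from the computation for simple tensors to the identity $\deg_{\CM}(t\bullet f)=\NN_{K/F}(t)\deg_{\CM}(f)$ for all $t\in K$. You verify $\deg^\ast(t\bullet f)=\NN_{K/F}(t)\deg^\ast(f)$ only when $t=t_1\otimes t_2$ is a simple tensor (and indeed, for general $t$ the right side does not even lie in $\QQ$, so this form of the identity cannot hold). You then assert that the two $F$-quadratic forms $t\mapsto\deg_{\CM}(t\bullet f)$ and $t\mapsto\NN_{K/F}(t)\deg_{\CM}(f)$ have the same composite with $\Tr_{F/\QQ}$. But that composite is a $\QQ$-quadratic form on the $4$-dimensional $\QQ$-space $K$, and you have only checked it on the Segre cone of simple tensors. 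Two quadratic forms can agree there and still differ: in coordinates $t=x_1+x_2a+x_3b+x_4ab$ the difference could be any multiple of the Segre quadric $x_1x_4-x_2x_3$, which vanishes on all simple tensors (for instance $1+ab$ is not a simple tensor, and you never evaluate there). So the ``killed-by-trace implies zero'' step does not apply yet, because its hypothesis has not been established on all of $K$.

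The fix is to run the argument one degree lower, at the level of the bilinear form, where the dependence on $t$ is $\QQ$-linear and therefore the check on simple tensors does suffice. Concretely, show $[\,t\bullet f,\,g\,]=[\,f,\,\bar t\bullet g\,]$ for all $t\in K$ and $f,g\in V$: both sides are $\QQ$-linear in $t$ (and in $g$), so it is enough to take $t=t_1\otimes t_2$, where it drops out of $(t\bullet f)^t=\kappa_1(t_1)\circ f^t\circ\kappa_2(\bar t_2)$, $\kappa_j(t_j)^{\dagger_j}=\kappa_j(\bar t_j)$, and the trace identity $\Trd(xy)=\Trd(yx)$ in $B^{(p)}$. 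By the uniqueness in the definition of $[\cdot\hspace{.5mm},\cdot]_{\CM}$, this Hermitian property lifts to $[\,t\bullet f,\,g\,]_{\CM}=[\,f,\,\bar t\bullet g\,]_{\CM}$, whence
$$
\deg_{\CM}(t\bullet f)=\tfrac12[\,t\bullet f,\,t\bullet f\,]_{\CM}=\tfrac12[\,f,\,\bar t\,t\bullet f\,]_{\CM}=\NN_{K/F}(t)\deg_{\CM}(f)
$$
for every $t\in K$. With this in place, your choice of $f_0$ and $\beta=\deg_{\CM}(f_0)$, the observation that a change of $f_0$ rescales $\beta$ by a norm, and your fundamental-unit argument for total positivity all go through, and (b) is complete.
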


\begin{proof}
(a) Suppose $p = 0$. By the proof of Corollary \ref{st2}, there is an $A^0_j \in \YYY^B(\overline{\QQ})$
such that $A^0_j \otimes_{\overline{\QQ}} k \cong A_j$ and $\End_k(A_j) \cong \End_{\overline{\QQ}}(A^0_j) \cong \End_{\CC}(A^0_j \otimes_{\overline{\QQ}} \CC)$.
Since $\deg^\ast(f) \neq 0$ by assumption,
$f : A_1 \to A_2$ is an isogeny, so it induces an isomorphism $\End^0_{\Oo_B}(A_1) \cong \End^0_{\Oo_B}(A_2)$
of $\QQ$-algebras. Also by assumption we have embeddings $\kappa_1 : K_1 \hookrightarrow \End^0_{\Oo_B}(A_1)$
and $\kappa_2 : K_2 \hookrightarrow \End^0_{\Oo_B}(A_2)$. By Proposition \ref{char p}, there are
two possibilities for $\End^0_{\Oo_B}(A_j) \cong \End^0_{\Oo_B}(A^0_j \otimes_{\overline{\QQ}}\CC)$.
The first case, where $\End^0_{\Oo_B}(A_1) \cong \End^0_{\Oo_B}(A_2) \cong \QQ$, is impossible. The second case is $\End^0_{\Oo_B}(A_1) \cong \End^0_{\Oo_B}(A_2)
\cong L$ for some imaginary quadratic field $L$. But then $\kappa_1$ and $\kappa_2$ induce isomorphisms
$K_1 \cong L \cong K_2$, contrary to our assumption about $K_1$ and $K_2$. Therefore $p > 0$. 
By Corollary \ref{endo ring},  $\End^0_{\Oo_B}(A_1)$ is
an imaginary quadratic field or $B^{(p)}$. It follows that $\End^0_{\Oo_B}(A_1) \cong \End^0_{\Oo_B}(A_2) \cong B^{(p)}$ because
otherwise $K_1 \cong K_2$ as above.

(b) Since $f : A_1 \to A_2$ is an isogeny, it induces an isomorphism $V(\AAA_1, \AAA_2) \to \End^0_{\Oo_B}(A_1)$
of $\QQ$-vector spaces defined by $\varphi \mapsto f^t \circ \varphi$. As $\End^0_{\Oo_B}(A_1) \cong B^{(p)}$
has dimension $4$ as a $\QQ$-vector space, $V(\AAA_1, \AAA_2)$ has dimension $1$ over $K$. Therefore
$(V(\AAA_1, \AAA_2), \langle\cdot\hspace{.5mm}, \cdot\rangle_{\CM})$ is a Hermitian $K$-module of dimension $1$.
This means that there is a $\gamma \in K^\times$ such that $\langle v, w\rangle_{\CM} = v\gamma\overline{w}$ for
all $v, w \in K$, so
$$
\deg_{\CM}(v) = \frac{1}{2}[v, v]_{\CM} = \frac{1}{2}\Tr_{K/F}(\langle v, v\rangle_{\CM}) = \frac{1}{2}\Tr_{K/F}(v\overline{v}
\gamma) = \beta\cdot\NN_{K/F}(v),
$$
where $\beta = \frac{1}{2}\Tr_{K/F}(\gamma) \in F^\times$. This proves the existence of the isomorphism of $F$-quadratic spaces.

Now suppose $\gamma' \in K^\times$ is another element satisfying $\langle v, w\rangle_{\CM} = v\gamma'\overline{w}$
for all $v, w \in K$. Then $\gamma = u\gamma'\overline{u} = \gamma'\cdot\NN_{K/F}(u)$ for some $u \in K^\times$,
so 
$$
\beta = \frac{1}{2}\Tr_{K/F}(\gamma) = \frac{1}{2}\NN_{K/F}(u)\Tr_{K/F}(\gamma') = \beta'\cdot\NN_{K/F}(u),
$$
where $\beta'$ is the element of $F^\times$ corresponding to $\gamma'$. 

To show $\beta$ is totally positive, note that if $g \in V(\AAA_1, \AAA_2)$ corresponds to $x \in K$ under the isomorphism $V(\AAA_1, \AAA_2) \cong K$, then $\deg_{\CM}(g) = \beta\NN_{K/F}(x)$, so $\deg^\ast(g) = \Tr_{F/\QQ}(\beta\NN_{K/F}(x))$. Since this element is positive for all $x \in K$, the elements $\Tr_{F/\QQ}(\beta)$ and $\NN_{F/\QQ}(\beta)$ are positive, which implies $\beta$ is totally positive.

(c) Under the isomorphism $V(\AAA_1, \AAA_2) \to \End^0_{\Oo_B}(A_1)$ defined above, the quadratic form $\deg^\ast$
on $V(\AAA_1, \AAA_2)$ corresponds to the quadratic form $d^{-1}\deg^\ast$ on $\End^0_{\Oo_B}(A_1)$, 
where $d = \deg^{\ast}(f)$.
We claim that under the isomorphism $\End^0_{\Oo_B}(A_1) \to B^{(p)}$, the quadratic form $\deg^\ast$ on 
$\End^0_{\Oo_B}(A_1)$
corresponds to the quadratic form $\Nrd$ on $B^{(p)}$. The Rosati involution $\varphi \mapsto \varphi^{\dagger} = 
\varphi^t$ on $\End^0_{\Oo_B}(A_1)$ corresponds to a positive involution on the definite quaternion algebra
$B^{(p)}$, which must be the main involution $x \mapsto x^{\iota}$ by \cite[Theorem 2 in \S 21]{Mumford}. Since
$\Nrd(x) = xx^{\iota}$ and $\deg^\ast(\varphi) = \varphi \circ \varphi^t$, this proves the claim. Therefore there
is an isomorphism of $\QQ$-quadratic spaces
$$
(V(\AAA_1, \AAA_2), \deg^\ast) \cong (B^{(p)}, d^{-1}\Nrd).
$$
However, since $d > 0$, it is in the image of $\Nrd$, so there is an isomorphism of $\QQ$-quadratic spaces
$$
(B^{(p)}, d^{-1}\Nrd) \cong (B^{(p)}, \Nrd).
$$

(d) Suppose $p \nmid d_B$, so $p$ ramifies in $B^{(p)}$.
If $p$ splits in $K_j$ then, since $K_j$ embeds in $B^{(p)}$,
we have $B^{(p)} \otimes_{\QQ} \QQ_p \supset K_j \otimes_{\QQ} \QQ_p \cong \QQ_p \times \QQ_p$.
This is impossible because $B^{(p)} \otimes_{\QQ} \QQ_p$ is a division algebra.
\end{proof}

For any $\Oo_K$-scheme $S$ and ring homomorphism $\Theta : \Oo_K \to \Oo_B/\frakm_B$,
the group $\Gamma = \Cl(\Oo_{K_1}) \times \Cl(\Oo_{K_2})$ acts on the set $[\XXX_\Theta^B(S)]$ of isomorphism classes of objects of $\XXX^B_\Theta(S)$ by
$$
(\fraka_1, \fraka_2)\cdot(\AAA_1, \AAA_2) = (\fraka_1 \otimes_{\Oo_{K_1}} A_1, \fraka_2 \otimes_{\Oo_{K_2}} A_2).
$$

\begin{lemma}\label{automorphism group}
Let $S$ be a connected $\Oo_K$-scheme and for $j \in\{1, 2\}$ set $w_j = |\Oo_{K_j}^\times|$. Every $x \in \XXX_\Theta^B(S)$,
viewed as an element of the set $[\XXX_\Theta^B(S)]$,
has trivial stabilizer in $\Gamma$ and satisfies $|\Aut_{\XXX_\Theta^B(S)}(x)| = w_1w_2$.
\end{lemma}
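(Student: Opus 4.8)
The plan is to split the assertion into its two ``coordinates'' $j=1,2$ and reduce each piece to a statement about endomorphism rings, respectively to the simple transitivity facts recorded just before Theorem~\ref{Serre type}. By the definition of a morphism of CM pairs, an automorphism of $x=(\AAA_1,\AAA_2)$ in $\XXX_\theta^B(S)$ is a pair $(f_1,f_2)$ where each $f_j\colon A_j\to A_j$ is simultaneously $\Oo_B$-linear and $\Oo_{K_j}$-linear; the condition defining $\YYY_j^B(\theta_j)$ inside $\YYY_j^B$ is a property of objects, hence automatically preserved. Thus $\Aut_{\XXX_\theta^B(S)}(x)=\Aut_{\YYY_1^B(\theta_1)(S)}(\AAA_1)\times\Aut_{\YYY_2^B(\theta_2)(S)}(\AAA_2)$, and likewise $(\fraka_1,\fraka_2)\in\Gamma$ fixes the class of $x$ exactly when $\fraka_j\otimes_{\Oo_{K_j}}A_j\cong A_j$ in $\YYY_j^B(\theta_j)(S)$ for both $j$. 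So it suffices to show, for each $j$, every connected $\Oo_K$-scheme $S$, and every $\AAA_j\in\YYY_j^B(\theta_j)(S)$, that $|\Aut_{\YYY_j^B(\theta_j)(S)}(\AAA_j)|=w_j$ and that the class of $\AAA_j$ has trivial stabilizer in $\Cl(\Oo_{K_j})$; such $\AAA_j$ exist and the stacks are nonempty by Theorem~\ref{Serre type}.

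For the automorphism count, let $R_j\subseteq\End_{\Oo_B}(A_j)$ be the subring of endomorphisms commuting with $\kappa_j(\Oo_{K_j})$, so that $\Aut_{\YYY_j^B(\theta_j)(S)}(\AAA_j)=R_j^\times$ and $\kappa_j(\Oo_{K_j})\subseteq R_j$ since $K_j$ is commutative. Fixing a geometric point $\bar s\to S$, connectedness of $S$ makes the specialization map $\End_{\Oo_B}(A_j)\to\End_{\Oo_B}(A_{j,\bar s})$ injective and sends $\kappa_j(\Oo_{K_j})$ onto $\kappa_{j,\bar s}(\Oo_{K_j})$, so it is enough to prove that $\kappa_{j,\bar s}(K_j)$ is its own centralizer in $\End^0_{\Oo_B}(A_{j,\bar s})$: then $R_j$ embeds into an order of $K_j$ containing the maximal order $\kappa_{j,\bar s}(\Oo_{K_j})$, forcing $R_j=\kappa_j(\Oo_{K_j})\cong\Oo_{K_j}$. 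Since $\AAA_{j,\bar s}$ is, by finite \'etaleness of $\YYY_j^B$ over $\Spec(\Oo_K)$, the base change of an object over $\CC$ or over $\overline{\FF}_\frakP$ (and endomorphism algebras of abelian varieties are invariant under extension of an algebraically closed base field), Proposition~\ref{char p} applies: $\End^0_{\Oo_B}(A_{j,\bar s})$ contains $\kappa_{j,\bar s}(K_j)$, hence is either an imaginary quadratic field, necessarily equal to $K_j$, or the quaternion algebra $B^{(p)}$. In the field case the centralizer is everything; in the quaternion case $\kappa_{j,\bar s}(K_j)$ is a maximal commutative subalgebra, hence self-centralizing. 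Therefore $|\Aut_{\YYY_j^B(\theta_j)(S)}(\AAA_j)|=|\Oo_{K_j}^\times|=w_j$, giving $|\Aut_{\XXX_\theta^B(S)}(x)|=w_1w_2$.

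For the stabilizer, suppose $\fraka_j\in\Cl(\Oo_{K_j})$ satisfies $\fraka_j\otimes_{\Oo_{K_j}}A_j\cong A_j$ in $\YYY_j^B(\theta_j)(S)$. Restricting along a geometric point $\bar s=\Spec(k)$ of $S$ yields $\fraka_j\otimes_{\Oo_{K_j}}A_{j,\bar s}\cong A_{j,\bar s}$, so $\fraka_j$ fixes $[\AAA_{j,\bar s}]$ under the $\Cl(\Oo_{K_j})$-action on $[\YYY_j^B(\theta_j)(k)]$. Since $k$ is an algebraically closed $\Oo_K$-field, it contains $\overline{K}$ (if $\charr k=0$) or $\overline{\FF}_\frakP$ for some prime $\frakP\subset\Oo_K$ (if $\charr k=p>0$), and finite \'etaleness of $\YYY_j^B$ over $\Spec(\Oo_K)$ makes base change along $\overline{K}\hookrightarrow k$, resp. $\overline{\FF}_\frakP\hookrightarrow k$, a $\Cl(\Oo_{K_j})$-equivariant bijection on sets of isomorphism classes. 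By the discussion preceding Theorem~\ref{Serre type}, $\Cl(\Oo_{K_j})$ acts simply transitively, in particular freely, on $[\YYY_j^B(\theta_j)(\CC)]\cong[\YYY_j^B(\theta_j)(\overline{K})]$ and on $[\YYY_j^B(\theta_j)(\overline{\FF}_\frakP)]$, so $\fraka_j$ is trivial, whence $x$ has trivial stabilizer in $\Gamma$. I expect the main obstacle to be the identification of $\End^0_{\Oo_B}(A_{j,\bar s})$ together with the centralizer of $\kappa_{j,\bar s}(K_j)$ in the supersingular case, where one must know that a quadratic subfield of the quaternion algebra $B^{(p)}$ is its own centralizer; the reduction of arbitrary geometric fibers to those over $\CC$ and $\overline{\FF}_\frakP$ via finite \'etaleness is routine by comparison.
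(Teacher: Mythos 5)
Your argument is correct, and it is essentially the paper's argument with the references unwound. The paper's proof simply cites Mumford's \cite[Corollary 6.2]{GIT} (injectivity of the specialization map for endomorphisms of abelian schemes over a connected base) together with ``the classification of endomorphism rings of QM abelian surfaces over algebraically closed fields'' to conclude $\End_{\Oo_j}(A_j)\cong\Oo_{K_j}$, then invokes \cite[Lemma 2.16]{Howard} for the triviality of the stabilizer in $\Gamma$. You have supplied the details: for the automorphism count you pass to a geometric fiber via the specialization injection, reduce to $\CC$ or $\overline{\FF}_\frakP$, and use Proposition~\ref{char p} plus self-centralization of a quadratic subfield to pin down the commutant; for the stabilizer you pass again to a geometric fiber and appeal to the simple transitivity of $\Cl(\Oo_{K_j})$ on $[\YYY_j^B(\theta_j)(k)]$, which is exactly what the deferred lemma of Howard--Yang does in the elliptic case. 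One small remark: the centralizer argument already forces $\End_{\Oo_j}(A_{j,\bar s})\cong\Oo_{K_j}$, so the stabilizer claim can also be settled without the transitivity facts by observing that $\Hom_{\Oo_j}(A_{j,\bar s},\fraka_j\otimes_{\Oo_{K_j}}A_{j,\bar s})\cong\fraka_j$ as $\Oo_{K_j}$-modules, so an isomorphism $\fraka_j\otimes A_{j,\bar s}\cong A_{j,\bar s}$ forces $\fraka_j\cong\Oo_{K_j}$ as modules, i.e.\ $\fraka_j$ is principal; but your route through Section~3.3 is equally valid and closer to the cited source.
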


\begin{proof}
The first claim is immediate from Lemma \ref{free action}.
Next, by definition, an automorphism of $x$ in $\XXX_\Theta^B(S)$ is a pair $(a_1, a_2$) with 
$a_j \in \Aut_{\Oo_j}(A_j) \cong \Oo^\times_{K_j}$, where $\Oo_j = \Oo_B \otimes_{\ZZ} \Oo_{K_j}$.
\end{proof}

\section{Local quadratic spaces}

This section and the next form the technical core of this paper. In this section we (essentially) count
the number of geometric points of $\XXX_{\Theta, \alpha}^B$. This comes from a careful examination
of the quadratic spaces $(V_\ell(\AAA_1, \AAA_2), \deg_{\CM})$ for each prime $\ell$, where
$$
L_{\ell}(\AAA_1, \AAA_2) = L(\AAA_1, \AAA_2) \otimes_{\ZZ} \ZZ_{\ell}, \quad
V_{\ell}(\AAA_1, \AAA_2) = V(\AAA_1, \AAA_2) \otimes_{\QQ} \QQ_{\ell}.
$$ 
The methods of the
proofs follow \cite{Howard} quite closely.
Suppose $\ell$ is a prime dividing $d_B$, let $k$ be an algebraically closed field over $\Oo_{\bm{K}}$, possibly of characteristic $\ell$, and let $A \in \YYY^B(k)$.
Define the $\frakm_{\ell}$-torsion of $A$ as 
$$
A[\frakm_{\ell}] = \ker(i(x_\ell) : A[\ell] \to A[\ell]),
$$
where $x_\ell$ is any element of $\frakm_{\ell}$ whose image generates the principal ideal $\frakm_{\ell}/\ell\Oo_B \subset \Oo_B/\ell\Oo_B$. This is a finite flat commutative group scheme over $\Spec(k)$ of order $\ell^2$.

\begin{lemma}\label{ml torsion}
Suppose $A \in \YYY^B(k)$ with $k$ an algebraically closed field over $\Oo_{\bm{K}}$ and $\ell \neq \charr(k)$ is a prime dividing $d_B$. There is an isomorphism of $\Oo_B/\frakm_\ell$-algebras 
$\End_{\Oo_B/\frakm_{\ell}}(A[\frakm_{\ell}]) \cong \Oo_B/\frakm_\ell$.
\end{lemma}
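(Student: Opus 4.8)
The plan is to reduce the computation of $\End_{\Oo_B/\frakm_\ell}(A[\frakm_\ell])$ to a purely local question at $\ell$, and then to exploit the structure theorem (Theorem \ref{Serre type}) that writes $A \cong M \otimes_{\Oo_{\bm k}} E$ for some $\Oo = \Oo_B \otimes_{\ZZ} \Oo_{\bm k}$-module $M$ free of rank $4$ over $\ZZ$ and some $E \in \YYY(k)$. Since $\ell \neq p$, the $\ell$-divisible group $A[\ell^\infty]$ is \'etale–dual to a prime-to-$p$ group, so $A[\ell]$ is determined by its Tate module $T_\ell A$, and the $\Oo_B/\frakm_\ell$-module $A[\frakm_\ell]$ is determined by $M_\ell = M \otimes_{\ZZ} \ZZ_\ell$ together with the $\ell$-adic Tate module of $E$, which is free of rank $1$ over $\Oo_{\bm k, \ell}$. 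First I would record that $\Oo_{B,\ell}$ is a maximal order in the division quaternion algebra over $\QQ_\ell$ (this is where $\ell \mid d_B$ enters), so $\Oo_{B,\ell} \cong \Delta$, and $\frakm_\ell$ is its unique maximal two-sided ideal, with $\Oo_{B,\ell}/\frakm_\ell \cong \FF_{\ell^2}$.

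Next I would compute $A[\frakm_\ell]$ explicitly as a module over $\Oo_B/\frakm_\ell \cong \FF_{\ell^2}$. Using $A \cong M \otimes_{\Oo_{\bm k}} E$ we get $A[\ell] \cong M_\ell \otimes_{\Oo_{\bm k,\ell}} E[\ell]$, and since $M_\ell \cong \Oo_{B,\ell}$ as a left $\Oo_{B,\ell}$-module (Proposition \ref{bijection}, or rather the description of $M_\Theta$ in its proof, noting there is only one local type at $\ell$ once we remember $M_\ell \cong \Oo_{B,\ell}$ as a left module — the two global types differ only in the right $\Oo_{\bm k}$-structure), the $\frakm_\ell$-torsion $A[\frakm_\ell] = \ker(i(x_\ell))$ corresponds to $(\frakm_\ell/\ell\Oo_{B,\ell}) \otimes \text{(rank one)} \cong \Oo_{B,\ell}/\frakm_\ell$ as a left $\Oo_B/\frakm_\ell$-module. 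So $A[\frakm_\ell]$ is, as an $\FF_{\ell^2}$-vector space with $\FF_{\ell^2} = \Oo_B/\frakm_\ell$ acting, one-dimensional. Its $\FF_{\ell^2}$-linear endomorphism ring is therefore $\FF_{\ell^2} \cong \Oo_B/\frakm_\ell$, giving the claim. I would present this via the observation that $\End_{\Oo_B/\frakm_\ell}(A[\frakm_\ell])$ can only become larger than $\Oo_B/\frakm_\ell$ if $A[\frakm_\ell]$ has $\Oo_B/\frakm_\ell$-rank $> 1$, and the order count $\#A[\frakm_\ell] = \ell^2 = \#(\Oo_B/\frakm_\ell)$ rules that out.

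The main obstacle is bookkeeping of left versus right module structures: $\Oo = \Oo_B \otimes_{\ZZ}\Oo_{\bm k}$ and one must be careful that the relevant endomorphism ring is taken over $\Oo_B/\frakm_\ell$ (which is commutative), acting through $i$, and that the residual $\Oo_{\bm k}$-action does not enlarge it. Concretely, $\End_{\Oo_B/\frakm_\ell}(A[\frakm_\ell])$ a priori could contain the image of $\kappa$, but since $\bm k \otimes_\QQ \QQ_\ell$ maps into $\End_{\Oo_{B,\ell}}(M_\ell) \cong$ the centralizer of $\Oo_{B,\ell}$ in itself $=$ center $= \ZZ_\ell$, modulo $\frakm_\ell$ the $\Oo_{\bm k}$-action lands in $\FF_\ell \subset \FF_{\ell^2} = \Oo_B/\frakm_\ell$ and contributes nothing new. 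Once this is spelled out the lemma is immediate; the only genuine content is the identification $\Oo_{B,\ell} \cong \Delta$ and $M_\ell \cong \Oo_{B,\ell}$, both of which are available from the earlier sections.
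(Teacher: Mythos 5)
Your proof takes a genuinely different route from the paper's, and it carries more machinery than needed. The paper's proof is very short: since $\ell \neq p$, the group scheme $A[\frakm_\ell]$ is finite \'etale over the algebraically closed field $k$, hence constant, so the natural map
\[
\End_{\Oo_B/\frakm_\ell}(A[\frakm_\ell]) \longrightarrow \End_{\Oo_B/\frakm_\ell}\bigl(A[\frakm_\ell](k)\bigr)
\]
is an isomorphism; then $A[\frakm_\ell](k)$ has order $\ell^2$, is annihilated by $\frakm_\ell$, and so is a one-dimensional vector space over $\Oo_B/\frakm_\ell \cong \FF_{\ell^2}$, whose endomorphism ring is $\FF_{\ell^2}$. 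You instead invoke Theorem \ref{Serre type} to write $A \cong M \otimes_{\Oo_{\bm k}} E$ and compute $A[\frakm_\ell]$ through $M_\ell$. That reaches the same 1-dimensionality conclusion, and it does work, but the extra structure theory buys you nothing here — the order count plus the fact that $\FF_{\ell^2}$ is a field already closes the argument, independently of any identification of $M_\ell$.

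Two cautions. First, there is a logical dependency you should make explicit: the proof of Theorem \ref{Serre type} cites Corollary \ref{max ideal}, which in turn cites this very lemma. This is not circular — the author notes Corollary \ref{max ideal} is proved using only the first paragraph of that proof, and the fact that $f$ is an isomorphism of stacks (hence that every $A$ is of the form $M_\Theta \otimes_{\Oo_{\bm k}} E$) is established in the middle paragraphs, prior to and independently of Corollary \ref{max ideal}. But since you lean on Theorem \ref{Serre type} you should say so; the paper's proof deliberately sidesteps the issue by using no global structure at all. Second, your bookkeeping paragraph on the $\Oo_{\bm k}$-action contains errors: $\End_{\Oo_{B,\ell}}(M_\ell)$ for $M_\ell$ free of rank one over $\Oo_{B,\ell}$ is $\Oo_{B,\ell}^{\mathrm{op}}$, not the center $\ZZ_\ell$, and the reduction of the $\Oo_{\bm k,\ell}$-action modulo $\frakm_\ell$ lands in all of $\FF_{\ell^2}$ (as $\ell$ is inert in $\bm k$), not merely $\FF_\ell$. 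Luckily that paragraph is superfluous: the statement concerns $\End_{\Oo_B/\frakm_\ell}(A[\frakm_\ell])$, not endomorphisms also commuting with $\kappa$, so where $\kappa$ lands does not matter. Drop it, and state explicitly the reduction from group scheme endomorphisms to endomorphisms of $k$-points via \'etaleness — that is the one genuine content of the lemma beyond elementary linear algebra.
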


\begin{proof}
Since $\ell$ is invertible in $k$, the group scheme $A[\ell]$ is finite \'etale over $k$,
so $A[\frakm_\ell]$ is finite \'etale over $k$ and thus constant. It follows that the natural map
$$
\End_{\Oo_B/\frakm_{\ell}}(A[\frakm_{\ell}]) \to \End_{\Oo_B/\frakm_{\ell}}(A[\frakm_{\ell}](k))
$$
is an isomorphism. The group $A[\frakm_\ell](k)$ is a vector space of dimension $1$ over $\Oo_B/\frakm_\ell$ since $|A[\frakm_\ell](k)| = \ell^2 = |\Oo_B/\frakm_\ell|$,
which proves the result.
\end{proof}

\subsection{The case of $\ell \neq p$}

Fix a prime ideal $\frakP \subset \Oo_K$ of residue characteristic $p$, where $p$ is
nonsplit in $K_1$ and $K_2$, a ring homomorphism $\Theta : \Oo_K \to \Oo_B/\frakm_B$, and a CM pair 
$(\AAA_1, \AAA_2) \in \XXX_\Theta^B(\overline{\FF}_{\frakP})$ (necessarily supersingular by Lemma \ref{supersingular2}). Recall that $\frakD$ is the different ideal of $F/\QQ$.

\begin{proposition}\label{quadratic I}
Let $\ell \neq p$ be a prime. There is a $K_{\ell}$-linear isomorphism of $F_{\ell}$-quadratic spaces
$$
(V_{\ell}(\AAA_1, \AAA_2), \deg_{\CM}) \cong (K_{\ell}, \beta_\ell\cdot \NN_{K_{\ell}/F_{\ell}})
$$
for some $\beta_\ell \in F_{\ell}^\times$ satisfying $\beta_\ell\Oo_{F,\ell} = \frakD_{\ell}^{-1} = \frakD^{-1}\Oo_{F, \ell}$
if $\ell \nmid d_B$ and $\beta_\ell\Oo_{F,\ell} = \frakl\frakD_{\ell}^{-1}$ if $\ell \mid d_B$, where $\frakl$ is
the prime over $\ell$ dividing $\ker(\Theta) \cap \Oo_F$. This map takes $L_{\ell}(\AAA_1, \AAA_2)$ 
isomorphically to $\Oo_{K, \ell}$.
\end{proposition}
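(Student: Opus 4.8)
The plan is to reduce to the case of CM elliptic curves treated in \cite{Howard}, and then to isolate the single new contribution, which occurs at the primes $\ell \mid d_B$. First, apply Theorem~\ref{Serre type} (with $\bm{k} = K_j$, $\bm{K} = K$) to write $\AAA_j \cong M_{\Theta_j} \otimes_{\Oo_{K_j}} E_j$, where $\Theta_j : \Oo_{K_j} \hookrightarrow \Oo_B$ is a ring embedding, $E_j \in \YYY_j(\overline{\FF}_\frakP)$ is a CM elliptic curve (necessarily supersingular, as $p$ is nonsplit in $K_j$), and $M_{\Theta_j} = \Oo_B$ with its tautological left $\Oo_B$-structure and right $\Oo_{K_j}$-structure via $\Theta_j$. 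For $\ell \neq p$ the Tate module $T_\ell(E_j)$ is free of rank one over $\Oo_{K_j,\ell}$, so $T_\ell(A_j) \cong M_{\Theta_j} \otimes_{\Oo_{K_j}} T_\ell(E_j)$ is, as a left $\Oo_{B,\ell}$-module, isomorphic to $\Oo_{B,\ell}$, with $\kappa_j(x)$ acting by right multiplication by $\Theta_j(x)$ once a generator of $T_\ell(E_j)$ over $\Oo_{K_j,\ell}$ is fixed. Since we work over the algebraically closed field $\overline{\FF}_\frakP$,
$$
L_\ell(\AAA_1, \AAA_2) = \Hom_{\Oo_{B,\ell}}(T_\ell A_1, T_\ell A_2),
$$
and sending a homomorphism to its value at $1$ identifies this with $\Oo_{B,\ell}$ (each such homomorphism being right multiplication by its value at $1$); chasing through $(x_1 \otimes x_2)\bullet f = \kappa_2(x_2) \circ f \circ \kappa_1(\overline{x}_1)$, the $\Oo_K$-action becomes $(x_1 \otimes x_2)\bullet c = \Theta_1(\overline{x}_1)\, c\, \Theta_2(x_2)$ on $c \in \Oo_{B,\ell}$. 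Using that $\Oo_K$ is maximal one checks that $\Oo_{B,\ell}$ with this bimodule action is free of rank one over $\Oo_{K,\ell}$; together with Proposition~\ref{quadratic}(b) this yields the $K_\ell$-linear isometry onto $(K_\ell, \beta_\ell \cdot \NN_{K_\ell/F_\ell})$ and shows $L_\ell$ lands in a fractional $\Oo_{K,\ell}$-ideal, which, being local, is principal; rescaling the isometry, we may assume $L_\ell$ maps onto $\Oo_{K,\ell}$, at the cost of moving $\beta_\ell$ within its class modulo $\NN_{K_\ell/F_\ell}(K_\ell^\times)$. It remains to compute $\beta_\ell\Oo_{F,\ell}$, equivalently the $\Oo_{F,\ell}$-ideal generated by $\deg_{\CM}$ on $L_\ell$.

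For $\ell \nmid d_B$ we have $\Oo_{B,\ell} \cong \MM_2(\ZZ_\ell)$, and Morita equivalence (via a primitive idempotent) identifies $T_\ell(A_j)$ with $T_\ell(E_j)$ compatibly with the $\Oo_{K,\ell}$-action and carrying the $\ZZ_\ell$-valued form attached to $\deg^\ast$ to the one attached to $\deg$ for the CM elliptic curves; hence the computation is exactly the one in \cite{Howard}, giving $\beta_\ell\Oo_{F,\ell} = \frakD_\ell^{-1}$. Since $\frakm_B$ is divisible only by primes dividing $d_B$, the ideal $\ker(\theta)\cap\Oo_F$ is trivial at $\ell$, so this agrees with the asserted formula.

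For $\ell \mid d_B$ the prime $\ell$ is split in $F$ and inert in $K/F$, $F/\QQ$ is unramified at $\ell$ (so $\frakD_\ell^{-1} = \Oo_{F,\ell}$), and $\Oo_{B,\ell} \cong \Delta$ is the maximal order in the quaternion division algebra over $\QQ_\ell$. I would first pin down the polarization pairings: the Weil pairing on $T_\ell(A_j) \cong \Delta$ induced by the canonical polarization of Proposition~\ref{polarization} is a perfect alternating $\ZZ_\ell$-valued form whose left-$\Oo_B$-adjoint is $x \mapsto x^\ast = a^{-1}x^\iota a$, and such a form is necessarily a $\ZZ_\ell^\times$-multiple of $(u,v) \mapsto \ell^{-1}\Trd_{\Delta/\QQ_\ell}(u^\iota a v)$ — the factor $\ell^{-1}$ being forced by perfectness, since the reduced different of $\Delta$ is $\Pi\Delta$ and $a \in \Pi\Delta^\times$ (as $d_B$ is squarefree). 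A short computation then shows the dual isogeny of ``right multiplication by $c$'' is again a right multiplication and that $\deg^\ast$ equals $\Nrd_{\Delta/\QQ_\ell}$ up to a unit. Decomposing $L_\ell \cong \Delta$ under the two idempotents of $\Oo_{F,\ell} \cong \ZZ_\ell \times \ZZ_\ell$ — the two summands being the eigenspaces of the operator $c \mapsto -\Theta_1(\sqrt{d_1})\, c\, \Theta_2(\sqrt{d_2})$, which in suitable coordinates are $\ZZ_{\ell^2}$ and $\ZZ_{\ell^2}\Pi$ — one sees $\Nrd_{\Delta/\QQ_\ell}$ generates the ideal $\Oo_{F,\ell}$ on one summand and $\ell\Oo_{F,\ell}$ on the other. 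Hence $\beta_\ell\Oo_{F,\ell} = \frakl'\frakD_\ell^{-1}$ for one of the two primes $\frakl'$ over $\ell$.

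The remaining — and genuinely delicate — point is to check that this prime $\frakl'$ is precisely $\frakl$, the prime dividing $\ker(\theta)\cap\Oo_F$: one must match the isomorphism type of the $\Oo_{B,\ell}\otimes_{\ZZ_\ell}\Oo_{K_j,\ell}$-module $M_{\Theta_j,\ell}$ (Lemma~\ref{local modules}) — equivalently the twist of $\theta_{j,\ell}:\Oo_{K_j}\to\Oo_B/\frakm_B$, equivalently the choice of square root $\Theta_j(\sqrt{d_j})\in\Delta$ — with the eigenspace of the operator above that carries the extra factor of $\ell$, and confirm this singles out $\frakl$. One must also verify that the globally defined $\beta$ of Proposition~\ref{quadratic}(b) is compatible with this local model, so that the $\beta_\ell$ computed here is the localization of the global $\beta$. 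I expect this sign-chase at $\ell \mid d_B$, together with the division-algebra computation of the polarization and of $\deg^\ast$, to be where essentially all of the work lies; the $\ell \nmid d_B$ case and the remaining ``soft'' steps parallel \cite{Howard}.
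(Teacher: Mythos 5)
Your framework for $\ell \nmid d_B$ and the initial reductions (existence of a $\beta_\ell$ via Proposition~\ref{quadratic}(b), identification of $L_\ell$ with $\Oo_{K,\ell}$ using that every local ideal is principal, Morita equivalence to reduce the $\ell \nmid d_B$ computation to the CM elliptic curve case) are sound and essentially match the paper. Your Serre-tensor coordinates at $\ell \mid d_B$, the formula $(u,v) \mapsto \ell^{-1}\Trd(u^\iota a v)$ for the polarization pairing, and the observation that $\deg^\ast$ becomes $\Nrd$ on $\Oo_{B,\ell}$ are also reasonable and would support a correct argument.

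However, there is a genuine gap, and you name it yourself: at $\ell \mid d_B$ you establish only that $\beta_\ell\Oo_{F,\ell} = \frakl'\frakD_\ell^{-1}$ for \emph{one} of the two primes $\frakl'$ over $\ell$, and then defer the proof that $\frakl' = \frakl$, the prime dividing $\ker(\theta)\cap\Oo_F$. That identification is the entire content of the proposition at $\ell \mid d_B$ — it is the only place where the datum $\theta$ enters — and deferring it means the proof is not complete. The paper closes precisely this gap by passing through the $\frakm_\ell$-torsion: by Lemma~\ref{ml torsion}, $\End_{\Oo_B/\frakm_\ell}(A[\frakm_\ell]) \cong \Oo_B/\frakm_\ell \cong \FF_{\ell^2}$, and the commutativity of diagram~(\ref{MB commutative}) in the definition of $\XXX_\theta^B$ says that the map $\Oo_K \to \FF_{\ell^2}$, $t_1 \otimes t_2 \mapsto \kappa_1^{\frakm_\ell}(t_1)\kappa_2^{\frakm_\ell}(t_2)$, factors through $\theta$. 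Running the argument of \cite[Lemma~2.11]{Howard} with $A[\frakm_\ell]$ in place of $\Lie(E)$ and $\Oo_{B,\ell}$ in place of $\Delta$, one then reads off that an element of $\Oo_{K,\ell}$ annihilates $L_\ell^\vee/L_\ell$ if and only if it lies in the unique prime over $\frakl$, which pins down $\frakl' = \frakl$. Your alternative route — matching the module type of $M_{\Theta_j,\ell}$ with the eigenspaces of $c \mapsto -\Theta_1(\sqrt{d_1})c\,\Theta_2(\sqrt{d_2})$ — could in principle work, but it still requires translating the module type into a statement about $\theta$, which again must go through $A[\frakm_\ell]$ and is not a sign-chase one can wave at. (Also, your final worry about compatibility with the global $\beta$ of Proposition~\ref{quadratic}(b) is a non-issue: the class of $\beta_\ell$ modulo $\NN_{K_\ell/F_\ell}(K_\ell^\times)$ is determined by the isometry class of $(V_\ell, \deg_{\CM})$, so any local model gives the same answer.)
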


\begin{proof}
We will write $L_{\ell}$ and $V_{\ell}$ for $L_{\ell}(\AAA_1, \AAA_2)$ and $V_{\ell}(\AAA_1, \AAA_2)$.
The existence of an isomorphism of quadratic spaces for some $\beta_\ell \in F_{\ell}^\times$ follows from Proposition \ref{quadratic}(b).
Under this isomorphism, $L_{\ell}$ is sent to a finitely generated $\Oo_{K, \ell}$-submodule
of $K_{\ell}$, that is, a fractional $\Oo_{K, \ell}$-ideal. Then since every ideal of $\Oo_{K, \ell}$ is principal, 
there is an isomorphism $V_\ell \cong K_\ell$ inducing an isomorphism $L_{\ell} \cong \Oo_{K, \ell}$. The $\Oo_{F, \ell}$-bilinear form
$$
[\cdot\hspace{.5mm}, \cdot]_{\CM} : L_{\ell} \times L_{\ell} \to \frakD_{\ell}^{-1} 
$$
induces an $\Oo_{F, \ell}$-bilinear form
$
\Oo_{K, \ell} \times \Oo_{K, \ell} \to \frakD_{\ell}^{-1}
$
given by $(x, y) \mapsto \beta_\ell\Tr_{K_{\ell}/F_{\ell}}(x\overline{y})$.
The dual lattice of $\Oo_{K, \ell}$ with respect to this pairing satisfies
\begin{align*}
\beta_\ell\Oo_{K,\ell}^{\vee} &= \{x \in K_{\ell} : \text{$\Tr_{K_{\ell}/F_{\ell}}(xy) \in \frakD_{\ell}^{-1}$
for all $y \in \Oo_{K, \ell}$}\} \\
&= \{x \in K_{\ell} : \text{$\Tr_{K_{\ell}/\QQ_{\ell}}(xy) \in \ZZ_{\ell}$ for all $y \in \Oo_{K, \ell}$}\} \\
&= \frakD^{-1}_{K/\QQ}\Oo_{K, \ell}.
\end{align*}
Since $K/F$ is unramified at any prime of $F$ over $\ell$, $\frakD_{K/\QQ}\Oo_{K, \ell} = \frakD\Oo_{K, \ell}$, which shows $L_{\ell}^\vee \cong \Oo_{K,\ell}^\vee
= \beta_\ell^{-1}\frakD^{-1}\Oo_{K,\ell}$.

First suppose $\ell \nmid d_B$. By Lemmas \ref{Tate modules} and \ref{Tate
modules 2} there are isomorphisms of $\ZZ_{\ell}$-modules
$$
L_{\ell} \cong \Hom_{\Oo_{B, \ell}}(T_\ell(A_1), T_\ell(A_2)) \cong \MM_2(\ZZ_\ell).
$$
Under this isomorphism the quadratic form $\deg^\ast$ on $L_{\ell}$ is identified with the quadratic
form $u\cdot\det$ on $\MM_2(\ZZ_{\ell})$ for some $u \in \ZZ_{\ell}^\times$. 
The lattice $\MM_2(\ZZ_{\ell}) \subset \MM_2(\QQ_{\ell})$ is self dual relative to $\det$, and the dual of $L_\ell$ with respect to $\deg_{\CM}$ is equal to the dual of $L_\ell$ with respect to $\deg^\ast$,
so from the isomorphism
$$
L_{\ell}^\vee/L_{\ell} \cong \beta_\ell^{-1}\frakD^{-1}\Oo_{K, \ell}/\Oo_{K,\ell},
$$
we find $\beta_\ell\Oo_{K, \ell} = \frakD^{-1}\Oo_{K, \ell}$, and thus $\beta_\ell\Oo_{F, \ell} = \frakD_{\ell}^{-1}$
as $K/F$ is unramified over $\ell$.

Now suppose $\ell \mid d_B$. In the proof of Lemma \ref{Tate modules}(b) we showed that
$T_{\ell}(A_j) \cong \Oo_{B, \ell}$ as $\Oo_{B, \ell}$-modules, so $T_{\ell}(A_1) \cong T_{\ell}(A_2)$
as $\Oo_{B, \ell}$-modules and thus by Lemma \ref{Tate modules 2} there are isomorphisms of $\ZZ_{\ell}$-modules
$$
\Hom_{\Oo_B}(A_1, A_2) \otimes_{\ZZ} \ZZ_{\ell}  \cong \End_{\Oo_{B, \ell}}(T_\ell(A_1)) \cong \End_{\Oo_B}(A_1) \otimes_{\ZZ} \ZZ_{\ell}.
$$
Therefore we may reduce to the case where $\AAA_1$ and
$\AAA_2$ have the same underlying QM abelian surface $A$. By Lemmas \ref{Tate modules} and \ref{Tate
modules 2} there are isomorphisms of $\ZZ_{\ell}$-algebras
$$
L_{\ell} \cong \End_{\Oo_{B, \ell}}(T_{\ell}(A)) \cong \Oo_{B, \ell},
$$
and by Proposition \ref{quadratic}(c), this isomorphism identifies the 
quadratic form $\deg^\ast$ on $L_{\ell}$ with the quadratic form $\Nrd$ on $\Oo_{B, \ell}$. If
$\frakn_{\ell} \subset \Oo_{B, \ell}$ is the unique maximal ideal, so $\frakm_\ell\Oo_{B, \ell} = \frakn_\ell$, 
then a calculation shows that the dual
lattice of $\Oo_{B, \ell}$ relative to $\Nrd$ is $\frakn_{\ell}^{-1}$. Hence we have $\Oo_{K, \ell}$-linear
isomorphisms
$$
\beta_\ell^{-1}\frakD^{-1}\Oo_{K, \ell}/\Oo_{K, \ell} \cong L_{\ell}^\vee/L_{\ell} \cong \frakn_{\ell}^{-1}/\Oo_{B, \ell}.
$$
As a group, $\frakn_{\ell}^{-1}/\Oo_{B, \ell} \cong \Oo_{B, \ell}/\frakn_{\ell} \cong \FF_{\ell^2}$, so 
$[\Oo_{K, \ell} : \beta_\ell\frakD\Oo_{K, \ell}] = \ell^2$.

Recall that $\Oo_K$ acts on $L_{\ell} \cong \Oo_{B, \ell}$ by
$$
(t_1 \otimes t_2) \bullet f = \kappa_2(t_2) \circ f \circ \kappa_1(\overline{t}_1).
$$
Fixing a uniformizer $\Pi \in \Oo_{B, \ell}$ satisfying $\kappa_1(\overline{t}_1)\Pi = \Pi\kappa_1(t_1)$
for all $t_1 \in \Oo_{K_1}$, for any $u \in \kappa_1(\Oo_{K_1}) \subset \Oo_{B, \ell}$ we have
$$
(t_1 \otimes t_2) \bullet u\Pi^{-1} = \kappa_2(t_2)u\Pi^{-1}\kappa_1(\overline{t}_1) = \kappa_2(t_2)\kappa_1(t_1)
u\Pi^{-1}.
$$
Since $\frakn_{\ell}^{-1}/\Oo_{B, \ell}$ is generated by such elements $u\Pi^{-1}$, $\Oo_K$ acts on 
$\frakn_{\ell}^{-1}/\Oo_{B, \ell}$ through left multiplication by the image of the composition 
$\Oo_K \to \Oo_{B, \ell} \to \Oo_{B, \ell}/\frakn_{\ell} \cong \frakn_\ell^{-1}/\Oo_{B, \ell}$, where the first map is given by 
$t_1 \otimes t_2 \mapsto \kappa_2(t_2)\kappa_1(t_1)$. 
Next, under the isomorphism $L_{\ell} \cong \Oo_{B, \ell}$, the action 
$$
\Oo_{B, \ell} \to \End_{\Oo_B/\frakm_\ell}(A[\frakm_\ell]) \cong \Oo_B/\frakm_\ell
$$
determines an isomorphism $\gamma : \Oo_{B, \ell}/\frakn_{\ell} \to \Oo_B/\frakm_\ell$, which allows us to
identify 
$$
\kappa_j^{\frakm_\ell} : \Oo_{K_j} \to \End_{\Oo_B/\frakm_\ell}(A[\frakm_\ell])
$$
with the composition
$$
\Oo_{K_j} \mapp{\kappa_j} \Oo_{B, \ell} \to \Oo_{B, \ell}/\frakn_{\ell} \mapp{\gamma} \Oo_B/\frakm_\ell.
$$
However, the map $\Oo_K \to \FF_{\ell^2}$ defined by $t_1 \otimes t_2 \mapsto \kappa_1^{\frakm_\ell}(t_1)\kappa_2^{\frakm_\ell}(t_2)$
is equal to the map 
$$
\Oo_K \mapp{\Theta} \Oo_B/\frakm_B \to \Oo_B/\frakm_\ell,
$$
by definition of $(\AAA_1, \AAA_2)$ being in $\XXX^B_\Theta(\overline{\FF}_\frakP)$,
and the kernel of this map is the prime $\frakL$ of $K$ over $\frakl$. It then follows from the factorization of $\kappa_j^{\frakm_\ell}$ above that any element of $\frakL$ acts trivially on $\frakn_{\ell}^{-1}/\Oo_{B, \ell}$ and therefore
there is an $\Oo_{K, \ell}$-linear map $\Oo_{K, \ell}/\frakL\Oo_{K, \ell} \hookrightarrow \frakn_{\ell}^{-1}/\Oo_{B, \ell}$
given by $t \mapsto t \bullet \Pi^{-1}$. But $\frakL$ has norm $\ell^2$, which means
$$
\Oo_{K, \ell}/\frakL\Oo_{K, \ell} \cong \frakn_{\ell}^{-1}/\Oo_{B, \ell} \cong \beta_\ell^{-1}\frakD^{-1}\Oo_{K, \ell}/\Oo_{K, \ell}
$$
as $\Oo_{K, \ell}$-modules. This shows $\beta_\ell\frakD\Oo_{K, \ell} = \frakL\Oo_{K, \ell}$ and thus
$\beta_\ell\Oo_{F, \ell} = \frakl\frakD_{\ell}^{-1}$.
\end{proof}
 
\subsection{The case of $\ell = p$}
In order to prove a similar result for $\ell = p$ we need a few preliminary results.

\begin{lemma}\label{Lie algebra}
If $A \in \YYY^B(\overline{\FF}_p)$ then $\End_{\Oo_B \otimes_{\ZZ} \overline{\FF}_p}(\Lie(A)) \cong \overline{\FF}_p$
as $\overline{\FF}_p$-algebras.
\end{lemma}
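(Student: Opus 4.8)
The plan is to make $\Lie(A)$ completely explicit using the Serre tensor description of $\YYY^B$ and then compute its endomorphism ring by a two-by-two matrix calculation. By Theorem~\ref{Serre type} we may write $A \cong M_\Theta \otimes_{\Oo_{\bm{k}}} E$ for some $E \in \YYY(\overline{\FF}_p)$ and some ring embedding $\Theta : \Oo_{\bm{k}} \hookrightarrow \Oo_B$, with $M_\Theta = \Oo_B$ (left multiplication by $\Oo_B$, right multiplication by $\Oo_{\bm{k}}$ through $\Theta$) as in the proof of Proposition~\ref{bijection}. Since $p \mid d_B$ we have $\Oo_{B,p} \cong \Delta$, so $M_{\Theta,p} \cong \Delta$ as a left $\Delta$-module of rank one on which $\Oo_{\bm{k},p} \cong \ZZ_{p^2}$ (recall $p$ is inert in $\bm{k}$) acts on the right through $\Theta$; I would fix the isomorphism $\Oo_{B,p} \cong \Delta$ so that $\Theta(\Oo_{\bm{k},p})$ becomes the distinguished subring $\ZZ_{p^2} \subset \Delta$ in the decomposition $\Delta = \ZZ_{p^2} \oplus \ZZ_{p^2}\Pi$ with $\Pi^2 = p$ and $\Pi a = \overline{a}\Pi$. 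Combining the isomorphism $\Lie(A) \cong M_\Theta \otimes_{\Oo_{\bm{k}}} \Lie(E)$ of $\Oo$-modules with the CM normalization for $E$ (which forces $\Oo_{\bm{k},p}$ to act on the rank-one $\overline{\FF}_p$-module $\Lie(E)$ through the structure embedding $\FF_{p^2} \hookrightarrow \overline{\FF}_p$), I obtain an isomorphism of $\Oo_{B,p}$-modules $\Lie(A) \cong \Delta \otimes_{\ZZ_{p^2}} \overline{\FF}_p$, a two-dimensional $\overline{\FF}_p$-vector space with basis $\{1 \otimes 1,\ \Pi \otimes 1\}$.

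Next I would record the $\Oo_{B,p}$-action in this basis: the subring $\ZZ_{p^2}$ acts by $a \mapsto \text{diag}(\overline{a}, \overline{a}^p)$, two distinct characters of $\FF_{p^2}$ (distinct because the structure embedding $\FF_{p^2} \hookrightarrow \overline{\FF}_p$ is injective, so $x \ne x^p$ for $x \notin \FF_p$), while $\Pi$ acts by the nonzero square-zero map $1 \otimes 1 \mapsto \Pi \otimes 1$, $\Pi \otimes 1 \mapsto p \otimes 1 = 0$. Any $\overline{\FF}_p$-linear endomorphism of $\Lie(A)$ commuting with $\Oo_{B,p}$ must commute with these diagonal matrices, hence be diagonal; commuting additionally with the Jordan block of $\Pi$ then forces its two diagonal entries to agree. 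Thus the centralizer of $\Oo_{B,p}$ in $\End_{\overline{\FF}_p}(\Lie A)$ is exactly the scalars $\overline{\FF}_p$, which is the claim.

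I do not anticipate a real obstacle; the two points needing care are matching the CM subring $\Theta(\Oo_{\bm{k},p})$ with the distinguished $\ZZ_{p^2} \subset \Delta$ (this is the same normalization already made before Proposition~\ref{endomorphisms}) and verifying that the CM normalization makes $\Oo_{\bm{k},p}$ act on $\Lie(E)$ through an \emph{injective} map $\FF_{p^2} \hookrightarrow \overline{\FF}_p$ rather than through $\FF_p$, which is precisely what the hypothesis that $p$ is inert in $\bm{k}$ guarantees. As an alternative I could avoid Theorem~\ref{Serre type} and argue from Proposition~\ref{basis} directly: reducing the explicit $\Delta$-action (\ref{OB action}) modulo the submodule $\FFF D(A) = \VVV D(A)$ produces the same two-dimensional representation of $\Delta$ on $\Lie(A) = D(A)/\VVV D(A)$, with $\Pi$ acting as a nonzero nilpotent of square zero, and the same centralizer computation finishes the proof; the Kottwitz condition (\ref{Kottwitz}), through $\Trd(\Pi) = 0$ and $\Nrd(\Pi) \equiv 0 \pmod{p}$, independently confirms that $i(\Pi)$ is nilpotent on the two-dimensional space $\Lie(A)$.
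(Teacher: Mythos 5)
Your proof is correct. The two routes you sketch are not really independent: your alternative (reduce Proposition~\ref{basis} modulo $\VVV D(A)$ and compute the centralizer in $\MM_2(\overline{\FF}_p)$) is exactly what the paper does, while your main route reaches the same two-dimensional $\Delta$-module without passing through Dieudonn\'e theory, by applying $\Lie(-)$ directly to the Serre tensor $A \cong M_\Theta \otimes_{\Oo_{\bm{k}}} E$. The paper's proof sketch says to use Proposition~\ref{basis} together with $\Lie(A) \cong D(A)/\VVV D(A)$; since Proposition~\ref{basis} was itself extracted from the isomorphism $D(A) \cong M_p \otimes_{\Oo_{\bm{k},p}} D(E)$, the underlying content is identical, and the Dieudonn\'e step is a genuine detour that your main argument shortcuts. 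What both approaches buy is the same thing: a basis of $\Lie(A)$ in which $\ZZ_{p^2} \subset \Delta$ acts by two distinct $\overline{\FF}_p$-valued characters (distinct precisely because $p$ is inert in $\bm{k}$, so the CM normalization is through $\FF_{p^2} \hookrightarrow \overline{\FF}_p$ rather than $\FF_p$) and $\Pi$ acts by a nonzero square-zero matrix, whence the centralizer is scalar. The only thing I would tighten is the notation in the line ``$a \mapsto \text{diag}(\overline{a}, \overline{a}^p)$'': elsewhere in the paper $\overline{a}$ denotes the Galois conjugate of $a$, so it is cleaner to write that $a$ acts by $\text{diag}(\tilde a, \tilde a^{\, p})$ where $\tilde a$ is the image of $a$ under $\ZZ_{p^2} \to \FF_{p^2} \hookrightarrow \overline{\FF}_p$; the entry over $\Pi \otimes 1$ comes from $a\Pi = \Pi\overline{a}$, and $\overline{a}$ reduces to $\tilde a^{\,p}$. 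Your closing observation that the Kottwitz condition already forces $i(\Pi)$ to be nilpotent on $\Lie(A)$ is a correct and useful sanity check, though not needed for the argument.
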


\begin{proof}
For $p \nmid d_B$, we have $\Oo_B \otimes_{\ZZ} \overline{\FF}_p \cong \MM_2(\overline{\FF}_p)$, so $\End_{\Oo_B \otimes_{\ZZ} \overline{\FF}_p}(\Lie(A)) \cong 
\End_{\MM_2(\overline{\FF}_p)}(\overline{\FF}_p^2) \cong \overline{\FF}_p$. Now suppose $p \mid d_B$
and fix an isomorphism $\Oo_{B, p} \cong \Delta$ and a uniformizer $\Pi \in \Delta$. There are isomorphisms of $\overline{\FF}_p$-vector spaces 
$$
\Lie(A) \cong \Lie(D(A)) \cong D(A)/\VVV D(A).
$$
Let $\{e_n\}$ be a $W$-basis for $D(A)$ as in
Proposition \ref{basis}, so the images $\widetilde{e}_1, \widetilde{e}_3$ of $e_1, e_3$ in $D(A)/\VVV D(A)$
form a basis for this $2$-dimensional vector space over $W/pW \cong \overline{\FF}_p$. 
In the notation of (\ref{OB action}), if $D(i) = f_1$ then the action of $\Delta$ on $D(A)/\VVV D(A)$ is given, in the basis $\{\widetilde{e}_1, \widetilde{e}_3\}$, by the matrix
$$
D(i)(a + b\Pi) = \begin{bmatrix} \widetilde{a} & \widetilde{b} \\ 0 & \widetilde{\overline{a}} \end{bmatrix} \in \MM_2(\overline{\FF}_p),
$$
where $\widetilde{a}$ is the image of $a$ in $W/pW \cong \overline{\FF}_p$. A computation shows that a matrix in $\MM_2(\overline{\FF}_p)$
commutes with $D(i)(a + b\Pi)$ for all $a + b\Pi \in \Delta$ if and only if it is a scalar matrix, and therefore
$$
\End_{\Delta\otimes_{\ZZ_p}\overline{\FF}_p}(\Lie(A)) \cong \End_{\Delta\otimes_{\ZZ_p}\overline{\FF}_p}(D(A)/\VVV D(A)) \cong \overline{\FF}_p.
$$
A similar computation gives the same result if $D(i) = f_2$.
\end{proof}

\begin{proposition}\label{quadratic form}
Suppose $(A, i) \in \YYY^B(\overline{\FF}_p)$ with $p \mid d_B$. Under the isomorphism 
$$\End_{\Oo_B}(A) \otimes_{\ZZ} \ZZ_p \to R_{11}$$
in Proposition {\upshape \ref{endomorphisms}},
the $\ZZ_p$-quadratic form $\deg^\ast$ on $\End_{\Oo_B}(A) \otimes_{\ZZ} \ZZ_p$ is identified with the $\ZZ_p$-quadratic form
$Q$ on $R_{11}$ given by
$$
Q\begin{bmatrix} x & y\Pi \\ py\Pi & x \end{bmatrix} = x\overline{x} - p^2y\overline{y}.
$$
\end{proposition}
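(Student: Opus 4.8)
The plan is to show that the two $\ZZ_p$-valued quadratic forms $\deg^\ast$ and $Q$ on $R_{11}$ become equal after extension of scalars to $\QQ_p$; since both are polynomial functions, this gives the equality on $R_{11}$. First I would note that, because $p\mid d_B$ is nonsplit (indeed inert) in $\bm k$, the surface $A$ is supersingular, so $A\cong M\otimes_{\Oo_{\bm k}}E$ with $E$ supersingular and Proposition \ref{endomorphisms} applies, identifying $\End_{\Oo_B}(A)\otimes_\ZZ\ZZ_p$ with $R_{11}$; moreover $R_{11}$ is an Eichler order in $\MM_2(\QQ_p)$, so $R_{11}\otimes_{\ZZ_p}\QQ_p$ is a split central simple $\QQ_p$-algebra of reduced degree $2$. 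The strategy is to recognize $\deg^\ast$ on $R_{11}\otimes\QQ_p$ as the reduced norm of this algebra, and then to evaluate that reduced norm in the explicit Dieudonn\'e coordinates of Proposition \ref{basis}.

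The key structural input is that $\deg^\ast$ is multiplicative. For isogenies $f,g\in\End_{\Oo_B}(A)$ one has $(f\circ g)^t=g^t\circ f^t$, hence $(f\circ g)^t\circ(f\circ g)=g^t\circ[\deg^\ast(f)]\circ g=\deg^\ast(f)\cdot[\deg^\ast(g)]$, so that $\deg^\ast(f\circ g)=\deg^\ast(f)\deg^\ast(g)$; since the $\ZZ$-lattice $\End_{\Oo_B}(A)$ is Zariski dense in $R_{11}\otimes\QQ_p$ and both sides are polynomial, this identity persists there, where also $\deg^\ast(1)=1$. Now I would invoke the elementary fact that a multiplicative quadratic form $q$ on $\MM_2(\QQ_p)$ with $q(1)=1$ is the determinant: restricting $q$ to the diagonal torus and using that $q$ is at once quadratic and multiplicative forces $q(\mathrm{diag}(a,b))=ab$, and then $q=\det$ on the Zariski-dense locus of regular semisimple split elements by conjugation invariance, hence on all of $\MM_2(\QQ_p)$. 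Thus $\deg^\ast$ equals the reduced norm $\Nrd$ on $R_{11}\otimes\QQ_p$.

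It remains to check $\Nrd(m)=Q(m)$ for $m=\begin{bmatrix}x&y\Pi\\ py\Pi&x\end{bmatrix}\in R_{11}$. By Proposition \ref{basis}, $m$ acts on the free rank-$4$ $W$-module $D(A)$ by the matrix (\ref{endo matrix}); extending scalars to $\mathrm{Frac}(W)$, the space $D(A)\otimes_W\mathrm{Frac}(W)$ is a faithful module of dimension $4=2\cdot 2$ over $R_{11}\otimes_{\ZZ_p}\mathrm{Frac}(W)\cong\MM_2(\mathrm{Frac}(W))$, so the determinant of the action of $m$ equals $\Nrd(m)^2$. On the other hand, grouping the basis as $\{e_1,e_4\}$ and $\{e_2,e_3\}$ block-diagonalizes (\ref{endo matrix}) into two $2\times 2$ blocks, each of determinant $x\overline{x}-p^2y\overline{y}$, so this determinant is $(x\overline{x}-p^2y\overline{y})^2=Q(m)^2$. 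Hence $\Nrd(m)^2=Q(m)^2$; as $\Nrd$ and $Q$ are quadratic polynomials this gives $\Nrd=\pm Q$, and evaluating at $m=1$ (that is, $x=1$, $y=0$), where both equal $1$, fixes the sign. Combining with the previous paragraph, $\deg^\ast=Q$ on $R_{11}$.

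The main obstacle is the middle step: showing that the \emph{a priori} global invariant $\deg^\ast$ restricts on the integral endomorphism ring to something that can be computed purely $p$-adically. Multiplicativity of $\deg^\ast$, which follows directly from the behaviour of dual isogenies, is exactly what makes this possible; once it is in hand, the identification with the reduced norm and the coordinate computation are routine, the only delicate point in the last step being the factor-of-two bookkeeping between the determinant on the rank-$4$ module $D(A)$ and the reduced norm of the rank-$2$ algebra $R_{11}\otimes\QQ_p$.
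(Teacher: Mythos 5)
Your argument is correct, but it follows a genuinely different route from the paper's. The paper pins down the explicit alternating pairing $\Lambda = D(\lambda)$ on $D(A)$ coming from the canonical principal polarization, computes the adjoint involution $\varphi \mapsto \Lambda^{-1}\varphi^T\Lambda$ in the basis of Proposition \ref{basis}, and reads off $\varphi\varphi^\dagger$ directly. You bypass the polarization entirely: you observe that $\deg^\ast$ is multiplicative (from $(fg)^t = g^tf^t$), deduce conjugation-invariance, and therefore identify $\deg^\ast \otimes \QQ_p$ with the reduced norm on $R_{11}\otimes\QQ_p \cong \MM_2(\QQ_p)$; then you compute $\Nrd$ by block-diagonalizing the $4\times 4$ matrix in (\ref{endo matrix}) and extracting a square root. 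Your approach is more conceptual and avoids having to determine $\Lambda$ (the paper asserts ``a computation shows $\Lambda$ must be of the form\dots'' without much detail), at the price of the abstract uniqueness lemma for multiplicative quadratic forms. The only soft spot is the sentence asserting that quadraticity plus multiplicativity on the diagonal torus alone forces $q(\mathrm{diag}(a,b)) = ab$: this is false as stated (both $a^2$ and $b^2$ are multiplicative, quadratic, and take value $1$ at the identity); it is the conjugation-invariance you invoke in the next clause that rules out $a^2$ and $b^2$ (via the Weyl element swapping $a,b$), and it would be cleaner to run the whole Zariski-density argument over $\overline{\QQ}_p$ where split regular semisimple elements are dense. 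The determinant computation and the $\Nrd = \pm Q$ sign-fixing are both fine.
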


\begin{proof}
Recall that $f^t = \lambda^{-1} \circ f^\vee \circ \lambda$, where $\lambda : A \to A^\vee$
is the unique principal polarization satisfying $\lambda^{-1} \circ i(x)^\vee \circ \lambda = i(x^\ast)$ for all $x \in \Oo_B$.
The polarization $\lambda$ induces a map 
$$
\Lambda = D(\lambda) : D(A) \to D(A^\vee) \cong D(A)^\vee,
$$
where $D(A)^\vee$ is a $\DDD$-module via
$$
(\FFF \cdot f)(x) = \sigma(f(\VVV x)), \quad (\VVV \cdot f)(x) = \sigma^{-1}(f(\FFF x)).
$$
Then $\Lambda$ determines 
a nondegenerate, alternating, bilinear pairing $\langle \cdot\hspace{.5mm}, \cdot
\rangle : D(A) \times D(A) \to W$ satisfying
$\langle \FFF x, y \rangle = \sigma(\langle x, \VVV y\rangle)$ for all $x, y \in D(A)$. 

Recall that $x^\ast = a^{-1}x^\iota a$, where $a \in \Oo_B$ is an element satisfying $a^2 = -d_B$. In the local case
of $a \in \Oo_B \otimes_\ZZ \ZZ_p \cong \Delta$, we can explicitly choose $a$: since 
the norm map $\NN_{\QQ_{p^2}/\QQ_p} : \ZZ_{p^2}^\times \to \ZZ_p^\times$ is surjective, there is an $a_0 \in \ZZ_{p^2}^\times$
such that $a_0\overline{a}_0 = -p^{-1}d_B$. Let $a = a_0\Pi \in \Delta$, so
$a^2 = pa_0\overline{a}_0 = -d_B$. 

Let $\{e_n\}$ be a $W$-basis for $D(A)$ as in Proposition \ref{basis}.
First suppose $D(i) = f_1$, in the notation of
(\ref{OB action}). Then for $x = u + v\Pi \in \Delta$, one can compute, using $x^\iota = \overline{u} - v\Pi$,
$$
D(i(x^\ast)) = D(i(a))^{-1}D(i(x^\iota))D(i(a)) = \begin{bmatrix} u & 0 & -a_0\overline{a}_0^{-1}\overline{v} & 0 \\
0 & \overline{u} & 0 & -a_0^{-1}\overline{a}_0v \\ -pa_0^{-1}\overline{a}_0v & 0 & \overline{u} & 0 \\
0 & -pa_0\overline{a}_0^{-1}\overline{v} & 0 & u \end{bmatrix}.
$$
Viewing $\Lambda$ as an element of $\Hom_W(D(A), D(A)^\vee) \cong \MM_4(W)$, using that
$\Lambda$ is invertible and alternating, and that $\Lambda \circ D(i(x^\ast)) = D(i(x))^\vee\circ \Lambda$
for all $x \in \Delta$, where $D(i(x))^\vee$ is the dual linear map, a computation shows $\Lambda$ must be of the form
$$
\Lambda = \begin{bmatrix}
0 & 0 & 0 & ba_0^{-1}\overline{a}_0 \\
0 & 0 & b & 0 \\
0 & -b & 0 & 0 \\
-ba_0^{-1}\overline{a}_0 & 0 & 0 & 0
\end{bmatrix}
$$
for some $b \in W^\times$. The equality $\langle\FFF e_1, e_3\rangle = \sigma\langle e_1, \VVV e_3\rangle$
implies $b = \sigma(b)a_0\overline{a}_0^{-1}$, so $b \in \ZZ_{p^2}^\times$ and
$$
\Lambda = \begin{bmatrix}
0 & 0 & 0 & \overline{b}\\
0 & 0 & b & 0 \\
0 & -b & 0 & 0 \\
-\overline{b} & 0 & 0 & 0
\end{bmatrix}.
$$

The involution $\varphi \mapsto \varphi^\dagger$ on $\End_W(D(A)) \cong \MM_4(W)$ corresponding
to the Rosati involution $f \mapsto \lambda^{-1} \circ f^\vee \circ \lambda$ on $\End^0_{\overline{\FF}_p}(A)$, which restricts to
$f \mapsto f^t$ on $\End_{\Oo_B}(A) \otimes_{\ZZ} \ZZ_p$, is then given by $\varphi^\dagger = \Lambda^{-1}
\varphi^T\Lambda$, where $\varphi^T$ is the transpose of the matrix $\varphi$. A computation shows that for $\varphi = [\varphi_{ij}] \in \MM_4(W)$,
$$
\varphi^\dagger = \begin{bmatrix} 
\varphi_{44} & b\overline{b}^{-1}\varphi_{34} & -b\overline{b}^{-1}\varphi_{24} & -\varphi_{14} \\ 
b^{-1}\overline{b}\varphi_{43} & \varphi_{33} & -\varphi_{23} & -b^{-1}\overline{b}\varphi_{13} \\
-b^{-1}\overline{b}\varphi_{42} & -\varphi_{32} & \varphi_{22} & b^{-1}\overline{b}\varphi_{12} \\
-\varphi_{41} & -b\overline{b}^{-1}\varphi_{31} & b\overline{b}^{-1}\varphi_{21} & \varphi_{11}
\end{bmatrix}.
$$
If 
$$
\varphi = \begin{bmatrix} x & y\Pi \\ py\Pi & x \end{bmatrix} \in R_{11},
$$
then viewing it as an element of $\MM_4(W)$ as in (\ref{endo matrix}), applying the involution $\dagger$, as described explicitly above, and then viewing it again in $R_{11}$, gives
$$
\varphi^\dagger = \begin{bmatrix} \overline{x} & -y\Pi \\ -py\Pi & \overline{x} \end{bmatrix}.
$$
Therefore $$\varphi^\dagger\varphi = \begin{bmatrix} x\overline{x} - p^2y\overline{y} & 0 \\ 0 & x\overline{x} - p^2y\overline{y}
\end{bmatrix}, $$
so after identifying $\ZZ_p$ with its diagonal image in $\MM_2(\ZZ_{p^2})$, we obtain $Q(\varphi) = x\overline{x}-p^2y\overline{y}$.
A similar computation gives the same result if $D(i) = f_2$. 
\end{proof}

For $j = 1, 2$ let $\theta_j : \Oo_{K_j} \to \Oo_B/\frakm_B$ be a ring homomorphism and let $A_j \in \YYY_j^B(\theta_j)
(\overline{\FF}_\frakP)$ for $p \mid d_B$. There is a unique ring isomorphism $\Oo_{K_1, p} \to \Oo_{K_2, p}$
making the diagram
\begin{equation}\label{identification}
\xymatrix{ 
\Oo_{K_1, p} \ar[rr] \ar[dr]_{\theta_1} && \Oo_{K_2, p} \ar[dl]^{\theta_2} \\
& \Oo_B/\frakm_p & }
\end{equation}
commute. We use this to identify the rings $\Oo_{K_1, p}$ and $\Oo_{K_2, p}$, and call this ring $\Oo^p$.

\begin{definition}\label{type}
With notation as above, if $D(A_1)$ and $D(A_2)$ are isomorphic as $\Delta \otimes_{\ZZ_p} \Oo^p$-modules, we
say that $A_1$ and $A_2$ are of the \textit{same type}. 
\end{definition}

Note that there are two isomorphism classes of $\Delta \otimes_{\ZZ_p} \Oo^p$-modules free of rank $4$ over
$\ZZ_p$, and $A_1$ and $A_2$ being of the same type just means $D(A_1)$ and $D(A_2)$ lie in the same isomorphism
class, and not being of the same type means they lie in the two separate classes. 
This definition is a bit misleading because we will see below that $A_1$ and $A_2$ are of the same type
if and only if $\frakP$ divides $\ker(\Theta)$, where $\Theta : \Oo_K \to \Oo_B/\frakm_B$ is the map induced
by $\theta_1$ and $\theta_2$, so this ``type" is really a property between $\frakP$ and $\Theta$, independent of
$A_1$ and $A_2$. However, the above definition is the easier one to start with in proving the next few results.

\begin{proposition}\label{mixed types}
Suppose $(A_j, i_j) \in \YYY_j^B(\theta_j)(\overline{\FF}_\frakP)$ for $j = 1, 2$, where $p \mid d_B$, and
$A_1$ and $A_2$ are not of the same type. There are isomorphisms of $\ZZ_p$-modules
$$
\Hom_{\Oo_B \otimes_{\ZZ} \DDD}(D(A_1), D(A_2)) \cong \Hom_{\Oo_B \otimes_{\ZZ} \DDD}(D(A_2), D(A_1)) \cong R_{12}, 
$$
where 
$$
R_{12} = \left\{\begin{bmatrix} px & y\Pi \\ y\Pi & x \end{bmatrix} : x, y \in \ZZ_{p^2}\right\} \subset \MM_2(\Delta)
$$
and we have fixed an embedding $\ZZ_{p^2} \hookrightarrow \Delta$ so that $\Delta = \ZZ_{p^2} \oplus \ZZ_{p^2}\Pi$.
Under the isomorphism
$$
\Hom_{\Oo_B}(A_1, A_2) \otimes_{\ZZ} \ZZ_p \mapp{D} \Hom_{\Oo_B \otimes_{\ZZ} \DDD}(D(A_1), D(A_2)) \cong R_{12},
$$
the $\ZZ_p$-quadratic form $\deg^\ast$ on $\Hom_{\Oo_B}(A_1, A_2) \otimes_{\ZZ} \ZZ_p$ is identified with the $\ZZ_p$-quadratic form $u\cdot Q'$ on $R_{12}$, where $u \in \ZZ_p^\times$ and
$$
Q'\begin{bmatrix} px & y\Pi \\ y\Pi & x \end{bmatrix} = p(x\overline{x} - y\overline{y}).
$$
Under the isomorphism
$$
\Hom_{\Oo_B}(A_2, A_1) \otimes_{\ZZ} \ZZ_p \mapp{D} \Hom_{\Oo_B \otimes_{\ZZ} \DDD}(D(A_2), D(A_1)) \cong R_{12},
$$
the quadratic form $\deg^\ast$ is identified with the quadratic form $u^{-1}\cdot Q'$.
\end{proposition}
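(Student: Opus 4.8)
The plan is to move everything to Dieudonn\'e modules, where the assertions of Proposition~\ref{mixed types} become linear-algebra computations entirely parallel to those in the proofs of Propositions~\ref{endomorphisms} and~\ref{quadratic form}; the one genuinely new feature is that the two $\Oo_{B,p}$-actions now lie in different $\GL_2(\ZZ_{p^2})$-conjugacy classes, which is exactly what ``not of the same type'' encodes.

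First I would apply the covariant Dieudonn\'e functor over $\overline{\FF}_\frakP$ to identify $\Hom_{\Oo_B}(A_1, A_2)\otimes_\ZZ\ZZ_p$ with $\Hom_{\Oo_{B,p}\otimes_{\ZZ_p}\DDD}(D(A_1), D(A_2))$, as in the cited proofs. Since $p\mid d_B$ forces $A_1$ and $A_2$ to be supersingular, the discussion preceding Proposition~\ref{basis} lets me write $D(A_j)\cong D(E)\oplus D(E)$ for the standard supersingular Dieudonn\'e module $D(E)$, so that $\Hom_\DDD(D(A_1), D(A_2))\cong\MM_2(\End_\DDD(D(E)))\cong\MM_2(\Delta)$ and $\Hom_{\Oo_{B,p}\otimes\DDD}(D(A_1), D(A_2))$ is the space of $M\in\MM_2(\Delta)$ intertwining $D(i_1)$ and $D(i_2)$. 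Using the diagram~(\ref{identification}) to identify $\Oo_{K_1,p} = \Oo_{K_2,p} = \Oo^p\cong\ZZ_{p^2}$, together with Lemma~\ref{types} and Proposition~\ref{basis}, I would choose $W$-bases of $D(A_1)$ and $D(A_2)$ relative to which the $\Oo^p$-action on both is $\text{diag}(a,\overline a, a,\overline a)$, the Frobenius is standard, and --- because $A_1$ and $A_2$ are not of the same type --- the two $\Oo_{B,p}$-actions are given by the \emph{distinct} matrices $f_1$ and $f_2$ of~(\ref{OB action}). A direct computation of the intertwiner space, of the same nature as the centralizer computation $C_{\MM_2(\Delta)}(\Delta) = R_{11}$ in the proof of Proposition~\ref{endomorphisms} (and with the same $\ZZ_p$-rank count), then identifies $\Hom_{\Oo_{B,p}\otimes\DDD}(D(A_1), D(A_2))$ with $R_{12}$ as a $\ZZ_p$-module, after at most permuting a basis so that it appears in the displayed normal form; the analogous computation, with $f_1$ and $f_2$ interchanged, does the same for $\Hom_{\Oo_{B,p}\otimes\DDD}(D(A_2), D(A_1))$.

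For the quadratic form I would follow the proof of Proposition~\ref{quadratic form}, but with two polarizations in play. Dualizing the canonical principal polarizations $\lambda_1, \lambda_2$ of Proposition~\ref{polarization} gives nondegenerate alternating pairings $\langle\,\cdot\,,\,\cdot\,\rangle_j$ on $D(A_j)$ with $\langle\FFF x, y\rangle_j = \sigma(\langle x, \VVV y\rangle_j)$, and the constraint $\lambda_j^{-1}\circ i_j(x)^\vee\circ\lambda_j = i_j(x^\ast)$ pins down each Gram matrix up to a unit $b_j\in\ZZ_{p^2}^\times$, just as before. For $\varphi\in R_{12}$ representing $f\in L(\AAA_1,\AAA_2)$ one has $\varphi^t$ computed from these Gram matrices, and $\deg^\ast(f) = \varphi^t\circ\varphi$; running this matrix computation shows $\varphi^t\circ\varphi$ is the scalar $p(x\overline x - y\overline y)$ times a unit $u\in\ZZ_p^\times$ --- whereas in the $\End$ case the single polarization's unit cancels, here the units $b_1$ and $b_2$ of the two \emph{distinct} polarizations survive as the residual factor $u$. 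The same computation with $A_1$ and $A_2$ exchanged yields some unit $u'$ in place of $u$; to see $u' = u^{-1}$ I would not redo the computation but instead use compatibility with composition: under the identifications above, the pairing $L_p(\AAA_2,\AAA_1)\times L_p(\AAA_1,\AAA_2)\to\End_{\Oo_B}(A_1)\otimes\ZZ_p\cong R_{11}$ given by $(\psi,\varphi)\mapsto\psi\circ\varphi$ is explicit, and a short algebraic identity shows that $Q$ of its value equals $Q'(\psi)\,Q'(\varphi)$; since $\deg^\ast$ is multiplicative under composition and, by Proposition~\ref{quadratic form}, equals exactly $Q$ on $R_{11}$, comparing the two expressions for $\deg^\ast(\psi\circ\varphi)$ forces $u u' = 1$.

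The main obstacle is the bookkeeping in the two matrix computations: pinning down the intertwiner lattice between $f_1$ and $f_2$ inside $\MM_2(\Delta)$ in coordinates for which it is exactly $R_{12}$, and then carrying the two polarization units $b_1, b_2$ through $\varphi^t\circ\varphi$ carefully enough to confirm that a unit $u$ genuinely survives and that the reversed computation produces its inverse. Both are finite but delicate, in the style of Propositions~\ref{endomorphisms} and~\ref{quadratic form}; I would organize the first around the decomposition $\Delta = \ZZ_{p^2}\oplus\ZZ_{p^2}\Pi$ --- imposing commutation first with $\ZZ_{p^2}$, which forces a block shape, then with $\Pi$ --- and the second around the explicit shapes of the Gram matrices dictated by the involution $x\mapsto x^\ast$.
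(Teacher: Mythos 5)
Your proposal matches the paper's proof in both substance and method: the first claim is handled by the same coordinate computation of the intertwiner lattice between $f_1$ and $f_2$ inside $\MM_2(\Delta)$ using the basis from Proposition \ref{basis}, and the quadratic form is read off by carrying the two polarization Gram matrices $\Lambda_1, \Lambda_2$ (with their units $b_1, b_2$) through $\varphi \mapsto \varphi^\dagger = \Lambda_1^{-1}\varphi^T\Lambda_2$, which indeed produces the residual factor $u = b_1^{-1}b_2$. The only divergence is your argument for $u' = u^{-1}$: the paper gets this for free from the explicit formula $u = b_1^{-1}b_2$ (swapping the roles of $A_1$ and $A_2$ swaps $b_1$ and $b_2$), whereas you derive it via multiplicativity of $\deg^\ast$ under composition and the identity $Q(\psi\circ\varphi) = Q'(\psi)Q'(\varphi)$; this is correct (the identity does hold, after conjugating the second $R_{12}$-presentation by the permutation matrix so that the composition lands in $R_{11}$) and is a slightly more structural, if not shorter, route to the same fact.
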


\begin{proof}
The first claim follows from a computation in coordinates using Proposition \ref{basis}.
Now let $\lambda_j : A_j \to A_j^\vee$ be the unique principal polarization satisfying $i_j(x^\ast) = 
\lambda_j^{-1} \circ i_j(x)^\vee \circ \lambda_j$ for all $x \in \Oo_B$. In the proof of Proposition \ref{quadratic form} we
showed  
$$
\Lambda_j = D(\lambda_j) = \begin{bmatrix} 0 & 0 & 0 & \overline{b}_j \\ 0 & 0 & b_j & 0 \\ 0 & -b_j & 0 & 0 \\ -\overline{b}_j & 0 & 0 & 0
\end{bmatrix} \in \MM_4(W)
$$
for some $b_j \in \ZZ_{p^2}^\times$ satisfying $b_1^{-1}b_2 \in \ZZ_p^\times$. 
We have $D(f^t) = \Lambda_1^{-1}D(f)^\vee \Lambda_2$,
where $D(f)^\vee$ is the dual linear map in $\Hom_{\Oo_B \otimes_{\ZZ} \DDD}(D(A_2)^\vee, D(A_1)^\vee)$. Therefore, through the map $D$, the assignment $f \mapsto f^t$ corresponds to the assignment $\varphi \mapsto \varphi^\dagger = \Lambda_1^{-1}\varphi^T\Lambda_2$. If 
$$
\varphi = \begin{bmatrix} px & y\Pi \\ y\Pi & x \end{bmatrix} \in R_{12}
$$
then a computation shows
$$
\varphi^\dagger\varphi = \begin{bmatrix} p(x\overline{x} - y\overline{y})u & 0
\\ 0 & p(x\overline{x} - y\overline{y})u \end{bmatrix},
$$  
where $u = b_1^{-1}b_2$. The last statement is proved similarly.
\end{proof}

Recall that $(\AAA_1, \AAA_2) \in \XXX_\Theta^B(\overline{\FF}_\frakP)$ and for $p \mid d_B$ 
we are using $\Theta$ to identify $\Oo_{K_1, p}$ and $\Oo_{K_2, p}$ as in (\ref{identification}).

\begin{proposition}\label{quadratic II}
There is a $K_p$-linear isomorphism of $F_p$-quadratic spaces
$$
(V_p(\AAA_1, \AAA_2), \deg_{\CM}) \cong (K_p, \beta_p \cdot \NN_{K_p/F_p})
$$
for some $\beta_p \in F_p^\times$ satisfying 
$$
\beta_p\Oo_{F, p} = \left\{\begin{array}{ll}
\frakp\frakD_p^{-1} & \text{if $p \nmid d_B$} \\
\frakp^2\frakD_p^{-1} & \text{if $p \mid d_B$ and $A_1, A_2$ are of the same type} \\
\frakp\overline{\frakp}\frakD_p^{-1} & \text{if $p \mid d_B$ and $A_1, A_2$ are not of the same type},
\end{array} \right.
$$
where $\frakD_p = \frakD\Oo_{F, p}$,  $\frakp = \frakP \cap \Oo_F$, and $\overline{\frakp}$ is the other prime ideal of $\Oo_F$ lying over $p$. This map takes $L_p(\AAA_1, \AAA_2)$ isomorphically to $\Oo_{K, p}$.
\end{proposition}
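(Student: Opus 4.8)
The plan is to run the argument used for Proposition \ref{quadratic I}, with the $\ell$-adic Tate module replaced by the Dieudonn\'e module at $p$. Proposition \ref{quadratic}(b) already provides an isomorphism $(V_p(\AAA_1,\AAA_2),\deg_{\CM})\cong(K_p,\beta_p\cdot\NN_{K_p/F_p})$ for some $\beta_p\in F_p^\times$. Because $\Oo_{K,p}$ is semilocal, the rank one $\Oo_{K,p}$-lattice $L_p=L_p(\AAA_1,\AAA_2)$ inside $V_p$ is free, so we may choose this isomorphism to send $L_p$ onto $\Oo_{K,p}$; the only remaining freedom is a rescaling by $\Oo_{K,p}^\times$, and $\NN_{K_p/F_p}(\Oo_{K,p}^\times)=\Oo_{F,p}^\times$ since $K_p/F_p$ is unramified at each prime above $p$ (this is exactly where the standing assumptions on the splitting of $p$ in $K_1$, $K_2$, $F$, $K$ enter). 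Thus $\beta_p\Oo_{F,p}$ is well defined, and it is detected by the dual lattice: using that the inverse different of $\Oo_{F,p}/\ZZ_p$ is $\frakD_p^{-1}$ and that $\Oo_{K,p}$ is self dual for $\Tr_{K_p/F_p}$, one identifies $L_p^\vee$ with $\beta_p^{-1}\frakD_p^{-1}\Oo_{K,p}$, where $L_p^\vee$ is the dual of $L_p$ for the bilinear form $[f,g]=f^t\circ g+g^t\circ f$ attached to $\deg^\ast$. Everything then comes down to computing $L_p^\vee/L_p$ as an $\Oo_{K,p}$-module in an explicit model, and in particular to identifying which prime of $\Oo_F$ above $p$ is $\frakp=\frakP\cap\Oo_F$.

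If $p\nmid d_B$, then $p$ is nonsplit in $K_1$ and $K_2$ by Proposition \ref{quadratic}(d) and $\Oo_{B,p}\cong\MM_2(\ZZ_p)$. Morita equivalence then identifies $L_p$, with its quadratic form and its $\Oo_{K,p}$-action, with $\Hom(E_1,E_2)\otimes_{\ZZ}\ZZ_p$ equipped with the degree form up to a $\ZZ_p^\times$-scalar, for CM elliptic curves $E_1$, $E_2$ satisfying $A_j\sim E_j^2$ (the curves produced by Theorem \ref{Serre type}). This is precisely the local computation carried out in \cite[Lemma 2.11]{Howard}, which gives $\beta_p\Oo_{F,p}=\frakp\frakD_p^{-1}$.

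If $p\mid d_B$, then $p$ is inert in $K_1$ and $K_2$, hence unramified in $F$, so $\frakD_p=\Oo_{F,p}$, and we work in the Dieudonn\'e-module coordinates established above. If $A_1$ and $A_2$ are of the same type, then $\Hom_{\Oo_B\otimes_{\ZZ}\DDD}(D(A_1),D(A_2))\cong R_{11}$, and comparing the polarizations $\lambda_1$, $\lambda_2$ as in the proof of Proposition \ref{mixed types} shows $\deg^\ast$ on $L_p$ is identified with $u\cdot Q$ for some $u\in\ZZ_p^\times$ and $Q$ as in Proposition \ref{quadratic form}. Computing the $\bullet$-action of $\Oo_{K,p}=\Oo_{K_1,p}\otimes_{\ZZ_p}\Oo_{K_2,p}$ in the matrix form (\ref{endo matrix}), using (\ref{OK action}), exhibits $R_{11}$ as the orthogonal sum of two rank one $\Oo^p$-lattices on which $Q$ restricts, respectively, to the norm form $\NN_{\Oo^p/\ZZ_p}$ and to $-p^2\NN_{\Oo^p/\ZZ_p}$; the CM normalization condition forces the second of these to be the summand belonging to $\frakp$. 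Since $\Oo^p$ is self dual for its trace form, this yields $\beta_p\Oo_{F,p}=\frakp^2=\frakp^2\frakD_p^{-1}$. If $A_1$ and $A_2$ are not of the same type, the same bookkeeping applied to the model $L_p\cong R_{12}$ with form $u\cdot Q'$ of Proposition \ref{mixed types} shows the overall factor of $p$ in $Q'$ sits on both summands, so $\beta_p\Oo_{K,p}=p\Oo_{K,p}$ and $\beta_p\Oo_{F,p}=p\Oo_{F,p}=\frakp\overline{\frakp}\frakD_p^{-1}$. The final assertion that $L_p$ maps onto $\Oo_{K,p}$ is built into the choice of isomorphism made at the outset.

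The main obstacle is the last paragraph: tracking how the $\Oo_{K,p}$-module structure on the matrix lattices $R_{11}$ and $R_{12}$ interacts with the quadratic forms $Q$, $Q'$, and above all pinning down which of the two primes of $\Oo_F$ over $p$ equals $\frakp=\frakP\cap\Oo_F$. This is dictated by the CM normalization condition through the formula (\ref{OK action}) for the action of $\Oo_{K_j,p}$ on $D(A_j)$, and keeping the various conjugations consistent there is the delicate point.
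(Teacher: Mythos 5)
Your plan is essentially correct and close to the paper's, with one notable packaging difference worth naming. The $p\nmid d_B$ case matches the paper exactly: reduce via $\Oo_{B,p}\cong\MM_2(\ZZ_p)$ idempotents (equivalently Morita) to a pair of CM elliptic curves and invoke \cite[Lemma 2.11]{Howard}. For $p\mid d_B$ you argue via the orthogonal decompositions $R_{11}=R_+\oplus R_-$ (with $R_+=\Oo^p$, $R_-=\Oo^p P$) and $R_{12}=\Oo^pP_1\oplus\Oo^pP_2$, reading off the two components of $\beta_p$ directly from $Q\vert_{R_\pm}$ (respectively $Q'$), whereas the paper computes the dual lattices $R_{11}^\vee$, $R_{12}^\vee$ in matrix coordinates, observes $[R^\vee:R]=p^4$, and determines the $\Oo_{K,p}$-annihilator of $R^\vee/R$. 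Both come to the same thing; your version is the one the paper itself uses later in the deformation-theory section (the $\varphi_\pm$ maps in the proof of Proposition \ref{representability II}), so it is a legitimate and arguably slightly cleaner rearrangement.

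The one point you correctly flag as delicate — deciding which summand belongs to $\frakp=\frakP\cap\Oo_F$ in the same-type case — is where the paper does real work that your sketch leaves implicit. The paper identifies the maximal ideal $\frakM\subset R_{11}$ (displayed in \eqref{proof}) as the kernel of the map $R_{11}\to\End_\Delta(\Lie(A))\cong\overline{\FF}_\frakP$, then uses the CM normalization condition (the structure map $\Oo_K\to\overline{\FF}_\frakP$ factoring as $t_1\otimes t_2\mapsto\kappa^{\Lie}(t_1)\kappa^{\Lie}(t_2)$ with kernel $\frakP$) to translate $t_1\otimes t_2\in\frakP$ into $\kappa(t_2)\kappa(t_1)\in\frakM$. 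This is exactly the calculation you would need to verify that your $R_-$ summand (where $Q$ picks up $-p^2$) is the $\frakp$-eigenpiece; in the not-same-type case, as you observe, the factor of $p$ appears on both summands, so the identification of $\frakp$ versus $\overline{\frakp}$ is actually immaterial there. So there is no error in your plan, but you should be aware that the Lie-algebra/normalization step is where the proof closes, and it requires the explicit computation with the basis from Proposition \ref{basis}, not just an appeal to the normalization condition.
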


\begin{proof}
First suppose $p \nmid d_B$. We will write $L_p$ for
$L_p(\AAA_1, \AAA_2)$. The proof of the existence of the isomorphism taking $L_p$ to $\Oo_{K, p}$ is the same as
for $\ell \neq p$. We may reduce to the case where
$\AAA_1$ and $\AAA_2$ have the same underlying QM abelian surface $A$ because the
idempotents $\varepsilon, \varepsilon' \in \MM_2(W) \cong \Oo_B \otimes_{\ZZ} W$ provide a splitting
$D(A_j) \cong \varepsilon D(A_j) \oplus \varepsilon' D(A_j)$, which means $D(A_1) \cong D(A_2)$ as
$\Oo_B \otimes_{\ZZ}\DDD$-modules and thus, by Lemma \ref{Dieudonne 2},
$$
\Hom_{\Oo_B}(A_1, A_2) \otimes_{\ZZ} \ZZ_p \cong
\End_{\Oo_B \otimes_{\ZZ} \DDD}(D(A_1)) \cong \End_{\Oo_B}(A_1) \otimes_{\ZZ} \ZZ_p.
$$
By Lemmas \ref{Dieudonne 1} and \ref{Dieudonne 2} there are isomorphisms of $\ZZ_p$-algebras
$$
L_p \cong \End_{\Oo_B \otimes_{\ZZ} \DDD}(D(A)) \cong \Delta.
$$
Similarly to the proof of Proposition \ref{quadratic I}, this isomorphism identifies the quadratic form
$\deg^\ast$ on $L_p$ with the quadratic form $\Nrd$ on $\Delta$. If $\frakm_{\Delta} \subset \Delta$ is the unique maximal ideal then 
the dual lattice of $\Delta$ relative to $\Nrd$ is $\frakm_{\Delta}^{-1}$, and there are $\Oo_{K, p}$-linear isomorphisms
$$
\beta_p^{-1}\frakD^{-1}\Oo_{K, p}/\Oo_{K, p} \cong L_p^\vee/L_p \cong \frakm_{\Delta}^{-1}/\Delta,
$$
the first obtained as in Proposition \ref{quadratic I}, so $[\Oo_{K, p} : \beta_p\frakD\Oo_{K, p}] = p^2$.

If $p$ is ramified in $K_1$ or $K_2$ then it is ramified in $F$, and the unique prime of $F$ above $p$ is inert in $K$. From
$p\Oo_F = \frakp^2$ and $[\Oo_{K, p} : \beta_p\frakD\Oo_{K, p}] = p^2$, we must have
$\beta_p\frakD\Oo_{K, p} = \frakP\Oo_{K, p}$, so $\beta_p\Oo_{F, p} = \frakp\frakD_p^{-1}$. 

Now suppose $p$ is inert in $K_1$ and $K_2$. Similarly to the proof of Proposition \ref{quadratic I}, $\Oo_K$ acts on $\frakm_{\Delta}^{-1}/\Delta$ through left multiplication
by the image of $\Oo_K \to \Delta \to \Delta/\frakm_{\Delta} \cong \frakm_{\Delta}^{-1}/\Delta$, where the first map is $t_1 \otimes t_2 \mapsto 
\kappa_2(t_2)\kappa_1(t_1)$. Under the isomorphism $L_p \cong \Delta$, the action $\Delta \to \End_{\Oo_B\otimes_{\ZZ}\overline{\FF}_{\frakP}}(\Lie(A)) \cong \overline{\FF}_{\frakP}$ determines an isomorphism $\gamma : \Delta/\frakm_{\Delta} \to \FF_{p^2}$, allowing us to identify $\kappa_j^{\Lie} :
\Oo_{K_j} \to \End_{\Oo_B\otimes_{\ZZ}\overline{\FF}_{\frakP}}(\Lie(A))$ with the composition
$$
\Oo_{K_j} \mapp{\kappa_j} \Delta \to \Delta/\frakm_{\Delta} \mapp{\gamma} \FF_{p^2}.
$$
However, the map $\Oo_K \to \overline{\FF}_{\frakP}$ defined by $t_1 \otimes t_2 \mapsto \kappa_1^{\Lie}(t_1)\kappa_2^{\Lie}(t_2)$
is precisely the structure map $\Oo_K \to \FF_{\frakP} \hookrightarrow \overline{\FF}_{\frakP}$ by the CM normalization condition,
whose kernel is $\frakP$. It then follows from the factorization of $\kappa_j^{\Lie}$ above that any element of $\frakP$ acts trivially
on $\frakm_{\Delta}^{-1}/\Delta$. Therefore there are isomorphisms of $\Oo_{K, p}$-modules
$$
\Oo_{K, p}/\frakP\Oo_{K, p} \cong \frakm_{\Delta}^{-1}/\Delta \cong \beta_p^{-1}\frakD^{-1}\Oo_{K, p}/\Oo_{K, p},
$$
which shows $\beta_p\frakD\Oo_{K, p} = \frakP\Oo_{K, p}$ and thus $\beta_p\Oo_{F, p} = \frakp\frakD_p^{-1}$.

Next suppose $p \mid d_B$, and first assume $A_1$ and $A_2$ are of the same type, where
$A_j \cong M_j \otimes_{\Oo_{K_j}} E_j$ for some supersingular CM elliptic curve $E_j$ over $\overline{\FF}_\frakP$. 
As mentioned above we identify $\Oo_{K_1, p}$ and $\Oo_{K_2, p}$, and call this ring $\Oo^p$.
By assumption there is a $\Delta \otimes_{\ZZ_p} \Oo^p$-linear isomorphism $f : D(A_1) \to D(A_2)$.
Then there is an isomorphism of $\ZZ_p$-modules
$$
G : \Hom_{\Oo_B \otimes_{\ZZ} \DDD}(D(A_1), D(A_2)) \to \End_{\Oo_B \otimes_{\ZZ} \DDD}(D(A_1))
$$
given by $\varphi \mapsto f^{-1} \circ \varphi$. Also, there are two maps $\Oo^p \rightrightarrows
\End_{\Oo_B \otimes_{\ZZ} \DDD}(D(A_1))$, the first being $\kappa_1$ and the second $(f^{-1})_\ast(\kappa_2)$. However,
since $f$ is $\Oo^p$-linear,
$$
(f^{-1})_\ast(\kappa_2(x)) = f^{-1} \circ \kappa_2(x) \circ f = f^{-1} \circ f \circ \kappa_1(x) = \kappa_1(x),
$$
so under the isomorphism $G$, the two CM actions $\kappa_1$ and $\kappa_2$ are identified with a 
single action $\Oo^p \to \End_{\Oo_B \otimes_{\ZZ} \DDD}(D(A_1))$. 

It follows from the above discussion that we may reduce to the
case where $\AAA_1$ and $\AAA_2$ have the same underlying QM abelian surface $A \cong M_1 \otimes_{\Oo_{K_1}} E_1$ and
$\kappa_1$ and $\kappa_2$ induce the same map $\kappa: \Oo^p \to \End_{\Oo_B \otimes_{\ZZ} \DDD}(D(A))$. If we fix the embedding $\Oo^p \cong \ZZ_{p^2} \hookrightarrow \Delta
\cong \End_{\DDD}(D(E_1))$, the CM action on $E_1$, then there is an isomorphism
$$
L_p = \End_{\Oo_B}(A) \otimes_{\ZZ} \ZZ_p \cong R_{11},
$$
in the notation of Proposition \ref{quadratic form}, with $\kappa : \Oo^p \to R_{11}$ given by $\kappa(x) = \text{diag}(x, x)$, and the quadratic form $\deg^\ast$ on $L_p$  is identified with the quadratic form $Q$ on $R_{11}$ given by
$$
Q\begin{bmatrix} x & y\Pi \\ py\Pi & x \end{bmatrix} = x\overline{x} - p^2y\overline{y}.
$$
The dual lattice of $R_{11}$ relative to $Q$ is
$$
R_{11}^\vee = \left\{\begin{bmatrix} x & p^{-2}y\Pi \\ p^{-1}y\Pi & x \end{bmatrix} : x, y \in \ZZ_{p^2}\right\},
$$
so $[R_{11}^\vee : R_{11}] = p^4$. Since there are isomorphisms of $\Oo_{K, p}$-modules
$$
\beta_p^{-1}\frakD^{-1}\Oo_{K, p}/\Oo_{K, p} \cong L_p^\vee/L_p \cong R_{11}^\vee/R_{11},
$$
we obtain $[\Oo_{K, p} : \beta_p\frakD\Oo_{K, p}] = p^4$.

Under the isomorphism $L_p \cong R_{11}$ there is an action $R_{11} \to \End_{\Delta \otimes_{\ZZ_p}\overline{\FF}_{\frakP}}(\Lie(A)) \cong \overline{\FF}_{\frakP}$, and a computation in coordinates shows that any element of
\begin{equation}\label{proof}
\frakM = \left\{\begin{bmatrix} px & y\Pi \\ py\Pi & px \end{bmatrix} : x, y \in \ZZ_{p^2} \right\} \subset R_{11},
\end{equation}
a maximal ideal of $R_{11}$, acts trivially on $D(A)/\VVV D(A) \cong \Lie(A)$, which shows $\frakM = \ker(R_{11}
\to \overline{\FF}_{\frakP})$. Hence, the action $R_{11} \to \End_{\Delta\otimes_{\ZZ_p}\overline{\FF}_{\frakP}}(\Lie(A))$ determines an isomorphism
$\gamma : R_{11}/\frakM \to \FF_{p^2}$, which allows us to identify $\kappa^{\Lie} : \Oo^p \to \End_{\Delta\otimes_{\ZZ_p}\overline{\FF}_{\frakP}}(\Lie(A))$
with the composition
$$
\Oo^p \mapp{\kappa} R_{11} \to R_{11}/\frakM \mapp{\gamma} \FF_{p^2}.
$$
However, the map $\Oo_{K, p} \cong \Oo^p \otimes_{\ZZ_p} \Oo^p \to \overline{\FF}_{\frakP}$ defined by $t_1 \otimes t_2 \mapsto \kappa^{\Lie}(t_1)\kappa^{\Lie}(t_2)$ is the structure map $\Oo_{K, p} \to \FF_\frakP \hookrightarrow \overline{\FF}_{\frakP}$ by the CM normalization
condition, so its kernel is $\frakP\Oo_{K, p}$. It follows from the factorization of $\kappa^{\Lie}$ above that if
$t_1 \otimes t_2 \in \frakP^2\Oo_{K, p}$ then $\kappa(t_2)\kappa(t_1) \in \frakM^2$. But $\kappa(t_j) = \text{diag}(t_j, t_j)$, so
$t_2t_1 \in p^2\ZZ_{p^2}$ for $t_1 \otimes t_2 \in \frakP^2\Oo_{K, p}$. Then for 
$$
\varphi = \begin{bmatrix} x & p^{-2}y\Pi \\ p^{-1}y\Pi & x \end{bmatrix}  \in R_{11}^\vee
$$
and $t_1 \otimes t_2 \in \frakP^2\Oo_{K, p}$, under the action of $\Oo_{K, p}$ on
$L_p$,
$$
(t_1 \otimes t_2) \bullet \varphi = \kappa(t_2)\varphi\kappa(\overline{t}_1) = \begin{bmatrix}
t_2\overline{t}_1 x & p^{-2}t_2t_1y\Pi \\ p^{-1}t_2t_1y\Pi & t_2\overline{t}_1x \end{bmatrix} \in R_{11}.
$$
This shows $\frakP^2\Oo_{K, p}$ acts trivially on $R_{11}^\vee/R_{11}$, and conversely, reversing this argument
shows that any element of $\Oo_{K, p}$ acting trivially on $R_{11}^\vee/R_{11}$ is in $\frakP^2\Oo_{K, p}$.
Hence there is
an $\Oo_{K, p}$-linear map $\Oo_{K, p}/\frakP^2\Oo_{K, p} \hookrightarrow R_{11}^\vee/R_{11}$ given by
$x \mapsto x \bullet P$ for any nonzero $P \in R_{11}^\vee/R_{11}$. But $\frakP^2$ has norm $p^4 = [R_{11}^\vee : R_{11}]$, so there are isomorphisms
of $\Oo_{K, p}$-modules
$$
\Oo_{K, p}/\frakP^2\Oo_{K, p} \cong R_{11}^\vee/R_{11} \cong \beta_p^{-1}\frakD^{-1}\Oo_{K, p}/\Oo_{K, p}.
$$
It follows that $\beta_p\frakD\Oo_{K, p} = \frakP^2\Oo_{K, p}$ and thus $\beta_p\Oo_{F, p} = \frakp^2\frakD_p^{-1}$.

Next assume $A_1$ and $A_2$ are not of the same type, with $A_j \cong M_j \otimes_{\Oo_{K_j}} E_j$. As before we identify $\Oo_{K_1, p}$ with $\Oo_{K_2, p}$ and call this ring $\Oo^p$.
Let $\frakg$ be the connected $p$-divisible group of height $2$ and dimension $1$ over $\overline{\FF}_\frakP$. Isomorphisms $E_j[p^\infty] \cong \frakg$ may be chosen in such a way
that the CM actions $g_1 : \Oo^p \to \End(E_1[p^\infty]) \cong \Delta$ and $g_2 : \Oo^p \to \End(E_2[p^\infty]) \cong \Delta$
have the same image in $\Delta$. 
Fix an embedding $\ZZ_{p^2} \hookrightarrow \Delta$ and a uniformizer $\Pi \in
\Delta$ satisfying $\Pi g_1(x) = g_1(\overline{x})\Pi$ for all $x \in \Oo^p$. 
By Lemma \ref{Dieudonne 2} and Proposition \ref{mixed types} there are isomorphisms of $\ZZ_p$-modules
$$
L_p \cong \Hom_{\Oo_B \otimes_{\ZZ} \DDD}(D(A_1), D(A_2)) \cong R_{12},
$$
and the quadratic form $\deg^\ast$ on $L_p$ is identified with the quadratic form $uQ'$ on $R_{12}$, where 
$u \in \ZZ_p^\times$ and
$$
Q'\begin{bmatrix} px & y\Pi \\ y\Pi & x \end{bmatrix} = p(x\overline{x} - y\overline{y}).
$$
The dual lattice of $R_{12}$ relative to $uQ'$ is 
$$
R_{12}^\vee = u^{-1}\cdot \left\{\begin{bmatrix} x & p^{-1}y\Pi \\ p^{-1}y\Pi & p^{-1}x \end{bmatrix} : x, y \in \ZZ_{p^2} \right\},
$$
so $[R_{12}^\vee : R_{12}] = p^4$. As before this gives $[\Oo_{K, p} : \beta_p\frakD\Oo_{K, p}] = p^4$. Fixing ring isomorphisms
$$
\End_{\Oo_B}(A_1) \otimes_{\ZZ} \ZZ_p \cong R_{11} \cong \End_{\Oo_B}(A_2) \otimes_{\ZZ} \ZZ_p,
$$
it makes sense to take the product $\kappa_2(t_2)\kappa_1(t_1)$ in $R_{11}$ for $t_1, t_2 \in \Oo^p$, and
$$
\kappa_2(t_2)\kappa_1(t_1) = \text{diag}(g_2(t_2)g_1(t_1), g_2(t_2)g_1(t_1)),
$$
where $g_2(t_2)g_1(t_1)$ is the product in the common image of $g_1$ and $g_2$ in $\Delta$. As in the case of $A_1$ and $A_2$
having the same type, the action $R_{11} \to \End_{\Delta\otimes_{\ZZ_p}\overline{\FF}_{\frakP}}(\Lie(A_j)) \cong \overline{\FF}_{\frakP}$, for $j = 1, 2$, determines
an isomorphism $\gamma_j : R_{11}/\frakM \to \FF_{p^2}$, which allows us to identify $\kappa_j^{\Lie} : \Oo^p \to \End_{\Delta\otimes_{\ZZ_p}\overline{\FF}_{\frakP}}(\Lie(A_j))$ with the composition
$$
\Oo^p \mapp{\kappa_j} R_{11} \to R_{11}/\frakM \mapp{\gamma_j} \FF_{p^2}.
$$
As above, the map $\Oo_{K, p} \to \overline{\FF}_{\frakP}$ defined by $t_1 \otimes t_2 \mapsto \kappa_1^{\Lie}(t_1)\kappa_2^{\Lie}(t_2)$ is the structure map $\Oo_{K, p} \to \FF_{\frakP} \hookrightarrow \overline{\FF}_{\frakP}$. Therefore $t_1 \otimes t_2 \in \frakP\Oo_{K, p}$ if and only if $\kappa_1^{\Lie}(t_1)\kappa_2^{\Lie}(t_2) = 0$, if and only if $\kappa_2(t_2)\kappa_1(t_1) \in \frakM$. 

Let $\overline{\frakP}$ be the other prime ideal of $\Oo_K$ lying over $p$, so $\overline{\frakP} \cap \Oo_F = \overline{\frakp}$. For $t_1 \otimes t_2 \in \Oo_K$, we have 
$$
t_1 \otimes t_2 \in \overline{\frakP} \iff \overline{t}_1 \otimes t_2 \in \frakP \iff t_1 \otimes \overline{t}_2 \in \frakP.
$$
For
$$
\varphi = u^{-1}\cdot\begin{bmatrix} x & p^{-1}y\Pi \\ p^{-1}y\Pi & p^{-1}x \end{bmatrix} \in R_{12}^\vee
$$
and $t_1 \otimes t_2 \in \Oo_{K, p}$,
$$
(t_1 \otimes t_2) \bullet \varphi = \kappa_2(t_2)\varphi\kappa_1(\overline{t}_1) = u^{-1}\cdot\begin{bmatrix} g_2(t_2)g_1(\overline{t}_1)x & 
p^{-1}g_2(t_2)g_1(t_1)y\Pi \\ p^{-1}g_2(t_2)g_1(t_1)y\Pi & p^{-1}g_2(t_2)g_1(\overline{t}_1)x \end{bmatrix}.
$$
Therefore
\begin{align*}
\text{$(t_1 \otimes t_2) \bullet \varphi \in R_{12}$ for all $\varphi$}  &\iff \text{$g_2(t_2)g_1(t_1) \in p\ZZ_{p^2}$ and $g_2(t_2)g_1(\overline{t}_1) \in p\ZZ_{p^2}$} \\
&\iff \text{$\kappa_2(t_2)\kappa_1(t_1) \in \frakM$ and $\kappa_2(t_2)\kappa_1(\overline{t}_1) \in \frakM$} \\
&\iff t_1 \otimes t_2 \in \frakP\Oo_{K, p} \cap \overline{\frakP}\Oo_{K, p} = \frakP\overline{\frakP}\Oo_{K, p}.
\end{align*}
This shows an element of $\Oo_{K, p}$ acts trivially 
on $R_{12}^\vee/R_{12}$ if and only if it is in $\frakP\overline{\frakP}\Oo_{K, p}$. Since $\frakP\overline{\frakP}$ has norm $p^4 = [R_{12}^\vee : R_{12}]$, similarly to above we obtain $\beta_p\Oo_{F, p} = \frakp\overline{\frakp}\frakD_p^{-1}$.
\end{proof}
 
 The next result gives an analogous conclusion as that of Lemma \ref{ml torsion} in the case when $\ell = \charr(k)$.

\begin{lemma}\label{mp torsion}
Suppose $A \in \YYY^B(k)$ with $k$ an algebraically closed field over $\Oo_{\bm{K}}$ of characteristic $p \mid d_B$.
There is an isomorphism of $\Oo_B/\frakm_p$-algebras $\End_{\Oo_B/\frakm_p}(A[\frakm_p]) \cong \Oo_B/\frakm_p$.
\end{lemma}

\begin{proof}
We will use Dieudonn\'e modules. 
Since $A[p]$ and $A[\frakm_p]$ are finite $p$-group schemes over $\Spec(k)$, they have associated
covariant Dieudonn\'e modules $D(A[p])$ and $D(A[\frakm_p])$, which are $\DDD_k$-modules of length $4$ and $2$ over $W(k)$,
respectively.
There is an exact sequence of group schemes
$$
0 \to A[\frakm_p] \to A[p] \mapp{i(x_p)} A[p] \to 0,
$$
where $x_p$ is any element of $\frakm_p$ whose image generates the principal ideal $\frakm_p/p\Oo_B \subset
\Oo_B/p\Oo_B$.
Since $D$ is an exact functor, we obtain an exact sequence of $\DDD_k$-modules
$$
0 \to D(A[\frakm_p]) \to D(A[p]) \mapp{i(x_p)} D(A[p]) \to 0.
$$
By definition, $D(A) = \mil_nD(A[p^n])$, and there is an isomorphism of $\DDD_k$-modules $D(A[p]) \cong D(A)/pD(A)$ (\cite[Proposition 7.2.6]{BC2}). Under this isomorphism the
map $i(x_p) : D(A[p]) \to D(A[p])$ is identified with the map $i(\Pi) : D(A)/pD(A) \to D(A)/pD(A)$, where $\Pi \in \Oo_{B, p}$
is a uniformizer. It follows that $D(A[\frakm_p]) \cong D_p$, where
$$
D_p = \ker(i(\Pi) : D(A)/pD(A) \to D(A)/pD(A)),
$$
and thus, since $D$ is an equivalence of categories,
$$
\End_{\Oo_B/\frakm_p}(A[\frakm_p]) \cong \End_{\Oo_B/\frakm_p \otimes_{\ZZ}\DDD_k}(D_p).
$$

Using the $W(k)$-basis for $D(A)$ as in Corollary \ref{basis 2}, and
considering the two possible forms (\ref{OB action}), a computation in coordinates, similar to that of Lemma \ref{Lie algebra}, shows $\End_{\Oo_B/\frakm_p \otimes_{\ZZ} \DDD_k}(D_p) \cong \FF_{p^2}$.
Therefore the action $i : \Oo_B/\frakm_p \to \End_{\Oo_B/\frakm_p}(A[\frakm_p])$ is an isomorphism of 
$\Oo_B/\frakm_p$-algebras.
\end{proof}

\begin{corollary}\label{max ideal}
Suppose $A \in \YYY^B(k)$ with $k$ an algebraically closed field over $\Oo_{\bm{K}}$. There is an isomorphism of 
$\Oo_B/\frakm_B$-algebras $\End_{\Oo_B/\frakm_B}(A[\frakm_B]) \cong \Oo_B/\frakm_B$.
\end{corollary}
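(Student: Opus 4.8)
The plan is to reduce the corollary to the two local lemmas just proved by decomposing the $\frakm_B$-torsion prime by prime. Recall that $d_B$ is squarefree and $\frakm_B = \prod_{q \mid d_B}\frakm_q$, where $\frakm_q$ is the unique maximal ideal of $\Oo_B$ of residue characteristic $q$, so $\Oo_B/\frakm_B \cong \prod_{q \mid d_B}\Oo_B/\frakm_q \cong \prod_{q \mid d_B}\FF_{q^2}$ by the Chinese remainder theorem.

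Since $A[\frakm_B]$ carries an action of the product ring $\Oo_B/\frakm_B$, the standard idempotents $e_q$ split it canonically as $A[\frakm_B] = \bigoplus_{q \mid d_B}e_q\cdot A[\frakm_B]$, where each $e_q\cdot A[\frakm_B]$ is a finite flat group scheme over $k$ carrying an action of the field $\Oo_B/\frakm_q \cong \FF_{q^2}$. First I would identify $e_q\cdot A[\frakm_B]$ with $A[\frakm_q]$: a lift of $e_q$ to $\Oo_B$ is $\equiv 1 \pmod{\frakm_q}$ and lies in $\bigcap_{q'\neq q}\frakm_{q'}$, and, using that $A[\frakm_q]\subseteq A[q]$ and that $\frakm_q$ is generated modulo $q\Oo_B$ by the chosen element $x_q$, one checks directly that $i(e_q)$ acts as the identity on $A[\frakm_q]$ and kills $A[\frakm_{q'}]$ for every $q'\neq q$.

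Because endomorphisms of a module over a finite product of rings decompose as a product over the factors, this gives a natural isomorphism of $\Oo_B/\frakm_B$-algebras
$$
\End_{\Oo_B/\frakm_B}(A[\frakm_B]) \;\cong\; \prod_{q \mid d_B}\End_{\Oo_B/\frakm_q}(A[\frakm_q]),
$$
and it remains to handle each factor. If $k = \CC$, Lemma \ref{ml torsion} gives $\End_{\Oo_B/\frakm_q}(A[\frakm_q]) \cong \Oo_B/\frakm_q$ for every $q \mid d_B$; if $k = \overline{\FF}_p$, Lemma \ref{ml torsion} covers the primes $q \neq p$ while Lemma \ref{mp torsion} covers $q = p$ when $p \mid d_B$. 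Taking the product of these isomorphisms yields the claim; when $B$ is split both sides are the zero ring and there is nothing to prove.

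I do not expect a real obstacle, as all the substance is already in Lemmas \ref{ml torsion} and \ref{mp torsion}. The only points requiring care are bookkeeping: verifying that the decomposition of $A[\frakm_B]$ is compatible with the $\Oo_B/\frakm_B$-action (so that the summand $e_q\cdot A[\frakm_B]$ really is the $\frakm_q$-torsion with its given $\FF_{q^2}$-structure), and that the factorwise isomorphisms assemble into an isomorphism of algebras rather than merely of modules.
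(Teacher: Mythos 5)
Your proof is correct and follows the same strategy as the paper: decompose $A[\frakm_B]$ as $\prod_{q\mid d_B}A[\frakm_q]$ and apply Lemma \ref{ml torsion} at primes $q\neq p$ and Lemma \ref{mp torsion} at $q=p$. The paper simply invokes the group-scheme isomorphism $A[\frakm_B]\cong\prod_{\ell\mid d_B}A[\frakm_\ell]$ directly, whereas you spell it out via idempotents of $\Oo_B/\frakm_B$; the extra bookkeeping you supply is sound.
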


\begin{proof}
Combine Lemmas \ref{ml torsion} and \ref{mp torsion} with the  isomorphism of group schemes $A[\frakm_B] \cong \prod_{\ell \mid d_B}A[\frakm_\ell]$.
\end{proof}

We conclude this section with giving an equivalent formulation of ``type" as stated in Definition \ref{type}.

\begin{lemma}\label{types II}
For $p \mid d_B$ let $w_p$ be the corresponding element of the Atkin-Lehner group $W_B$, so $w_p = \Pi$ is a 
uniformizer in $\Delta \cong \Oo_{B, p}$. Let $(A, i, \kappa) \in \YYY^B(\overline{\FF}_p)$ and set 
$A' = w_p\cdot A \in \YYY^B(\overline{\FF}_p)$. Then $D(A)$ and $D(A')$ are not isomorphic as $\Delta \otimes_{\ZZ_p}
\Oo_{\bm{k}, p}$-modules.
\end{lemma}

\begin{proof}
Recall that $A' = (A, i', \kappa)$ where $i' : \Delta \to \End_{\overline{\FF}_p}(A) \otimes_{\ZZ} \ZZ_p$ 
is given by $i'(x) = i(\Pi^{-1} x\Pi)$.
Suppose there is a $\Delta$-linear isomorphism 
$D(A) \cong D(A')$. Then there is
a $u \in \End_{\DDD}(D(A))^\times$ such that $i'(x) = u\circ i(x) \circ u^{-1}$  for all $x \in \Delta$, viewing
$i(x)$ and $i'(x)$ as their induced endomorphisms of $D(A)$. Therefore
conjugation by $u$ on $i(\Delta) \subset \End_{\DDD}(D(A))$ is equal to conjugation by $i(\Pi)$, which means $i(\Pi) = u\circ z$ for some $z \in Z(i(\Delta))$, the center of $i(\Delta)$. However, $i(\Delta) \cong \Delta$ has center
$\ZZ_p$, so $z \in \ZZ_p \subset \MM_2(\Delta)$. Viewing $i(\Pi)$, $u$, and $z$ as their corresponding
endomorphisms of $A$, we have $\deg(i(\Pi)) = \Nrd(\Pi)^2 = p^2$ and $\deg(u \circ z) = \deg(z) = p^{4k}$ for some integer $k \gqq 0$ since $\deg([p]) = p^4$. This is a contradiction.
\end{proof}

\begin{proposition}\label{reflex II}
Let $(\AAA_1, \AAA_2) \in \XXX_\Theta^B(\overline{\FF}_\frakP)$ with $\frakP$ lying over $p \mid d_B$. Then 
$\frakP$ divides $\ker(\Theta)$ if and only if $A_1$ and $A_2$ are of the same type.
\end{proposition}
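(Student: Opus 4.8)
The statement is local at $p$, so the plan is to work entirely with Dieudonn\'e modules. First note that, since $\Oo_B/\frakm_B \cong \prod_{q \mid d_B}\FF_{q^2}$, the $p$-component $\theta_p : \Oo_K \to \FF_{p^2}$ of $\theta$ is a surjective ring homomorphism (its restriction to each $\Oo_{K_j}$ factors through $\Oo_{K_j}/p\Oo_{K_j} \cong \FF_{p^2}$), so $\ker(\theta_p)$ is a prime of $\Oo_K$ above $p$ with residue field $\FF_{p^2}$, hence equal to $\frakP$ or $\overline{\frakP}$; the condition ``$\frakP \mid \ker(\theta)$'' is exactly ``$\ker(\theta_p) = \frakP$''. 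By Theorem \ref{Serre type} we may write $A_j \cong M_{\Theta_j} \otimes_{\Oo_{K_j}} E_j$ with $\Theta_j : \Oo_{K_j} \hookrightarrow \Oo_B$ lifting $\theta_j$ and $E_j \in \YYY_j(\overline{\FF}_\frakP)$ supersingular, giving $D(A_j) \cong \Oo_{B,p} \otimes_{\Oo_{K_j,p}} D(E_j)$; thus the type of $A_j$, being an invariant of $D(A_j)$ as an $\Oo_{B,p} \otimes_{\ZZ_p}\Oo^p$-module, depends only on $\theta_{j,p}$ and on $\frakP$, not on $A_j$ itself, consistent with the remark preceding the proposition.

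Next I would read off both sides from the standard coordinates of Proposition \ref{basis}: fix a $W$-basis of $D(A_j)$ in which $\Oo_{B,p} \cong \Delta$ acts by $f_1$ or $f_2$ as in \textup{(\ref{OB action})}, $\Oo_{K_j,p} \cong \ZZ_{p^2}$ acts as in \textup{(\ref{OK action})}, and $\FFF=\VVV$ acts by the standard formulas; the type of $A_j$ records which of $f_1,f_2$ occurs, and $A_1,A_2$ have the same type iff the same map occurs for both. Two computations are then needed. \emph{(i)} The Lie algebra $\Lie(A_j) = D(A_j)/\VVV D(A_j)$ is spanned by the first and third basis vectors, so $\Oo_{K_j,p}$ acts on it by the scalar $a\mapsto\tilde a$; the CM normalization condition forces the composite $\Oo_{K_j}\to\Oo_{K_j,p}\cong\ZZ_{p^2}\to\FF_{p^2}\hookrightarrow\overline{\FF}_\frakP$ to be the structure map, which is the point at which the identification $\Oo_{K_j,p}\cong\ZZ_{p^2}$ becomes tied to $\frakP$. \emph{(ii)} Writing $\Pi_B$ for a uniformizer of $\Oo_{B,p}$, the $\frakm_p$-torsion $A_j[\frakm_p] = \ker(i_j(\Pi_B))$ has covariant Dieudonn\'e module spanned by the first two (case $f_1$) or last two (case $f_2$) basis vectors mod $p$, and a direct computation with \textup{(\ref{OB action})}--\textup{(\ref{OK action})} shows the induced action $\kappa_j^{\frakm_p}:\Oo_{K_j}\to\End_{\Oo_B/\frakm_p}(A_j[\frakm_p])\cong\Oo_B/\frakm_p$ is a fixed embedding $r_j$ in one type and its Galois conjugate $\overline{r_j}$ in the other. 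On the other hand, by the defining diagram \textup{(\ref{MB commutative})} of $\YYY_j^B(\theta_j)$ and Corollary \ref{max ideal}, this $\kappa_j^{\frakm_p}$ coincides, under the canonical isomorphism $\Oo_B/\frakm_p\cong\End_{\Oo_B/\frakm_p}(A_j[\frakm_p])$, with $\theta_{j,p}$.

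Combining \emph{(i)} and \emph{(ii)}, $\theta_{j,p}$ equals one of the two embeddings $\Oo_{K_j}/\frakp_j\hookrightarrow\FF_{p^2}$ (with $\frakp_j=\frakP\cap\Oo_{K_j}$), namely the ``$\frakP$-one'' if $A_j$ has one type and the ``$\overline{\frakP}$-one'' if the other. Since $\theta_p(t_1\otimes t_2)=\theta_{1,p}(t_1)\theta_{2,p}(t_2)$ while the structure map is $t_1\otimes t_2\mapsto s_1(t_1)s_2(t_2)$ for the $\frakP$-embeddings $s_j$, and among the four ring homomorphisms $\Oo_K\to\FF_{p^2}$ so produced exactly $(s_1,s_2)$ and $(\overline{s_1},\overline{s_2})$ have kernel $\frakP$ (the mixed pairs have kernel $\overline{\frakP}$), we conclude $\ker(\theta_p)=\frakP$ if and only if $A_1$ and $A_2$ are of the same type. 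The main obstacle is step \emph{(ii)}: pinning down precisely how $\Pi_B$ acts on $A_j[\frakm_p]$ relative to the tangent space, and fixing the orientation conventions so that the correspondence ``$f_1$ versus $f_2$'' $\leftrightarrow$ ``$r_j$ versus $\overline{r_j}$'' is the same for $j=1$ and $j=2$ (this is what makes ``same type'' the relevant condition and not its negation); everything else is bookkeeping with the explicit matrices, and the outcome can be sanity-checked against the case distinction in Proposition \ref{quadratic II}.
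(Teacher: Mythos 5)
Your proposal shares the same computational base with the paper's proof --- the Dieudonn\'e coordinates of Proposition \ref{basis}, the identification of $D(A_j[\frakm_p])$ as a span of two basis vectors, and the comparison of the actions on $\Lie(A_j)$ and on $A_j[\frakm_p]$ --- but the architecture of the argument is genuinely different, and the difference is exactly where the subtlety lives. The paper proves only one implication by direct computation: assuming $A_1$ and $A_2$ are of the same type, it places both on a \emph{common} underlying $(A,\kappa)$ and then compares two characterizations of the single condition ``$(t_1\otimes t_2)\bullet R_{11}^\vee\subset R_{11}$,'' one read off via $\Lie(A)$ (yielding $\frakP^2$) and one via $A[\frakm_p]$ (yielding $\frakQ^2$), forcing $\frakP=\frakQ$. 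The converse is then extracted by feeding the Atkin--Lehner twist $(\AAA_1, w_p\cdot\AAA_2)$ with the modified homomorphism $\eta$ back into the first part. Your proposal instead decomposes over $j$: for each $j$ it reads off from the matrices whether $\theta_j^{\frakm_p}$ is the $\frakP$-reduction or its Frobenius twist, and then combines the two answers.

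The trade is this: you avoid $L_p^\vee/L_p$ and the Atkin--Lehner device entirely, but you incur precisely the orientation-coherence burden that you flag as ``the main obstacle'' and do not resolve. Two things must line up. First, the dictionary ``$f_1\leftrightarrow$ the $\frakP$-embedding, $f_2\leftrightarrow$ its conjugate'' has to hold under a \emph{single} fixed identification $\Oo_B/\frakm_p\cong\FF_{p^2}\hookrightarrow\overline{\FF}_\frakP$ simultaneously for $j=1$ and $j=2$, even though the bases of $D(A_1)$ and $D(A_2)$ and the embeddings $\ZZ_{p^2}\hookrightarrow\Delta$ are chosen independently through the CM actions on $E_1$ and $E_2$. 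Second, ``same type'' is defined relative to the identification $\Oo^p$ of (\ref{identification}), which is built from $\theta$, and in the mixed-type case this identification does \emph{not} agree with the one matching the two CM normalizations, so the translation ``same type $\leftrightarrow$ both $A_j$ realize the same $f_?$'' is itself part of what must be proved, not a reformulation one may start from. The paper's Atkin--Lehner step exists specifically so that one never has to align these coordinate systems: instead of reconciling two bases, it transports the mixed-type pair to a same-type pair where the first-half argument applies verbatim. Your route can be completed, but the sketch as written defers to ``bookkeeping'' the exact half of the proof the paper handles by that trick.
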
  

\begin{proof}
Suppose $A_1$ and $A_2$ are of the same type. The proof essentially follows the part of the proof of Proposition
\ref{quadratic II} starting around (\ref{proof}). Since $A_1$ and $A_2$ are of the same type, there is an isomorphism
of $\ZZ_p$-modules $L_p = L_p(\AAA_1, \AAA_2) \cong R_{11}$. Fix ring isomorphisms
$$
\End_{\Oo_B}(A_1) \otimes_{\ZZ} \ZZ_p \cong R_{11} \cong \End_{\Oo_B}(A_2) \otimes_{\ZZ} \ZZ_p.
$$
For $j \in \{1, 2\}$, under the action 
$$
R_{11} \to \End_{\Oo_B/\frakm_p}(A_j[\frakm_p]) \cong \End_{\Oo_B/\frakm_p \otimes_{\ZZ} \DDD}(D(A_j[\frakm_p])) \cong \FF_{p^2}, 
$$
any element of
$$
\frakM = \left\{\begin{bmatrix} px & y\Pi \\ py\Pi & px \end{bmatrix} : x, y \in \ZZ_{p^2} \right\} \subset R_{11}
$$
acts trivially on 
$$
D(A_j[\frakm_p]) = \ker(i_j(\Pi) : D(A_j)/pD(A_j) \to D(A_j)/pD(A_j)), 
$$
so $\frakM = \ker(R_{11} \to \FF_{p^2})$. It follows that the map $R_{11} \to \End_{\Oo_B/\frakm_p}(A_j[\frakm_p])$
determines an isomorphism $\gamma_j : R_{11}/\frakM \to \FF_{p^2}$ which allows us to identify
$\kappa_j^{\frakm_p} : \Oo_{K_j} \to \End_{\Oo_B/\frakm_p}(A_j[\frakm_p])$ with the composition
$$
\Oo_{K_j} \mapp{\kappa_j} R_{11} \to R_{11}/\frakM \mapp{\gamma_j} \FF_{p^2}.
$$
Let $\frakQ \subset \Oo_K$ be the prime over $p$ dividing $\ker(\Theta)$, so $\frakQ$ is the kernel of the map $\Oo_K \to \FF_{p^2}$ defined by $t_1 \otimes t_2 \mapsto
\kappa_1^{\frakm_p}(t_1)\kappa_2^{\frakm_p}(t_2)$. Using the factorization of $\kappa_j^{\frakm_p}$ given
above and following the rest of this part of the proof of Proposition \ref{quadratic II}, we find that
an element of $\Oo_{K, p}$ acts trivially on $L_p^\vee/L_p$ if and only if it is in $\frakQ^2\Oo_{K, p}$. However, the same
is true for $\frakP$ in place of $\frakQ$, so $\frakQ^2\Oo_{K, p} = \frakP^2\Oo_{K, p}$ and therefore $\frakP = \frakQ$.

Now suppose $A_1$ and $A_2$ are not of the same type and let $\theta_j = \Theta|_{\Oo_{K_j}}$. Define a ring homomorphism $\Omega : \Oo_K \to \Oo_B/\frakm_B$ with $\omega_j = \Omega|_{\Oo_{K_j}}$
being defined by $\omega_j^{\frakm_\ell} = \theta_j^{\frakm_\ell}$ for all $\ell \neq p$ and $j = 1, 2$, $\omega_1^{\frakm_p}
= \theta_1^{\frakm_p}$, and $\omega_2^{\frakm_p}(x) = \theta_2^{\frakm_p}(\overline{x})$.
Consider the CM pair $(\AAA_1, \AAA_2')$, where $\AAA_2' = w_p\cdot\AAA_2$ and $w_p$ is the Atkin-Lehner operator at $p$. The map
$$
(\kappa_2')^{\frakm_p} : \Oo_{K_2} \to \End_{\Oo_B/\frakm_p}(A_2'[\frakm_p]) \cong \Oo_B/\frakm_p
$$
is given by $(\kappa_2')^{\frakm_p}(x) = \kappa_2^{\frakm_p}(\overline{x})$. The resulting map $\Oo_K \to \Oo_B/\frakm_p$ for the pair $(\AAA_1, \AAA_2')$ is given by
$$
t_1 \otimes t_2 \mapsto \kappa_1^{\frakm_p}(t_1)\kappa_2^{\frakm_p}(\overline{t}_2),
$$
so $(\AAA_1, \AAA_2') \in \XXX^B_\Omega(\overline{\FF}_\frakP)$ and the kernel of this map
is $\overline{\frakQ}$, where $\frakQ$ is the prime over $p$ dividing $\ker(\Theta)$.
As $A_1$ and $w_p\cdot A_2$ are of the same type (Lemma \ref{types II}), $\overline{\frakQ} = \frakP$
by the first part of the proof applied to $(\AAA_1, \AAA_2')$, so $\frakP$ does not divide $\ker(\Theta)$.\end{proof}

\subsection{Cases combined}
Let $(\AAA_1, \AAA_2) \in \XXX_\Theta^B(\overline{\FF}_\frakP)$ with $\frakP$ lying over some prime $p$, and let
$\frakp = \frakP \cap \Oo_F$. Set $\fraka_\Theta = \ker(\Theta) \cap \Oo_F$.

\begin{theorem}\label{quadratic combined}
For any finite idele $\beta \in \widehat{F}^\times$ satisfying $\beta\widehat{\Oo}_F = \fraka_\Theta\frakp\frakD^{-1}\widehat{\Oo}_F$, there is a $\widehat{K}$-linear isomorphism
of $\widehat{F}$-quadratic spaces
$$
(\widehat{V}(\AAA_1, \AAA_2), \deg_{\CM}) \cong (\widehat{K}, \beta\cdot\NN_{K/F})
$$
taking $\widehat{L}(\AAA_1, \AAA_2)$ isomorphically to $\widehat{\Oo}_K$.
\end{theorem}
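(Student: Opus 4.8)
The plan is to assemble the purely local isomorphisms furnished by Propositions~\ref{quadratic I} and~\ref{quadratic II} into the stated adelic isomorphism, correcting each local piece by a unit of $\Oo_{K,\ell}$ so as to match the prescribed idele $\beta$. The one arithmetic input I will need at the outset is that, since $K/F$ is unramified at every finite place, the local norm map $\NN_{K_\frakq/F_\frakq}\colon \Oo_{K_\frakq}^\times \to \Oo_{F_\frakq}^\times$ is surjective for every prime $\frakq$ of $F$ (trivially if $\frakq$ splits, and by surjectivity of the norm on residue fields together with Hensel's lemma if $\frakq$ is inert). Hence $\NN_{K_\ell/F_\ell}(\Oo_{K,\ell}^\times) = \Oo_{F,\ell}^\times$ for every rational prime $\ell$. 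I will also use the elementary observation that every $K_\ell$-linear self-map of $K_\ell$ is multiplication by an element of $K_\ell^\times$, so that $c\cdot\NN_{K_\ell/F_\ell}$ and $c'\cdot\NN_{K_\ell/F_\ell}$ on $K_\ell$ are $K_\ell$-linearly isometric precisely when $c/c'\in\NN_{K_\ell/F_\ell}(K_\ell^\times)$, via an isometry that may be taken to be multiplication by an element of $\Oo_{K,\ell}^\times$ whenever $c/c'\in\Oo_{F,\ell}^\times$.

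First I would fix, for each prime $\ell$, the $K_\ell$-linear isometry $(V_\ell(\AAA_1,\AAA_2),\deg_{\CM}) \cong (K_\ell,\beta_\ell\cdot\NN_{K_\ell/F_\ell})$ carrying $L_\ell(\AAA_1,\AAA_2)$ onto $\Oo_{K,\ell}$ that is provided by Proposition~\ref{quadratic I} when $\ell\neq p$ and by Proposition~\ref{quadratic II} when $\ell=p$, and then verify case by case that $\beta_\ell\Oo_{F,\ell} = \fraka_\theta\frakp\frakD^{-1}\Oo_{F,\ell}$. This uses that $\fraka_\theta$ is the product of one prime of $\Oo_F$ above each $p\mid d_B$, so it is squarefree and supported exactly on $d_B$; and, for the delicate case $\ell=p\mid d_B$, Proposition~\ref{reflex II}, which says that $A_1$ and $A_2$ are of the same type exactly when $\frakP$ divides $\ker(\theta)$: in the same-type case the $p$-part of $\fraka_\theta$ is $\frakp$, matching $\beta_p\Oo_{F,p}=\frakp^2\frakD_p^{-1}$, while otherwise the $p$-part of $\fraka_\theta$ is $\overline{\frakp}$, matching $\beta_p\Oo_{F,p}=\frakp\overline{\frakp}\frakD_p^{-1}$. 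The remaining cases ($\ell\nmid d_B$; $\ell\mid d_B$ with $\ell\neq p$, where $\beta_\ell\Oo_{F,\ell}=\frakl\frakD_\ell^{-1}$ with $\frakl$ the $\ell$-part of $\fraka_\theta$; and $p\nmid d_B$, where $\beta_p\Oo_{F,p}=\frakp\frakD_p^{-1}$) fall out immediately from the ideal data recorded in the two propositions.

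Now, given a finite idele $\beta$ with $\beta\widehat{\Oo}_F = \fraka_\theta\frakp\frakD^{-1}\widehat{\Oo}_F$, write $b_\ell\in F_\ell^\times$ for its component at $\ell$; by the previous paragraph $b_\ell/\beta_\ell\in\Oo_{F,\ell}^\times$, so there is $\mu_\ell\in\Oo_{K,\ell}^\times$ with $\NN_{K_\ell/F_\ell}(\mu_\ell)=\beta_\ell/b_\ell$. Post-composing the isometry of the previous step with multiplication by $\mu_\ell$ produces a $K_\ell$-linear isometry $(V_\ell(\AAA_1,\AAA_2),\deg_{\CM}) \cong (K_\ell,b_\ell\cdot\NN_{K_\ell/F_\ell})$ that still carries $L_\ell(\AAA_1,\AAA_2)$ onto $\Oo_{K,\ell}$, since multiplication by a unit of $\Oo_{K,\ell}$ preserves $\Oo_{K,\ell}$. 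Forming the restricted product of these local isometries over all $\ell$ (legitimate because each one matches the chosen integral lattices) yields a $\widehat{K}$-linear isometry $(\widehat{V}(\AAA_1,\AAA_2),\deg_{\CM}) \cong (\widehat{K},\beta\cdot\NN_{K/F})$ taking $\widehat{L}(\AAA_1,\AAA_2)$ to $\widehat{\Oo}_K$, as required; in particular $\widehat{V}(\AAA_1,\AAA_2)$ is free of rank one over $\widehat{K}$, consistent with Proposition~\ref{quadratic}(b).

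I expect the main obstacle to be the second step: carefully tracking the local ideal $\beta_\ell\Oo_{F,\ell}$ in every case, and in particular appealing to Proposition~\ref{reflex II} to determine which of the two primes of $\Oo_F$ above $p$ occurs in $\fraka_\theta$ when $p\mid d_B$. The remaining ingredients — surjectivity of local norms on units, the unit correction, and passing to the restricted product — are routine.
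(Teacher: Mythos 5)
Your proposal is correct and follows essentially the same route as the paper: combine the local isomorphisms of Propositions \ref{quadratic I} and \ref{quadratic II}, use Proposition \ref{reflex II} to reconcile the same-type/different-type dichotomy at $p \mid d_B$ with whether $\frakP$ divides $\ker(\theta)$, and then twist by units via surjectivity of the local norm $\Oo_{K,\ell}^\times \to \Oo_{F,\ell}^\times$ to reach an arbitrary prescribed $\beta$. You have merely spelled out the unit-correction and restricted-product steps that the paper compresses into a single sentence.
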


\begin{proof}
Combining Propositions \ref{quadratic I}, \ref{quadratic II}, and \ref{reflex II} proves the claim for some $\beta \in \widehat{F}^\times$ satisfying $\beta\widehat{\Oo}_F = \fraka_\Theta\frakp\frakD^{-1}\widehat{\Oo}_F$, and the surjectivity of the norm map $\widehat{\Oo}_K^\times \to
\widehat{\Oo}_F^\times$ gives the result for all such $\beta$.
\end{proof} 

Recall the definitions of the functions $\rho$ and $\rho_\ell$ from the introduction.

\begin{definition}
For each prime number $\ell$ and $\alpha \in F_{\ell}^{\times}$ define the \textit{orbital integral} at $\ell$ by
$$
O_{\ell}(\alpha, \AAA_1, \AAA_2) = \left\{\begin{array}{ll}
\rho_{\ell}(\alpha\frakD_{\ell}) & \text{if $\ell \neq p$, $\ell \nmid d_B$} \\
\rho_{\ell}(\alpha\frakl(\ell)^{-1}\frakD_{\ell}) & \text{if $\ell \neq p$, $\ell \mid d_B$} \\
\rho_p(\alpha\frakp^{-1}\frakl(p)^{-1}\frakD_p) & \text{if $\ell = p$},
\end{array} \right.
$$
where $\frakl(\ell)$ is the prime over $\ell$ dividing $\fraka_\Theta$, with the convention that 
$\frakl(p) = \Oo_F$ if $p \nmid d_B$.
\end{definition}

It is possible to give a definition of $O_{\ell}(\alpha, \AAA_1, \AAA_2)$ as a sum of characteristic functions, analogous 
to \cite[(2.11)]{Howard}, but we do not need the details of that here. This alternative definition agrees with the one given
above by a proof identical to that of \cite[Lemmas 2.19, 2.20]{Howard}, using Propositions \ref{quadratic I} and \ref{quadratic II} in place of Lemmas 2.10 and 2.11 of \cite{Howard}.

\begin{theorem}\label{orbital}
Let $p$ be a prime number that is nonsplit in $K_1$ and $K_2$ and suppose $(\AAA_1, \AAA_2)$ is a CM pair
over $\overline{\FF}_p$. Recall $\Gamma = \Cl(\Oo_{K_1}) \times \Cl(\Oo_{K_2})$ and $w_j = |\Oo_{K_j}^\times|$. For any $\alpha \in F^\times$ totally positive,
$$
\sum_{(\fraka_1, \fraka_2) \in \Gamma} \#\{f \in L(\fraka_1 \otimes_{\Oo_{K_1}} \AAA_1, \fraka_2 \otimes_{\Oo_{K_2}} \AAA_2) : 
\deg_{\CM}(f) = \alpha\} = \frac{w_1w_2}{2}\prod_{\ell}O_{\ell}(\alpha, \AAA_1, \AAA_2).
$$
\end{theorem}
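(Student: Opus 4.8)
The plan is to reduce the identity to a count of isomorphism classes in $[\XXX_{\theta,\alpha}^B(\overline{\FF}_\frakP)]$ and then to evaluate that count adelically by means of Theorem \ref{quadratic combined}. First I would record two structural facts. By Theorem \ref{Serre type} together with Proposition \ref{st} (applied with $\bm k = K_j$), the fibre $\XXX_\theta^B(\overline{\FF}_\frakP)$ is a single orbit for $\Gamma = \Cl(\Oo_{K_1})\times\Cl(\Oo_{K_2})$, and by Lemma \ref{automorphism group} the group $\Gamma$ acts on CM pairs with trivial stabilizers. Second, for any triple $(\AAA_1,\AAA_2,f)$ over $\overline{\FF}_\frakP$ with $\deg_{\CM}(f)=\alpha$ one has $f\neq 0$ because $\alpha$ is totally positive, and its automorphism group inside $\Oo_{K_1}^\times\times\Oo_{K_2}^\times$ is exactly $\{(1,1),(-1,-1)\}$: an automorphism $(u_1,u_2)$ satisfies $\kappa_2(u_2)\circ f=f\circ\kappa_1(u_1)$, hence $f^{-1}\kappa_2(u_2)f=\kappa_1(u_1)$ inside $\End^0_{\Oo_B}(A_1)$, which by Proposition \ref{char p}(a) is either an imaginary quadratic field or $B^{(p)}$; in either case an element of $\kappa_1(\Oo_{K_1}^\times)$ that is conjugate to an element of $\kappa_2(\Oo_{K_2}^\times)$ must be central, for otherwise it would produce an isomorphism $K_1\cong K_2$, contrary to hypothesis.

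It follows that the natural map
$$
\bigsqcup_{g\in\Gamma}\{f\in L(g\cdot(\AAA_1,\AAA_2)) : \deg_{\CM}(f)=\alpha\}\longrightarrow [\XXX_{\theta,\alpha}^B(\overline{\FF}_\frakP)]
$$
is surjective with every fibre of cardinality $w_1w_2/2$, so the left-hand side of the theorem equals $\tfrac{w_1w_2}{2}\cdot\#[\XXX_{\theta,\alpha}^B(\overline{\FF}_\frakP)]$. On the other hand, using $\fraka_\theta=\prod_{\ell\mid d_B}\frakl(\ell)$ and $\frakl(p)=\Oo_F$ when $p\nmid d_B$, one checks that $O_\ell(\alpha,\AAA_1,\AAA_2)=\rho_\ell(\alpha\fraka_\theta^{-1}\frakp^{-1}\frakD)$ for every $\ell$, so that $\prod_\ell O_\ell(\alpha,\AAA_1,\AAA_2)=\rho(\alpha\fraka_\theta^{-1}\frakp^{-1}\frakD)$ by the product formula for $\rho$. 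Thus the theorem is equivalent to the equality $\#[\XXX_{\theta,\alpha}^B(\overline{\FF}_\frakP)]=\rho(\alpha\fraka_\theta^{-1}\frakp^{-1}\frakD)$, and it is this equality I would prove.

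To prove it I would pass to adeles. Using Theorem \ref{quadratic combined} (whose local inputs are Propositions \ref{quadratic I}, \ref{quadratic II} and \ref{reflex II}), fix for each object $(\AAA_1',\AAA_2')$ of the $\Gamma$-orbit a $\widehat K$-linear isometry $\widehat V(\AAA_1',\AAA_2')\cong(\widehat K,\beta\cdot\NN_{K/F})$ carrying $\widehat L(\AAA_1',\AAA_2')$ onto $\widehat\Oo_K$, with $\beta\widehat\Oo_F=\fraka_\theta\frakp\frakD^{-1}\widehat\Oo_F$; here one uses that $K/F$ is unramified at every finite place, so the norm is surjective on local unit groups and $\beta$ may be taken in any prescribed class. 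Comparing with the global description from Proposition \ref{quadratic}(b), the space $V(\AAA_1',\AAA_2')$ becomes $(K,\beta_0\cdot\NN_{K/F})$ with $\beta_0\in F^\times$ totally positive and $L(\AAA_1',\AAA_2')$ a fractional $\Oo_K$-ideal $\frakb$, and comparing volumes gives $\beta_0\Oo_F=\fraka_\theta\frakp\frakD^{-1}\NN_{K/F}(\frakb)^{-1}$. Then an $f$ with $\deg_{\CM}(f)=\alpha$ corresponds to $y\in\frakb$ with $\NN_{K/F}(y)=\alpha/\beta_0$, and $\frakc:=y\frakb^{-1}$ is an integral $\Oo_K$-ideal with $\NN_{K/F}(\frakc)=\alpha\fraka_\theta^{-1}\frakp^{-1}\frakD$, a quantity that no longer depends on the chosen object. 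Running over the orbit and dividing by $\Oo_{K_1}^\times\times\Oo_{K_2}^\times$, one shows that each integral ideal $\frakc$ of relative norm $\alpha\fraka_\theta^{-1}\frakp^{-1}\frakD$ is hit exactly once, which yields $\#[\XXX_{\theta,\alpha}^B(\overline{\FF}_\frakP)]=\rho(\alpha\fraka_\theta^{-1}\frakp^{-1}\frakD)$; this step follows \cite{Howard} closely.

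The main obstacle is precisely this last bookkeeping. One must match the sum over the $\Gamma$-orbit --- a priori indexed by $\Cl(\Oo_{K_1})\times\Cl(\Oo_{K_2})$ --- against the set of integral ideals of $\Oo_K$ of prescribed relative norm, once the contribution of units is accounted for, and the discrepancy is governed by the class-number relation for the biquadratic field $K$ relative to $K_1,K_2,F$, by the unit index $[\Oo_K^\times:\Oo_{K_1}^\times\Oo_{K_2}^\times\Oo_F^\times]$, and by the equality $\NN_{K/F}(\Oo_K^\times)=\Oo_F^{\times,+}$ (every totally positive unit of $F$ is a norm of a unit of $K$, again because $K/F$ is unramified at all finite places); these ingredients are what conspire to produce the factor $w_1w_2/2$ rather than some other constant. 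It is also at this stage that one uses the alternative description of $O_\ell(\alpha,\AAA_1,\AAA_2)$ as an integral of characteristic functions, whose agreement with the definition adopted here is \cite[Lemmas 2.19, 2.20]{Howard}, so that the global count genuinely factors as $\prod_\ell O_\ell$ prime by prime. I would therefore organize the argument by first computing the $\ell$-adic contribution at each prime separately (recovering $\rho_\ell=O_\ell$) and then reassembling, exactly as in the corresponding step of \cite{Howard}, with Propositions \ref{quadratic I}, \ref{quadratic II} and \ref{reflex II} used in place of Lemmas 2.10 and 2.11 there.
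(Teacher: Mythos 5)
Your proposal is correct and rests on the same adelic bookkeeping from Howard--Yang that the paper cites; the difference is one of packaging. The paper's one-line proof invokes \cite[Proposition 2.18]{Howard} together with the characteristic-function form of $O_\ell$ to produce the factorization $\prod_\ell O_\ell$ directly from the global sum, and defers the conversion to $\#[\XXX_{\theta,\alpha}^B(\overline{\FF}_\frakP)]$ and the identity $\prod_\ell O_\ell = \rho(\alpha\fraka_\theta^{-1}\frakp^{-1}\frakD)$ to Theorem \ref{final formula} and Proposition \ref{orbital II} respectively. You instead front-load those: you rewrite the left side as $\tfrac{w_1w_2}{2}\cdot\#[\XXX_{\theta,\alpha}^B(\overline{\FF}_\frakP)]$ (using the stabilizer-of-$f$ count and the simple transitivity of $\Gamma$, both of which are used elsewhere in the paper and are independent of Theorem \ref{orbital}, so there is no circularity) and reduce to $\#[\XXX_{\theta,\alpha}^B(\overline{\FF}_\frakP)]=\rho(\alpha\fraka_\theta^{-1}\frakp^{-1}\frakD)$, which you prove by the ideal-counting argument. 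This is logically sound, merges Theorem \ref{orbital} with Proposition \ref{orbital II}, and is perhaps more transparent; but it leaves the class-group and unit bookkeeping at the end, the actual content of the Gross--Zagier-type count, unchanged, and you cite \cite{Howard} for that exactly as the paper does, so nothing is saved there. One small point: the stabilizer $\{\pm(1,1)\}$ of a nonzero $f$ is most cleanly seen from the fact that $V(\AAA_1,\AAA_2)$ is a one-dimensional $K$-vector space, whence $(\overline{u}_1^{-1}\otimes u_2)\bullet f=f$ forces $\overline{u}_1=u_2\in K_1\cap K_2=\QQ$; your conjugation argument inside $\End^0_{\Oo_B}(A_1)$ works, but you should observe explicitly that the imaginary-quadratic branch of Proposition \ref{char p}(a) cannot occur once $L(\AAA_1,\AAA_2)\neq 0$, since conjugation by $f$ would then identify $\End^0_{\Oo_B}(A_1)\cong K_1$ with $\End^0_{\Oo_B}(A_2)\cong K_2$.
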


\begin{proof}
The proof is formally the same as \cite[Proposition 2.18]{Howard}, replacing the definitions there with our
analogous definitions, and using the above comment to match up the different definitions of the orbital integral.
\end{proof}

\begin{proposition}\label{orbital II}
For any $\alpha \in F^\times$ we have
$$
\prod_{\ell}O_{\ell}(\alpha, \AAA_1, \AAA_2) = \rho(\alpha\fraka_\Theta^{-1}\frakp^{-1}\frakD).
$$
\end{proposition}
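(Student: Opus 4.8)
The plan is to reduce the claimed identity to a prime-by-prime statement by means of the product formula $\rho(\frakb)=\prod_\ell\rho_\ell(\frakb)$ recorded in the introduction. Concretely, it suffices to show that for every rational prime $\ell$ one has
$$
O_\ell(\alpha,\AAA_1,\AAA_2)=\rho_\ell(\alpha\fraka_\theta^{-1}\frakp^{-1}\frakD),
$$
and then to multiply over all $\ell$. Since the right-hand side depends only on the localization of $\alpha\fraka_\theta^{-1}\frakp^{-1}\frakD$ at $\ell$, the real work is to pin down the local components of the ideals $\fraka_\theta$ and $\frakp$.

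First I would record the local structure of $\fraka_\theta=\ker(\theta)\cap\Oo_F$. Since $\Oo_F/\fraka_\theta$ embeds into $\Oo_B/\frakm_B\cong\prod_{q\mid d_B}\FF_{q^2}$, a finite ring annihilated by $\prod_{q\mid d_B}q$, the ideal $\fraka_\theta$ is supported only over primes dividing $d_B$; hence $\fraka_\theta\Oo_{F,\ell}=\Oo_{F,\ell}$ for $\ell\nmid d_B$. For $\ell\mid d_B$, the prime $\ell$ is inert in $K_1$ and $K_2$, so split in $F$, and $\Oo_{F,\ell}\cong\ZZ_\ell\times\ZZ_\ell$; the $\ell$-component of $\theta|_{\Oo_F}$ is a ring homomorphism $\ZZ_\ell\times\ZZ_\ell\to\FF_{\ell^2}$ sending $1$ to $1$, so it factors through one of the two projections $\ZZ_\ell\times\ZZ_\ell\to\ZZ_\ell$ followed by $\ZZ_\ell\twoheadrightarrow\FF_\ell\hookrightarrow\FF_{\ell^2}$, and its kernel is exactly one of the two maximal ideals of $\Oo_{F,\ell}$. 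Thus $\fraka_\theta\Oo_{F,\ell}=\frakl(\ell)\Oo_{F,\ell}$ is the \emph{first} power of a single prime $\frakl(\ell)$ over $\ell$, namely the prime so denoted in the statement. I also note $\frakp\Oo_{F,\ell}=\Oo_{F,\ell}$ for $\ell\neq p$ (as $\frakp$ lies over $p$) and $\frakD\Oo_{F,\ell}=\frakD_\ell$.

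Given these, the local identity follows by comparing against the three cases in the definition of $O_\ell(\alpha,\AAA_1,\AAA_2)$. For $\ell\neq p$, $\ell\nmid d_B$, both $\fraka_\theta$ and $\frakp$ are trivial at $\ell$, so the right-hand side is $\rho_\ell(\alpha\frakD_\ell)=O_\ell$. For $\ell\neq p$, $\ell\mid d_B$, only $\fraka_\theta$ contributes at $\ell$, giving $\rho_\ell(\alpha\frakl(\ell)^{-1}\frakD_\ell)=O_\ell$. For $\ell=p$, using $\fraka_\theta\Oo_{F,p}=\frakl(p)\Oo_{F,p}$ — with the convention $\frakl(p)=\Oo_F$ when $p\nmid d_B$, which matches $\fraka_\theta$ being trivial at $p$ in that case, and which is consistent with Proposition \ref{reflex II} when $p\mid d_B$ — the right-hand side is $\rho_p(\alpha\frakp^{-1}\frakl(p)^{-1}\frakD_p)=O_p$. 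Multiplying over $\ell$ and invoking the product formula gives the proposition.

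The only substantive point, everything else being bookkeeping, is the claim that $\fraka_\theta$ is the first power of a single prime at each $\ell\mid d_B$; this uses the standing hypothesis that every prime dividing $d_B$ is inert in $K_1$ and $K_2$ (hence split in $F$, with residue field $\FF_\ell$ equal to the prime subfield of $\Oo_B/\frakm_\ell\cong\FF_{\ell^2}$), and it is already implicit in the way $\frakl(\ell)$ is defined in the statement. I would also remark on the degenerate case in which $\alpha\fraka_\theta^{-1}\frakp^{-1}\frakD$ is not $\ell$-integral for some $\ell$: then $\rho_\ell$ of it vanishes and so does $O_\ell$, since the norm of an integral ideal of $\Oo_{K,\ell}$ is integral — so the equality still holds, in agreement with $\rho$ of a non-integral ideal being zero.
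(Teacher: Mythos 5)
Your proof is correct and takes essentially the same approach as the paper, which gives only a one-line justification ("this follows from the definition of $O_\ell$ and the product expansion for $\rho$"). Your elaboration of the local structure of $\fraka_\theta$---that it is supported only at primes dividing $d_B$, and that at each such prime $\ell$ (split in $F$ by the inertness hypothesis) $\fraka_\theta\Oo_{F,\ell}$ is exactly the first power of the single prime $\frakl(\ell)$---is precisely the bookkeeping needed to make the term-by-term comparison rigorous.
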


\begin{proof}
This follows from the definition of $O_{\ell}(\alpha, \AAA_1, \AAA_2)$ and the product expansion for $\rho$.
\end{proof}

\section{Deformation theory}

This section is devoted to the calculation of the length of the local ring $\OOO^{\text{sh}}_{\XXX_{\Theta, \alpha}^B, x}$,
which relies on the deformation theory of objects $(\AAA_1, \AAA_2, f)$ of $\XXX_{\Theta, \alpha}^B(
\overline{\FF}_\frakP)$. We continue with the notation of Section 3.3. Fix a prime ideal $\frakP \subset \Oo_K$
of residue characteristic $p$ and set $\WWW = \WWW_{K_\frakP}$ and $\mathbf{CLN} = \mathbf{CLN}_{K_\frakP}$.
Let $\frakg$ be the connected
$p$-divisible group of height $2$ and dimension $1$ over $\overline{\FF}_\frakP$.

\begin{definition}
Let $(\AAA_1, \AAA_2)$ be a CM pair over $\overline{\FF}_\frakP$
and $R \in \mathbf{CLN}$.
A \textit{deformation} of $(\AAA_1, \AAA_2)$ to $R$ is a CM pair $(\widetilde{\AAA}_1, \widetilde{\AAA}_2)$ over
$R$ together with an isomorphism of CM pairs $(\widetilde{\AAA}_1, \widetilde{\AAA}_2)_{/\overline{\FF}_\frakP} \cong
(\AAA_1, \AAA_2)$.
\end{definition}

Given a CM pair $(\AAA_1, \AAA_2)$ over $\overline{\FF}_\frakP$, define $\Def(\AAA_1, \AAA_2)$ to be the functor
$\mathbf{CLN} \to \mathbf{Sets}$ that assigns to each $R \in \mathbf{CLN}$ the set of isomorphism
classes of deformations of $(\AAA_1, \AAA_2)$ to $R$. By Proposition \ref{reduction}, 
$$
\Def(\AAA_1, \AAA_2) \cong \Def_{\Oo_B}(A_1, \Oo_{K_1}) \times \Def_{\Oo_B}(A_2, \Oo_{K_2})
$$
is represented by $\WWW \widehat\otimes_{\WWW} \WWW \cong \WWW$. Given a nonzero $f \in L(\AAA_1, \AAA_2)$ define
$\Def(\AAA_1, \AAA_2, f)$ to be the functor $\mathbf{CLN} \to \mathbf{Sets}$ that assigns to each $R \in
\mathbf{CLN}$ the set of isomorphism classes of deformations of $(\AAA_1, \AAA_2, f)$ to $R$.

\subsection{Deformations of CM pairs}
Fix a ring homomorphism $\Theta : \Oo_K \to \Oo_B/\frakm_B$, a CM pair $(\AAA_1, \AAA_2) \in \XXX_\Theta^B(\overline{\FF}_\frakP)$, and a nonzero $f \in L(\AAA_1, \AAA_2)$. Assume $p$ is nonsplit in $K_1$ and $K_2$, and let $\frakp = \frakP \cap \Oo_F$.

\begin{proposition}\label{representability I} Suppose $p \nmid d_B$. \\
{\upshape (a)} If $p$ is inert in $K_1$ and $K_2$, then the functor $\Def(\AAA_1, \AAA_2, f)$ is represented by a local Artinian $\WWW$-algebra of length
$$
\frac{\ord_{\frakp}(\deg_{\CM}(f)) + 1}{2}.
$$ \\
{\upshape (b)} If $p$ is ramified in $K_1$ or $K_2$, then $\Def(\AAA_1, \AAA_2, f)$ is represented by
a local Artinian $\WWW$-algebra of length
$$
\frac{\ord_{\frakp}(\deg_{\CM}(f)) + \ord_\frakp(\frakD) + 1}{2}.
$$
\end{proposition}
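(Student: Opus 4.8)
The plan is to reduce the statement to a deformation problem for $p$-divisible groups of height $2$ and dimension $1$ with complex multiplication, and then to invoke Gross's theory of canonical and quasi-canonical liftings, following \cite[\S2]{Howard} essentially verbatim.

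First, since $f$ is nonzero it is an isogeny (Lemma \ref{isog}), and the functor $\Def(\AAA_1, \AAA_2, f)$ is a closed subfunctor of $\Def(\AAA_1, \AAA_2)$, which by Proposition \ref{reduction} is represented by $\WWW$; hence $\Def(\AAA_1,\AAA_2,f)$ is represented by a quotient $\WWW/I$, which is local, and (as $\WWW$ is a complete discrete valuation ring) its length is the largest $n$ such that $f$ deforms over $\WWW/\frakm^n$. Because $p \nmid d_B$ we have $\Oo_{B,p} \cong \MM_2(\ZZ_p)$, so the idempotents of $\Oo_{B,p}$ split $A_j[p^\infty] \cong \frakg_j \times \frakg_j$ with $\frakg_j$ a $p$-divisible group of height $2$ and dimension $1$ carrying the restriction of the $\Oo_{K_j,p}$-action; as $p$ is nonsplit in $K_j$, each $A_j$ is supersingular (Proposition \ref{quadratic}), so $\frakg_j$ is connected and hence isomorphic to $\frakg$. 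An $\Oo_B$-linear homomorphism is block-diagonal for these decompositions, so restriction to one factor identifies $L(\AAA_1,\AAA_2) \otimes_\ZZ \ZZ_p$ with $\Hom(\frakg_1,\frakg_2)$ in a way compatible with the $\Oo_{K_j,p}$-actions, sending $\deg^\ast$ to the degree of the corresponding isogeny of $p$-divisible groups and hence $\deg_{\CM} \in \frakD^{-1}$ to the $F_\frakp$-valued form whose trace is that degree. Together with the uniqueness of the deformation of $(\AAA_1, \AAA_2)$, this reduces the claim to computing the largest $n$ for which the induced homomorphism $\frakg_1 \to \frakg_2$ lifts to the canonical liftings reduced modulo $\frakm^n$, which is exactly the computation carried out for CM elliptic curves in \cite[\S2]{Howard}.

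Second, I would apply that computation. The pair $(\frakg_1,\frakg_2)$ has a unique deformation over $\WWW$, namely the product of the two canonical liftings; the induced homomorphism does not lift to characteristic $0$ (equivalently, $\frakp \in \Diff(\deg_{\CM}(f))$), so $\WWW/I$ is Artinian, and Gross's analysis of the filtration on homomorphisms between (quasi-)canonical liftings evaluates the length in terms of $\ord_\frakp$ of the CM-degree. In case (a), $p$ is split in $F$, so $\frakp$ is unramified over $\QQ$, $\ord_\frakp(\frakD) = 0$, only the canonical lifting is needed, and one obtains $\tfrac{1}{2}(\ord_\frakp(\deg_{\CM}(f)) + 1)$; in case (b), $p$ ramifies in $F$, so $\frakp \mid \frakD$, the quasi-canonical liftings of positive level enter, and the length acquires the extra term $\ord_\frakp(\frakD)$, giving $\tfrac{1}{2}(\ord_\frakp(\deg_{\CM}(f)) + \ord_\frakp(\frakD) + 1)$.

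The only real obstacle is making the first step precise: one must check that the $\Oo_{B,p} \cong \MM_2(\ZZ_p)$ splitting is compatible with the CM actions and with the canonical polarizations of Proposition \ref{polarization}, and that under the resulting identification the normalization of the quadratic forms matches the one in \cite{Howard} (so that the $\frakD$ appearing in the output is the different of $F/\QQ$ localized at $\frakp$). Once this dictionary is in place, the computation of \cite[\S2]{Howard} applies with no change; it in turn rests on Gross's (quasi-)canonical lifting theory, which is the genuinely delicate input in the ramified case (b).
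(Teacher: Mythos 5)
Your proposal is correct and takes essentially the same route as the paper, which disposes of the statement by citing \cite[Lemmas 2.23, 2.24]{Howard} verbatim. You correctly identify the two ingredients that the paper leaves implicit: the Morita reduction via the idempotents of $\Oo_{B,p} \cong \MM_2(\ZZ_p)$, which identifies $L_p(\AAA_1,\AAA_2)$ with $\Hom(\frakg_1,\frakg_2)$ for connected $p$-divisible groups of height $2$ and dimension $1$ carrying the $\Oo_{K_j,p}$-actions, and the appeal to Gross's theory of (quasi-)canonical liftings that underlies Howard's computation of the deformation length in both the inert and ramified cases. The compatibility check you flag (that the idempotent splitting respects the CM actions and the polarization of Proposition \ref{polarization}, so the quadratic forms match Howard's normalization) is indeed the only point that needs care, and the paper itself uses exactly this idempotent splitting in the proof of Proposition \ref{quadratic II} for $p \nmid d_B$, so your worry is resolvable by the same argument made there.
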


\begin{proof}
The proofs of (a) and (b) are the same as \cite[Lemmas 2.23, 2.24]{Howard}, respectively, using the following facts. If $A_j \cong M_j \otimes_{\Oo_{K_j}} E_j$ for some supersingular CM elliptic curve $E_j$ over $\overline{\FF}_\frakP$ then $\widetilde{E}_j \mapsto M_j \otimes_{\Oo_{K_j}} \widetilde{E}_j$ defines an isomorphism of functors
$\Def(E_j, \Oo_{K_j}) \to \Def_{\Oo_B}(A_j, \Oo_{K_j})$ from $\mathbf{CLN}$ to $\mathbf{Sets}$. Next, the idempotents $\varepsilon, \varepsilon' \in \MM_2(\ZZ_p) \cong \Oo_B \otimes_{\ZZ} \ZZ_p$ induce a splitting $A_j[p^{\infty}] \cong \varepsilon A_j[p^{\infty}] \times \varepsilon'A_j[p^{\infty}]$ with $\varepsilon A_j[p^{\infty}] \cong \varepsilon' A_j[p^{\infty}] \cong E_j[p^{\infty}]$.
\end{proof}

We will need an analogue for QM abelian surfaces of a result of Gross (\cite[Proposition 3.3]{Gross}) that gives 
the structure of the endomorphism ring of the modulo $m$ reduction of the universal deformation of the
$p$-divisible group $\frakg$.
This is what we prove next.

\begin{lemma}\label{lift}
Let $(A, i, \kappa) \in \YYY^B(\overline{\FF}_\frakP)$ for $p \mid d_B$. Set $R = \End_{\Oo_B}(A[p^{\infty}])$, let $\Aa$ be the universal deformation of $A$ to $\WWW = W$, and for each integer $m \gqq 1$ set
$$
R_m = \End_{\Oo_B}(\Aa[p^{\infty}]
\otimes_W W_m),
$$
where $W_m = W/(p^m)$. Then the reduction map $R_m \hookrightarrow R$ induces an isomorphism
$$
R_m \cong \Oo^p + p^{m-1}R,
$$
where $\Oo^p = \kappa(\Oo_{\bm{k}, p})$.
\end{lemma}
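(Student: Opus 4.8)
The plan is to reduce the statement to the classical computation of Gross (\cite[Proposition 3.3]{Gross}) describing the endomorphism ring of the reduction mod $p^m$ of the universal deformation of the height-$2$ formal group $\frakg$, using the theory already developed in Section 3.3. First I would invoke Proposition \ref{reduction} (with $\bm{k} = K_j$), which says $\Aa$ is the unique deformation of $A$ as a QM abelian surface with $\Oo_{\bm{k}}$-action to $W$; equivalently, after passing to $p$-divisible groups, $\Aa[p^\infty]$ is the universal deformation of $A[p^\infty]$ with its $\Oo_B \otimes_\ZZ \Oo_{\bm{k},p}$-action. The key structural input is Proposition \ref{basis}: there is a $W$-basis of $D(A)$ in which the $\Delta$-action is one of the two maps (\ref{OB action}), the $\Oo_{\bm{k},p}$-action is (\ref{OK action}), and $\FFF = \VVV$ acts by the explicit formulas displayed there. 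In particular, the decomposition $D(A) \cong D(E) \oplus D(E)$ coming from Theorem \ref{Serre type} identifies $A[p^\infty]$, up to $\Oo_B$-linear isogeny, with $\frakg \times \frakg$ carrying the diagonal $\Oo_{\bm{k},p}$-action, and by Proposition \ref{endomorphisms} we have $R \cong R_{11} \subset \MM_2(\Delta)$ with $\Oo^p \subset R$ the diagonal $\ZZ_{p^2}$.

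Next I would transport the deformation problem to the Dieudonné/Serre–Tate side. Because each prime dividing $d_B$ is inert in $\bm{k}$ and $E$ is supersingular, $E[p^\infty] \cong \frakg$, and a deformation of $A$ with its $\Oo_B \otimes \Oo_{\bm{k}}$-structure is the same as a deformation of $E$ with its $\Oo_{\bm{k}}$-structure (the $\Oo_B$-action is rigid by Proposition \ref{reduction}, as the $\Oo_B$-action on $A$ is determined by the module $M$ and the $\Oo_{\bm{k}}$-structure of the deforming elliptic curve). Thus $\Aa \otimes_W W_m$ corresponds to the reduction mod $p^m$ of the universal deformation $\mathscr{E}$ of $E$, and
$$
R_m = \End_{\Oo_B \otimes_\ZZ W_m}(\Aa[p^\infty] \otimes_W W_m) \otimes_\ZZ \ZZ_p
\cong C_{\MM_2(\Delta)}\!\big(\Delta\big) \cap \MM_2\!\big(\End(\mathscr{E}[p^\infty] \otimes W_m) \otimes \ZZ_p\big).
$$
By Gross's proposition, $\End(\mathscr{E}[p^\infty] \otimes W_m) \otimes_\ZZ \ZZ_p = \Oo_{\bm{k},p} + p^{m-1}\Delta = \ZZ_{p^2} + p^{m-1}\Delta$ inside $\Delta$. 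Substituting this into the centralizer description of $R = R_{11}$ from Proposition \ref{endomorphisms} and intersecting, a direct matrix computation (using $\Delta = \ZZ_{p^2} \oplus \ZZ_{p^2}\Pi$ and the explicit form of $R_{11}$) shows that the entries allowed at level $m$ are exactly those of $R_{11}$ lying in $\Oo^p + p^{m-1}R_{11}$, i.e. $R_m \cong \Oo^p + p^{m-1}R$.

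The main obstacle I anticipate is making the identification ``$\Aa \otimes_W W_m$ at the level of $p$-divisible groups is governed by $\mathscr{E}[p^\infty] \otimes W_m$'' fully rigorous, i.e. checking that the $\Oo_B$-action genuinely imposes no further deformation condition and that the Serre tensor decomposition $D(A) \cong D(E)^{\oplus 2}$ is compatible with the deformation to $W_m$ (not merely over $\overline{\FF}_\frakP$). This should follow by applying Theorem \ref{Serre type} over the base $W_m$ together with the rigidity in Proposition \ref{reduction}, but the bookkeeping of the $\Oo = \Oo_B \otimes_\ZZ \Oo_{\bm{k}}$-module structures through the Serre construction is where care is needed. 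Once that identification is in place, the remaining computation is the same centralizer-intersection bookkeeping as in Gross's argument and in \cite[pp. 26--27]{Goren}, and is routine.
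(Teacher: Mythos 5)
Your proposal is correct, and it takes a genuinely different route from the paper. The paper's proof works directly with Grothendieck--Messing theory on the $4$-dimensional Dieudonn\'e module $D(A)$: using the explicit $W$-basis of Proposition \ref{basis}, it identifies the Hodge filtration of $\Aa \otimes_W W_m$ inside $D/p^m D$ with $\mathrm{Span}_{W_m}(e_2, e_4)$ and checks by a direct matrix computation which $f \in R_{11}$ preserve it, arriving at $y \in p^{m-1}\ZZ_{p^2}$, i.e.\ $f \in \Oo^p + p^{m-1}R$. You instead use the Serre tensor decomposition $\Aa \cong M_\Theta \otimes_{\Oo_{\bm{k}}} \mathscr{E}'$ over $W$ (where $\mathscr{E}'$ is the canonical CM lift of $E$, not the universal deformation as an elliptic curve --- a terminological slip worth fixing), reduce to Gross's Proposition 3.3 giving $\End(\mathscr{E}'[p^\infty]\otimes W_m) = \ZZ_{p^2} + p^{m-1}\Delta$, and then compute $R_m = C_{\MM_2(\Delta)}(\Delta) \cap \MM_2(\ZZ_{p^2} + p^{m-1}\Delta) = R_{11} \cap \MM_2(\ZZ_{p^2}+p^{m-1}\Delta) = \Oo^p + p^{m-1}R_{11}$. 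The centralizer identity checks out: an element $\begin{bmatrix} x & y\Pi \\ py\Pi & x\end{bmatrix}$ of $R_{11}$ lies in $\MM_2(\ZZ_{p^2}+p^{m-1}\Delta)$ precisely when $y \in p^{m-1}\ZZ_{p^2}$, which is the stated condition. The ``main obstacle'' you flag --- compatibility of the Serre tensor decomposition with the deformation over $W_m$ --- is indeed the point that needs to be nailed down, and it does follow as you suggest from Theorem \ref{Serre type} applied over $\Spec(W_m)$ together with Corollary \ref{decomp lift} (which guarantees that $\theta$, hence $M_\Theta$, is unchanged under deformation), so $\Aa \otimes_W W_m \cong M_\Theta \otimes_{\Oo_{\bm{k}}}(\mathscr{E}' \otimes_W W_m)$ and hence $\Aa[p^\infty]\otimes W_m \cong (\mathscr{E}'[p^\infty]\otimes W_m)^2$ over $W_m$, with $\Oo_B$ acting through the fixed map $f_1$ or $f_2$ into $\MM_2(\ZZ_{p^2})$. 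Your approach is shorter and more conceptual, outsourcing the Grothendieck--Messing content to Gross and leaving only a matrix-algebra step; the paper's is self-contained and makes the deformation-theoretic computation explicit at the level of $D(A)$, which it reuses in the parallel argument of Lemma \ref{lift II}. Both are sound; the price of your route is the extra bookkeeping verifying that Serre tensor commutes with base change to $W_m$ and preserves the module type, which the direct computation sidesteps. The $\otimes \ZZ_p$ in your centralizer formula is redundant since $\End$ of a $p$-divisible group is already a $\ZZ_p$-module, but this is harmless.
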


\begin{proof}
We will use Grothendieck-Messing deformation theory. Let $D = D(A)$ be the covariant Dieudonn\'e module 
of $A$ and set $\Oo = \Oo_B \otimes_{\ZZ} \Oo^p$. For any $m \gqq 1$ there 
are $\Oo$-linear isomorphisms of $W_m$-modules
$$
H_1^{\text{dR}}(\Aa \otimes_W W_m) \cong D \otimes_W W_m \cong D/p^mD.
$$

For any $m \gqq 1$ the surjection $W_m \to \overline{\FF}_\frakP$ has kernel $pW/p^mW$,
which has the canonical divided power structure. By Proposition \ref{reduction}, $(A, i, \kappa)$ has a 
unique deformation to $W_m$, namely $\Aa_m = \Aa \otimes_W W_m$. Therefore
there is a unique direct summand $M_m \subset \widetilde{H}_1^{\text{dR}}(A)$, where $\widetilde{H}_1^{\text{dR}}(A) = H_1^{\text{dR}}(\widetilde{A})$ for any deformation $\widetilde{A}$ of $A$
to $W_m$, stable under the action of $\Oo$ on
$\widetilde{H}_1^{\text{dR}}(A)$, that reduces to $\text{Fil}(A)$, the Hodge filtration of $A$, and such that the diagram, corresponding to the CM normalization condition, 
\begin{equation}\label{Hodge}
\xymatrix{
\Oo^p \ar[rr] \ar[dr] & & \End_{\Oo_B\otimes_{\ZZ}W_m}(\widetilde{H}_1^{\text{dR}}(A)/M_m) \\
& W_m \ar[ur] }
\end{equation}
commutes, namely $M_m = \text{Fil}(\Aa_m)$. The Hodge sequence for $A$ takes the form
$$
0 \to \text{Fil}(A) \to D/pD \to \Lie(A) \to 0.
$$
Using a $W$-basis $\{e_1, e_2, e_3, e_4\}$ for $D$ as in Proposition \ref{basis},
it also defines an $\overline{\FF}_\frakP$-basis for $D/pD$, and
$\text{Fil}(A) = \ker(D/pD \to D/\VVV D)$
has $\{e_2, e_4\}$ as an $\overline{\FF}_\frakP$-basis. 

Any $f \in R$ induces an endomorphism of $H_1^{\text{dR}}(A) \cong D \otimes_W \overline{\FF}_\frakP$, which lifts to an endomorphism $\widetilde{f}$ of
$\widetilde{H}_1^{\text{dR}}(A) \cong D/p^mD$, and $f$ lifts to an element of $R_m$ if and only if
$\widetilde{f}(M_m) \subset M_m$. The map $\widetilde{f}$
corresponds to the reduction modulo $p^m$ of $f : D \to D$. 
Consider the $W_m$-submodule
$N = \text{Span}_{W_m}(e_2, e_4) \subset D/p^mD$. In the basis $\{e_n\}$, the $\Oo_{\bm{k}}$-action on $D$ is given by
(\ref{OK action}) and the $\Oo_B$-action is given by one of the matrices in (\ref{OB action}). Each of these maps
stabilizes $N$, so $N$ is an $\Oo$-stable direct summand of $D/p^mD$ that reduces to $\text{Fil}(A) = \text{Span}_{\overline{\FF}_\frakP}(e_2, e_4)$ modulo $p$. Also, a computation in coordinates shows that the diagram (\ref{Hodge}) commutes with $N$ is place of $M_m$. Hence $N \cong M_m$ under the isomorphism
$D/p^mD \cong \widetilde{H}_1^{\text{dR}}(A)$. Expressing 
$$
f = \begin{bmatrix} x & y\Pi \\ py\Pi & x \end{bmatrix} \in R 
$$
as an element of $\MM_4(W)$ as in (\ref{endo matrix}), we have
\begin{align*}
\text{$f$ lifts to an element of $R_m$}  &\iff \widetilde{f}(N) \subset N \\
&\iff f(e_2), f(e_4) \in We_2 + We_4 + p^mD \\
&\iff y \in p^{m-1}\Oo_{\bm{k}, p} \\
&\iff f \in \Oo^p + p^{m-1}R.  \qedhere
\end{align*}
\end{proof}

\begin{proposition}\label{representability II}
If $p \mid d_B$ and $\frakP$ divides $\ker(\Theta)$, then $\Def(\AAA_1, \AAA_2, f)$ is represented by a local Artinian $\WWW$-algebra of length $\frac{1}{2}\ord_{\frakp}(\deg_{\CM}(f))$.
\end{proposition}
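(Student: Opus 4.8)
The plan is to mimic the structure of Proposition~\ref{representability I}, replacing the input of Gross's result on deformations of $p$-divisible groups with Lemma~\ref{lift}. By Proposition~\ref{reduction} the functor $\Def(\AAA_1,\AAA_2)$ is represented by $\WWW$, so a deformation $(\widetilde{\AAA}_1,\widetilde{\AAA}_2)$ of $(\AAA_1,\AAA_2)$ to $R\in\mathbf{CLN}$ is unique up to isomorphism, and $\Def(\AAA_1,\AAA_2,f)$ is the subfunctor of $\Hom_{\mathbf{CLN}}(\WWW,-)$ cut out by the condition that the quasi-isogeny $f$ deforms to a genuine homomorphism. The issue is thus to identify the largest quotient $\WWW\to R$ over which $f$ lifts. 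Writing $\WWW=W$ and $W_m=W/(p^m)$, a standard argument (as in \cite{Howard}) shows $\Def(\AAA_1,\AAA_2,f)$ is represented by $W_m$, where $m$ is the largest integer such that $f$, viewed in $L(\AAA_1,\AAA_2)\otimes_\ZZ\ZZ_p$, lifts to an element of $\Hom_{\Oo_B\otimes_\ZZ W_m}(\Aa_1\otimes_W W_m,\Aa_2\otimes_W W_m)$; for $\ell\neq p$ the reduction map on $\ell$-adic Tate modules is an isomorphism, so there is no obstruction away from $p$.

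First I would reduce to the case $\AAA_1=\AAA_2$. Since $\frakP$ divides $\ker(\theta)$, Proposition~\ref{reflex II} tells us $A_1$ and $A_2$ are of the same type, so $D(A_1)\cong D(A_2)$ as $\Oo_B\otimes_\ZZ\DDD$-modules; choosing such an isomorphism lets us transport everything to a single QM abelian surface $A$ with $R=\End_{\Oo_B}(A)\otimes_\ZZ\ZZ_p\cong R_{11}$, and under this identification $f$ becomes an element of $R_{11}$ and $\deg^\ast(f)=Q(f)$ with $Q$ as in Proposition~\ref{quadratic form}. Then I would invoke Lemma~\ref{lift}: the endomorphism ring of the reduction mod $p^m$ of the universal deformation is $R_m\cong\Oo^p+p^{m-1}R$. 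Hence $f=\begin{bmatrix} x & y\Pi\\ py\Pi & x\end{bmatrix}$ lifts to $W_m$ if and only if $y\in p^{m-1}\Oo_{\bm k,p}$, i.e. if and only if $p^{m-1}\mid y$. Therefore the representing ring is $W_m$ for $m=1+\ord_p(y)$, provided $x\in\ZZ_{p^2}^\times$; in general one must check that the "diagonal part'' $x$ never creates a stronger obstruction, which follows because $\Oo^p\subset R_m$ for all $m$, so the lifting condition only constrains the off-diagonal coordinate $y$.

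The remaining step is the bookkeeping that converts $1+\ord_p(y)$ into $\tfrac12\ord_\frakp(\deg_{\CM}(f))$. From $Q(f)=x\overline x-p^2y\overline y$ and the fact that $f\ne 0$, while $f$ reduces to a quasi-isogeny that is \emph{not} an isomorphism (otherwise $\XXX_{\theta,\alpha}^B$ would not be supported in characteristic $p$), one deduces $x\in p\ZZ_{p^2}$; combined with $\Diff_\theta(\alpha)=\{\frakp\}$, a valuation count — exactly as in \cite[Lemma~2.23]{Howard} but now using $\beta_p\Oo_{F,p}=\frakp^2\frakD_p^{-1}$ from Proposition~\ref{quadratic II} in the same-type case — gives $\ord_\frakp(\deg^\ast(f)\,\text{in }F)=\ord_\frakp(\deg_{\CM}(f))=2\ord_p(y)$, i.e. the length is $\ord_p(y)=\tfrac12\ord_\frakp(\deg_{\CM}(f))$. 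Note $\deg^\ast(f)=\Tr_{F/\QQ}\deg_{\CM}(f)$ while $Q(f)$ records the image of $\deg^\ast$ in $\ZZ_p$, so the passage between $Q(f)=x\overline{x}-p^2y\overline{y}$ and $\ord_\frakp(\deg_{\CM}(f))$ is where the local computation of Proposition~\ref{quadratic II} must be fed in carefully; I expect this valuation matching to be the main technical obstacle, since one has to track precisely how the ramification of $\frakp$ in $K$ and the factor $p^2$ in $Q$ interact, but it is entirely parallel to the corresponding step in \cite{Howard}.
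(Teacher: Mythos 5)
Your overall strategy matches the paper's: reduce to $A_1 \cong A_2$ via the same-type hypothesis, use Proposition~\ref{reduction} and the Rapoport--Zink lifting criterion to identify the representing ring as $W_m$ for $m$ the largest integer with $f$ lifting mod $p^m$, and invoke Lemma~\ref{lift} to see that lifting mod $p^m$ is the condition $y \in p^{m-1}\Oo^p$. Up to that point you are on the right track.

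However, the final valuation bookkeeping contains two errors that happen to cancel, and you are missing the structural device the paper uses to make the count transparent. First, you correctly identify $m = 1 + \ord_p(y)$ as the maximal lifting index, but then assert the length of the representing ring is $\ord_p(y)$; since $W_m = W/(p^m)$ has length $m$, the length is $1 + \ord_p(y)$, not $\ord_p(y)$. Second, you claim $\ord_\frakp(\deg_{\CM}(f)) = 2\ord_p(y)$; the correct value is $2 + 2\ord_p(y)$. To see this one needs the orthogonal left-$\Oo^p$-module decomposition $R = R_+ \oplus R_-$ with $R_+ = \Oo^p$ (diagonal) and $R_- = \Oo^p P$ where $P = \begin{bmatrix} 0 & \Pi \\ p\Pi & 0\end{bmatrix}$, together with the isomorphism $\Phi = \varphi_+ \times \varphi_- : \Oo_{K,p} \to \Oo^p \times \Oo^p$ which shows $\Phi(\deg_{\CM}(f)) = (\deg^\ast(f_+), \deg^\ast(f_-))$. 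Since $\frakp = \frakp_-$, the $\frakp$-adic valuation of $\deg_{\CM}(f)$ is $\ord_p(\deg^\ast(f_-))$, and with $f_- = yP$ and $\deg^\ast(P) = Q(P) = -p^2$ this is $\ord_p(-p^2 y\overline{y}) = 2 + 2\ord_p(y)$. The two off-by-one errors then cancel to give the correct $\frac{1}{2}\ord_\frakp(\deg_{\CM}(f)) = 1 + \ord_p(y)$, but as written neither step is justified. Your vague appeal to "a valuation count as in Lemma 2.23 of Howard using $\beta_p\Oo_{F,p}=\frakp^2\frakD_p^{-1}$" does not substitute for this; that lemma deals with the different (ramified) setting, and the explicit $\pm$-decomposition is what actually makes the local computation work here. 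Finally, your parenthetical "provided $x \in \ZZ_{p^2}^\times$" contradicts your later claim $x \in p\ZZ_{p^2}$; in fact neither hypothesis is needed, since $\Oo^p \subset R_m$ for all $m$ so the diagonal component never obstructs lifting, exactly as you observed in the next clause.
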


\begin{proof}
As usual $A_j \cong M_j \otimes_{\Oo_{K_j}} E_j$ 
for some supersingular elliptic curve $E_j$. Isomorphisms $E_j[p^{\infty}] \cong \frakg$ may be chosen so that the CM actions $\Oo_{K_1, p} \to 
\Delta$ and $\Oo_{K_2, p} \to \Delta$ on $E_1$ and $E_2$ have the same image $\Oo^p \cong \ZZ_{p^2}$.
Fix a uniformizer $\Pi \in \Delta$ satisfying $x\Pi = \Pi x^{\iota}$ for all $x \in \Oo^p \subset \Delta$. 
Since $\frakP \mid \ker(\Theta)$, there is an isomorphism of $\ZZ_p$-modules $L_p(\AAA_1, \AAA_2) \cong R$, where
$$
R = \left\{\begin{bmatrix} x & y\Pi \\ py\Pi & x \end{bmatrix} : x, y \in \Oo^p \right\},
$$
and the CM actions $\kappa_1$ and $\kappa_2$ are identified with a single action $\Oo^p \to R$ given by
$x \mapsto \text{diag}(x, x)$. Under the isomorphism $L_p(\AAA_1, \AAA_2)
\cong R$ the quadratic form $\deg^\ast$ on $L_p(\AAA_1, \AAA_2)$ is identified with the quadratic form $Q$
on $R$ defined in Proposition \ref{quadratic form}. 
There is a decomposition of left $\Oo^p$-modules $R = R_{+} \oplus R_{-}$, with
$R_{+} = \Oo^p$, embedded diagonally in $R$, and $R_{-} = \Oo^pP$, where
$$
P = \begin{bmatrix} 0 & \Pi \\ p\Pi & 0 \end{bmatrix},
$$
and this decomposition is orthogonal with respect to the quadratic form $\deg^\ast$. Define
$\varphi_{\pm} : \Oo_{K, p} \to \Oo^p \subset R$ by
\begin{align*}
&\varphi_{+}(x_1 \otimes x_2) = \kappa_2(x_2)\kappa_1(\overline{x}_1) \\
&\varphi_{-}(x_1 \otimes x_2) = \kappa_2(x_2)\kappa_1(x_1),
\end{align*}
and let $\Phi$ be the isomorphism
$\varphi_{+} \times \varphi_{-} : \Oo_{K, p} \to \Oo^p \times \Oo^p$.
Then the usual action of $\Oo_{K, p}$ on $R$ is given by
$$
x \bullet f = \varphi_{+}(x)f_{+} + \varphi_{-}(x)f_{-}
$$
for $f = f_{+} + f_{-} \in R$ since $P\kappa_1(\overline{x}_1) = \kappa_1(x_1)P$ by the choice of $\Pi$. 
As $p$ is split in $F$,
$$
\Tr_{F_p/\QQ_p}(\deg^\ast(f_{+}), \deg^\ast(f_{-})) = \deg^\ast(f_{+}) + \deg^\ast(f_{-}) = \deg^\ast(f_{+} + f_{-}) = 
\deg^\ast(f),
$$
the second equality coming from orthogonality, so
$\Phi(\deg_{\CM}(f)) = (\deg^\ast(f_{+}), \deg^\ast(f_{-}))$.

Let $\frakp_{-} = \frakp$ and $\frakp_{+} = \overline{\frakp}$ be the other prime of $F$ over $p$.
We saw in the proof of Proposition \ref{quadratic II} that
\begin{align*}
&x_1 \otimes x_2 \in \frakP\Oo_{K, p} \iff \kappa_2(x_2)\kappa_1(x_1) \in \frakM \cap \Oo^p = p\Oo^p \\
&x_1 \otimes x_2 \in \overline{\frakP}\Oo_{K, p} \iff \kappa_2(x_2)\kappa_1(\overline{x}_1) \in \frakM \cap \Oo^p = p\Oo^p
\end{align*}
where $\overline{\frakP}$ is the prime of $K$ over $\frakp_{+}$ and $\frakM$ defined in (\ref{proof}). This implies $\Phi(\frakP\Oo_{K, p}) = \Oo^p \times p\Oo^p$ and $\Phi(\overline{\frakP}\Oo_{K, p}) = p\Oo^p \times
\Oo^p$ and hence
\begin{align*}
&\ord_{\frakp_{+}}(\deg_{\CM}(f)) = \ord_p(\deg^\ast(f_{+})) \\
&\ord_{\frakp_{-}}(\deg_{\CM}(f)) = \ord_p(\deg^\ast(f_{-})).
\end{align*}
Since $\deg^\ast(P) = Q(P) = -p^2$, for any
integer $m \gqq 1$ and any $f \in R$ we have
\begin{align*}
f \in \Oo^p + p^{m-1}R &\iff f_{-} \in p^{m-1}\Oo^pP \\
&\iff \ord_p(\deg^\ast(f_{-})) \gqq 2m  \\
&\iff \tfrac{1}{2}\ord_{\frakp}(\deg_{\CM}(f)) \gqq m.
\end{align*}

The functor
$$
\Def(\AAA_1, \AAA_2) \cong \Def_{\Oo_B}(A_1[p^{\infty}], \Oo^p) \times \Def_{\Oo_B}(A_2[p^{\infty}], \Oo^p) 
$$
is represented by $\WWW \widehat\otimes_{\WWW} \WWW \cong \WWW$. Let $(\widetilde{\AAA}_1, \widetilde{\AAA}_2)$ 
be the universal deformation of $(\AAA_1, \AAA_2)$ to $\WWW = W$.
It follows from \cite[Proposition 2.9]{RZ} that the functor $\Def(\AAA_1, \AAA_2, f)$ is
represented by $W_m = W/(p^m)$, where $m$ is the largest integer such that
$f \in \Hom_{\Oo_B}(A_1[p^{\infty}], A_2[p^{\infty}]) \cong R$ lifts to an element of
$$
\Hom_{\Oo_B}(\widetilde{A}_1[p^{\infty}] \otimes_W W_m, \widetilde{A}_2[p^{\infty}] \otimes_W W_m).
$$
Since there are $\Oo_B \otimes_{\ZZ} \Oo^p$-linear isomorphisms $A_1[p^{\infty}] \cong A_2[p^{\infty}]$ (as $\frakP \mid \ker(\Theta)$)
and $\widetilde{A}_j \otimes_W \overline{\FF}_\frakP \cong A_j$, there is an $\Oo_B \otimes_{\ZZ} \Oo^p$-linear
isomorphism $\widetilde{A}_1[p^{\infty}] \cong \widetilde{A}_2[p^{\infty}]$ by the uniqueness of the universal
deformation. Hence
$$
\Hom_{\Oo_B}(\widetilde{A}_1[p^{\infty}] \otimes_W W_m, \widetilde{A}_2[p^{\infty}] \otimes_W W_m) 
\cong R_m \cong \Oo^p + p^{m-1}R
$$
in the notation of Lemma \ref{lift}. Then $m = \frac{1}{2}\ord_{\frakp}(\deg_{\CM}(f))$ by the above calculation, which proves the result since $\text{lg}(W/(p^m)) = \text{lg}_W(W/(p^m)) = m$.
\end{proof}

With $(\AAA_1, \AAA_2)$ as above, suppose $p \mid d_B$ and $\frakP$ does not divide $\ker(\Theta)$.
As usual $A_j \cong M_j \otimes_{\Oo_{K_j}} E_j$ for some supersingular $E_j$.
Choose isomorphisms $E_j[p^\infty] \cong \frakg$ so that the CM actions $g_1 : \Oo_{K_1, p} \to \Delta$
and $g_2 : \Oo_{K_2, p} \to \Delta$ on $E_1$ and $E_2$, where $\Delta = \End(\frakg)$, have the same image $\Oo^p \cong \ZZ_{p^2}$. 
Fix a uniformizer $\Pi \in \Delta$ satisfying $\Pi g_1(x) = g_1(\overline{x})\Pi$ for all $x \in \Oo_{K_1, p}$.
There is an isomorphism of $\ZZ_p$-modules $L_p(\AAA_1, \AAA_2) \cong R'$, where
$$
R' = \left\{\begin{bmatrix} px & y\Pi \\ y\Pi & x \end{bmatrix} : x, y \in \Oo^p\right\},
$$
and the quadratic form $\deg^\ast$ on $L_p(\AAA_1, \AAA_2)$ is identified with the quadratic form $uQ'$ on
$R'$ defined in Proposition \ref{mixed types}. 
There is a decomposition of left $\Oo^p$-modules $R' = R'_{+} \oplus R'_{-}$, where
$R'_{+} = \Oo^pP_1$ and $R'_{-} = \Oo^pP_2$, with
$$
P_1 = \begin{bmatrix} p & 0 \\ 0 & 1 \end{bmatrix}, \quad P_2 = \begin{bmatrix} 0 & \Pi \\ \Pi & 0 \end{bmatrix}.
$$

\begin{lemma}\label{lift II}
With notation as above, let $\Aa_j$ be the universal deformation of $A_j$ to $\WWW = W$, and for each integer
$m \gqq 1$ set
$$
R_m' = \Hom_{\Oo_B}(\Aa_1[p^{\infty}] \otimes_W W_m, \Aa_2[p^{\infty}] \otimes_W W_m).
$$
Then the reduction map $R_m' \hookrightarrow \Hom_{\Oo_B}(A_1[p^{\infty}], A_2[p^{\infty}]) \cong R'$ induces an isomorphism
$$
R_m' \cong \Oo^pP_1 + p^{m-1}\Oo^pP_2.
$$
\end{lemma}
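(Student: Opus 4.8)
The plan is to argue exactly as in the proof of Lemma \ref{lift}, now with homomorphisms between the two (non-isomorphic) deformed $p$-divisible groups in place of endomorphisms of one, via Grothendieck-Messing deformation theory. First I would pass to $p$-divisible groups, using the identifications $R_m' \cong \Hom_{\Oo_B \otimes_\ZZ W_m}(\Aa_1[p^\infty] \otimes_W W_m, \Aa_2[p^\infty] \otimes_W W_m)$ and, on reduction, $R' \cong \Hom_{\Oo_B \otimes_\ZZ \DDD}(D(A_1), D(A_2))$, just as in the proofs of Lemma \ref{lift} and Proposition \ref{representability II}. By Proposition \ref{reduction}, $\Aa_j \otimes_W W_m$ is the unique deformation of $A_j$ (as a QM abelian surface with CM by $\Oo_{K_j}$) to $W_m$, and for each $m$ I would fix $\Oo$-equivariant identifications $\widetilde{H}_1^{\text{dR}}(A_j) \cong D(A_j)/p^m D(A_j)$ for the crystal evaluated along $W_m \to \overline{\FF}_\frakP$ (whose kernel carries its canonical divided power structure), with $\Oo = \Oo_B \otimes_\ZZ \Oo^p$.

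As in Lemma \ref{lift}, the uniqueness in Proposition \ref{reduction} forces for each $j$ a unique $\Oo$-stable direct summand $M_{j,m} \subset \widetilde{H}_1^{\text{dR}}(A_j)$ lifting the Hodge filtration $\text{Fil}(A_j)$ and making $\Oo^p$ act on $\widetilde{H}_1^{\text{dR}}(A_j)/M_{j,m}$ through $W_m$, namely $M_{j,m} = \text{Fil}(\Aa_j \otimes_W W_m)$. Then an element $f \in R'$ lifts to $R_m'$ if and only if its reduction $\widetilde f : D(A_1)/p^m D(A_1) \to D(A_2)/p^m D(A_2)$ satisfies $\widetilde f(M_{1,m}) \subseteq M_{2,m}$; equivalently, one may phrase this criterion via \cite[Proposition 2.9]{RZ} as in the proof of Proposition \ref{representability II}.

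It remains to run the coordinate computation. Using Proposition \ref{basis} I would fix $W$-bases of $D(A_1)$ and $D(A_2)$ adapted to their (opposite) types in the sense of (\ref{OB action}), read off that $\text{Fil}(A_j) = \ker(D(A_j)/pD(A_j) \to D(A_j)/\VVV D(A_j))$ is spanned by the appropriate pair of basis vectors, and express a general $f = \begin{bmatrix} px & y\Pi \\ y\Pi & x \end{bmatrix} \in R'$ as a matrix in $\MM_4(W)$ (the mixed-type analogue of (\ref{endo matrix}), as in Proposition \ref{mixed types}). Writing $f = f_+ + f_-$ along $R' = \Oo^p P_1 \oplus \Oo^p P_2$, the condition $\widetilde f(M_{1,m}) \subseteq M_{2,m}$ should be vacuous on $f_+$ while forcing $f_- \in p^{m-1}\Oo^p P_2$, which is exactly the asserted isomorphism $R_m' \cong \Oo^p P_1 + p^{m-1}\Oo^p P_2$.

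The main obstacle is the bookkeeping in this final step: since $A_1$ and $A_2$ are of opposite types, their Dieudonné modules, the actions of $\Pi$ and $\FFF = \VVV$, and the Hodge filtrations all sit differently relative to the standard bases, so one must track these carefully to see why $P_1$ and $P_2$ play asymmetric roles here, in contrast to the symmetric diagonal/off-diagonal split in Lemma \ref{lift}.
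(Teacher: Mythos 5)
Your proposal is correct and is essentially the approach the paper intends; the paper's own ``proof'' is the single sentence that the argument is very similar to Lemma~\ref{lift}, and you have spelled out exactly that argument (Grothendieck--Messing via Proposition~\ref{reduction}, identification of $M_{j,m}$ as the span of the second and fourth basis vectors, then the coordinate check that only the off-diagonal piece is constrained). One small remark on your final paragraph: the bookkeeping is lighter than you anticipate. Proposition~\ref{basis} is set up so that in the adapted bases the $\FFF=\VVV$ action, and hence $\text{Fil}(A_j)=\text{Span}(e_2,e_4)$, are given by the \emph{same} formulas for both types; only the $\Delta$-action differs ($f_1$ vs.\ $f_2$). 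Carrying out the mixed-type analogue of (\ref{endo matrix}) one finds (for $D(i_1)=f_1$, $D(i_2)=f_2$) that a general $\varphi\in R'$ is, as a $4\times 4$ matrix,
$$
\varphi=\begin{bmatrix} px & 0 & 0 & py \\ 0 & p\overline{x} & \overline{y} & 0 \\ 0 & py & x & 0 \\ \overline{y} & 0 & 0 & \overline{x}\end{bmatrix},
$$
so $\varphi(e_2)=p\overline{x}e_2+pye_3$ and $\varphi(e_4)=pye_1+\overline{x}e_4$; the condition $\varphi(M_{1,m})\subseteq M_{2,m}$ is exactly $y\in p^{m-1}\Oo^p$, i.e.\ $\varphi\in\Oo^pP_1+p^{m-1}\Oo^pP_2$, as you predicted.
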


\begin{proof}
The proof is very similar to that of Lemma \ref{lift}, using the following two facts. For each $j \in \{1, 2\}$ 
there is a unique $\Oo_B \otimes_{\ZZ} \Oo_{K_j}$-stable direct summand 
$M_j \subset \widetilde{H}_1^{\text{dR}}(A_j)$ whose image under the reduction map $\widetilde{H}_1^{\text{dR}}(A_j) \to
H_1^{\text{dR}}(A_j)$ is $\text{Fil}(A_j)$, and such that the diagram (\ref{Hodge}) commutes, corresponding to the unique deformation $\Aa_j \otimes_W W_m$ of $A_j$ to $W_m$. Any $f \in R'$ lifts to an element of $R_m'$ if and only if $\widetilde{f}(M_1) \subset M_2$, where $\widetilde{f} : \widetilde{H}_1^{\text{dR}}(A_1) \to  \widetilde{H}_1^{\text{dR}}(A_2)$ is the unique lift of $f : H_1^{\text{dR}}(A_1) \to H_1^{\text{dR}}(A_2)$.
\end{proof}

\begin{proposition} \label{representability III}
If $p \mid d_B$ and $\frakP$ does not divide $\ker(\Theta)$, then $\Def(\AAA_1, \AAA_2, f)$ is
represented by a local Artinian $\WWW$-algebra of length 
$$
\frac{\ord_\frakp(\deg_{\CM}(f)) + 1}{2}.
$$
\end{proposition}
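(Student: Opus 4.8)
The plan is to run the argument of Proposition \ref{representability II}, with the ring $R$, the element $P$, and Lemma \ref{lift} replaced throughout by the ring $R'$, the element $P_2$, and Lemma \ref{lift II}. First I would introduce, exactly as in the proof of Proposition \ref{representability II}, the two ring homomorphisms $\varphi_{\pm} : \Oo_{K, p} \to \Oo^p$ sending $x_1 \otimes x_2$ to $\kappa_2(x_2)\kappa_1(\overline{x}_1)$ and $\kappa_2(x_2)\kappa_1(x_1)$ respectively, so that $\Phi = \varphi_{+} \times \varphi_{-}$ is an isomorphism $\Oo_{K, p} \to \Oo^p \times \Oo^p$ and the $\bullet$-action of $\Oo_K$ on $R' = R'_{+} \oplus R'_{-}$ is $x \bullet (f_{+} + f_{-}) = \varphi_{+}(x)f_{+} + \varphi_{-}(x)f_{-}$ for $f_{\pm} \in R'_{\pm}$. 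Applying $\deg^\ast$ to the two sides of this identity and using $\deg^\ast = \Tr_{F/\QQ}\deg_{\CM}$, the relation $\deg_{\CM}(x\bullet f) = \NN_{K/F}(x)\deg_{\CM}(f)$ from Proposition \ref{quadratic II}, and the orthogonality of $R'_{+}$ and $R'_{-}$ for $\deg^\ast$, I would obtain $\Phi(\deg_{\CM}(f)) = (\deg^\ast(f_{+}), \deg^\ast(f_{-}))$; and, as in the proof of Proposition \ref{quadratic II}, the summand $R'_{-} = \Oo^p P_2$ is the one attached to the prime $\frakp = \frakP \cap \Oo_F$, so that $\ord_{\frakp}(\deg_{\CM}(f)) = \ord_p(\deg^\ast(f_{-}))$.

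Next comes the local computation. By Proposition \ref{mixed types} the form $\deg^\ast$ on $R'$ is $u\cdot Q'$ with $u \in \ZZ_p^\times$, hence $\deg^\ast(P_2) = -up$; writing $f_{-} = z_2 P_2$ with $z_2 \in \Oo^p$ (nonzero, since $\deg_{\CM}(f)$ is a unit in $F_p$), this gives $\ord_p(\deg^\ast(f_{-})) = 1 + 2\ord_p(z_2)$. In particular $\ord_{\frakp}(\deg_{\CM}(f))$ is odd, and it is $\gqq 1$ since $f$ lies in $L_p(\AAA_1, \AAA_2) \cong \Oo_{K, p}$ (Proposition \ref{quadratic II}), so the asserted length is a positive integer. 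For $m \gqq 1$, Lemma \ref{lift II} identifies $R'_{m}$ with the sublattice $\Oo^p P_1 + p^{m-1}\Oo^p P_2$ of $R'$, so
$$
f \in R'_{m} \iff f_{-} \in p^{m-1}\Oo^p P_2 \iff \ord_p(z_2) \gqq m - 1 \iff \tfrac{\ord_{\frakp}(\deg_{\CM}(f)) + 1}{2} \gqq m.
$$
Finally, by \cite[Proposition 2.9]{RZ}, applied as in the proof of Proposition \ref{representability II} using that $\Def(\AAA_1, \AAA_2)$ is represented by $\WWW = W$, the functor $\Def(\AAA_1, \AAA_2, f)$ is represented by $W_m = W/(p^m)$ for the largest $m$ such that $f$, viewed in $\Hom_{\Oo_B}(A_1[p^{\infty}], A_2[p^{\infty}]) \cong R'$, lies in $R'_{m} \cong \Hom_{\Oo_B \otimes_{\ZZ} W_m}(\Aa_1[p^{\infty}] \otimes_W W_m, \Aa_2[p^{\infty}] \otimes_W W_m)$; by the displayed chain this is $m = \frac{\ord_{\frakp}(\deg_{\CM}(f)) + 1}{2}$, and $W_m$ has length $m$.

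The point that distinguishes this from Proposition \ref{representability II}, and which I would treat most carefully, is that here $A_1$ and $A_2$ are \emph{not} of the same type (Proposition \ref{reflex II}): their $p$-divisible groups, and hence their universal deformations, are not isomorphic as $\Oo_B \otimes_{\ZZ} \Oo^p$-modules, so one cannot reduce the lifting problem to the endomorphism ring of a single deformation and must instead use Lemma \ref{lift II} to compute $R'_{m}$ directly. The remaining subtlety is combinatorial: in contrast to $\deg^\ast(P) = -p^2$ in Proposition \ref{representability II}, here $\deg^\ast(P_2) = -up$ has \emph{odd} $p$-adic valuation, and this is exactly what turns $\tfrac{1}{2}\ord_{\frakp}(\deg_{\CM}(f))$ into $\tfrac{\ord_{\frakp}(\deg_{\CM}(f)) + 1}{2}$; I would therefore be careful to keep straight which of the two primes of $\Oo_F$ above $p$ is attached to the summand $R'_{-}$ that Lemma \ref{lift II} constrains.
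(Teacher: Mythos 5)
Your proposal is correct and follows exactly the route the paper itself indicates: repeat the argument of Proposition~\ref{representability II} with $R$, $P$, Lemma~\ref{lift} replaced by $R'$, $P_2$, Lemma~\ref{lift II}, with the only change in the arithmetic being $\deg^\ast(P_2)=-up$ (odd valuation) in place of $\deg^\ast(P)=-p^2$, which is precisely what shifts the length from $\tfrac{1}{2}\ord_\frakp(\deg_{\CM}(f))$ to $\tfrac{1}{2}(\ord_\frakp(\deg_{\CM}(f))+1)$. The paper's proof is a one-line reference to this substitution; you have correctly filled in the details, including the point that Lemma~\ref{lift II} is needed because $\Aa_1[p^\infty]\not\cong\Aa_2[p^\infty]$ here, and the identification of $R'_-$ with the $\frakp$-component via the CM normalization condition.
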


\begin{proof}
The proof is the same as that of Proposition \ref{representability II}, using Lemma \ref{lift II}, the key difference being 
$\deg^\ast(P_2) = uQ'(P_2) = -up$. 
\end{proof}

\subsection{The \'etale local ring}
Let $\ZZZ$ be a stack over $\Spec(\Oo_K)$ and let $z \in \ZZZ(\overline{\FF}_\frakP)$ be a geometric point.
An \textit{\'etale neighborhood} of $z$ is a commutative diagram in the $2$-category of stacks over
$\Spec(\Oo_K)$
$$
\xymatrix{
& U \ar[d] \\
\Spec(\overline{\FF}_\frakP) \ar[ur]^{\widetilde{z}} \ar[r]_<<<<<z & \ZZZ }
$$
where $U$ is an $\Oo_K$-scheme and $U \to \ZZZ$ is an \'etale morphism. The \textit{strictly Henselian local ring}
of $\ZZZ$ at $z$ is the direct limit
$$
\OOO^{\text{sh}}_{\ZZZ, z} = \dlim_{(U, \widetilde{z})}\OOO_{U, \widetilde{z}}
$$
over all \'etale neighborhoods of $z$, where $\OOO_{U, \widetilde{z}}$ is the local ring of the scheme $U$ at the image
of $\widetilde{z}$. The ring $\OOO^{\text{sh}}_{\ZZZ, z}$ is a strictly Henselian local ring with residue field
$\overline{\FF}_\frakP$ and the completion
$\widehat{\OOO}^{\text{sh}}_{\ZZZ, z}$ is a $\WWW$-algebra.

\begin{theorem}\label{local ring}
Let $\alpha \in F^\times$, let $\Theta : \Oo_K \to \Oo_B/\frakm_B$ be a ring homomorphism, 
and suppose $\frakP \subset \Oo_K$ is a prime ideal lying over a prime $p$.
Set
$$
\nu_{\frakp}(\alpha) = \frac{1}{2}\ord_\frakp(\alpha\frakp\frakD), \quad  
\nu'_\frakp(\alpha) = \frac{1}{2}\ord_\frakp(\alpha),
$$
where $\frakp = \frakP \cap \Oo_F$.
For any $x = (\AAA_1, \AAA_2, f) \in \XXX_{\Theta, \alpha}^B(\overline{\FF}_\frakP)$,
the ring $\OOO^{\text{\upshape sh}}_{\XXX_{\Theta, \alpha}^B, x}$ is Artinian of length $\nu_\frakp(\alpha)$ if $p \nmid d_B$ or $p \mid d_B$ and $\frakP \nmid \ker(\Theta)$, and is Artinian of length $\nu'_\frakp(\alpha)$ if $p \mid d_B$ and $\frakP \mid \ker(\Theta)$.
\end{theorem}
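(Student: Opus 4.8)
The plan is to identify the completed strict Henselian local ring of $\XXX_{\theta,\alpha}^B$ at $x$ with the ring pro-representing the deformation functor $\Def(\AAA_1,\AAA_2,f)$, and then read off its length from Propositions \ref{representability I}, \ref{representability II}, and \ref{representability III}.

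First I would argue that, since $\XXX_{\theta,\alpha}^B$ is a Deligne--Mumford stack of finite type over $\Spec(\Oo_K)$ and $x$ is a geometric point valued in $\overline{\FF}_\frakP$, the completion $\widehat{\OOO}^{\mathrm{sh}}_{\XXX_{\theta,\alpha}^B,x}$ pro-represents the functor on $\mathbf{CLN}$ assigning to $R$ the set of isomorphism classes of deformations of $x$ to $R$ \emph{inside} $\XXX_{\theta,\alpha}^B$. This functor is exactly $\Def(\AAA_1,\AAA_2,f)$: by Corollary \ref{decomp lift} every deformation of the underlying CM pair automatically satisfies the $\theta$-condition, and since $\deg_{\CM}$ is locally constant on the base the condition $\deg_{\CM}=\alpha$ is automatic for any deformation of $f$; moreover $\alpha\in F^\times$ forces $f\neq 0$, so Lemma \ref{isog} and the cited representability statements apply. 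Because the length of an Artinian local ring over itself is unchanged under completion and under passage to or from the strict Henselization, it follows that $\OOO^{\mathrm{sh}}_{\XXX_{\theta,\alpha}^B,x}$ is Artinian of the same finite length as the local Artinian $\WWW$-algebra representing $\Def(\AAA_1,\AAA_2,f)$.

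It then remains to plug in $\deg_{\CM}(f)=\alpha$ and compare with $\nu_\frakp(\alpha)$ and $\nu'_\frakp(\alpha)$, using the arithmetic of splitting in $F$. Non-emptiness of $\XXX_{\theta,\alpha}^B(\overline{\FF}_\frakP)$ and Proposition \ref{quadratic} give $p>0$, $(\AAA_1,\AAA_2)$ supersingular, and $p$ nonsplit in $K_1$ and $K_2$ when $p\nmid d_B$ (when $p\mid d_B$ this is part of the standing hypothesis), so the representability propositions apply. If $p\nmid d_B$ and $p$ is inert in $K_1$ and $K_2$, then $p$ is split in $F$, so $\ord_\frakp(\frakD)=0$ and Proposition \ref{representability I}(a) gives length $\tfrac12(\ord_\frakp(\alpha)+1)=\tfrac12\ord_\frakp(\alpha\frakp\frakD)=\nu_\frakp(\alpha)$; if $p\nmid d_B$ and $p$ is ramified in $K_1$ or $K_2$, then $p$ is ramified in $F$ and Proposition \ref{representability I}(b) gives length $\tfrac12(\ord_\frakp(\alpha)+\ord_\frakp(\frakD)+1)=\tfrac12\ord_\frakp(\alpha\frakp\frakD)=\nu_\frakp(\alpha)$. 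If $p\mid d_B$ then $p$ is inert in $K_1,K_2$, hence split in $F$, so again $\ord_\frakp(\frakD)=0$: when $\frakP\mid\ker(\theta)$ Proposition \ref{representability II} gives length $\tfrac12\ord_\frakp(\alpha)=\nu'_\frakp(\alpha)$, and when $\frakP\nmid\ker(\theta)$ Proposition \ref{representability III} gives length $\tfrac12(\ord_\frakp(\alpha)+1)=\tfrac12\ord_\frakp(\alpha\frakp\frakD)=\nu_\frakp(\alpha)$. These agree with the stated lengths in every case.

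The step requiring the most care is the first one: justifying that the completed \'etale local ring of the stack $\XXX_{\theta,\alpha}^B$ at the geometric point $x$ pro-represents $\Def(\AAA_1,\AAA_2,f)$. This uses that $\XXX_{\theta,\alpha}^B$ is Deligne--Mumford and locally of finite presentation, together with the observation that $\OOO^{\mathrm{sh}}_{\XXX_{\theta,\alpha}^B,x}$, defined via \'etale neighborhoods by \emph{schemes}, does not involve $\Aut(x)$, so no rigidification issue arises; the automorphisms have already been separated off in the definition of the arithmetic degree. Once this identification is in place, the remainder is the elementary bookkeeping of $\ord_\frakp(\frakD)$ carried out above.
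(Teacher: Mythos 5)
Your proposal is correct and follows essentially the same approach as the paper: identify $\widehat{\OOO}^{\text{sh}}_{\XXX_{\theta,\alpha}^B,x}$ with the ring representing $\Def(\AAA_1,\AAA_2,f)$ (via Corollary \ref{decomp lift} for the $\theta$-condition), read off the length from Propositions \ref{representability I}--\ref{representability III}, and note that length is preserved under completion; the only difference is that the paper outsources the pro-representability step to the argument of \cite[Proposition 2.25]{Howard}, whereas you sketch it directly, and your arithmetic bookkeeping with $\ord_\frakp(\frakD)$ correctly reproduces $\nu_\frakp$ and $\nu'_\frakp$ in every case.
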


\begin{proof}
Using Corollary \ref{decomp lift}, the same proof as in \cite[Proposition 2.25]{Howard} shows
the functor $\Def(\AAA_1, \AAA_2, f)$ is represented by the ring
$\widehat{\OOO}^{\text{sh}}_{\XXX_{\Theta, \alpha}^B, x}$. The result then follows from Propositions 
\ref{representability I}, \ref{representability II}, \ref{representability III}, and
the fact that $\text{lg}(\widehat{\OOO}^{\text{sh}}_{\XXX_{\Theta, \alpha}^B, x}) = \text{lg}(\OOO^{\text{sh}}_{\XXX_{\Theta, \alpha}^B, x})$.
\end{proof}

\section{Final formula}

As in the introduction, let $\chi$ be the quadratic Hecke character associated with the extension $K/F$, so $\chi = \prod_v\chi_v : I_F/F^\times \to \{\pm 1\}$, where the product is over all places of $F$, $I_F$ is the group of ideles of $F$, and
$$
\chi_v(x_v) = \left\{\begin{array}{ll}
1 & \text{if $x_v \in \NN_{K_v/F_v}(K_v^\times)$} \\
-1 & \text{if $x_v \notin \NN_{K_v/F_v}(K_v^\times)$}.
\end{array} \right.
$$
We may interpret $\chi$ as a character on ideals as follows. Since $K_v/F_v$ is unramified for any finite place $v$ of $F$,
the norm map $\NN_{K_v/F_v} : \Oo_{K_v}^\times \to \Oo_{F_v}^\times$ is surjective, so if $\fraka$ is a fractional
$\Oo_F$-ideal, then the definition $\chi_v(\fraka) = \chi_v(a_v)$ is independent of the choice
of $a_v \in F_v^\times$ satisfying $a_v\Oo_{F_v} = \fraka\Oo_{F_v}$.

For any $\alpha \in F^\times$ totally positive and any ring homomorphism $\Theta : \Oo_K \to \Oo_B/\frakm_B$, 
define a finite set of prime ideals
$$
\Diff_\Theta(\alpha) = \{\frakp \subset \Oo_F : \chi_\frakp(\alpha\fraka_\Theta\frakD) = -1\},
$$
where $\fraka_\Theta = \ker(\Theta) \cap \Oo_F$. Note that any prime in $\Diff_\Theta(\alpha)$
is inert in $K$ because if $\frakp$ is split in $K$ then $K_\frakp \cong F_\frakp \times F_\frakp$
and the norm map $K_\frakp \to F_\frakp$ is just multiplication, which is surjective.

\begin{lemma}
For any $\alpha \in F^\times$ totally positive, the set $\Diff_{\Theta}(\alpha)$ is nonempty.
\end{lemma}

\begin{proof}
Let $D$ be the discriminant of $F$, so $\frakD = \sqrt{D}\Oo_F$. If $v_1$ and $v_2$ are the two archimedean places of $F$ then $\chi_{v_1}(\alpha\sqrt{D})\chi_{v_2}(\alpha\sqrt{D}) = -1$.
Since  $\prod_v\chi_v(x) = 1$ for any $x \in F^\times$, it follows that the set 
$$
\Diff(\alpha) = \{\frakp \subset \Oo_F : \chi_\frakp(\alpha\frakD) = -1\}
$$
has odd cardinality, and in particular is nonempty. The set $P = \{\frakp \subset \Oo_F : \frakp \mid \fraka_\Theta\}$ has cardinality equal to the number of primes dividing $d_B$, which is even since $B$ is indefinite. Therefore, there exists a
prime $\frakp_0 \subset \Oo_F$ such that $\frakp_0 \in \Diff(\alpha)\sm P$ or $\frakp_0 \in P\sm\Diff(\alpha)$.
Note that if $\frakp \in P$ then $\fraka_\Theta\Oo_{F_\frakp} = \frakp\Oo_{F_\frakp} = \pi_\frakp\Oo_{F_\frakp}$, where $\pi_\frakp \in \Oo_{F_\frakp}$ is a uniformizer at $\frakp$. If $p$ is the prime below $\frakp$ then $p \mid d_B$ is inert in $K_1$ and $K_2$, so $\frakp$ is inert in $K$.
Hence $K_\frakp/F_\frakp$ is an unramified extension of local fields, which implies $\chi_\frakp(\fraka_\Theta) = \chi_\frakp(\pi_\frakp) = -1$. Also, $\chi_\frakp(\fraka_\Theta) = 1$ if $\frakp \notin P$, so in either case, $\frakp_0 \in \Diff_{\Theta}(\alpha)$.
\end{proof}

\begin{lemma}\label{reflex primes}
For any prime $\frakP \subset \Oo_K$ and any ring homomorphism $\Theta : \Oo_K \to \Oo_B/\frakm_B$, we have $\#[\XXX_\Theta^B(\overline{\FF}_\frakP)] = |\Gamma|$, where $\Gamma = \Cl(\Oo_{K_1}) \times \Cl(\Oo_{K_2})$.
\end{lemma}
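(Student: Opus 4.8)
The plan is to reduce the count to a product of two counts of isomorphism classes of CM elliptic curves, which have already been pinned down in Section~3. By definition a CM pair over $\overline{\FF}_\frakP$ is just a pair $(\AAA_1,\AAA_2)$ with $\AAA_j\in\YYY_j^B(\theta_j)(\overline{\FF}_\frakP)$ where $\theta_j=\theta|_{\Oo_{K_j}}$, and an isomorphism of CM pairs is a pair of $\Oo_{K_j}$-linear isomorphisms; passing to isomorphism classes therefore gives a bijection
$$
[\XXX_\theta^B(\overline{\FF}_\frakP)]\;\cong\;[\YYY_1^B(\theta_1)(\overline{\FF}_\frakP)]\times[\YYY_2^B(\theta_2)(\overline{\FF}_\frakP)].
$$
So it suffices to show $\#[\YYY_j^B(\theta_j)(\overline{\FF}_\frakP)]=|\Cl(\Oo_{K_j})|$ for $j=1,2$.

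For each $j$ I would apply the results of Section~3 with $\bm{k}=K_j$ and $\bm{K}=K$. Every ring homomorphism $\theta_j:\Oo_{K_j}\to\Oo_B/\frakm_B$ is the reduction $\widetilde{\Theta}_j$ of some embedding $\Theta_j:\Oo_{K_j}\hookrightarrow\Oo_B$ (this was observed in Section~3.3 by comparing $\#\KKK'=2^r$ with the number of such homomorphisms), so $\YYY_j^B(\theta_j)=\YYY_j^B([\Theta_j])$, and Theorem~\ref{Serre type} gives an isomorphism $\YYY_j\xrightarrow{\sim}\YYY_j^B([\Theta_j])$. By Proposition~\ref{st}, $W_0\times\Cl(\Oo_{K_j})$ acts simply transitively on the (nonempty) set $[\YYY_j^B(\overline{\FF}_\frakP)]$, and by the discussion of the equivalence relation on $\KKK$ preceding Theorem~\ref{Serre type} this set decomposes into $|W_0|$ blocks indexed by the homomorphisms $\theta_j$, permuted simply transitively by $W_0$, with $\Cl(\Oo_{K_j})$ acting simply transitively on each block $[\YYY_j^B(\theta_j)(\overline{\FF}_\frakP)]$. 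In particular that block is nonempty of cardinality $|\Cl(\Oo_{K_j})|$, which is what we wanted.

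Feeding the two counts into the displayed bijection gives $\#[\XXX_\theta^B(\overline{\FF}_\frakP)]=|\Cl(\Oo_{K_1})|\cdot|\Cl(\Oo_{K_2})|=|\Gamma|$. One may equivalently package the argument by noting that $\Gamma=\Cl(\Oo_{K_1})\times\Cl(\Oo_{K_2})$ acts simply transitively on $[\XXX_\theta^B(\overline{\FF}_\frakP)]$: transitivity is the product of the two transitive actions, and triviality of all stabilizers is Lemma~\ref{automorphism group}. There is no substantial obstacle here, since the real content is already contained in Section~3; the only points needing care are the bookkeeping of the specialization $\bm{k}=K_j$, $\bm{K}=K$ and the remark that the simple transitivity of the $\Cl(\Oo_{K_j})$-action on each block forces the block, hence each factor above, to be nonempty, so that the count comes out as a genuine equality.
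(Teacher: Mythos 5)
Your proof is correct and takes essentially the same route as the paper: split $[\XXX_\theta^B(\overline{\FF}_\frakP)]$ as the product $[\YYY_1^B(\theta_1)(\overline{\FF}_\frakP)]\times[\YYY_2^B(\theta_2)(\overline{\FF}_\frakP)]$ and then quote the Section~3.3 result that $\Cl(\Oo_{K_j})$ acts simply transitively on each factor. The paper's proof is a one-liner citing ``what we proved in Section 3.3''; you have merely unpacked that reference, which is a faithful expansion of the same argument.
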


\begin{proof}
Let $\theta_j = \Theta|_{\Oo_{K_j}}$. By definition, an object of $\XXX_\Theta^B(\overline{\FF}_\frakP)$ is a pair
$(\AAA_1, \AAA_2)$ with $\AAA_j$ an object of $\YYY_j^B(\theta_j)(\overline{\FF}_\frakP)$, so by what we proved in
Section 3.3,
\begin{equation*}
\#[\XXX_\Theta^B(\overline{\FF}_\frakP)] = \#[\YYY_1^B(\theta_1)(\overline{\FF}_\frakP)]\cdot\#[\YYY_2^B(\theta_2)(\overline{\FF}_\frakP)] = |\Cl(\Oo_{K_1})|\cdot|\Cl(\Oo_{K_2})| = |\Gamma|. \qedhere
\end{equation*}
\end{proof}

\begin{proposition}\label{local ring II} 
Suppose $\alpha \in F^\times$ and $\Theta : \Oo_K \to \Oo_B/\frakm_B$ is a ring homomorphism.
If $\#\Diff_\Theta(\alpha) > 1$ then $\XXX_{\Theta, \alpha}^B = \varnothing$. Suppose $\Diff_\Theta(\alpha) = \{\frakp\}$,
let $\frakP \subset \Oo_K$ be the prime over $\frakp$, and let $p\ZZ = \frakp \cap \ZZ$. 
Then the stack $\XXX_{\Theta, \alpha}^B$ is supported in characteristic $p$. More specifically, it only has
geometric points over the field $\overline{\FF}_\frakP$ {\upshape (}if it has any at all{\upshape )}. 
\end{proposition}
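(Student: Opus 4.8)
The plan is to exploit the rigidity of the $\widehat{F}$-quadratic space $(\widehat{V}(\AAA_1,\AAA_2),\deg_{\CM})$ provided by Theorem \ref{quadratic combined} to pin down the residue characteristic of any geometric point of $\XXX_{\theta,\alpha}^B$ in terms of $\Diff_\theta(\alpha)$.

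First I would reduce to geometric points over fields of the form $\overline{\FF}_\frakQ$. By Proposition \ref{quadratic}(a), any geometric point of $\XXX_{\theta,\alpha}^B$ lives over an algebraically closed field $k$ of some positive characteristic $q$, and the structure morphism $\Spec k\to\Spec\Oo_K$ factors through $\Oo_K/\frakQ$ for a prime $\frakQ\subset\Oo_K$ of residue characteristic $q$, so $\overline{\FF}_\frakQ\hookrightarrow k$. Since each $\YYY_j^B$ is finite \'etale over $\Spec\Oo_K$ and $\Hom$-groups of abelian varieties over algebraically closed fields are unchanged by further extension of the base field, the triple $(\AAA_1,\AAA_2,f)$ descends to an object of $\XXX_{\theta,\alpha}^B(\overline{\FF}_\frakQ)$; thus it is enough to determine which primes $\frakQ$ admit such objects.

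So fix $(\AAA_1,\AAA_2,f)\in\XXX_{\theta,\alpha}^B(\overline{\FF}_\frakQ)$, put $\frakq=\frakQ\cap\Oo_F$, and let $q\ZZ=\frakq\cap\ZZ$. Proposition \ref{quadratic}(d) together with our standing hypothesis that each prime dividing $d_B$ is inert in $K_1$ and $K_2$ shows $q$ is nonsplit in both $K_1$ and $K_2$, so Theorem \ref{quadratic combined} applies: for an idele $\beta\in\widehat{F}^\times$ with $\beta\widehat{\Oo}_F=\fraka_\theta\frakq\frakD^{-1}\widehat{\Oo}_F$ there is a $\widehat{K}$-linear isometry $(\widehat{V}(\AAA_1,\AAA_2),\deg_{\CM})\cong(\widehat{K},\beta\cdot\NN_{K/F})$ sending $\widehat{L}(\AAA_1,\AAA_2)$ onto $\widehat{\Oo}_K$. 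It carries $f$ to some $x\in\widehat{\Oo}_K$ with $\beta\cdot\NN_{K/F}(x)=\deg_{\CM}(f)=\alpha$, so $\alpha\beta^{-1}\in\NN_{\widehat{K}/\widehat{F}}(\widehat{K}^\times)$ and hence $\chi_v(\alpha)=\chi_v(\beta_v)$ for every finite place $v$ of $F$. As $K_1$ and $K_2$ have coprime discriminants, $K/F$ is unramified at all finite places, so each $\chi_v$ is unramified and $\chi_v(\beta_v)$ depends only on $\ord_v(\beta_v)=\ord_v(\fraka_\theta\frakq\frakD^{-1})$; a short computation using $\chi_v(\pi_v)^2=1$ then yields, for every finite $v$,
$$\chi_v(\alpha\fraka_\theta\frakD)=\chi_v(\pi_v)^{\ord_v(\frakq)},$$
which is trivial unless $v=\frakq$. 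Therefore $\Diff_\theta(\alpha)\subseteq\{\frakq\}$, with equality exactly when $\frakq$ is inert in $K$; since $\Diff_\theta(\alpha)$ is always nonempty, we conclude $\Diff_\theta(\alpha)=\{\frakq\}$ and $\frakq$ is inert in $K$.

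The proposition then follows at once. If $\#\Diff_\theta(\alpha)>1$, no prime $\frakQ$ carries a geometric point, so $\XXX_{\theta,\alpha}^B$ has no geometric points and, being of finite type over $\Spec\Oo_K$, is empty. If $\Diff_\theta(\alpha)=\{\frakp\}$, every geometric point lies over $\overline{\FF}_\frakQ$ with $\frakQ\cap\Oo_F=\frakp$, and $\frakp$ inert in $K$ forces $\frakQ$ to be the unique prime $\frakP$ of $\Oo_K$ above $\frakp$; hence $\XXX_{\theta,\alpha}^B$ has geometric points only over $\overline{\FF}_\frakP$ and is supported in characteristic $p$ with $p\ZZ=\frakp\cap\ZZ$. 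I expect the one step needing genuine care is the descent in the second paragraph, ensuring that an object over an arbitrary algebraically closed field of characteristic $q$ really arises from $\overline{\FF}_\frakQ$ so that Theorem \ref{quadratic combined} is applicable; everything else is formal manipulation of the unramified characters $\chi_v$.
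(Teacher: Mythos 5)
Your proof is correct and follows essentially the same route as the paper: apply Theorem \ref{quadratic combined} to read off $\chi_v(\alpha)$ in terms of $\ord_v(\fraka_\theta\frakq\frakD^{-1})$, conclude $\Diff_\theta(\alpha)=\{\frakq\}$, and note that $\frakq$ is inert in $K$ so $\frakP$ is determined. The one place you go further is your second paragraph, where you explicitly justify reducing an arbitrary algebraically closed geometric point to one over $\overline{\FF}_\frakQ$ (via finite \'etaleness of the $\YYY_j^B$ and invariance of $\Hom$-groups under extension of algebraically closed fields); the paper takes this for granted, so your added care is welcome but not a divergence in method.
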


\begin{proof} 
By Proposition \ref{quadratic}, $\XXX_{\Theta, \alpha}^B$ has no geometric points in characteristic $0$.
Suppose $\XXX_{\Theta, \alpha}^B(\overline{\FF}_\frakP) \neq \varnothing$ for some prime ideal $\frakP \subset \Oo_K$.
Fix $(\AAA_1, \AAA_2, f) \in \XXX_{\Theta, \alpha}^B(\overline{\FF}_\frakP)$,
and let $\frakp = \frakP \cap \Oo_F$ and $p\ZZ = \frakp \cap \ZZ$. 
Any prime ideal $\frakq$ of $\Oo_F$ lying over $p$ is inert in $K$ (by Proposition \ref{quadratic}(d) and our assumption about the primes
dividing $d_B$), so for such a $\frakq$,
$$
\chi_\frakl(\frakq) = \left\{\begin{array}{ll}
-1 & \text{if $\frakl = \frakq$} \\
1 & \text{if $\frakl \neq \frakq$}
\end{array} \right.
$$
for any prime $\frakl \subset \Oo_F$. By Theorem \ref{quadratic combined}, the quadratic space
$(\widehat{K}, \beta\cdot\NN_{K/F})$ represents $\alpha$ for any $\beta \in \widehat{F}^\times$ satisfying
$\beta\widehat{\Oo}_F = \fraka_\Theta\frakp\frakD^{-1}\widehat{\Oo}_F$.
It follows that $\chi_\frakl(\alpha) = \chi_\frakl(\fraka_\Theta\frakp\frakD^{-1})$
for every prime $\frakl \subset \Oo_F$, so $\Diff_\Theta(\alpha) = \{\frakp\}$. This shows that if $\XXX_{\Theta, \alpha}^B(\overline{\FF}_\frakP) \neq \varnothing$ then $\Diff_\Theta(\alpha) = \{\frakp\}$, where $\frakp = \frakP \cap \Oo_F$. 
\end{proof}

Recall the definition of the arithmetic degree of $\XXX_{\Theta, \alpha}^B$ from the introduction:
$$
\deg(\XXX_{\Theta, \alpha}^B) = \sum_{\frakP \subset \Oo_K}\log(|\FF_\frakP|)\sum_{x \in [\XXX_{\Theta, \alpha}^B(\overline{\FF}_\frakP)]} \frac{\text{lg}(\OOO^{\text{sh}}_{\XXX_{\Theta, \alpha}^B, x})}{|\Aut(x)|}.
$$

\begin{theorem}\label{final formula}
Let $\alpha \in F^\times$ be totally positive and suppose $\alpha \in \frakD^{-1}$. Let $\Theta : \Oo_K \to \Oo_B/\frakm_B$ 
be a ring homomorphism with $\fraka_\Theta = \ker(\Theta) \cap \Oo_F$, suppose $\Diff_\Theta(\alpha) = \{\frakp\}$, and let $p\ZZ = \frakp \cap \ZZ$. \\
{\upshape (a)} If $p \nmid d_B$ then 
$$
\deg(\XXX_{\Theta, \alpha}^B) = \frac{1}{2}\log(p)\cdot\ord_\frakp(\alpha\frakp\frakD)\cdot\rho(\alpha\fraka_\Theta^{-1}
\frakp^{-1}\frakD).
$$
{\upshape (b)} Suppose $p \mid d_B$ and let $\frakP \subset \Oo_K$ be the prime over $\frakp$. If $\frakP$ 
divides $\ker(\Theta)$  then
$$
\deg(\XXX_{\Theta, \alpha}^B) = \frac{1}{2}\log(p)\cdot \ord_\frakp(\alpha)\cdot \rho(\alpha\fraka_\Theta^{-1}\frakp^{-1}
\frakD).
$$
If $\frakP$ does not divide $\ker(\Theta)$ then
$$
\deg(\XXX_{\Theta, \alpha}^B) = \frac{1}{2}\log(p)\cdot \ord_\frakp(\alpha\frakp)\cdot \rho(\alpha\fraka_\Theta^{-1}\frakp^{-1}\frakD).
$$
If $\alpha \notin \frakD^{-1}$ or if $\#\Diff_\Theta(\alpha) > 1$, then $\deg(\XXX_{\Theta, \alpha}^B) = 0$.
\end{theorem}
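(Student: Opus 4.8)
The plan is to evaluate the defining sum
$$
\deg(\XXX_{\theta, \alpha}^B) = \sum_{\frakP \subset \Oo_K}\log(|\FF_\frakP|)\sum_{x \in [\XXX_{\theta, \alpha}^B(\overline{\FF}_\frakP)]} \frac{\text{length}(\OOO^{\text{sh}}_{\XXX_{\theta, \alpha}^B, x})}{|\Aut(x)|}
$$
directly, feeding in the point count (Theorem \ref{orbital}, Proposition \ref{orbital II}) and the length computation (Theorem \ref{local ring}). First I would dispose of the vanishing assertions: if $\#\Diff_\theta(\alpha) > 1$ then $\XXX_{\theta,\alpha}^B = \varnothing$ by Proposition \ref{local ring II}, and if $\alpha \notin \frakD^{-1}$ then $\XXX_{\theta,\alpha}^B = \varnothing$ because $\deg_{\CM}$ takes values in $\frakD^{-1}$, so no triple $(\AAA_1,\AAA_2,f)$ with $\deg_{\CM}(f) = \alpha$ exists; in either case the degree is $0$.

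So assume $\Diff_\theta(\alpha) = \{\frakp\}$ and $\alpha \in \frakD^{-1}$, and let $\frakP$ be the unique prime of $\Oo_K$ over $\frakp$, unique because $\frakp$, lying in $\Diff_\theta(\alpha)$, is inert in $K$. It follows that the rational prime $p$ below $\frakp$ is automatically nonsplit in $K_1$ and $K_2$, since a prime of $\Oo_F$ lying over a prime split in some $K_j$ cannot be inert in $K$; so Theorem \ref{orbital} will apply. By Proposition \ref{local ring II} the stack $\XXX_{\theta,\alpha}^B$ has geometric points only over $\overline{\FF}_\frakP$, so the outer sum collapses to that single term. By Theorem \ref{local ring} the length $\text{length}(\OOO^{\text{sh}}_{\XXX_{\theta, \alpha}^B, x})$ is a constant $\nu$, equal to $\nu_\frakp(\alpha) = \frac{1}{2}\ord_\frakp(\alpha\frakp\frakD)$ unless $p \mid d_B$ and $\frakP \mid \ker(\theta)$, in which case it is $\nu'_\frakp(\alpha) = \frac{1}{2}\ord_\frakp(\alpha)$. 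Hence
$$
\deg(\XXX_{\theta, \alpha}^B) = \nu\cdot\log(|\FF_\frakP|)\cdot\sum_{x \in [\XXX_{\theta,\alpha}^B(\overline{\FF}_\frakP)]}\frac{1}{|\Aut(x)|},
$$
and the task reduces to evaluating the groupoid cardinality $\sum_x 1/|\Aut(x)|$.

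For this I would use a standard bookkeeping argument. Fix a base CM pair $(\AAA_1, \AAA_2) \in \XXX_\theta^B(\overline{\FF}_\frakP)$, which exists by Lemma \ref{reflex primes}. By Lemmas \ref{automorphism group} and \ref{reflex primes}, $\Gamma$ acts freely on the $|\Gamma|$-element set $[\XXX_\theta^B(\overline{\FF}_\frakP)]$, hence simply transitively, so the twists $(\fraka_1 \otimes_{\Oo_{K_1}} \AAA_1,\ \fraka_2 \otimes_{\Oo_{K_2}} \AAA_2)$ for $(\fraka_1,\fraka_2)\in\Gamma$ give exactly one representative of each isomorphism class of CM pair. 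Over a fixed pair, the isomorphism classes of objects of $\XXX_{\theta,\alpha}^B(\overline{\FF}_\frakP)$ are the orbits of $\Aut(\AAA_1)\times\Aut(\AAA_2) \cong \Oo_{K_1}^\times\times\Oo_{K_2}^\times$ (of order $w_1 w_2$, by Lemma \ref{automorphism group}) on the finite set $\{f : \deg_{\CM}(f) = \alpha\}$, and the automorphism group of such an object is the stabilizer of $f$; orbit–stabilizer gives $\sum_{[f]} 1/|\mathrm{Stab}(f)| = \frac{1}{w_1 w_2}\#\{f : \deg_{\CM}(f)=\alpha\}$. Summing over the $|\Gamma|$ representative pairs and invoking Theorem \ref{orbital} and Proposition \ref{orbital II},
$$
\sum_{x \in [\XXX_{\theta,\alpha}^B(\overline{\FF}_\frakP)]} \frac{1}{|\Aut(x)|} = \frac{1}{w_1 w_2}\sum_{(\fraka_1,\fraka_2) \in \Gamma}\#\{f \in L(\fraka_1 \otimes \AAA_1, \fraka_2 \otimes \AAA_2) : \deg_{\CM}(f) = \alpha\} = \frac{1}{2}\,\rho(\alpha\fraka_\theta^{-1}\frakp^{-1}\frakD).
$$

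Finally I would feed in $\log(|\FF_\frakP|)$. Under the running hypotheses $\frakp$ has residue degree one over $p$ — $p$ is split in $F$ when inert in both $K_j$ or when $p \mid d_B$, and ramified in $F$ when it ramifies in some $K_j$ — so $\FF_\frakp = \FF_p$; and $\frakp$ is inert in $K$, so $\FF_\frakP = \FF_{p^2}$ and $\log(|\FF_\frakP|) = 2\log p$. Combining the last two displays,
$$
\deg(\XXX_{\theta,\alpha}^B) = \nu\cdot\log p\cdot\rho(\alpha\fraka_\theta^{-1}\frakp^{-1}\frakD),
$$
and substituting $\nu = \nu_\frakp(\alpha) = \frac{1}{2}\ord_\frakp(\alpha\frakp\frakD)$ yields (a) together with the $\frakP \nmid \ker(\theta)$ case of (b), once we note $\ord_\frakp(\frakD) = 0$ whenever $p \mid d_B$ (such $p$ being unramified in each $K_j$, hence in $F$); substituting $\nu = \nu'_\frakp(\alpha) = \frac{1}{2}\ord_\frakp(\alpha)$ yields the $\frakP \mid \ker(\theta)$ case of (b). I expect the main obstacle to be the bookkeeping in the third paragraph: making precise that the forgetful functor $\XXX_{\theta,\alpha}^B(\overline{\FF}_\frakP) \to \XXX_\theta^B(\overline{\FF}_\frakP)$ interacts correctly with automorphism groups, so that the weighted point count is exactly $\frac{1}{w_1 w_2}$ times the naive sum of $\#\{f : \deg_{\CM}(f) = \alpha\}$ over the twists, and that simple transitivity of $\Gamma$ legitimately turns ``sum over twists'' into ``sum over isomorphism classes''; everything else is substitution, noting that an empty stack is consistent with $\rho(\alpha\fraka_\theta^{-1}\frakp^{-1}\frakD) = 0$ on the right-hand side.
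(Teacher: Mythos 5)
Your proof is correct and takes essentially the same approach as the paper: evaluate the defining sum by combining the length computation (Theorem \ref{local ring}), the support/vanishing result (Proposition \ref{local ring II}), and the weighted point count (Theorem \ref{orbital}, Proposition \ref{orbital II}, Lemmas \ref{automorphism group} and \ref{reflex primes}). The only cosmetic difference is in the bookkeeping: you invoke simple transitivity of $\Gamma$ on $[\XXX_\theta^B(\overline{\FF}_\frakP)]$ to rewrite the sum over isomorphism classes of CM pairs as a sum over $\Gamma$-twists of one fixed base pair, whereas the paper inserts an averaging factor $|\Gamma|^{-1}\sum_{(\fraka_1,\fraka_2)\in\Gamma}$ into the sum over all classes and then cancels using $\#[\XXX_\theta^B(\overline{\FF}_\frakP)] = |\Gamma|$ — the two computations are arithmetically identical.
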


\begin{proof}
(a) The group $\Aut(\AAA_1, \AAA_2)$ acts on the set $L(\AAA_1, \AAA_2)$ by
$(g_1, g_2) \cdot f = g_2 \circ f \circ g_1^{-1}$
and the stabilizer of $f$ under this action is $\text{Stab}(f) = \Aut(\AAA_1, \AAA_2, f)$.
Using Theorem \ref{local ring}, Proposition \ref{local ring II}, Lemma \ref{automorphism group}, 
and $|\FF_\frakP| = \NN_{K/\QQ}(\frakP) = p^2$, 
\begin{align*}
\deg(\XXX_{\Theta, \alpha}^B) &= \log(|\FF_\frakP|)\sum_{x \in [\XXX_{\Theta, \alpha}^B(\overline{\FF}_\frakP)]}\frac{\text{lg}(
\OOO^{\text{sh}}_{\XXX_{\Theta, \alpha}^B, x})}{|\Aut(x)|} \\
&= 2\log(p)\nu_\frakp(\alpha)\sum_{(\AAA_1, \AAA_2, f) \in [\XXX_{\Theta, \alpha}^B(\overline{\FF}_\frakP)]}
\frac{1}{|\Aut(\AAA_1, \AAA_2, f)|} \\
&= 2\log(p)\nu_\frakp(\alpha)\sum_{(\AAA_1, \AAA_2) \in [\XXX^B_\Theta(\overline{\FF}_\frakP)]}
\sum_{\substack{f \in L(\AAA_1, \AAA_2) \\ \deg_{\CM}(f) = \alpha}}\frac{1}{|\Aut(\AAA_1, \AAA_2, f)|}\cdot
\frac{|\text{Stab}(f)|}{|\Aut(\AAA_1, \AAA_2)|} \\
&= 2\log(p)\nu_\frakp(\alpha)\sum_{(\AAA_1, \AAA_2) \in [\XXX_\Theta^B(\overline{\FF}_\frakP)]}
\sum_{\substack{f \in L(\AAA_1, \AAA_2) \\ \deg_{\CM}(f) = \alpha}}\frac{1}{w_1w_2}.
\end{align*}
Now using Theorem \ref{orbital}, Proposition \ref{orbital II}, and Lemma \ref{reflex primes}, we have 
\begin{align*}
\deg(\XXX_{\Theta, \alpha}^B) 
&= \frac{2\log(p)\nu_\frakp(\alpha)}{|\Gamma|}\sum_{(\AAA_1, \AAA_2) \in [\XXX^B_\Theta(\overline{\FF}_\frakP)]}
\sum_{(\fraka_1, \fraka_2) \in \Gamma}
\sum_{\substack{f \in L(\fraka_1 \otimes \AAA_1, \fraka_2 \otimes \AAA_2) \\ \deg_{\CM}(f) = \alpha}}\frac{1}{w_1w_2} \\
&= \log(p)\frac{\nu_\frakp(\alpha)}{|\Gamma|}\sum_{(\AAA_1, \AAA_2) \in [\XXX_\Theta^B(\overline{\FF}_\frakP)]}
\prod_{\ell}O_\ell(\alpha, \AAA_1, \AAA_2) \\
&= \log(p)\frac{\nu_\frakp(\alpha)}{|\Gamma|}\sum_{(\AAA_1, \AAA_2) \in [\XXX_\Theta^B(\overline{\FF}_\frakP)]}
\rho(\alpha\fraka_\Theta^{-1}\frakp^{-1}\frakD) \\
&= \frac{1}{2}\log(p)\cdot\ord_\frakp(\alpha\frakp\frakD)\cdot\rho(\alpha\fraka_\Theta^{-1}\frakp^{-1}\frakD).
\end{align*}

(b) Suppose $p \mid d_B$. If $\frakP$ divides $\ker(\Theta)$ then a similar calculation to
that in (a), replacing $\nu_\frakp(\alpha)$ with $\nu'_\frakp(\alpha)$, gives the desired result. If $\frakP$ does not
divide $\ker(\Theta)$ then the exact same calculation as in (a) gives the desired formula, noting that
$\nu_\frakp(\alpha) = \frac{1}{2}\ord_\frakp(\alpha\frakp)$ for $p \mid d_B$.
The final claim follows from Proposition \ref{local ring II} and the fact that $\deg_{\CM}$ takes values in $\frakD^{-1}$.
\end{proof}

\appendix
\section{Hecke correspondences}
In this section we will define the Hecke correspondences $T_m$ on $\MMM$ and $\MMM^B$, and prove
the equalities (\ref{Hecke 1}) and (\ref{Hecke 2}) in the introduction (we continue with the same notation as
in Sections 1.1 and 1.2). Suppose $\XXX$ is a Noetherian stack and $\ZZZ$ is
a closed substack of codimension $1$. Using an atlas on $\XXX$, the definition of a Weil divisor on a Noetherian scheme associated with a closed subscheme of codimension $1$ 
can be extended to stacks to give a divisor $[\ZZZ] \in \text{Div}(\XXX)$ (see \cite[Definition 3.5]{Vistoli}).
Suppose $h : \XXX \to \XXX'$ is a morphism of Noetherian stacks of the same dimension. 
In the case of $h$ finite and flat there is an induced group homomorphism 
$$
h^\ast : \text{Div}(\XXX') \to \text{Div}(\XXX)
$$
defined on prime divisors by $h^\ast\DDD = [\DDD \times_{\XXX'} \XXX]$ and extended linearly to all
of $\text{Div}(\XXX')$. If $h$ is proper and representable, there is a notion of the image of $h$, which is a 
closed substack of $\XXX'$, defined through an atlas and the scheme-theoretic image (see \cite[Definition 1.7]{Vistoli}). 
For $h$ finite, flat, and representable, this leads to a group homomorphism
$$
h_\ast : \text{Div}(\XXX) \to \text{Div}(\XXX')
$$
defined by sending a prime divisor $\DDD$ to $\deg(\DDD/\DDD')\cdot [\DDD']$,
where $\DDD'$ is the image of $\DDD$ under $h$ and $\deg(\DDD/\DDD')$ is the degree of the 
morphism $\DDD \to \DDD'$ (see \cite[Definition 3.6]{Vistoli}).

Fix a positive integer $m$. Let $\MMM(m)$ be the category fibered in groupoids over $\Spec(\Oo_K)$
with $\MMM(m)(S)$ the category of triples $(E_1, E_2, \varphi)$ with $E_i$ an object of $\MMM(S)$ and $\varphi \in \Hom_S(E_1, E_2)$ satisfying $\deg(\varphi) = m$ on every connected component of $S$. The category $\MMM(m)$ is a stack,
flat of relative dimension $1$ over $\Spec(\Oo_K)$, and there are two finite flat morphisms
$$
\xymatrix{
& \MMM(m) \hspace{.5mm} \ar@<-.7ex>[r]_<<<<<{\pi_2} \ar@<.7ex>[r]^<<<<<{\pi_1} & \hspace{1.5mm} \MMM }
$$
given by $\pi_i(E_1, E_2, \varphi) = E_i$. Define 
$T_m : \text{Div}(\MMM) \to \text{Div}(\MMM)$
by $T_m = (\pi_2)_\ast \circ (\pi_1)^\ast$. 

For $i \in \{1, 2\}$ let $f_i : \YYY_i \to \MMM$ be the finite morphism defined by forgetting the complex multiplication structure.
Consider $\DDD_1 = \YYY_1 \times_{f_1, \MMM, \pi_1} \MMM(m)$. 
Up to the obvious isomorphism of stacks, the objects of $\DDD_1$
can be described as triples $(E_1, E_2, \varphi)$ with $E_1 \in \YYY_1$, $E_2 \in \MMM$, and
$\varphi : E_1 \to E_2$ a degree $m$ isogeny. Now let $g$ be the composition $\DDD_1 \to \MMM(m) \map{\pi_2} \MMM$.
The fiber product $\DDD_1 \times_{g, \MMM, f_2} \YYY_2$ is easily seen to be isomorphic to $\TTT_m$.
Below is a diagram of these spaces and morphisms:
\begin{equation}\label{diagram}
\xymatrix{
&& \TTT_m \ar[dl] \ar[dr] & \\
& \DDD_1 \ar[dl] \ar[dr] \ar@{->}@/^2pc/[ddrr]^{g} & & \YYY_2 \ar[dd]^{f_2} \\
\YYY_1 \ar[dr]_{f_1} && \MMM(m) \ar[dl]^{\pi_1} \ar[dr]_{\pi_2} && \\
& \MMM & & \MMM. & }
\end{equation}

Viewing $\DDD_1$ as a closed substack of $\MMM(m)$ through the image of $\DDD_1 \to \MMM(m)$, the divisor $T_m\YYY_1$ on $\MMM$ is $(\pi_2)_\ast[\DDD_1]$, where $[\DDD_1]$ is the divisor associated with $\DDD_1$,
so to prove $\deg(\TTT_m) = I(T_m\YYY_1, \YYY_2)$, we need to show
\begin{equation}\label{Hecke 3}
\deg(\DDD_1 \times_{g, \MMM, f_2} \YYY_2) = I((\pi_2)_\ast[\DDD_1], [\YYY_2]),
\end{equation}
where we are writing $[\YYY_2]$ for the divisor on $\MMM$ determined by the image of $f_2$.

Let $k = \overline{\FF}_{\frakP}$ for $\frakP \subset \Oo_K$ a prime ideal and let $x \in \MMM(k)$ be a 
geometric point.
For any two prime divisors $\ZZZ$ and $\ZZZ'$ on $\MMM$ intersecting properly, define the \textit{Serre
intersection multiplicity} at $x$ by
$$
I_x^{\MMM}(\ZZZ, \ZZZ') = \sum_{d \gqq 0}(-1)^d\hspace{.5mm}\text{lg}_{\OOO^{\text{sh}}_{\MMM, x}}\big(\Tor_d^{\OOO^{\text{sh}}_{\MMM, x}}
(\OOO^{\text{sh}}_{\ZZZ, x}, \OOO^{\text{sh}}_{\ZZZ', x})\big)
$$
if $x \in (\ZZZ \cap \ZZZ')(k)$ and set $I_x^{\MMM}(\ZZZ, \ZZZ') = 0$ otherwise. Extend this definition bilinearly
to all divisors on $\MMM$. Again, if $\ZZZ$ and $\ZZZ'$ are prime divisors on $\MMM$ intersecting properly, there is a  
way of defining a $0$-dimensional cycle $\ZZZ\cdot\ZZZ'$ on $\MMM$ in such a way that 
$$
\text{Coef}_x(\ZZZ \cdot \ZZZ') = I_x^{\MMM}(\ZZZ, \ZZZ'),
$$
where $\text{Coef}_x(\ZZZ \cdot \ZZZ')$ is the coefficient in $\ZZZ\cdot\ZZZ'$ of the $0$-dimensional closed substack determined by the image of $x : \Spec(k) \to \MMM$ (see \cite[Chapter V]{Serre} and
\cite[Chapter I]{Soule}).

With notation as in (\ref{diagram}), let $\DDD_2 = \MMM(m) \times_{\pi_2, \MMM, f_2} \YYY_2$, which has  objects $(E_1, E_2, \varphi)$ with $E_1 \in \MMM$, $E_2 \in \YYY_2$, and
$\varphi : E_1 \to E_2$ a degree $m$ isogeny, so
$[\DDD_2] = (\pi_2)^\ast[\YYY_2]$. Also, let $x \in \MMM(m)(k)$ where $x = (E_1, E_2, \varphi)$
and $E_i \in \YYY_i(k)$ for $i \in \{1, 2\}$. We claim 
\begin{equation}\label{Tor}
\Tor_d^{\OOO^{\text{sh}}_{\MMM(m), x}}(\OOO^{\text{sh}}_{\DDD_1, x}, \OOO^{\text{sh}}_{\DDD_2, x}) = 0
\end{equation}
for all $d > 0$. To prove this, note that
$$
\OOO^{\text{sh}}_{\DDD_i, x} \cong
\OOO^{\text{sh}}_{\MMM(m), x} \otimes_{\OOO^{\text{sh}}_{\MMM, \pi_i(x)}} \OOO^{\text{sh}}_{\YYY_i, \pi_i(x)}
$$
for $i \in \{1, 2\}$, so from $\pi_1$ being flat,
$$
\Tor_d^{\OOO^{\text{sh}}_{\MMM(m), x}}(\OOO^{\text{sh}}_{\DDD_1, x}, \OOO^{\text{sh}}_{\DDD_2, x})
\cong
\Tor_d^{\OOO^{\text{sh}}_{\MMM, \pi_1(x)}}(\OOO^{\text{sh}}_{\YYY_1, \pi_1(x)}, \OOO^{\text{sh}}_{\DDD_2, x}). 
$$
As $\OOO^{\text{sh}}_{\MMM, \pi_1(x)}$ and $\OOO^{\text{sh}}_{\YYY_1, \pi_1(x)}$ are regular local rings of dimension
$2$ and $1$, respectively, and $\OOO^{\text{sh}}_{\DDD_2, x}$ is a Noetherian local ring of dimension $1$, the $\OOO^{\text{sh}}_{\MMM, \pi_1(x)}$-modules $\OOO^{\text{sh}}_{\YYY_1, \pi_1(x)}$ and
$\OOO^{\text{sh}}_{\DDD_2, x}$ are Cohen-Macaulay, so
(\ref{Tor}) holds for all $d > 0$ by \cite[Corollary on p. \hspace{-.5mm}111]{Serre}. 

There is a projection formula 
$$
((\pi_2)_\ast[\DDD_1]) \cdot [\YYY_2] = (\pi_2)_\ast([\DDD_1]\cdot ((\pi_2)^\ast[\YYY_2])).
$$
This is a special case of a more general formula, but it takes this form in our case since (\ref{Tor}) holds 
(see \cite[p. \hspace{-.5mm}118, formulas (10), (11)]{Serre}). 
It follows that for any $y \in \MMM(k)$,
\begin{align*}
I_y^{\MMM}((\pi_2)_\ast[\DDD_1], [\YYY_2]) &= \text{Coef}_y\big(((\pi_2)_\ast[\DDD_1])\cdot[\YYY_2]\big) \\
&= \sum_{x \in \pi_2^{-1}(y)}[\kappa(x) : \kappa(y)]\text{Coef}_x\big([\DDD_1]\cdot ((\pi_2)^\ast[\YYY_2])\big) \\
&= \sum_{x \in \pi_2^{-1}(y)}I_x^{\MMM(m)}([\DDD_1], [\DDD_2]),
\end{align*}
where we are using that $\kappa(x) = \kappa(y) = k$.
Letting $h_i : \DDD_i \to \MMM(m)$ be the natural projection, there is an isomorphism of stacks
$$
\DDD_1 \times_{h_1, \MMM(m), h_2} \DDD_2 \cong \DDD_1 \times_{g, \MMM, f_2} \YYY_2,
$$
and this stack has objects $(E_1, E_2, \varphi)$ where $E_i \in \YYY_i$ and $\varphi : E_1 \to E_2$ is an isogeny of degree $m$.
Also, by (\ref{Tor}) we have
$$
I^{\MMM(m)}_x([\DDD_1], [\DDD_2]) 
= \text{lg}(\OOO^{\text{sh}}_{\DDD_1, x}\otimes_{\OOO^{\text{sh}}_{\MMM(m), x}} \OOO^{\text{sh}}_{\DDD_2, x}).
$$
Therefore, for any $y \in \MMM(k)$,
$$
 I_y^{\MMM}((\pi_2)_\ast[\DDD_1], [\YYY_2]) 
=  \sum_{x \in \pi_2^{-1}(y)}\text{lg}(\OOO^{\text{sh}}_{\DDD_1 \times_{\MMM(m)}\DDD_2, x}) 
= \sum_{x \in \pi_2^{-1}(y)}\text{lg}(\OOO^{\text{sh}}_{\DDD_1 \times_{\MMM}\YYY_2, x}). 
$$
Since $\YYY_2$ is regular and the local ring at $y$ of any prime divisor appearing in $(\pi_2)_\ast[\DDD_1]$ is
a $1$-dimensional domain, hence Cohen-Macaulay, the $\Tor_d$ terms appearing in the sum 
$I_y^{\MMM}((\pi_2)_\ast[\DDD_1], [\YYY_2])$ are zero for all $d > 0$. Multiplying both sides of the above equality by 
$\log(|\FF_{\frakP}|)/|\Aut(y)|$ and summing over all $y \in [\YYY_2(\overline{\FF}_\frakP)]$ and over all primes $\frakP \subset \Oo_K$ 
gives
\begin{align*}
I((\pi_2)_\ast[\DDD_1], [\YYY_2]) &= \sum_{\frakP}\sum_{y \in [\YYY_2(\overline{\FF}_\frakP)]}\frac{\log(|\FF_{\frakP}|)}{|\Aut(y)|}\sum_{x \in \pi_2^{-1}(y)}\text{lg}(\OOO^{\text{sh}}_{\DDD_1 \times_{\MMM}\YYY_2, x}) \\
&= \sum_{\frakP}\sum_{y \in [\YYY_2(\overline{\FF}_\frakP)]}\sum_{\substack{(\mathbf{E}_1, \mathbf{E}_2, \varphi) \in [(\DDD_1 \times_{\MMM} \YYY_2)(\overline{\FF}_\frakP)] \\ [\mathbf{E}_2] = y}}\frac{\log(|\FF_{\frakP}|)}{|\Aut(y)|}\text{lg}(\OOO^{\text{sh}}_{\DDD_1 \times_{\MMM}\YYY_2, (\mathbf{E}_1, \mathbf{E}_2, \varphi)}),
\end{align*}
where $[\mathbf{E}_i]$ is the isomorphism class of the object $\mathbf{E}_i$ of $\YYY_i(\overline{\FF}_\frakP)$
which has underlying elliptic curve $E_i$.
For a fixed $(\mathbf{E}_1, \mathbf{E}_2) \in (\YYY_1 \times \YYY_2)(\overline{\FF}_\frakP)$, the group $\Aut(\mathbf{E}_1)$ acts on the set of all degree $m$ isogenies $\varphi : E_1 \to E_2$ by $g\cdot \varphi = \varphi \circ g^{-1}$ and this action is free. Hence
$$
I((\pi_2)_\ast[\DDD_1], [\YYY_2]) = \sum_{\frakP}\sum_{(\mathbf{E}_1, \mathbf{E}_2) \in [(\YYY_1 \times \YYY_2)(\overline{\FF}_\frakP)]}
\sum_{\substack{\varphi : E_1 \to E_2 \\ \deg(\varphi) = m}}\frac{\log(|\FF_{\frakP}|)}{|\Aut(\mathbf{E}_1)||\Aut(\mathbf{E}_2)|}\text{lg}(\OOO^{\text{sh}}_{\DDD_1 \times_{\MMM}\YYY_2, (\mathbf{E}_1, \mathbf{E}_2, \varphi)}).
$$
Similarly, for a fixed $(\mathbf{E}_1, \mathbf{E}_2) \in (\YYY_1 \times \YYY_2)(\overline{\FF}_\frakP)$, the group
$\Aut(\mathbf{E}_1, \mathbf{E}_2)$ acts on the set of all degree $m$ isogenies $\varphi : E_1 \to E_2$ by $(g_1, g_2) \cdot \varphi = g_2 \circ \varphi \circ g_1^{-1}$, and the stabilizer of $\varphi$ is
$\text{Stab}(\varphi) = \Aut(\mathbf{E}_1, \mathbf{E}_2, \varphi)$. Therefore
\begin{align*}
I((\pi_2)_\ast[\DDD_1], [\YYY_2]) &= \sum_{\frakP}\sum_{(\mathbf{E}_1, \mathbf{E}_2) \in [(\YYY_1 \times \YYY_2)(\overline{\FF}_\frakP)]}
\sum_{\substack{\varphi : E_1 \to E_2 \\ \deg(\varphi) = m}}\frac{\log(|\FF_{\frakP}|)\text{lg}(\OOO^{\text{sh}}_{\DDD_1 \times_{\MMM}\YYY_2, (\mathbf{E}_1, \mathbf{E}_2, \varphi)})}{|\Aut(\mathbf{E}_1, \mathbf{E}_2, \varphi)|}\cdot\frac{|\text{Stab}(\varphi)|}{|\Aut(\mathbf{E}_1, \mathbf{E}_2)|} \\
&=  \sum_{\frakP}\sum_{x \in [(\DDD_1\times_{\MMM}\YYY_2)(\overline{\FF}_\frakP)]}\frac{\log(|\FF_\frakP|)}{|\Aut(x)|}\text{lg}(\OOO^{\text{sh}}_{\DDD_1 \times_{\MMM}\YYY_2, x}),
\end{align*}
which proves (\ref{Hecke 3}).

The definition of $T_m : \text{Div}(\MMM^B) \to \text{Div}(\MMM^B)$ and the proof of
the equality $\deg(\TTT_m^B) = I(T_m\YYY_1^B, \YYY_2^B)$ are exactly the same as the elliptic curve case.
The equality (\ref{Hecke 2}) then follows from (\ref{TM decomp}).

\end{document}